\documentclass[12pt]{amsart}
\usepackage[table,x11names]{xcolor}
\usepackage[left=2cm,right=2cm,
    top=2cm,bottom=2cm,bindingoffset=0cm]{geometry}

\usepackage{mathtools}

\usepackage{soul}
\usepackage{color}

\usepackage{nicefrac}
\usepackage{dirtytalk}

\usepackage{graphicx}
\graphicspath{ {./images/} }
\usepackage{subcaption}
\usepackage[export]{adjustbox}
\usepackage{wrapfig}

\usepackage{yfonts}

\usepackage{quiver}

\usepackage{multicol}
\usepackage{enumitem}

\usepackage{amsmath}
\usepackage{empheq}
\usepackage{epsfig}
\usepackage{wasysym}
\usepackage{extpfeil}
\usepackage{amsthm}
\usepackage{amssymb}
\usepackage{mathrsfs}
\usepackage{bm}
\usepackage{listings}
\usepackage{breqn}
\usepackage[usestackEOL]{stackengine}

\usepackage{float}
\numberwithin{equation}{section}
\usepackage{bm}

\usepackage{longtable}
\usepackage{makecell}
\usepackage[all]{xy}
\usepackage{longtable}
\usepackage{rotating}
\input xy
\xyoption{all}
\usepackage{multirow}
\usepackage{nicematrix}
\usepackage{xltabular}
\usepackage{tabularray}

\newtheorem{theorem}{Theorem}[section]
\newtheorem{lemma}{Lemma}[section]
\newtheorem*{lemma*}{Lemma}
\newtheorem*{corollary*}{Corollary}
\newtheorem{corollary}{Corollary}[section]
\theoremstyle{definition}
\newtheorem{definition}{Definition}[section]
\newtheorem*{proposition*}{Proposition}
\newtheorem{proposition}{Proposition}[section]

\newtheorem*{remark*}{Remark}

\usepackage{manfnt}

\usepackage{hyperref}
\hypersetup{
    colorlinks=true,
    linkcolor=blue,
    filecolor=magenta,      
    urlcolor=blue,
    citecolor=blue
}
\usepackage[nameinlink]{cleveref}
\usepackage{tikz-cd}
\usetikzlibrary{babel}

\DeclareMathOperator{\PGL}{PGL}

\DeclareMathOperator{\id}{id}

\DeclareMathOperator{\Img}{Im}
\DeclareMathOperator{\im}{im}

\DeclareMathOperator{\pr}{pr}

\DeclareMathOperator{\Aut}{Aut}
\DeclareMathOperator{\Norm}{N}

\DeclareMathOperator{\mult}{mult}
\DeclareMathOperator{\Bs}{Bs}
\DeclareMathOperator{\Bl}{Bl}
\DeclareMathOperator{\len}{length}

\DeclareMathOperator{\Pic}{Pic}
\DeclareMathOperator{\rank}{rk}

\DeclareMathOperator{\Stab}{Stab}

\newcommand{\ee}{\varepsilon}

\newcommand{\M}{\mathcal{M}}

\newcommand{\PPP}{{\mathbb{P}^1\times\mathbb{P}^1\times\mathbb{P}^1}}
\newcommand{\PP}{\mathbb{P}^1}

\makeatletter
\let\save@mathaccent\mathaccent
\newcommand*\if@single[3]{%
  \setbox0\hbox{${\mathaccent"0362{#1}}^H$}%
  \setbox2\hbox{${\mathaccent"0362{\kern0pt#1}}^H$}%
  \ifdim\ht0=\ht2 #3\else #2\fi
  }
\newcommand*\rel@kern[1]{\kern#1\dimexpr\macc@kerna}
\newcommand*\widebar[1]{\@ifnextchar^{{\wide@bar{#1}{0}}}{\wide@bar{#1}{1}}}
\newcommand*\wide@bar[2]{\if@single{#1}{\wide@bar@{#1}{#2}{1}}{\wide@bar@{#1}{#2}{2}}}
\newcommand*\wide@bar@[3]{%
  \begingroup
  \def\mathaccent##1##2{%
    \let\mathaccent\save@mathaccent
    \if#32 \let\macc@nucleus\first@char \fi
    \setbox\z@\hbox{$\macc@style{\macc@nucleus}_{}$}%
    \setbox\tw@\hbox{$\macc@style{\macc@nucleus}{}_{}$}%
    \dimen@\wd\tw@
    \advance\dimen@-\wd\z@
    \divide\dimen@ 3
    \@tempdima\wd\tw@
    \advance\@tempdima-\scriptspace
    \divide\@tempdima 10
    \advance\dimen@-\@tempdima
    \ifdim\dimen@>\z@ \dimen@0pt\fi
    \rel@kern{0.6}\kern-\dimen@
    \if#31
      \overline{\rel@kern{-0.6}\kern\dimen@\macc@nucleus\rel@kern{0.4}\kern\dimen@}%
      \advance\dimen@0.4\dimexpr\macc@kerna
      \let\final@kern#2%
      \ifdim\dimen@<\z@ \let\final@kern1\fi
      \if\final@kern1 \kern-\dimen@\fi
    \else
      \overline{\rel@kern{-0.6}\kern\dimen@#1}%
    \fi
  }%
  \macc@depth\@ne
  \let\math@bgroup\@empty \let\math@egroup\macc@set@skewchar
  \mathsurround\z@ \frozen@everymath{\mathgroup\macc@group\relax}%
  \macc@set@skewchar\relax
  \let\mathaccentV\macc@nested@a
  \if#31
    \macc@nested@a\relax111{#1}%
  \else
    \def\gobble@till@marker##1\endmarker{}%
    \futurelet\first@char\gobble@till@marker#1\endmarker
    \ifcat\noexpand\first@char A\else
      \def\first@char{}%
    \fi
    \macc@nested@a\relax111{\first@char}%
  \fi
  \endgroup
}
\makeatother

\definecolor{myblue}{rgb}{0.95,0.975,1}

\makeatother
\makeatletter

\begin{document}
\NewTblrTheme{mytheme}{ 
\SetTblrStyle{firstfoot}{\itshape} 
\SetTblrStyle{middlefoot}{\itshape} 
\SetTblrStyle{middlehead}{\itshape}
\SetTblrStyle{lasthead}{\itshape}
}
\title[$G$-birational rigidity of $\PPP$]{$G$-birational rigidity of the $G$-Fano variety $\PPP$}

\author[N. Stor]{Nikolai Stor}
\address{Department of Mathematics, University of Edinburgh, UK}
\email{N.Stor@sms.ed.ac.uk}

\date{\today}

\begin{abstract}
We determine the $G$-birational rigidity of the $G$-Fano variety $\PPP$ for all finite subgroups $G$ of $\Aut(\PPP)$.
\end{abstract}

\maketitle

\section{Introduction}
Fano varieties with many symmetries appear naturally in equivariant birational geometry. If a rationally connected variety is equipped with an action of a group $G$, then the $G$-equivariant Minimal Model Program ($G$-MMP) produces a $G$-Mori fiber space. The special case where the base is a point gives a $G$-Fano variety. Thus, $G$-Fano varieties are the basic objects whose equivariant birational geometry one wants to understand.

For a $G$-Fano variety $X$, this birational geometry is organized by the $G$-equivariant Sarkisov program. Namely, $G$-birational maps from $X$ to other $G$-Mori fiber spaces decompose into sequence of $G$-Sarkisov links. This leads to the following standard notions: 
\begin{enumerate}
    \item $X$ is $G$-birationally superrigid if no $G$-Sarkisov links start from $X$;
    \item $X$ is $G$-birationally rigid if every $G$-Sarkisov link starting from $X$ also ends at $X$ with the same $G$-action;
    \item $X$ is $G$-solid if it is not $G$-birational to a $G$-Mori fiber space with positive-dimensional base.
\end{enumerate}
These questions become particularly explicit in the toric setting. Let $X$ be a toric variety. We say that a subgroup $G\subset \Aut(X)$ is \emph{toric} if it normalizes a maximal torus $\mathbb{T}$ in $\Aut(X)$. Then $G$ fits into an exact sequence
\[
1\to T_G\to G\to W_G\to 1,
\]
where $T_G=G\cap \mathbb{T}$, and $W_G$ is the image of $G$ in the Weyl group of $X$.

Recall that we call a variety $X$ a $G$-Fano variety if the following conditions are satisfied:
\begin{enumerate}
    \item $X$ has terminal and $G\mathbb{Q}$-factorial singularities;
    \item $-K_X$ is ample;
    \item $X$ is $G$-minimal, that is $\rank\Pic(X)^G=1$.
\end{enumerate}
The paper~\cite{ToricGsolid} investigates $G$-birational rigidity of toric $G$-Fano threefolds for toric groups $G$. However, the finite toric subgroup case is not completely resolved there for two toric $G$-Fano varieties: a divisor $Y_{24}$ of degree $(1,1,1,1)$ in $(\mathbb{P}^1)^4$ and $\PPP$:
\begin{theorem}[\protect{\cite[Theorem 1.2]{ToricGsolid}}]
\label{GminimalGsolid}
Let $X = \PPP$ or $Y_{24}$, and let $G$ be a finite toric subgroup of $\Aut(X)$. Then 
\begin{enumerate}
    \item If $X$ is $G$-minimal and $G$-solid, then $W_G$ contains a subgroup isomorphic to $\mathfrak{A}_4$.
    \item If $W_G$ contains $\mathfrak{A}_4$, then $X$ is $G$-minimal.  
\end{enumerate}
Moreover, if $W_G$ contains $\mathfrak{A}_4$ and $|G|\geq 32 \cdot 24^4$, then $X$ is $G$-birationally superrigid.
\end{theorem}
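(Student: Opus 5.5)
We prove the three assertions in order, working with the two toric varieties in parallel. For $X=\PPP$ the Weyl group is $\mathfrak{S}_3$, so $\mathfrak{A}_4$ could never occur. We therefore read the statement, as in \cite{ToricGsolid}, with the Weyl group acting on the four $\PP$-factors: this is the case of $Y_{24}\subset(\PP)^4$, whose Weyl group is $\mathfrak{S}_4$. For $\PPP$ the analogous role is played by transitivity of $W_G$ on the factors. In all cases $\Pic(X)^G\otimes\mathbb{Q}$ equals the $W_G$-invariants of the permutation module spanned by the pullbacks $H_i$ of $\mathcal{O}(1)$ from the factors. Hence $G$-minimality is equivalent to $W_G$ acting transitively on these factors.

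For (2), every subgroup of $\mathfrak{S}_4$ containing $\mathfrak{A}_4$ is transitive on $\{1,2,3,4\}$. So $\rank\Pic(X)^G=1$. Terminality and $G\mathbb{Q}$-factoriality hold because $X$ is smooth.

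For (1) we argue by contraposition. Suppose $X$ is $G$-minimal, so $W_G$ is transitive, but $W_G$ contains no $\mathfrak{A}_4$. Then $W_G$ is one of $C_4$, $V_4$ (normal) or $D_4$. Each of these preserves a partition of the four factors into two pairs $\{1,2\}\sqcup\{3,4\}$, or a cyclic structure. We use this to build an explicit $G$-Mori fibre space structure on a $G$-birational model. Projecting to $(\PP)^2$ via the pair partition gives a $G$-equivariant map. On $Y_{24}$, the projection to $\PP\times\PP$ (factors 1,2), composed with the quotient identifying the two pairs, produces a conic bundle (or del Pezzo fibration) over a $G$-minimal surface or over $\PP$. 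We realise this by a $G$-Sarkisov link: blow up the $G$-invariant curves where the fibre degenerates, then contract. The base is positive-dimensional, so $X$ is not $G$-solid. This case analysis over the three small transitive subgroups, with an explicit link for each, is routine but must be done group by group.

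For the superrigidity statement we use the Noether--Fano method. Suppose $\M$ is a mobile $G$-invariant linear system with $\M\sim_{\mathbb{Q}} -\lambda K_X$ and $(X,\frac1\lambda\M)$ not canonical. Take a $G$-orbit of non-canonical centres. If a centre is a curve $C$, then $\mult_C\M>\lambda$. Intersecting $M_1\cdot M_2$ with $H_i$ then bounds the number and degree of curves in the orbit, and $|G|$ large contradicts this. If the centre is a point, the $4n^2$-inequality gives $\mult_p(M_1\cdot M_2)>4\lambda^2$. Summing over the orbit of $p$ and intersecting with $-K_X$ yields $|Gp|\le(-K_X)^3/4$ up to constants. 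The key input, and the main obstacle, is a lower bound on orbit lengths. Since $T_G$ is a finite subgroup of the torus and $|W_G|\le24$, the hypothesis gives $|T_G|\ge 32\cdot24^3$. We must show that every $G$-orbit not contained in the toric boundary has length at least roughly $|T_G|/(\text{small constant})$. We must also rule out small orbits on the boundary strata, using the fact that $\mathfrak{A}_4$ permutes the torus-fixed points transitively, so boundary orbits are long too. We do this by bounding the stabiliser of a point in $T_G$ by the subgroup of $\mathbb{T}$ fixing a point of the relevant orbit, which has order at most $2^4$ per factor. Combining this with the degree bound $(-K_X)^3$ gives the numerical threshold $32\cdot24^4$.
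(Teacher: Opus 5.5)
\textbf{The main gap is your reading of the statement.} The Weyl group of $\PPP$ is not $\mathfrak{S}_3$. The normalizer of $\mathbb{T}$ also contains the inversion $\iota(1)$ on each factor ($t_i\mapsto t_i^{-1}$, swapping $[0:1]$ and $[1:0]$). Hence $\mathbb{W}\cong\boldsymbol{\mu}_2^3\rtimes\mathfrak{S}_3\cong\mathfrak{S}_4\times\boldsymbol{\mu}_2$, and $\mathfrak{A}_4=\langle(1,1,1;\,(1,2,3)),(\iota(1),1,\iota(1))\rangle$ is the group of even sign changes extended by the $3$-cycle.

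With the correct Weyl group, (2) is immediate, since this $\mathfrak{A}_4$ already permutes the factors transitively. Part (1), however, has real content, and replacing ``$W_G\supset\mathfrak{A}_4$'' by ``$W_G$ is transitive on the factors'' turns it into a tautology. Toric groups with, e.g., $W_G=\langle(1,1,1;\,(1,2,3))\rangle$ or $W_G\subseteq\mathfrak{S}_3\times\langle(\iota(1),\iota(1),\iota(1))\rangle$ make $\PPP$ $G$-minimal, and you must show that $\PPP$ is then not $G$-solid.

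The missing argument is short:
\begin{enumerate}
\item Since $W_G$ maps onto a transitive subgroup of $\mathfrak{S}_3$, the subgroup $W_G\cap\boldsymbol{\mu}_2^3$ is a $\boldsymbol{\mu}_3$-stable subspace of $\mathbb{F}_2^3$. It is therefore $0$, the diagonal, the even vectors, or everything, and the last two force $\mathfrak{A}_4\subseteq W_G$.
\item In the first two cases, the image of $W_G$ in $\mathbb{W}/\langle(\iota(1),\iota(1),\iota(1))\rangle\cong\mathfrak{S}_4$ has order $3$ or $6$. So it fixes one of the four main diagonals of the cube, i.e.\ a pair of opposite $\mathbb{T}$-fixed points.
\item After conjugating by suitable $\iota$'s, $G$ preserves the pencil in $|\mathcal{O}_X(1,1,1)|$ spanned by $x_1x_2x_3$ and $y_1y_2y_3$, and Lemma~\ref{pencilsnotGsolid} applies.
\end{enumerate}

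Your treatment of $Y_{24}$ has a parallel defect. $Y_{24}$ has six nodes, so it is neither smooth nor $\mathbb{Q}$-factorial. Its $G$-minimality includes $G\mathbb{Q}$-factoriality, which is governed by the $W_G$-action on the twelve torus-invariant prime divisors (indexed by ordered pairs $i\neq j$ of factors), not by the action on the four factors. Your groups $C_4$, $V_4$, $D_4$ are not transitive on these twelve divisors, so $Y_{24}$ is not even a $G$-Fano variety for them. In any case, the promised conic-bundle links are never constructed.

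\textbf{The superrigidity sketch fails at exactly the step you flag as key.} Independently of $|G|$:
\begin{enumerate}
\item the eight $\mathbb{T}$-fixed points form one or two $G$-orbits of length at most $8$;
\item the twelve $\mathbb{T}$-invariant curves form the $G$-invariant curve $\boldsymbol{\mathcal{C}}_{\mathbb{T}}$, with $H\cdot\boldsymbol{\mathcal{C}}_{\mathbb{T}}=12<24$;
\item a general point of $\boldsymbol{\mathcal{C}}_{\mathbb{T}}$ has stabilizer of index only about $n$ in $T_G$, so its orbit has length about $12n$, i.e.\ of order $|G|^{1/3}$ (see the orbits $G\cdot([0:1],p,[0:1])$ in Proposition~\ref{orbitdescription}).
\end{enumerate}
So boundary orbits are not long, your bound of $2^4$ per factor has no basis, and no lower bound on $|G|$ can exclude these centres.

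There is a second problem. Summing Pukhlikov's inequality over an orbit and intersecting with $-K_X$ gives nothing, because a general anticanonical surface does not pass through the orbit. You need an auxiliary surface through $Z$ that meets $M_1\cdot M_2$ properly there.

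The paper quotes this theorem rather than proving it. It does, however, recover the superrigidity part for $\PPP$ in Section~\ref{subsecGbirsupriglarge} under the much weaker hypothesis $n\geq4$, and that argument shows what is needed. In the paper's normalization $\lambda\M\sim_{\mathbb{Q}}-K_X$:
\begin{enumerate}
\item Nadel vanishing gives $|Z|\leq h^0(\mathcal{O}_X(1,1,1))=8$ unless $Z$ lies on an invariant curve of tri-degree at most $(4,4,4)$ (Lemma~\ref{ZonTinv}).
\item Intersecting with invariant base-point-free systems shows that the only such curve is $\boldsymbol{\mathcal{C}}_{\mathbb{T}}$ (Corollary~\ref{nocurves}).
\item Orbits on $\boldsymbol{\mathcal{C}}_{\mathbb{T}}$ that are not $\mathbb{T}$-fixed are excluded by splitting off the $\boldsymbol{\mathcal{C}}_{\mathbb{T}}$-part of $M_1\cdot M_2$ and intersecting with a general member of the system $\mathcal{B}\subset|2H|$ whose base locus is $\boldsymbol{\mathcal{C}}_{\mathbb{T}}$ (Lemma~\ref{ZisTinv}).
\item At a $\mathbb{T}$-fixed point $P$, the tangent space is an irreducible $G_P$-module, because the $T_G$-characters are distinct and permuted transitively. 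So $\mult_P\M>2/\lambda$ by Lemma~\ref{irrepmult}, which feeds into Lemma~\ref{sufficientGbirsuprig}.
\end{enumerate}
It is this tangent-space argument together with Lemma~\ref{sufficientGbirsuprig}, not orbit counting, that disposes of the fixed points and of $\boldsymbol{\mathcal{C}}_{\mathbb{T}}$ itself.

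Finally, your threshold $32\cdot24^4$ is asserted rather than derived, and $|W_G|$ can be $48$, not $24$.
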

The purpose of this paper is to complete the case of $\PPP$ and determine its $G$-birational behaviour for all finite toric groups $G$ and to investigate the non-toric groups as well.

Note, that for $X = \PPP$, the first two conditions of a $G$-Fano variety are automatic, since $\PPP$ is smooth and Fano. Hence, the only non-trivial condition is $G$-minimality. This condition is equivalent to the transitivity of the action of $G$ on the rulings of $\PPP$. For toric groups, $G$-minimality alone is not the only relevant constraint; indeed, $G$-solidity imposes an additional condition on the image $W_G$ of $G$ in the Weyl group of $X$: by Theorem~\ref{GminimalGsolid}, if $W_G$ does not contain $\mathfrak{A}_4$, then $\PPP$ is either not $G$-minimal, or it is $G$-minimal but not $G$-solid.

One of the obstructions to $G$-solidity of $\PPP$ is the existence of $G$-invariant pencils in $|\mathcal{O}_X(1,1,1)|$:
\begin{lemma}
\label{pencilsnotGsolid}
    Let $X = \PPP$ and let $G$ be a finite group in $\Aut(X)$ with $\rank\Pic(X)^G = 1$. Suppose there is a $G$-invariant pencil $W$ in $|\mathcal{O}_X(1,1,1)|$. Then $X$ is not $G$-solid.
\end{lemma}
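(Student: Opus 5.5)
The plan is to use the pencil $W$ to build a $G$-equivariant birational map from $X$ to a $G$-Mori fiber space whose base is positive-dimensional; by definition this shows that $X$ is not $G$-solid. A pencil of $(1,1,1)$-divisors defines a $G$-equivariant rational map $\phi_W\colon X\dashrightarrow \mathbb{P}^1=\mathbb{P}(W^\vee)$, where $G$ acts on the target through its action on $W$. Its base locus is $Z=S_1\cap S_2$ for two members $S_1,S_2$ of $W$. Since $(1,1,1)^3=6$ and $(1,1,1)^2\cdot(1,0,0)=2$, $Z$ is a curve of tridegree $(2,2,2)$. Being a complete intersection of two divisors, $Z$ has arithmetic genus given by adjunction: $K_Z=(K_X+2H)|_Z=(0,0,0)|_Z$, so $p_a(Z)=1$.

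\textbf{Step 1.} Resolve $\phi_W$ $G$-equivariantly. The natural candidate is the graph
\[
\widetilde X=\{(x,[\lambda:\mu]) : \lambda s_2(x)=\mu s_1(x)\}\subset X\times\mathbb{P}^1,
\]
where $s_1,s_2$ span $W$. This is a divisor of class $(1,1,1;1)$ and it is $G$-invariant. The projection $\widetilde X\to\mathbb{P}^1$ has fibres the members of $W$, namely del Pezzo surfaces of degree $6$ when they are smooth, since $K_S=-H|_S$ and $H^3=6$.

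\textbf{Step 2.} Run a $G$-equivariant relative MMP for $\widetilde X$ over $\mathbb{P}^1$, possibly after passing to a $G$-equivariant resolution of singularities or a terminalization. The general fibre $S$ has $-K_S$ ample, so it is rationally connected. Because $\widetilde X\to\mathbb{P}^1$ is a dominant morphism to a curve, the MMP over $\mathbb{P}^1$ cannot contract the whole space to a point. It therefore ends with a $G$-Mori fiber space $Y\to B$ together with a morphism $B\to\mathbb{P}^1$. In particular $\dim B\geq 1$.

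\textbf{Step 3.} $Y$ is $G$-birational to $X$, because all the maps involved are birational and $G$-equivariant. Hence $X$ is $G$-birational to a $G$-Mori fiber space with positive-dimensional base, and so $X$ is not $G$-solid.

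I expect the main obstacle to be singularities. The base curve $Z$ may be reducible or non-reduced, or the members of $W$ may share a component, so $\widetilde X$ need not be terminal or $G\mathbb{Q}$-factorial. In that case the relative MMP has to be run on a $G$-equivariant resolution. This is harmless, because the equivariant MMP over $\mathbb{P}^1$ still works and only the existence of a Mori fiber space structure over a positive-dimensional base is needed.

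Some care is also needed to ensure the pencil has no fixed component. If $s_1$ and $s_2$ share a component $D$, that component is $G$-invariant. A $G$-invariant divisor of class $(a,b,c)$ with $(a,b,c)\ne(1,1,1)$, $0\le a,b,c\le 1$, is impossible: the hypothesis $\rank\Pic(X)^G=1$ means $G$ permutes the three rulings transitively, so any $G$-invariant class must have $a=b=c$. Thus $W$ has no fixed component and $\phi_W$ is genuinely a dominant map onto $\mathbb{P}^1$.
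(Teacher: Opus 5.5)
Your argument is correct and follows essentially the same route as the paper. The paper defers to the strategy of Cheltsov's Lemma~2.4, which is also the content of Lemma~\ref{nonGsolidcriterion}: resolve the $G$-equivariant map $X\dashrightarrow\mathbb{P}^1$ defined by the pencil, then run the relative $G$-MMP over $\mathbb{P}^1$ to reach a $G$-Mori fibre space whose base is positive-dimensional.

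The one step to tighten is ``$-K_S$ ample, so $S$ is rationally connected'', which silently assumes the general member is smooth. Instead, note two things. First, the general member is irreducible, because $W$ has no fixed part (as you showed) and the class $(1,1,1)$ is primitive. Second, every irreducible $(1,1,1)$-divisor is rational, since $\pr_{1,2}$ restricts to a birational morphism onto $\mathbb{P}^1\times\mathbb{P}^1$. This gives the uniruledness of the fibres that the relative MMP needs.
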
 The proof follows the same strategy as in~\cite[Lemma 2.4]{GbirPnChelstov}.

The main result of this paper is the following:
\begin{theorem}
    \label{maintheorem}
    Let $X = \PPP$ and let $G$ be a finite subgroup of $\Aut(X)$ such that $X$ is $G$-minimal. Then $X$ is $G$-solid if and only if it is $G$-birationally superrigid. Moreover, $X$ is not $G$-solid if and only if one of the following is true: 
        \begin{enumerate}
            \item $G$ is toric and $W_G$ does not contain $\mathfrak{A}_4$.
            \item $G$ is toric with $W_G \supset \mathfrak{A}_4 $, and ${G \cong \mathfrak{A}_4\rtimes \boldsymbol{\mu}_4}$.
            \item there is a $G$-invariant $(1,1,1)$-curve.
            \item there is a $G$-invariant disjoint pair of $(1,1,1)$-curves.
            \item there is a $G$-invariant pencil in $|\mathcal{O}_X(1,1,1)|$.
        \end{enumerate}
\end{theorem}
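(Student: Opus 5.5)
The plan has two halves. First, show that each of the conditions (1)--(5) gives a $G$-equivariant birational map to a $G$-Mori fiber space with positive-dimensional base, so $X$ is not $G$-solid. Second, show that if none of (1)--(5) holds, then $X$ is $G$-birationally superrigid. Together these prove both equivalences, since superrigidity trivially implies solidity.

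For the first half:
\begin{itemize}
\item Case (1) is Theorem~\ref{GminimalGsolid}(1): $G$-minimality is a standing assumption, so if $W_G\not\supset\mathfrak{A}_4$ the variety is not $G$-solid.
\item Case (5) is Lemma~\ref{pencilsnotGsolid}.
\item Case (3): blow up the $G$-invariant $(1,1,1)$-curve $C$ (a twisted sextic of anticanonical degree $6$). The linear system of $(1,1,1)$-divisors through $C$ should give a Sarkisov link to a conic bundle or to a del Pezzo fibration over $\mathbb{P}^1$. Concretely, projection from $C$ via $|\mathcal{O}(1,1,1)-C|$ maps $X$ to $\mathbb{P}^{7-4}=\mathbb{P}^3$ or lower, and I expect a $G$-equivariant fibration structure.
\item Case (4): blow up the disjoint pair $C_1\cup C_2$. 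The divisors of $|\mathcal{O}(1,1,1)|$ through both curves form a pencil. After blowing up, this pencil gives a $G$-equivariant fibration over $\mathbb{P}^1$ by surfaces, which is a link to a del Pezzo fibration.
\item Case (2): for $G\cong\mathfrak{A}_4\rtimes\boldsymbol{\mu}_4$ toric, I would exhibit an explicit invariant configuration. The most likely candidate is an orbit of points of small length, for instance $4$ or $8$ points, whose blow-up followed by flops gives a link to a fibration. The alternative is a $G$-equivariant rational map to a surface. I would compute with the explicit generators.
\end{itemize}

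For the second half I would use the Noether--Fano method. Suppose $X$ is not $G$-birationally superrigid. Then there is a $G$-invariant mobile linear system $\mathcal{M}\subset|\mathcal{O}(n,n,n)|$ such that the pair $(X,\frac{2}{n}\mathcal{M})$ is not canonical; the $n$ is the same in all three slots by $G$-minimality and $\rank\Pic(X)^G=1$. Its non-canonical centre is a $G$-orbit of points or a $G$-invariant union of curves.

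Curves are handled by degree counting. Take the intersection $M_1\cdot M_2$ of two general members, which has degree $(-K_X)\cdot M_1\cdot M_2=6\cdot 2n^2\cdot\ldots$, and compare it with $\mathrm{mult}_C\mathcal{M}>n/2$. This shows the centre curves have small total anticanonical degree. The only $G$-orbits of low-degree curves should be invariant $(1,1,1)$-curves or pairs of them, which are excluded, or curves of degree $(1,0,0)$. Fibers of a ruling are ruled out by $G$-minimality, and their orbits would have large length.

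For points I would use the $4n^2$-inequality (Corti/Pukhlikov): $\mathrm{mult}_P(M_1\cdot M_2)>4n^2/4\cdot\ldots$. Combine it with $M_1\cdot M_2\cdot H$, where $H$ is a divisor of class $(1,1,1)$ through many orbit points, to bound the orbit length by something like $|\Sigma|\le 6$. Small orbits then either lie on a $G$-invariant $(1,1,1)$-curve or pencil, contrary to assumption, or force $G$ to be small.

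The main obstacle is the classification of finite $G\subset\Aut(X)=\PGL_2^3\rtimes\mathfrak{S}_3$ with transitive action on the rulings. I need to determine which of them have orbits of length at most $6$, an invariant pencil, or invariant twisted curves. For toric $G$ with $W_G\supset\mathfrak{A}_4$ this reduces, via Theorem~\ref{GminimalGsolid}, to finitely many small groups below the bound $32\cdot24^4$. I would enumerate these, expecting only $\mathfrak{A}_4\rtimes\boldsymbol{\mu}_4$ to escape. Non-toric $G$ have projections containing $\mathfrak{A}_4$, $\mathfrak{S}_4$ or $\mathfrak{A}_5$ in some $\PGL_2$ factor, and I would treat these by their orbit structure on $\mathbb{P}^1$.
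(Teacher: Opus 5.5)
Your two-halves logic is sound, and cases (1) and (5) are correct. The constructions for the other three conditions, however, do not work.

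\textbf{Case (4).} The $(1,1,1)$-divisors through a disjoint pair $C_1\sqcup C_2$ do not form a pencil. We have $h^0(\mathcal{O}_X(1,1,1))=8$, and each $C_i\cong\mathbb{P}^1$ with $\mathcal{O}_X(1,1,1)|_{C_i}\cong\mathcal{O}_{\mathbb{P}^1}(3)$ imposes $4$ conditions, so one expects no such divisor at all. Moreover, if that system were a pencil it would be $G$-invariant, and (4) would just be a case of (5). The paper (Theorem~\ref{notGsolid2412}) uses $|4H-3C|$ instead. This system splits as the sum of the pencils of $(2,1,1)$-, $(1,2,1)$- and $(1,1,2)$-divisors through $C$, which map $X$ onto a smooth $(1,1,1)$-divisor of $\PPP$ (a sextic del Pezzo surface) with rational general fibre.

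\textbf{Case (3).} The system $|H-C|$ is \emph{birational} onto $\mathbb{P}^3$: indeed $(\pi^*H-E)^3=6-9+4=1$, and the inverse blows up three skew lines and contracts the quadric through them. So no fibration appears, and you need a second step. Conjugating $C$ to the diagonal puts $G$ inside $\Delta\PGL_2\times\mathfrak{S}_3$. On $\mathbb{P}^3$, the subgroup $\Delta\PGL_2\times\boldsymbol{\mu}_3$ fixes two skew lines $L_1,L_2$ and the transpositions swap them. Blowing up $L_1$ or $L_1\cup L_2$ then gives the fibration.

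\textbf{Case (2).} You give no construction, and your natural candidate fails. The $(1,1,1)$-divisors through a length-$4$ orbit of torus-fixed points define $[x_1x_2x_3:x_1y_2y_3:y_1x_2y_3:y_1y_2x_3]$, which is the quotient by $(-1,-1,-1)$: a double cover of $\mathbb{P}^3$, not a fibration. The paper needs a $G$-invariant pencil of $(9,9,9)$-divisors of multiplicity $5$ along an irreducible $(6,6,6)$-curve with four triple points (the preimage of a twisted cubic under that double cover), together with an explicit rational parametrisation of the fibres.

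\textbf{The superrigidity half.} Here the gap is more serious, because both key claims are false.

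It is not true that the only invariant curves with $H\cdot Z<24$ are invariant $(1,1,1)$-curves, pairs of them, or long orbits of rulings. Groups for which $X$ is superrigid carry all of the following, none of which is excluded by (3)--(5):
\begin{itemize}
\item $\boldsymbol{\mathcal{C}}_{\mathbb{T}}$ and other unions of $12$ or $18$ rulings;
\item irreducible $(4,4,4)$-curves $\boldsymbol{\mathcal{S}}_i$;
\item unions of four disjoint $(1,1,1)$-curves $\boldsymbol{\mathcal{R}}_i$;
\item pairs of $(2,2,2)$-curves $\boldsymbol{\mathcal{Q}}_{2,i}$.
\end{itemize}
Each must be ruled out as a centre by its own argument: restricting $\lambda\M$ to a $(1,0,0)$-surface through two components; a covering family on a $(0,2,2)$-surface after blowing up; intersection numbers on a K3 surface of the anticanonical pencil through $Z$; and nefness of $\pi^*H-E_i$.

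Nor do small orbits lie on an invariant $(1,1,1)$-curve or pencil. The torus-fixed points form orbits of length $4$ or $8$ for every toric $G$ with $W_G\supset\mathfrak{A}_4$, including the superrigid ones, and orbits of length $12$ and $16$ occur on invariant curves. These are excluded locally, by:
\begin{itemize}
\item irreducibility of $T_PX$ plus a $(1,1)$-curve through two points;
\item $\alpha_K(\mathbb{P}^1\times\mathbb{P}^1)\geq1$;
\item subsystems of $|2H|$ with zero-dimensional base locus.
\end{itemize}
The correct global bound, via Nadel vanishing, is $|Z|\le 8$ for orbits off invariant curves of degree at most $(4,4,4)$.

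\textbf{The classification.} The bound $32\cdot24^4$ leaves far too many toric groups to enumerate. The paper first shows (Corollary~\ref{nocurves}) that for $n\ge4$ the only such curve is $\boldsymbol{\mathcal{C}}_{\mathbb{T}}$, which gives superrigidity uniformly for $n\ge4$. It treats $n\le3$ by explicit lists, proving superrigidity for five minimal groups and passing to overgroups via Lemma~\ref{overgroupsGbirsuperrig}. It classifies the non-toric groups by Goursat's lemma, after showing that cyclic or dihedral projections force $G$ to be toric. The theorem is then read off from this exhaustive list, in which every $G$ is either superrigid or not solid. Since (2) is a condition on the isomorphism type, the implication ``none of (1)--(5) implies superrigid'' has to be checked group by group on that list. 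A uniform Noether--Fano argument keyed to (1)--(5) cannot detect it.
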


The structure of the paper is as follows. In Section~\ref{Preliminaries}, we introduce the necessary definitions and notation. In Section~\ref{Groups}, we classify toric groups with $W_G = \mathfrak{A}_4$, as well as the groups with the other possible choices of $W_G$ and small toric kernels $T_G$. We also list the orbit decompositions that are needed later. In Section~\ref{Curves}, we first show that, for sufficiently large toric groups, the only invariant curve of small tri-degree is the union of torus-invariant curves. We then compute the invariant curves of small tri-degree for the remaining toric groups requiring separate analysis. In Section~\ref{Gbirrig}, we prove the main $G$-birational rigidity results for toric groups. In Section~\ref{nontoricgroups}, we classify non-toric groups and prove $G$-birational rigidity results for them. Finally, Appendix~\hyperref[appendixA]{A} contains the summary table of groups and the $G$-birational behaviour of $\PPP$ for each group.

All computer-assisted computations, including group-theoretic and orbit calculations, representation-theoretic decompositions, computations of ideals, linear systems and base loci, and verifications of irreducibility, smoothness, singularities and properties of explicit rational maps, were carried out in SageMath \cite{sagemath}. We omit the routine computational details for brevity.

\subsection*{Acknowledgements}
The author is very grateful to his supervisor, Ivan Cheltsov, for invaluable discussions and help during the preparation of this paper. The author’s work was supported by the UKRI Centre for Doctoral Training in Algebra, Geometry and Quantum Fields (AGQ), Grant Number EP/Y035232/1
\section{Preliminaries}
We work over the field of complex numbers $\mathbb{C}$.
\label{Preliminaries}
\subsection{$G$-varieties and their $G$-birational geometry}
\begin{definition}
    A variety $X$ together with a regular action $m_X$ of an algebraic group $G$ \[m_X\colon G\times X \rightarrow X\] is called a \emph{$G$-variety} and is denoted $(X,G)$.
\end{definition}
\begin{definition}
    Let $(X,G)$ and $(X',G')$ be two $G$-varieties. An \emph{equivariant rational map} is a pair consisting of a homomorphism of algebraic groups $\varphi\colon G \rightarrow G'$ and a rational map $\Phi\colon X \dashrightarrow X'$ such that the following diagram is commutative: 
    \[\begin{tikzcd}
	{G\times X} &&&& X \\
	\\
	{G'\times X'} &&&& {X'}
	\arrow["{m_{X}}", from=1-1, to=1-5]
	\arrow["{\varphi\times\Phi}"', dashed, from=1-1, to=3-1]
	\arrow["\Phi", dashed, from=1-5, to=3-5]
	\arrow["{m_{X'}}"', from=3-1, to=3-5]
\end{tikzcd}\]
We say that a rational map $\Phi$ is \emph{$\varphi$-equivariant}. If $G = G'$ and $\varphi$ is an automorphism, we say that $\Phi$ is a \emph{rational $G$-map}. If, moreover, $\varphi = \id_G$, we say that $\Phi$ is \emph{$G$-equivariant}.
\end{definition}
\begin{definition}
Let $X$ be a projective variety and let $G$ be a finite group acting on $X$. We call a morphism \[\varphi\colon X \rightarrow S\] to some variety $S$ with an action of $G$ a \emph{$G$-Mori fiber space} if it satisfies the following conditions:
\begin{enumerate}
    \item $\varphi$ is a surjective $G$-equivariant morphism with connected fibers;
    \item $X$ has terminal singularities, and each $G$-invariant Weil divisor on $X$ is a $\mathbb{Q}$-Cartier divisor (we call the second property $G\mathbb{Q}$-factoriality);
    \item $\dim S < \dim X$;
    \item $-K_X$ is $\varphi$-ample;
    \item $\rank \Pic(X)^G = \rank \Pic(S)^G+1$.
\end{enumerate}
If $S$ is a point, then $X$ is called a \emph{$G$-Fano variety}.
\end{definition}
$G$-Mori fiber spaces are one of the two possible outputs of the $G$-MMP; the other ones are $G$-minimal models. Recall the criterion that distinguishes those cases:
\begin{theorem}[\protect{\cite[3.1.1.]{GMMPProkhorov}}]
Let $X$ be a projective $G$-threefold with terminal and $G\mathbb{Q}$-factorial singularities. Then there exists a projective $G$-variety $Y$ and a $G$-equivariant birational map $\varphi\colon X \dashrightarrow Y$ such that $\varphi^{-1}$ does not contract any divisors, and one of the following is true: 
\begin{enumerate}
    \item if $K_X$ is pseudo-effective, then $Y$ is a $G$-minimal model, i.e., $K_Y$ is nef;
    \item if $K_X$ is not pseudo-effective, then there is a $G$-Mori fiber space structure $f\colon Y \rightarrow S$ on $Y$.
\end{enumerate}
\end{theorem}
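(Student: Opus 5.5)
The plan is to run the $K_X$-Minimal Model Program equivariantly, working throughout with $G$-invariant classes. Let $N_1(X)^G$ and $N^1(X)^G$ be the $G$-invariant parts of the spaces of curves and divisors modulo numerical equivalence. Since $G$ is finite, averaging over $G$ identifies $N^1(X)^G$ with the classes of $G$-invariant $\mathbb{Q}$-Cartier divisors. The $G\mathbb{Q}$-factoriality hypothesis means every $G$-invariant Weil divisor is $\mathbb{Q}$-Cartier, and this is the only kind of divisor the argument will ever use.

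\textbf{First step: a $G$-equivariant cone and contraction theorem.} Suppose $K_X$ is not nef. Then $K_X$ is $G$-invariant, so by the Cone Theorem some $K_X$-negative extremal ray $R$ exists, and $G$ permutes the $K_X$-negative extremal rays. Let $F$ be the face of $\overline{NE}(X)$ spanned by the orbit $G\cdot R$. I will show $F$ is $K_X$-negative and extremal: take an ample $H$ with $K_X+H$ supporting $R$, and replace $H$ by its $G$-average. The resulting divisor is nef, $G$-invariant, and vanishes exactly on a $G$-stable face containing $R$; choosing it minimally gives a face on which $K_X$ is negative. The Contraction Theorem, which holds for terminal threefolds (Kawamata--Shokurov), then gives a contraction $f\colon X\to Z$ of $F$. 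It is $G$-equivariant because $F$ is $G$-stable. In the invariant Néron--Severi space, $F$ contributes exactly one dimension, so $\rank\Pic(X)^G=\rank\Pic(Z)^G+1$.

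\textbf{Second step: case analysis on the contraction.} There are three cases.
\begin{enumerate}
\item If $\dim Z<\dim X$, then $-K_X$ is $f$-ample and we have a $G$-Mori fiber space.
\item If $f$ is birational and its exceptional locus is a divisor, that divisor is $G$-invariant. The usual discrepancy argument shows $Z$ is terminal. Every $G$-invariant Weil divisor on $Z$ is the pushforward of its strict transform plus a multiple of the exceptional divisor, so $Z$ is $G\mathbb{Q}$-factorial.
\item If $f$ is small, I use Mori's existence of threefold flips for terminal singularities. The flip $X^+=\operatorname{Proj}_Z\bigoplus_m f_*\mathcal{O}_X(mK_X)$ is canonical, so $G$ acts on it and the flip is $G$-equivariant. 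The same invariant-divisor argument gives terminality and $G\mathbb{Q}$-factoriality of $X^+$.
\end{enumerate}
In none of these steps is a divisor extracted, so the inverse of the composite map contracts no divisors.

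\textbf{Third step: termination, which I expect to be the main obstacle.} Each divisorial contraction drops $\rank\Pic^G$, so there are only finitely many of them. A $G$-flip is a composition of ordinary $K$-flips: one flips the orbit of rays one step at a time, or, more simply, it lies in the same class for which Shokurov's difficulty invariant strictly decreases. Hence termination of terminal threefold flips applies. The point needing care is that flips of $G$-faces rather than single rays are still covered by the difficulty argument. I would verify this by noting that the difficulty depends only on discrepancies, and these strictly increase over the flipped locus.

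\textbf{Conclusion.} The program therefore stops at a $Y$ with either $K_Y$ nef or a $G$-Mori fiber space structure. If $K_X$ is pseudo-effective, a fiber-type outcome is impossible. Indeed, $K_Y$ would be negative on a covering family of curves, so $K_Y$, and therefore $K_X=\varphi^*$-pullback up to effective exceptional divisors, would not be pseudo-effective. Conversely, if $K_X$ is not pseudo-effective, then neither is $K_Y$, since pseudo-effectivity is preserved under maps whose inverse contracts no divisor. So $K_Y$ is not nef, and the program must end in case~(2) of the theorem.
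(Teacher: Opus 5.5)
The paper gives no proof of this statement; it only quotes Prokhorov. The argument behind that citation is the equivariant $K_X$-MMP you outline. However, your first step does not work as written.

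If $K_X+H$ is nef with $(K_X+H)^\perp\cap\overline{NE}(X)=R$, then for $z\in\overline{NE}(X)$ the averaged divisor satisfies $(K_X+\bar H)\cdot z=\frac{1}{|G|}\sum_{g}(K_X+H)\cdot g_*z$. This is a sum of non-negative terms, so it vanishes only when $g_*z\in R$ for every $g$, i.e.\ on $\bigcap_g g^{-1}R$. As soon as $R$ is not $G$-stable this intersection is $\{0\}$. So the average is \emph{ample}, not a supporting divisor of a face containing $G\cdot R$.

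Passing to ``the smallest face containing $G\cdot R$'' does not repair this. The cone spanned by an orbit of extremal rays need not be a face: if two $G$-conjugate rays sit at opposite vertices of a square cross-section of $\overline{NE}(X)$, the smallest face containing them is the whole cone. That face may contain $K_X$-non-negative classes. So ``choosing it minimally'' gives neither a $K_X$-negative face nor the drop $\rank\Pic(X)^G=\rank\Pic(Z)^G+1$ you claim.

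The missing idea is to apply the cone theorem \emph{in the invariant space}, not to a single ray of $\overline{NE}(X)$.
\begin{itemize}
\item The averaging projection $\pi_G$ maps $\overline{NE}(X)$ onto $\overline{NE}(X)^G=\overline{NE}(X)\cap N_1(X)^G$, and $K_X\cdot\pi_G(z)=K_X\cdot z$. Hence the cone theorem descends: the $K_X$-negative part of $\overline{NE}(X)^G$ is locally polyhedral, spanned by images of $K_X$-negative extremal rays, and non-empty when $K_X$ is not nef.
\item Take a $K_X$-negative extremal ray $R^G$ of \emph{this} cone. In the square example it is not the image of the ray you started with.
\item Take a $G$-invariant divisor $D$ with $D\geq 0$ on $\overline{NE}(X)^G$ and $D^\perp\cap\overline{NE}(X)^G=R^G$. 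It is nef on all of $\overline{NE}(X)$, because $D\cdot z=D\cdot\pi_G(z)$.
\item Contract $F=D^\perp\cap\overline{NE}(X)$. For $0\neq z\in F$ we have $\pi_G(z)\neq 0$ by strict convexity of $\overline{NE}(X)$, and $\pi_G(z)\in R^G$. Hence $K_X\cdot z=K_X\cdot\pi_G(z)<0$.
\end{itemize}
So $F$ is a $G$-stable $K_X$-negative extremal face with $F\cap N_1(X)^G=R^G$. This is what gives relative invariant Picard number one.

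With this in place the rest of your outline goes through, modulo two smaller points.
\begin{itemize}
\item For a birational $G$-contraction you should check that it is either small or has exceptional locus a single $G$-orbit of prime divisors. This follows from $G\mathbb{Q}$-factoriality, relative invariant Picard number one and the negativity lemma; you simply assumed the exceptional locus is a divisor.
\item In your conclusion, the implication ``$K_X$ not pseudo-effective $\Rightarrow$ $K_Y$ not pseudo-effective'' needs the relation $p^*K_X=q^*K_Y+E$ with $E\geq 0$ on a common resolution. The fact you cite, that push-forward preserves pseudo-effectivity, proves the opposite direction, which is what part (1) needs.
\end{itemize}
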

The criterion also works relatively (with the necessary changes).

From~\cite{SarkisovProgram} we know that every $G$-equivariant birational map of threefold $G$-Mori fiber spaces can be decomposed into a composition of elementary $G$-Sarkisov links (and $G$-equivariant isomorphisms).
\begin{definition}
    Let $X$ be a $G$-Fano variety. We say that $X$ is:
    \begin{enumerate}
        \item \emph{$G$-birationally superrigid} if no $G$-Sarkisov links start at $X$;
        \item \emph{$G$-birationally rigid} if every $G$-Sarkisov link that starts at $X$ also ends at $X$ with the same $G$-action;
        \item \emph{$G$-solid} if $X$ is not $G$-birational to a $G$-Mori fiber space with a positive-dimensional base.
    \end{enumerate}
\end{definition}
It follows from the definition that for a $G$-Fano variety $X$ there is the following chain of implications: \[X \text{ is }G\text{-birationally superrigid} \Rightarrow X \text{ is }G\text{-birationally rigid} \Rightarrow X \text{ is }G\text{-solid}.\]
To prove that a $G$-Fano threefold is $G$-birationally superrigid it is enough to prove that it is $H$-birationally superrigid for some subgroup $H$ of $G$. More precisely:
\begin{lemma}[\protect{\cite[Remark 6.5]{FinColCheltsovShramov}}]
    \label{overgroupsGbirsuperrig}
    Let $G$ be a finite group in $\Aut(X)$ and let $H$ be a subgroup in $H$ such that $X$ is is $G$-Fano and $H$-Fano. and let $X$ be a $G$-Fano threefold. If $X$ is $H$-birationally superrigid, then $X$ is $G$-birationally superrigid.
\end{lemma}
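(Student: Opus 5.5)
The proof is by contradiction, via the Noether--Fano framework for Sarkisov links. Suppose $X$ is $H$-birationally superrigid but not $G$-birationally superrigid. Then some $G$-Sarkisov link starts at $X$. By the standard characterisation of superrigidity, this gives a $G$-invariant mobile linear system $\mathcal{M}$ on $X$ with $\mathcal{M}\sim_{\mathbb{Q}} -nK_X$ for some $n\in\mathbb{Q}_{>0}$ (possible since $\rank\Pic(X)^G=1$) such that the pair $(X,\frac{1}{n}\mathcal{M})$ is not canonical. Equivalently, there is a $G$-equivariant birational map $\chi\colon X\dashrightarrow Y$ to a $G$-Mori fiber space $Y\to S$ that is not a $G$-isomorphism; $\mathcal{M}$ is then the strict transform of a $G$-invariant very ample (relatively over $S$) mobile system on $Y$.

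The key step is restriction to $H$. Since $H\subset G$, the system $\mathcal{M}$ is also $H$-invariant. The hypothesis that $X$ is $H$-Fano means that $\rank\Pic(X)^H=1$, so $\mathcal{M}\sim_{\mathbb{Q}}-nK_X$ is still a legitimate Noether--Fano test system for the $H$-action. The singularities of the pair $(X,\frac{1}{n}\mathcal{M})$ do not depend on the group, so the pair is still non-canonical.

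It remains to apply the Noether--Fano inequality in its $H$-equivariant form (the converse direction). A non-canonical pair $(X,\frac1n\mathcal{M})$ with $\mathcal{M}$ $H$-invariant and mobile yields an $H$-Sarkisov link starting at $X$, obtained by running the $H$-MMP on a suitable $H$-equivariant extraction of a non-canonical centre. This contradicts $H$-birational superrigidity, so $X$ is $G$-birationally superrigid.

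The main obstacle is making precise the equivalence ``no $G$-links from $X$ $\iff$ every $G$-invariant mobile $\mathcal{M}\sim_{\mathbb{Q}}-nK_X$ gives a canonical pair.'' This is exactly where $G$-minimality for both groups is used, and it is what \cite[Remark 6.5]{FinColCheltsovShramov} encodes. To close this gap I would quote \cite{SarkisovProgram} (the equivariant Sarkisov program) together with the standard equivariant Noether--Fano lemma, rather than reprove them.
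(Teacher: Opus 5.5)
Your proof is correct and is the standard argument behind the cited remark; the paper gives no proof of its own, only the reference to Cheltsov--Shramov. The argument is that $G$-birational superrigidity of a $G$-Fano threefold is equivalent to canonicity of $(X,\lambda\mathcal{M})$ for every $G$-invariant mobile $\mathcal{M}$, and every $G$-invariant mobile system is in particular $H$-invariant. One direction of the equivalence is the Noether--Fano criterion (Lemma~\ref{mobilesystems}); the other is an equivariant extraction followed by a two-ray game.

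The only point to sharpen is the converse direction. There you should extract a divisor computing the canonical threshold $c<\lambda$ of $(X,\mathcal{M})$. This makes the extraction terminal with $-(K_Y+c\mathcal{M}_Y)$ nef and big, so the $H$-equivariant two-ray game runs, and $H$-minimality of $X$ is what makes its output an $H$-Sarkisov link starting at $X$.
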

Note that the same fact is not automatically true for $G$-birational rigidity, but there are no known counterexamples.
\begin{lemma}[\protect{\cite[Lemma~2.4]{ToricGsolid}}]
    \label{faithrep}
    Let $G$ be a reductive group acting generically free on $X$ and let $P$ be a smooth $G$-invariant point of $X$. Then the induced action of $G$ on the Zariski tangent space $T_P(X)$ is faithful.
\end{lemma}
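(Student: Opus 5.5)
Since $P$ is a smooth point, I will work in the completed local ring $\widehat{\mathcal{O}}_{X,P}\cong\mathbb{C}[[x_1,\dots,x_n]]$ with $n=\dim X$, and argue by linearization. The stabilizer of $P$ acts on this ring, and the action of $G$ on $T_P(X)=(\mathfrak{m}_P/\mathfrak{m}_P^2)^\vee$ is induced from its action on $\mathfrak{m}_P/\mathfrak{m}_P^2$. Let $K$ be the kernel of $G\to \mathrm{GL}(T_P(X))$; it is a closed normal subgroup of $G$, and the goal is to show $K$ is trivial. In our application $G$ is finite, so I will first give the argument for $K$ finite, where it is cleanest. For reductive $K$ in general I would instead use a Luna-type linearization.

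The first step is linearization. Since $K$ is reductive and fixes $P$, the surjection $\mathfrak{m}_P\to\mathfrak{m}_P/\mathfrak{m}_P^2$ has a $K$-equivariant splitting. Choose $K$-semi-invariant generators $x_1,\dots,x_n$ of $\mathfrak{m}_P$ lifting a basis of $\mathfrak{m}_P/\mathfrak{m}_P^2$; for finite $K$ one can average with the Reynolds operator. Then $K$ acts on $\mathbb{C}[[x_1,\dots,x_n]]$ through its action on the span of the $x_i$, so the action on the complete local ring is linear in these coordinates.

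The second step uses triviality on the tangent space. Since $K$ acts trivially on $\mathfrak{m}_P/\mathfrak{m}_P^2$, the lifted $x_i$ are $K$-invariant. Therefore $K$ acts trivially on $\widehat{\mathcal{O}}_{X,P}$. The inclusion $\mathcal{O}_{X,P}\hookrightarrow\widehat{\mathcal{O}}_{X,P}$ is $K$-equivariant, so $K$ acts trivially on $\mathcal{O}_{X,P}$ as well. Because $X$ is an irreducible variety, $\mathcal{O}_{X,P}$ has fraction field $\mathbb{C}(X)$. Hence $K$ acts trivially on $\mathbb{C}(X)$ and so fixes a dense open subset of $X$ pointwise.

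The last step contradicts generic freeness. The action is generically free, so a general point has trivial stabilizer. Yet every element of $K$ fixes a general point, which forces $K=1$, and the action on $T_P(X)$ is faithful.

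The main obstacle is the linearization step when $K$ is a positive-dimensional reductive group rather than finite. There I would either reduce to finite subgroups or replace the Reynolds averaging by complete reducibility of the rational $K$-modules $\mathfrak{m}_P/\mathfrak{m}_P^{k}$, giving compatible equivariant splittings at every order. For an element $g\in K$ of finite order the argument above applies verbatim. Such elements are dense in a reductive group, which would finish the general case.
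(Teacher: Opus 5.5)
Your argument is correct and is the standard linearization proof of this lemma (which the paper only cites): an element acting trivially on $T_P(X)$ acts trivially on $\widehat{\mathcal{O}}_{X,P}$, hence on $\mathcal{O}_{X,P}$ and on $\mathbb{C}(X)$, so it is the identity on $X$, which contradicts generic freeness. For the positive-dimensional reductive case you can drop the density claim: if $K\neq 1$, then either $K$ is finite or $K^\circ$ is a nontrivial connected reductive group containing a nontrivial torus, so in both cases $K$ has a nontrivial element of finite order, and your finite-group argument applied to it already gives the contradiction.
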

\begin{lemma}[\protect{\cite[Corollary~2.5]{ToricGsolid}}]
    \label{singcyclic}
    A finite group of automorphisms of an irreducible curve fixing a smooth point is a cyclic group.
\end{lemma}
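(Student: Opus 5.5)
The statement immediately above is Lemma~\ref{singcyclic}: a finite group $H$ of automorphisms of an irreducible curve $C$ fixing a smooth point $P\in C$ is cyclic. I will deduce it from Lemma~\ref{faithrep} by linearizing the action at $P$.

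First I make sure the hypotheses of Lemma~\ref{faithrep} hold. A finite group is reductive, so only generic freeness needs checking. Since $C$ is irreducible, the fixed locus of any element $1\neq h\in H$ is a proper closed subset of $C$; if it were all of $C$, then $h$ would act trivially and would not be a nontrivial automorphism. A proper closed subset of a curve is a finite set of points. As $H$ is finite, the union of these fixed loci is finite, so $H$ acts freely on a dense open subset of $C$. If one prefers not to treat a singular $C$ directly, one may first pass to the normalization $\widetilde C\to C$. The action of $H$ lifts to $\widetilde C$, because normalization is functorial. The point $P$ is smooth on $C$, so it has a unique preimage $\widetilde P$, which is fixed by $H$, and nothing changes near $P$.

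Next I apply Lemma~\ref{faithrep} at the smooth fixed point $P$. It shows that the representation $H\to \mathrm{GL}(T_P C)$ is faithful. Since $P$ is a smooth point of a curve, $\dim T_P C=1$, so this gives an injection $H\hookrightarrow \mathrm{GL}_1(\mathbb C)=\mathbb C^*$.

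Finally, a finite subgroup of $\mathbb C^*$ is a finite subgroup of the multiplicative group of a field, hence a group of roots of unity $\boldsymbol{\mu}_n$, which is cyclic. Therefore $H$ is cyclic.

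The only point needing care is the generic freeness check. It holds because the elements of $H$ are, by hypothesis, automorphisms of $C$, so nontrivial elements act nontrivially and cannot fix all of the irreducible curve.

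If one wanted to avoid citing Lemma~\ref{faithrep}, faithfulness can be proved directly. The local ring $\mathcal O_{C,P}$ is a DVR; choose a uniformizer $t$ and average it over $H$, which is possible since $|H|$ is invertible in $\mathbb C$, to obtain an $H$-semi-invariant uniformizer. Then $h^*t=\chi(h)t$ in the completion $\mathbb C[[t]]$. If $\chi(h)=1$, then $h$ acts trivially on $\mathbb C[[t]]$, hence on the function field of $C$, and so $h=1$.
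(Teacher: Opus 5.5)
Your proof is correct and matches the intended route: the paper gives no proof of its own, only the citation \cite[Corollary~2.5]{ToricGsolid}, which sits immediately after Lemma~\ref{faithrep} (\cite[Lemma~2.4]{ToricGsolid}) and follows from it exactly as you argue, since $T_P C$ is one-dimensional and so $H\hookrightarrow\mathbb{C}^*$. One small point in your optional direct argument: the averaging must be twisted by the character $\chi$ of $H$ on $\mathfrak{m}_P/\mathfrak{m}_P^2$, that is, take $\frac{1}{|H|}\sum_{h}\chi(h)^{-1}h^*t$, because the plain average of $t$ over $H$ may lie in $\mathfrak{m}_P^2$ and then fails to be a uniformizer.
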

\begin{lemma}
    Every finite group $G$ acting effectively on a curve of genus $1$ is of the form: \begin{equation}
    \label{eq:Gellipticcurve}
        1 \rightarrow N \rightarrow G \rightarrow Q \rightarrow 1,
    \end{equation} where $N$ is a subgroup of translations and $Q =\boldsymbol{\mu}_1,\boldsymbol{\mu}_2,\boldsymbol{\mu}_3,\boldsymbol{\mu}_4$ or $\boldsymbol{\mu}_6$.
\end{lemma}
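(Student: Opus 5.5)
The plan is to work with a smooth connected projective curve $E$ of genus $1$ and the faithful homomorphism $G\hookrightarrow\Aut(E)$. First choose a point $O\in E$, which makes $E$ an elliptic curve, and recall the standard structure of the automorphism group: every automorphism of $E$ is the composition of a translation and an automorphism fixing $O$, so that
\[
\Aut(E)\cong E\rtimes \Aut(E,O).
\]
Here $E$ acts on itself by translations, and $\Aut(E,O)$ is the group of group automorphisms of the elliptic curve. The map $\Aut(E)\to\Aut(E,O)$ sends $g$ to $t_{-g(O)}\circ g$; it is a homomorphism, and its kernel is exactly the translation subgroup. I would state this decomposition and argue it quickly. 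The key input is that an automorphism of $E$ fixing $O$ is an isomorphism of abelian varieties, and is therefore a group homomorphism.

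Next, restrict this homomorphism to $G$. The kernel $N=G\cap E$ is the subgroup of translations contained in $G$. The image $Q$ is isomorphic to a subgroup of $\Aut(E,O)$. This gives the exact sequence \eqref{eq:Gellipticcurve}. It remains to show that $Q$ is cyclic of order $1,2,3,4$ or $6$.

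For this step I would use Lemma~\ref{singcyclic}. The group $\Aut(E,O)$ is a finite group fixing the smooth point $O$, so every finite subgroup of it, in particular $Q$, is cyclic. Alternatively, $\Aut(E,O)$ acts faithfully on the one-dimensional tangent space $T_O E$ by Lemma~\ref{faithrep}, so $Q$ embeds into $\mathbb{C}^*$ and is cyclic. To bound its order, note that $Q$ also acts faithfully on $H^0(E,\Omega^1_E)\cong\mathbb{C}$. It preserves the lattice $\Lambda\subset\mathbb{C}$ with $E\cong\mathbb{C}/\Lambda$, since a group automorphism lifts to multiplication by some $\zeta\in\mathbb{C}^*$ with $\zeta\Lambda=\Lambda$. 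Then $\zeta$ is an algebraic integer whose characteristic polynomial, as an endomorphism of $\Lambda\cong\mathbb{Z}^2$, has degree $2$ with integer coefficients. Hence $\zeta$ is a root of unity of degree at most $2$ over $\mathbb{Q}$, so $\varphi(\operatorname{ord}\zeta)\le 2$. This leaves $\operatorname{ord}\zeta\in\{1,2,3,4,6\}$.

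I expect the main obstacle to be the clean justification of the decomposition $\Aut(E)=E\rtimes\Aut(E,O)$. The issue is that it requires the fact that automorphisms fixing $O$ are group homomorphisms, via the rigidity lemma for abelian varieties. I would simply cite this as standard, for instance from Silverman or Hartshorne~IV.4, rather than reprove it. The lattice argument that bounds $|Q|$ is routine.
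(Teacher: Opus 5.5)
Your proof is correct: the map $g\mapsto t_{-g(O)}\circ g$ is a homomorphism onto $\Aut(E,O)$ whose kernel is exactly the translations, and the lattice argument gives $\varphi(\operatorname{ord}\zeta)\le 2$, which leaves precisely $\boldsymbol{\mu}_1,\boldsymbol{\mu}_2,\boldsymbol{\mu}_3,\boldsymbol{\mu}_4,\boldsymbol{\mu}_6$. The paper states this lemma as a standard fact without proof, and your route (the splitting $\Aut(E)\cong E\rtimes\Aut(E,O)$ followed by a bound on the linear part) is the standard textbook argument; for the cyclicity step you do not even need finiteness of $\Aut(E,O)$, since $Q$ is finite as a quotient of $G$.
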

Suppose $X$ is a normal projective $G$-variety for a finite group $G$, and $\mathcal{M}\subset |\mathcal{L}|$ is a $G$-invariant linear system. Let $V\subset H^0(X,\mathcal{L})$ be the corresponding space of sections. Then we have a natural projective representation of $G$ on $\mathcal{M}=\mathbb{P}(V)$. However, it is not necessarily true that $G$ acts linearly on $V$. For that it might be necessary to consider a central extension of $G$. Note that, for a fixed space of sections, this convention does not affect the representation-theoretic data that we use below. Indeed, suppose that two finite central extensions $\widehat{G}_1$ and $\widehat{G}_2$ lift the same projective action of $G$ on $V$. Passing to the common central extension $\widehat{G}_1\times_G\widehat{G}_2$, we may compare the two linear actions on the same group. Since these two actions have the same projectivization, they differ by multiplication by a character of $\widehat{G}_1\times_G\widehat{G}_2$. In particular, the number of pairwise non-isomorphic irreducible summands, their dimensions, and their multiplicities are independent of the chosen lift. From now on, by abuse of notation, whenever we consider the representation of $G$ on a $G$-invariant space of sections $V\subset H^0(X,\mathcal{L})$ and its decomposition into irreducible $G$-modules, we always assume that we have chosen some finite central lift of $G$ first.

By a \emph{$G$-irreducible} variety we mean a, possibly reducible, variety such that $G$ acts transitively on its irreducible components. By a \emph{degenerate orbit} of a $G$-variety we mean an orbit of length strictly smaller than a generic orbit of this variety; equivalently, this is the orbit of a point whose stabilizer is strictly larger than the kernel of the action of $G$. 
\subsection{Singularities of log-pairs}
\label{singularities}
We also require some results about the singularities of log-pairs. Our main instrument in proving $G$-birational superrigidity is the following Noether--Fano criterion.
\begin{lemma}[\protect{\cite[Corollary~3.3.3]{IcosahedronCheltsovShramov}, see also \cite[Corollary~5.10]{RationalVarieties}}]
\label{mobilesystems}
   Let $G$ be a finite group and $X$ be a $G$-Fano variety. Suppose that for every non-empty $G$-invariant linear system $\M$ on the variety $X$ that has no fixed components, the log-pair $(X, \lambda \M)$ has canonical singularities, where $\lambda$ is a positive rational number such that \[K_X+\lambda\M \sim_{\mathbb{Q}}0.\] Then $X$ is $G$-birationally superrigid.
\end{lemma}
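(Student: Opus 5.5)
The plan is the standard Noether--Fano argument: assume $X$ is not $G$-birationally superrigid and produce a $G$-invariant mobile linear system on $X$ for which $(X,\lambda\mathcal{M})$ is not canonical. If a $G$-Sarkisov link starts at $X$, then there is a $G$-equivariant birational map $\chi\colon X\dashrightarrow Y$ to a $G$-Mori fibre space $\varphi\colon Y\to S$ which is not a $G$-isomorphism. (If a link ended at $X$ with a different $G$-action, it would still give such a $\chi$ that is not biregular.) On $Y$ I will fix a $G$-invariant mobile, indeed base point free, linear system $\mathcal{M}_Y$, chosen according to the two cases below. Let $\mathcal{M}=\chi^{-1}_*\mathcal{M}_Y$ be its strict transform. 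It is $G$-invariant, has no fixed components and is non-empty. Since $\rank\Pic(X)^G=1$, we have $\mathcal{M}\sim_{\mathbb{Q}}-\mu K_X$ for some $\mu>0$, so $\lambda=1/\mu$. Take a $G$-equivariant common resolution $p\colon W\to X$, $q\colon W\to Y$ and write
\[
K_W+\lambda\mathcal{M}_W\sim_{\mathbb{Q}} p^*(K_X+\lambda\mathcal{M})+\sum a_iE_i=\sum a_iE_i,\qquad K_W+\lambda\mathcal{M}_W\sim_{\mathbb{Q}} q^*(K_Y+\lambda\mathcal{M}_Y)+\sum b_jF_j,
\]
where $E_i$ are $p$-exceptional and $F_j$ are $q$-exceptional. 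By hypothesis all $a_i\ge 0$. Since $Y$ is terminal and $\mathcal{M}_Y$ is free, all $b_j>0$.

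\emph{Case $\dim S>0$.} I take $\mathcal{M}_Y=\varphi^*|A|$ for a very ample $G$-invariant class $A$ on $S$. I then choose a covering family of curves $\Gamma$ contained in general fibres of $\varphi$, for instance complete intersections of very ample divisors in a fibre, such that a general $\Gamma$ avoids the codimension $\ge2$ set $q(\bigcup F_j)$ and the indeterminacy of $\chi^{-1}$. Let $\widetilde\Gamma$ be its lift to $W$. On the $q$-side, $(K_W+\lambda\mathcal{M}_W)\cdot\widetilde\Gamma=K_Y\cdot\Gamma<0$, because $\mathcal{M}_Y\cdot\Gamma=0$ and $-K_Y$ is $\varphi$-ample. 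On the $p$-side, $\widetilde\Gamma$ moves in a family covering $W$, so it meets the effective divisor $\sum a_iE_i$ non-negatively. This is a contradiction, so $X$ is in particular $G$-solid.

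\emph{Case $S$ a point.} Here $Y$ is $G$-Fano with $\rank\Pic(Y)^G=1$. I take $\mathcal{M}_Y\subset|-nK_Y|$ base point free for $n\gg0$, so $K_Y+\frac1n\mathcal{M}_Y\sim_{\mathbb{Q}}0$ and the pair $(Y,\frac1n\mathcal{M}_Y)$ is terminal.

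First I compare $\lambda$ with $1/n$, using the two expressions on $W$ together with the argument of the first case applied to curves covering $W$; this should give $\lambda\le 1/n$. Symmetrically, the pair on $Y$ is canonical and $\mathcal{M}_Y$ is the strict transform of $\mathcal{M}$, so $1/n\le\lambda$, hence $\lambda=1/n$. Then both $K_X+\lambda\mathcal{M}$ and $K_Y+\lambda\mathcal{M}_Y$ are $\mathbb{Q}$-trivial, and the difference $\sum a_iE_i-\sum b_jF_j$ is $\mathbb{Q}$-linearly trivial on $W$. By the negativity lemma applied over $X$ and over $Y$ this forces equality of the two discrepancy divisors. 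Every $q$-exceptional $F_j$ then has $b_j>0$ and must be $p$-exceptional, so $\chi^{-1}$ contracts no divisor. Since $\chi$ is an isomorphism in codimension one between varieties of Picard rank one (in the $G$-sense) on which $\chi_*$ carries $-K_X$ to $-K_Y$, which is ample on both sides, $\chi$ is biregular. This contradicts the choice of $\chi$.

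The main obstacle I expect is this last step. One has to make the comparison of $\lambda$ and $1/n$ rigorous, which is the usual Noether--Fano inequality. One also has to get the correct sign bookkeeping for $\mathcal{M}_W$, which equals both $p^*\mathcal{M}$ minus multiplicities and $q^*\mathcal{M}_Y$, in the presence of $G\mathbb{Q}$-factoriality rather than full $\mathbb{Q}$-factoriality. Everything is done with $G$-invariant divisors, so I would work in $\Pic(W)^G\otimes\mathbb{Q}$ throughout. First I would try to follow Corti's formulation of the Noether--Fano inequality, as in \cite[Corollary~5.10]{RationalVarieties}.
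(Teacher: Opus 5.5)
Your approach is the standard Noether--Fano argument behind the results the paper cites; the paper gives no proof of its own. The case $\dim S>0$ and the equality $\lambda=1/n$ are fine. One slip: you have the two attributions reversed. Curves coming from $Y$ together with $a_i\ge 0$ give $\lambda\ge 1/n$, while curves coming from $X$ together with $b_j>0$ give $\lambda\le 1/n$. Since you use both inequalities, the equality still holds.

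The genuine gap is the passage from $\sum a_iE_i=\sum b_jF_j$ to ``$\chi$ is an isomorphism in codimension one''.

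\begin{itemize}
\item With $\chi=q\circ p^{-1}$, knowing that every $q$-exceptional $F_j$ is $p$-exceptional says that $\chi$ (not $\chi^{-1}$) contracts no divisor.
\item The converse direction does not follow. A prime divisor on $W$ that is $p$-exceptional but not $q$-exceptional is a divisor of $Y$ contracted by $\chi^{-1}$. It has coefficient $0$ on the right-hand side, so the equality only forces its $a_i$ to be $0$.
\item That is exactly what canonicity (as opposed to terminality) of $(X,\lambda\mathcal{M})$ allows. So the discrepancy comparison alone cannot give smallness.
\end{itemize}

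This is the point where $\rank\Pic(X)^G=\rank\Pic(Y)^G=1$ must be used. For instance, since $\chi$ contracts no divisor, strict transform gives a well-defined surjection
\[
\chi^{-1}_*\colon\mathrm{Cl}(Y)^G\otimes\mathbb{Q}\to\mathrm{Cl}(X)^G\otimes\mathbb{Q},
\]
which sends $\mathrm{div}_Y(f)$ to $\mathrm{div}_X(f)$. By $G\mathbb{Q}$-factoriality both spaces are one-dimensional, so this map is injective. Yet it kills the $G$-orbit of any divisor contracted by $\chi^{-1}$. That orbit is a non-zero effective divisor, so it cannot be $\mathbb{Q}$-linearly trivial on a projective variety. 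Equivalently, you can compare the numbers of $G$-orbits of $p$- and $q$-exceptional divisors on $W$.

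Once smallness is established, the last step needs no Picard-rank hypothesis. Since $\chi_*(-K_X)=-K_Y$ with both ample,
\[
X\cong\operatorname{Proj}\bigoplus_m H^0(X,-mK_X)\cong\operatorname{Proj}\bigoplus_m H^0(Y,-mK_Y)\cong Y.
\]
So the Picard-rank-one condition you invoke in your final sentence is the right ingredient, but it belongs in the smallness step.
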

\begin{lemma}[\protect{\cite[Exercise~6.18]{RationalVarieties}}]
    \label{noncancurve}
    Let $C$ be an irreducible curve on $X$. Assume that \[\mult_{C}(\mathcal{M})\leq\frac{1}{\lambda}.\] Then $C$ is not a center of non-canonical singularities of the log-pair $(X,\lambda\M)$.
\end{lemma}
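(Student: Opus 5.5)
The statement just made is Lemma~\ref{noncancurve}: if $C\subset X$ is an irreducible curve with $\mult_C(\M)\leq 1/\lambda$, then $C$ is not a center of non-canonical singularities of $(X,\lambda\M)$. The plan is to reduce to a computation at the generic point of $C$, where $X$ is smooth and the problem becomes two-dimensional. Since $X$ is normal, it is smooth at the generic point of $C$. Any divisor $E$ over $X$ whose center is $C$ has discrepancy that can be computed on a neighbourhood of a general point of $C$. So we may shrink $X$ to an open set $U$ meeting $C$ on which $X$ and $C$ are smooth. Shrinking further, we may assume the multiplicity of a general member $M\in\M$ along $C$ is constant, equal to $m=\mult_C(\M)\le 1/\lambda$.

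Next, cut by a general smooth surface $S$ transversal to $C$ at a general point $P$; analytically, slice along a transverse $2$-disc. The claim reduces to the standard surface statement: if $(S,\lambda M|_S)$ has $\mult_P(\lambda M|_S)=\lambda m\le 1$, then the pair is canonical at $P$ with respect to divisors centered at $P$. I would prove this directly rather than by inversion of adjunction, since a sequence of blow-ups of the generic point of $C$ captures every divisor with center $C$.

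The direct proof is as follows. Every exceptional divisor with center $C$ is reached by a sequence of blow-ups $X_k\to\cdots\to X_1\to X_0=U$. Each step blows up a smooth curve $C_i$ lying over $C$ and dominating it, namely the center of $E$ on $X_i$. Let $E_1,\dots,E_k$ be the exceptional divisors, with $a(E_i)$ the discrepancy of $K$ and $\nu_i=\mult_{E_i}\M$. By induction, $a(E_i)=\sum_{j\in I(i)}a(E_j)+1$ and $\nu_i=\sum_{j\in I(i)}\nu_j+\mult_{C_{i-1}}(\text{strict transform})$, where $I(i)$ indexes the (at most two) previous exceptional divisors containing $C_{i-1}$. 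The strict-transform multiplicities are non-increasing under blow-up, hence all are $\le m$. From this one shows inductively that $\lambda\nu_i\le a(E_i)$. In the base case, $E_1$ has $a=1$ and $\nu_1=m$, so $\lambda\nu_1\le 1$. In the inductive step, $\lambda\nu_i\le \sum\lambda\nu_j+\lambda m\le\sum a(E_j)+1=a(E_i)$. Hence the discrepancy $a(E,X,\lambda\M)=a(E)-\lambda\nu_E\ge 0$ for all $E$ with center $C$, which is what it means for $C$ not to be a non-canonical center.

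The main obstacle is the bookkeeping in this induction. One must check that the curves $C_i$ dominate $C$, so that over a general point of $C$ everything is a product of a surface tower with a curve. One must also check that the strict transform of $\M$ has multiplicity at most $m$ along each $C_i$; this follows because multiplicity does not increase along points of the exceptional divisor under blow-up of a smooth center. Once the situation is reduced to a general transverse slice, this is the classical surface fact that multiplicity $\le1$ implies canonicity, and it is standard.
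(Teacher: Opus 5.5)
The paper gives no proof of this lemma; it only quotes \cite[Exercise~6.18]{RationalVarieties}. Your argument is the standard solution of that exercise, and it is correct.

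The tower of blow-ups of the successive centres of $E$ terminates by Zariski's lemma \cite[Lemma~2.45]{KollarMori}. Your recursions are $a(E_i)=1+\sum_{j<i}e_{ij}\,a(E_j)$ and $\nu_i=m_{i-1}+\sum_{j<i}e_{ij}\,\nu_j$, where $e_{ij}=\mult_{C_{i-1}}E_j\geq 0$ and $m_{i-1}=\mult_{C_{i-1}}\M_{i-1}\leq m$. Together these give $\lambda\nu_i\leq a(E_i)$ by induction. The induction works for arbitrary $e_{ij}\geq 0$, so the ``at most two'' and $e_{ij}=1$ bookkeeping is not needed.

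Your phrasing of the multiplicity step is too strong. The multiplicity of a strict transform can go up at points of the exceptional divisor lying over special points of the centre. What you actually use, and what is true, is $m_{i-1}\leq m_{i-2}$ at the generic point of $C_{i-1}$, which lies on $E_{i-1}$ and dominates $C_{i-2}$.

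One justification is wrong. Normality does not make $X$ smooth at the generic point of $C$: it only gives regularity in codimension one, and $C$ has codimension two. A normal threefold can be singular along a curve, and there the statement genuinely fails. Take $X=S\times\mathbb{A}^1$, with $S$ an $A_1$ surface singularity and $C$ the singular curve. The crepant exceptional divisor over $C$ has negative discrepancy for $(X,\lambda\M)$ as soon as $C\subset\Bs(\M)$. So for small $\lambda$ the hypothesis $\mult_C(\M)\leq 1/\lambda$ holds while the conclusion fails.

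What you need is that $X$ is smooth at a general point of $C$. This holds here because $X=\PPP$ is smooth, and more generally because terminal threefold singularities are isolated.
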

\begin{lemma}[\protect{\cite[Theorem~3.1]{SingCorti}}]
    \label{ineqCorti}
    Let $C$ be an irreducible curve on $X$. Assume that $C$ is a center of non-log canonical singularities of the log-pair $(X,\lambda\M)$. Then \[\mult_{C}(M_1 \cdot M_2)>\frac{4}{\lambda^2}\] for any two general surfaces $M_1$ and $M_2$ in the linear system $\M$. 
\end{lemma}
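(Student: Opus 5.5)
The plan is to reduce to a statement about surfaces by cutting with a general transversal surface, and then to prove the local surface inequality (Corti's ``$4n^2$-inequality'') via a resolution by point blow-ups and a quadratic-form estimate.

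\emph{Reduction to a surface.} Since the statement is local along $C$, it suffices to work near a general point $P\in C$. At such a point $X$ is smooth, $C$ is smooth, and general members of $\M$ have multiplicity exactly $\mult_C(\M)$ along $C$. Choose a general smooth surface germ $S\ni P$ transversal to $C$. Then $\mult_C(M_1\cdot M_2)$ equals the local intersection number $(M_1|_S\cdot M_2|_S)_P$ for general $M_1,M_2\in\M$. Because $C$ is a centre of non-log canonical singularities, some divisor $E$ over $X$ with centre $C$ has discrepancy $a(E,X,\lambda\M)<-1$. Restricting a log resolution to the general transversal slice $S$ (which meets everything transversally near $P$ by Bertini) produces a divisor over $S$ with centre $P$ and the same discrepancy. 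Thus $(S,\lambda\M_S)$ is not log canonical at $P$, where $\M_S=\M|_S$ is a mobile linear system on a smooth surface germ.

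\emph{Surface inequality.} It then remains to show that if $(S,\lambda\M_S)$ is not lc at a smooth point $P$, then $(M_1\cdot M_2)_P>4/\lambda^2$.
\begin{enumerate}
\item Resolve by a sequence of point blow-ups $S_N\to\cdots\to S_0=S$ with exceptional curves $E_1,\dots,E_N$. Each blow-up is centred on the previous exceptional locus, and we may assume it is a chain reaching a divisor $E_N$ with $a(E_N)<-1$.
\item Let $m_i$ be the multiplicity at the $i$-th centre of the strict transform of a general $M$. Record the discrepancies $a_i$ of $K_S$ (so $a_i=$ number of predecessors plus $1$, in the graph encoding which centres lie infinitely near which). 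The non-lc condition reads $\lambda\sum_i p_{N i}\,m_i>\sum_i p_{Ni}+1$, where $p_{Ni}$ counts paths in that graph.
\item Since the $M_j$ are general in a mobile system, $(M_1\cdot M_2)_P\ge\sum_i m_i^2$. This bounds the intersection by a quadratic form.
\item Minimise $\sum m_i^2$ subject to the linear constraint, with weights $p_{Ni}$. Cauchy--Schwarz gives $\sum m_i^2\ge(\sum p_{Ni}+1)^2/(\lambda^2\sum p_{Ni}^2)$. The combinatorics of path numbers (using $\sum p_{Ni}\ge$ a suitable quantity and comparing $\sum p_{Ni}^2$ with $(\sum p_{Ni})^2$) yields the strict inequality $>4/\lambda^2$.
\end{enumerate}
The model case is a chain of $N$ blow-ups, each lying on the last exceptional curve only, so all $p_{Ni}=1$. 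There the bound becomes $(N+1)^2/N\ge4$, with strictness coming from the strict inequality in the non-lc condition.

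\emph{Main obstacle.} The hard step is the combinatorial optimisation in the general (non-chain) resolution graph, i.e.\ showing that the Cauchy--Schwarz bound is always at least $4/\lambda^2$. An alternative route I would try first is a shortcut: reduce to the case where the graph is a chain, by noting that ``satellite'' centres only increase the intersection. Alternatively one can apply Corti's original argument, which uses the inequality $\sum m_i^2\cdot\sum a_i\ge\ldots$ directly. The slicing step is standard, by Bertini and inversion along general hyperplane sections.
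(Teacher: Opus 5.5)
\textbf{Verdict: there is a genuine gap, at step 4 of your surface argument.}

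Your slicing step is fine. Cutting by a general surface through a general point of $C$ is how the curve statement follows from Corti's surface theorem, which the paper only cites and does not prove. Steps 1--3 are also fine: the non-lc condition $\lambda\sum_i p_{Ni}m_i>1+\sum_i p_{Ni}$, the bound $(M_1\cdot M_2)_P\ge\sum_i m_i^2$, and Cauchy--Schwarz. One small correction: once satellite points occur, the discrepancy of $E_i$ is $\sum_j p_{ij}$, not ``number of predecessors plus one''.

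After Cauchy--Schwarz you still need
\[ \Bigl(1+\sum_i p_{Ni}\Bigr)^2\ \ge\ 4\sum_i p_{Ni}^2 \]
for every sequence of infinitely near points, and nothing in your proposal proves it. This is not bookkeeping; it is the whole content of the theorem. With $v=\mathrm{ord}_{E_N}$ one has $A(v)=1+\sum_i p_{Ni}$ and $\mathrm{vol}(v)=1/\sum_i p_{Ni}^2$. So the inequality says $A(v)^2\,\mathrm{vol}(v)\ge 4$, which is essentially the two-dimensional case of $e(\mathfrak a)\,\mathrm{lct}(\mathfrak a)^2\ge 4$ (de Fernex--Ein--Musta\c{t}\u{a}).

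Your shortcut of reducing to free chains goes the wrong way, because satellites make the Cauchy--Schwarz bound \emph{worse}:
\begin{itemize}
\item For the free chain, the ratio $(1+\sum_i p_{Ni})^2/\sum_i p_{Ni}^2$ equals $(N+1)^2/N$.
\item If all centres $P_1,\dots,P_{N-1}$ lie on the strict transforms of $E_1$, then $p_{N1}=N-1$ and $p_{Nj}=1$ for $j\ge 2$. The ratio is $(2N-1)^2/(N(N-1))=4+1/(N(N-1))$, which tends to $4$.
\end{itemize}
So the estimate is nearly sharp precisely on satellite configurations, and it needs a real argument.

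To close the gap you have two options.
\begin{itemize}
\item Prove the sharper combinatorial bound $\sum_i p_{Ni}^2\le p_{N1}\bigl(1+\sum_i p_{Ni}-p_{N1}\bigr)$ and finish by AM--GM. In Favre--Jonsson's valuative-tree language this is ``thinness $\ge 1+{}$skewness'' for $v/v(\mathfrak m)$.
\item Follow Corti, whose theorem the paper cites. Prove the stronger statement for pairs $(S,(1-a_1)\Delta_1+(1-a_2)\Delta_2+\lambda\mathcal{M}_S)$ with $\Delta_1+\Delta_2$ a normal crossing germ at $P$, namely $\mult_P(M_1\cdot M_2)>4a_1a_2/\lambda^2$. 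Argue by induction on the number of blow-ups needed to reach a non-lc place. After blowing up $P$, the exceptional curve joins the boundary with coefficient $1-a_1-a_2+\lambda\mult_P\mathcal{M}_S$. The inductive bound at the new centre plus $(\mult_P\mathcal{M}_S)^2$ then gives the estimate by an elementary computation, with no global optimisation over the proximity graph.
\end{itemize}
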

\begin{lemma}[\protect{\cite{BirAutFanoPukhlikov}}]
    \label{ineqPukhlikov}
    Let $P$ be a smooth point on $X$. Assume that $P$ is a center of non-canonical singularities of the log-pair $(X,\lambda\M)$. Then \[\mult_{P}(M_1 \cdot M_2)>\frac{4}{\lambda^2}\] for any two general surfaces $M_1$ and $M_2$ in the linear system $\M$. 
\end{lemma}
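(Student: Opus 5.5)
The proof reduces to a two-dimensional statement by cutting with a general surface through $P$, and then uses the surface version of Corti's inequality, proved by induction along the sequence of point blow-ups. Set $\mathcal{Z}=M_1\cdot M_2$, an effective $1$-cycle. Since $M_1,M_2$ are general members of a linear system without fixed components, $\mathcal{Z}$ has no component contained in a general surface through $P$.

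\emph{Step 1: from non-canonical to non-log canonical.} By assumption there is an exceptional divisor $E$ over $X$ with centre $P$ and discrepancy $a(E;X,\lambda\M)<0$. Choose a general smooth surface $S\ni P$, for instance a general member of a very ample system through $P$, so that $S$ is smooth at $P$. Then $\mult_E(S)\geq 1$, hence
\[
a(E;X,\lambda\M+S)=a(E;X,\lambda\M)-\mult_E(S)<-1 .
\]
So $(X,\lambda\M+S)$ is not log canonical at $P$. By inversion of adjunction, the pair $(S,\lambda\M|_S)$ is not log canonical at $P$. Here $\M|_S$ is a mobile linear system on the smooth surface $S$, since $S$ is general.

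\emph{Step 2: the intersection with $S$ computes the multiplicity.} Write $L_i=M_i|_S$. Then
\[
(L_1\cdot L_2)_P=(M_1\cdot M_2\cdot S)_P .
\]
For $S$ general among surfaces smooth at $P$, the tangent plane of $S$ at $P$ avoids the tangent cones of the curves in $\mathcal{Z}$ through $P$. Therefore this local intersection number equals $\mult_P(\mathcal{Z})\cdot\mult_P(S)=\mult_P(M_1\cdot M_2)$. It therefore suffices to prove the following surface statement: if $(S,\lambda\mathcal{L})$ is not log canonical at a smooth point $P$ for a mobile system $\mathcal{L}$, then $(L_1\cdot L_2)_P>4/\lambda^2$.

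\emph{Step 3: the surface inequality (main obstacle).} I would prove a slightly stronger statement by induction, in the spirit of \cite{SingCorti}. Let $\Delta_1,\Delta_2$ be curves through $P$ meeting transversally, and let $a_1,a_2\le 1$. Suppose that
\[
\bigl(S,\lambda\mathcal{L}+(1-a_1)\Delta_1+(1-a_2)\Delta_2\bigr)
\]
is not log canonical at $P$. Then
\[
(L_1\cdot L_2)_P>\frac{4a_1a_2}{\lambda^2}
\]
when $a_1,a_2\geq 0$, with the appropriate variant otherwise. Taking $a_1=a_2=1$ recovers the needed bound.

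The induction runs on the number of blow-ups needed to extract a divisor of discrepancy below $-1$. Blow up $P$, and let $m=\mult_P(\mathcal{L})$. The exceptional curve $F$ enters the log pullback with coefficient $\lambda m-1+a_1+a_2$, i.e.\ with $1-a'$ for the new parameter $a'=1-(\lambda m-1+a_1+a_2)$. The new non-lc point lies on $F$ and on at most one strict transform $\widetilde{\Delta}_i$. Moreover
\[
(L_1\cdot L_2)_P=m^2+(\widetilde{L}_1\cdot\widetilde{L}_2)_{P'} .
\]
Applying the inductive hypothesis at $P'$ and optimising the elementary inequality in $m$ closes the induction. The base case is $\lambda m>2$, where the bound follows from $m^2\geq 4/\lambda^2$.

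The delicate part is the bookkeeping of the cases where some $a_i$ becomes negative or exceeds the allowed range. I would handle these by first reducing to $a_i\in[0,1]$ via monotonicity in the coefficients of the $\Delta_i$.
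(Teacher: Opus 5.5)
Your proposal is correct in outline, but it follows a different route from the paper, which does not prove this lemma and simply quotes Pukhlikov. Pukhlikov's argument works directly on the threefold. One resolves the non-canonical valuation by successive blow-ups of its centres, bounds $\mult_P(M_1\cdot M_2)$ from below using the multiplicities of $\M$ along the exceptional divisors (weighted by path counts in the resolution graph), and finishes by minimising a quadratic form. What you propose is Corti's proof from \cite{SingCorti}. Your Steps~1--2 are correct: a general section $S\ni P$, inversion of adjunction, and $(S\cdot M_1\cdot M_2)_P=\mult_P(M_1\cdot M_2)$ for general $S$. Step~3 is his surface inequality, the same result the paper invokes via \cite[Theorem~3.1]{SingCorti} for Lemma~\ref{ineqCorti}. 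Your route gives a short argument and one tool that serves both lemmas. Its price is inversion of adjunction; the klt/plt form applied to $(X,S+t\lambda\M)$ with $t<1$ close to $1$ suffices. Pukhlikov's argument is purely numerical but combinatorially heavier.

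The bookkeeping in Step~3 needs fixing, though. The coefficient of $F$ in the log pull-back is $-1+(1-a_1)+(1-a_2)+\lambda m=1-a_1-a_2+\lambda m$, so $a'=a_1+a_2-\lambda m$. Your formula gives $a'=-\lambda m<0$ already for $a_1=a_2=1$, which contradicts your own base case. Also $(L_1\cdot L_2)_P=m^2+\sum_{Q\in F}(\widetilde L_1\cdot\widetilde L_2)_Q$, so with the single point $P'$ you only get $\geq$; that is all you need.

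With these fixes, and with $x=\lambda m$ and $a_1,a_2\in[0,1]$, the induction closes exactly by the monotonicity you suggest:
\begin{itemize}
\item \emph{Base case.} If $x>a_1+a_2$, then $F$ is itself a non-lc place and $m^2>(a_1+a_2)^2/\lambda^2\geq 4a_1a_2/\lambda^2$. This, not $x>2$, is the general base case.
\item \emph{Reduction.} Otherwise $0\le a'\le 2$. Replacing $a'$ by $a''=\min(a',1)$ only raises the coefficient of $F$, so the pair at $P'$ stays non-lc and needs one blow-up fewer.
\item \emph{Case $P'\notin\widetilde\Delta_1\cup\widetilde\Delta_2$.} The hypothesis with $(a'',1)$ gives $\lambda^2(L_1\cdot L_2)_P>x^2+4a''$. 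This is either $x^2+4\geq 4$, or $x^2-4x+4(a_1+a_2)\geq(a_1+a_2)^2$ because the quadratic decreases on $[0,2]$ and $x\le a_1+a_2\le 2$. Both are $\geq 4a_1a_2$. The constraint $x\le a_1+a_2$ is essential: at $x=2$ the quadratic equals $4(a_1+a_2-1)<4a_1a_2$ when $a_1,a_2<1$.
\item \emph{Case $P'=F\cap\widetilde\Delta_1$.} The hypothesis with $(a_1,a'')$ gives $\lambda^2(L_1\cdot L_2)_P>x^2+4a_1a''$. This is either $x^2+4a_1$ or $(x-2a_1)^2+4a_1a_2$, again $\geq 4a_1a_2$.
\end{itemize}
So no variant for negative $a_i$ is needed.
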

\begin{lemma}[\protect{\cite[Lemma~2.4]{DoubVerCones}}]
\label{irrepmult}
    Let $G$ be an algebraic group acting on $X$ and let $\mathcal{M}$ be $G$-invariant. Suppose that $G$ fixes a smooth point $P \in X$ and that its induced action on $T_P(X)$ is an irreducible representation. If $P$ is a non-canonical center of $(X,\lambda \mathcal{M})$ then $\mult_P(\mathcal{M})>\frac{2}{\lambda}$.
\end{lemma}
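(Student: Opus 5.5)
We argue by contradiction: assume $m:=\mult_P(\mathcal{M})\leq\frac{2}{\lambda}$ while $P$ is a non-canonical center of $(X,\lambda\M)$. Let $f\colon Y\to X$ be the blow up of $P$, with exceptional divisor $E\cong\mathbb{P}^2=\mathbb{P}(T_P(X))$, and let $\M_Y$ be the strict transform of $\M$. The group $G$ fixes $P$, so it lifts to $Y$ and acts on $E$ via the projectivization of its action on $T_P(X)$. Since this representation is irreducible, $G$ fixes no point of $E$ and leaves no line of $E$ invariant. Invariant points and invariant lines would correspond to invariant $1$- and $2$-dimensional subspaces of $T_P(X)$, respectively.

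We have
\[
K_Y+\lambda\M_Y+(\lambda m-2)E\sim_{\mathbb{Q}} f^*(K_X+\lambda\M),
\]
and $\lambda m-2\leq 0$. Since $P$ is a non-canonical center, there is a divisor over $X$ with negative discrepancy whose center on $X$ is $P$. Its center $Z$ on $Y$ is therefore contained in $E$, and the pair $(Y,\lambda\M_Y+(\lambda m-2)E)$ is not canonical at $Z$.

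Replacing the coefficient $\lambda m-2$ of $E$ by $1$ adds $(3-\lambda m)E$ with $3-\lambda m\geq1$. Since $Z\subset E$, this lowers the discrepancy of that divisor by at least $1$. Hence the pair $(Y,\lambda\M_Y+E)$ is not log canonical at $Z$. By inversion of adjunction, the pair $(E,\lambda\M_Y|_E)$ is then not log canonical at $Z$. Here $\M_Y|_E$ is a mobile family of plane curves of degree $m$, so the boundary $\lambda\M_Y|_E$ has degree $\lambda m\leq 2$.

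Let $\mathrm{Nklt}$ denote the non-klt locus of $(E,\lambda\M_Y|_E)$. It is $G$-invariant, because $\M$ is $G$-invariant, and it is nonempty, since it contains $Z$. We now split into two cases.
\begin{enumerate}
\item Suppose $\mathrm{Nklt}$ contains a curve $C$. Then $C$ appears in $\lambda\M_Y|_E$ with coefficient at least $1$. Since $\deg(\lambda\M_Y|_E)\leq2$, this forces $\deg C\leq 2$. If $\deg C=2$, then equality holds throughout, so $\lambda\M_Y|_E=C$ is a conic and there is no further curve in the locus. Otherwise the curves in $\mathrm{Nklt}$ are lines, and at most two of them occur. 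A single such line would be $G$-invariant, which is excluded. For two lines, their intersection point, or the remaining configuration, gives a $G$-invariant point or line, which is again excluded. The conic subcase needs a separate check: a conic with coefficient exactly $1$ and no other boundary is log canonical, so the non-lc point would have to lie elsewhere, contradicting $\deg\leq 2$.
\item Suppose $\mathrm{Nklt}$ is finite. Since $-(K_E+\lambda\M_Y|_E)$ has degree at least $3-2>0$, it is ample. By the Koll\'ar--Shokurov connectedness theorem, $\mathrm{Nklt}$ is connected, so it consists of a single point. This point is $G$-fixed, a contradiction.
\end{enumerate}
In both cases we reach a contradiction, so $m>\frac{2}{\lambda}$.

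The main obstacle is the curve case, in particular the degree-$2$ boundary situation. There one has to check carefully that the only non-lc configurations compatible with $\deg\leq2$ produce a $G$-invariant line or point. It is also here that one uses that we may pass to general members of $\M$, so that $\lambda\M_Y|_E$ is an effective $\mathbb{Q}$-divisor of degree $\lambda m$ with no further fixed parts.
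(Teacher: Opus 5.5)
Your argument is correct, and it is essentially the standard proof behind this lemma, which the paper does not prove but imports from \cite{DoubVerCones}: blow up $P$, raise the coefficient of $E$ to get a non-lc pair, pass to $E\cong\mathbb{P}^2$ by inversion of adjunction, and use the bound $\lambda m\le 2$ together with Koll\'ar--Shokurov connectedness to produce a $G$-fixed point or a $G$-invariant line in $\mathbb{P}(T_P(X))$. Two small remarks: $\M_Y|_E$ need not be mobile (your Case 1 is exactly about its fixed part), and if you work with the non-lc locus instead of the non-klt locus, curves there have coefficient $>1$, so at most one line can occur and the conic subcase disappears.
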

Note that if an irreducible variety $Z$ is a center of non-canonical/canonical/non-log canonical/log canonical singularities of a log-pair $(X,\lambda\M)$, where $\M$ is $G$-invariant, then any element $Z'$ of the $G$-orbit of $Z$ is so too, and the multiplicity of $\M$ at $Z'$ is the same. Thus, it makes sense to speak of $G$-irreducible varieties as centers of non-canonical/canonical/non-log canonical/log canonical singularities. We call such a $G$-irreducible subvariety a \emph{$G$-center} of the corresponding singularities.

We will also utilize the alpha-invariant. Recall the definition.
\begin{definition}
    Let $X$ be a $G$-Fano variety for a finite group $G \subset \Aut(X)$. Then the \emph{$\alpha$-invariant} $\alpha_G(X)$ of $(X,G)$ is \[\sup\left\{\lambda \in \mathbb{Q}\ \middle|\ (X,\lambda D) \text{ is log-canonical for every effective }G\text{-invariant }\mathbb{Q}\text{-divisor }D\sim_{\mathbb{Q}} -K_X\right\}.\] Equivalently, it can be defined using linear systems as: \[\sup\left\{\lambda \in \mathbb{Q}\ \middle|\ \left(X,\frac{\lambda}{n}\mathcal{M}\right)\text{ is log-canonical for every } n \text{ and every } G\text{-invariant }\mathcal{M} \subset |-nK_X|\right\}.\]
\end{definition}
\begin{theorem}[\protect{\cite[Lemma 2.16]{FiveEmbed}}]
\label{alphaPP}
Suppose $X \cong \PP \times \PP$ and $G$ is a finite subgroup of $\Aut(X)$. Then the following are equivalent:
\begin{enumerate}
    \item $\alpha_G(X) \geq 1$;
    \item There are no $G$-invariant curves in the linear systems \[|L_1|,\ |L_2|,\ |L_1+L_2|,\] where $L_1$ and $L_2$ are fibers of two distinct natural projections of the surface $X$ to $\mathbb{P}^1$. 
\end{enumerate}
\end{theorem}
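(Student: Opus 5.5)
The plan is to prove the two implications separately. For (1)$\Rightarrow$(2) we exhibit explicit invariant divisors that are not log canonical. For (2)$\Rightarrow$(1) we use Kollár--Shokurov connectedness together with a simple numerical bound on the coefficients of a boundary numerically equivalent to $-K_X\sim 2L_1+2L_2$.

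\emph{(1)$\Rightarrow$(2).} Suppose $C$ is a $G$-invariant curve in $|L_1+L_2|$. Then $D=2C$ is a $G$-invariant effective divisor with $D\sim -K_X$. For every $\lambda>\tfrac12$ the pair $(X,\lambda D)$ has a component of coefficient $2\lambda>1$, so it is not log canonical, and hence $\alpha_G(X)\le \tfrac12<1$. Now suppose instead that $C\in|L_1|$ is $G$-invariant, i.e.\ a fiber of the first projection. Then $G$ cannot swap the rulings, because it would send $C$ to a curve in $|L_2|$. Hence $G$ acts on the second factor $\mathbb{P}^1$ through a finite group. Choose a fiber $F\in|L_2|$ and let $F_1+\dots+F_k\sim kL_2$ be the sum over its $G$-orbit. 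Then
\[
D=2C+\frac{2}{k}\sum_{i=1}^k F_i\sim_{\mathbb{Q}} -K_X
\]
is $G$-invariant and has coefficient $2$ along $C$, so again $\alpha_G(X)\le\tfrac12$. The case $|L_2|$ is symmetric.

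\emph{(2)$\Rightarrow$(1).} Assume (2) holds but $\alpha_G(X)<1$. Then there is a $G$-invariant effective $\mathbb{Q}$-divisor $D\sim_{\mathbb{Q}}-K_X$ whose log canonical threshold $\lambda=\mathrm{lct}(X;D)$ satisfies $\lambda<1$. The non-klt locus $\mathrm{Nklt}(X,\lambda D)$ is $G$-invariant, and it is connected by Kollár--Shokurov connectedness, since $-(K_X+\lambda D)\sim_{\mathbb{Q}}(1-\lambda)(-K_X)$ is ample. We split into two cases.

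First, suppose this locus contains a curve. Then some $G$-irreducible curve $Z$ appears in $D$ with coefficient $a\ge 1/\lambda>1$, and $Z$ itself is $G$-invariant. Write $Z\sim bL_1+cL_2$ with $b,c\ge0$ integers. Intersecting $aZ\le D$ with the nef classes $L_1$ and $L_2$ gives $ac\le 2$ and $ab\le2$, so $b,c\le1$. Therefore $Z$ is a $G$-invariant curve in $|L_1|$, $|L_2|$ or $|L_1+L_2|$, which contradicts (2).

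Second, suppose the locus is zero-dimensional. By connectedness it is a single point $P$, which is therefore $G$-fixed. Let $F_1\ni P$ and $F_2\ni P$ be the fibers of the two projections through $P$. If $G$ preserves the rulings, then $F_1\in|L_1|$ is $G$-invariant. If some element swaps the rulings, then $F_1\cup F_2\in|L_1+L_2|$ is $G$-invariant. In either case (2) is contradicted.

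The step needing most care is the application of connectedness. One must pass to the log canonical threshold $\lambda<1$, so that the relevant locus is the non-klt locus and the ampleness hypothesis holds. One must also check that the curve case genuinely produces an invariant member of one of the three linear systems; here the convention that curves may be reducible but $G$-irreducible matters. The remaining steps are elementary intersection-number bookkeeping on $\mathbb{P}^1\times\mathbb{P}^1$.
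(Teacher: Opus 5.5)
Your proof is correct. The explicit boundaries $2C$ and $2C+\frac{2}{k}\sum_i F_i$ settle (1)$\Rightarrow$(2). For (2)$\Rightarrow$(1), passing to the log canonical threshold, applying connectedness of the non-klt locus, and using $(D-aZ)\cdot L_j\ge 0$ correctly produce either a $G$-invariant member of $|L_1|$, $|L_2|$ or $|L_1+L_2|$, or a $G$-fixed point, and a fixed point yields such a member.

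The paper gives no proof of its own here, since the statement is quoted from Cheltsov--Shramov, so there is nothing to compare against. Your argument appears to be the standard one for this lemma. As a by-product it shows that $\alpha_G(X)<1$ forces $\alpha_G(X)\le\frac12$.
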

\subsection{Notation and basic geometry of $\PPP$}
\label{Notationsandbasics}
Let $X = \PPP$. We will write coordinates on it as \[\left([x_1:y_1],[x_2:y_2],[x_3:y_3]\right).\] Then \[\Aut(X) \cong (\PGL_2)^3 \rtimes \mathfrak{S}_3,\] where each $\PGL_2$ acts on the corresponding $\mathbb{P}^1$ and $\mathfrak{S}_3$ permutes the factors. We will write elements of $\Aut(X)$ as \[\left(A_1,A_2,A_3;\sigma\right),\] where $A_i \in \PGL_2$, and $\sigma \in \mathfrak{S}_3$. Then the action of $\Aut(X)$ on $X$ is given as \[\left(A_1,A_2,A_3;\sigma\right)\cdot\left(p_1,p_2,p_3\right) = \left(A_1 p_{\sigma^{-1}(1)},A_2 p_{\sigma^{-1}(2)},A_3 p_{\sigma^{-1}(3)}\right),\] and the multiplication in $\Aut(X)$ is \[\left(A_1,A_2,A_3;\sigma\right) \cdot \left(B_1,B_2,B_3;\pi\right) = (A_1B_{\sigma^{-1}(1)},A_2B_{\sigma^{-1}(2)},A_3B_{\sigma^{-1}(3)};\sigma \circ \pi).\] We also fix the contragredient convention for the action of $G$ on functions/sections: \[g\cdot f(x) = f(g^{-1}\cdot x).\]

Let $\mathbb{T} \cong \mathbb{G}_m^3$ be a maximal torus in $\Aut(X)$. We pick the standard maximal torus, defined as \[\mathbb{T} = \left\{\left(
    \begin{pmatrix}
        t_1 & 0\\
        0 & 1 
    \end{pmatrix},
    \begin{pmatrix}
        t_2 & 0\\
        0 & 1 
    \end{pmatrix},
    \begin{pmatrix}
        t_3 & 0\\
        0 & 1 
    \end{pmatrix};\,\id \right)\ \middle|\ t_1,t_2,t_3 \in \mathbb{C}^*\right\}.\] We will write $t$ for the diagonal matrix \[
    \begin{pmatrix}
        t & 0\\
        0 & 1 
    \end{pmatrix},\] and we will write $\iota(t)$ for the anti-diagonal matrix \[\begin{pmatrix}
        0 & t\\
        1 & 0 
    \end{pmatrix}.\] We will also write \[(g_1,g_2,g_3)\] for the elements \[(g_1,g_2,g_3;\id).\] Thus, we denote elements of $\mathbb{T}$ as \[(t_1,t_2,t_3).\] 

Let $\mathbb{W}\cong \mathfrak{S}_4 \times \boldsymbol{\mu}_2$ be the Weyl group of $X$ as a toric variety. We choose the following lift of $\mathbb{W}$ in $\Norm_{\Aut(X)}(\mathbb{T})$:  \[\mathbb{W} = \left\langle \mathfrak{S}_3, \left(
   \iota(1),1,1 \right)\right\rangle,\] where $\mathfrak{S}_3$ acts as the pure permutations of the factors. Then there is an exact sequence: \[1\rightarrow \mathbb{T}\rightarrow \mathrm{N}_{\Aut(X)}(\mathbb{T})\rightarrow\mathbb{W}\rightarrow 1,\] where $\mathrm{N}_{\Aut(X)}(\mathbb{T})$ is the normalizer of $\mathbb{T}$ in $\Aut(X)$. By a \textit{toric group} we mean a subgroup of $\Aut(X)$ that normalizes a maximal torus, i.e. a subgroup that lies in $\mathrm{N}_{\Aut(X)}(\mathbb{T})$ up to conjugation. Every finite subgroup $G$ of $\mathrm{N}_{\Aut(X)}(\mathbb{T})$ fits into the exact sequence: \[1 \rightarrow T_G\rightarrow G \xrightarrow{\pi_{\mathbb{T}}} W_G \rightarrow 1.\]

    Let us describe the structure of $\mathbb{T}$-orbits on $\PPP$. 
    \begin{lemma} The $\mathbb{T}$-orbit structure of $\PPP$ is the following:
        \begin{enumerate}
            \item There are eight $\mathbb{T}$-invariant points in $\PPP$:
            \begin{align*}
                &\left([0:1],[0:1],[0:1]\right),\ \left([0:1],[1:0],[1:0]\right),\ \left([1:0],[0:1],[1:0]\right),\ \left([1:0],[1:0],[0:1]\right),\\
                &\left([1:0],[1:0],[1:0]\right),\ \left([1:0],[0:1],[0:1]\right),\ \left([0:1],[1:0],[0:1]\right),\ \left([0:1],[0:1],[1:0]\right).
            \end{align*}
            \item There are twelve $\mathbb{T}$-invariant irreducible curves in $\PPP$:
            \begin{align*}
                &\mathbb{P}^1 \times \{[0:1]\} \times \{[0:1]\},\ \mathbb{P}^1 \times \{[1:0]\} \times \{[0:1]\},\ \mathbb{P}^1 \times \{[0:1]\} \times \{[1:0]\},\\
                &\mathbb{P}^1 \times \{[1:0]\} \times \{[1:0]\},\ \{[0:1]\} \times \mathbb{P}^1 \times \{[0:1]\},\ \{[1:0]\} \times \mathbb{P}^1 \times \{[0:1]\},\\
                &\{[0:1]\} \times \mathbb{P}^1 \times \{[1:0]\},\ \{[1:0]\} \times \mathbb{P}^1 \times \{[1:0]\},\ \{[0:1]\} \times \{[0:1]\} \times \mathbb{P}^1,\\
                &\{[1:0]\} \times \{[0:1]\} \times \mathbb{P}^1,\ \{[0:1]\} \times \{[1:0]\} \times \mathbb{P}^1,\ \{[1:0]\} \times \{[1:0]\} \times \mathbb{P}^1.
            \end{align*}
            \item There are six $\mathbb{T}$-invariant prime divisors in $\PPP$:
            \begin{align*}
                &\PP \times \PP \times \{[0:1]\},\ \PP \times \PP \times \{[1:0]\},\ \PP \times \{[0:1]\} \times \PP,\\
                &\PP \times \{[1:0]\} \times \PP, \ \{[0:1]\} \times \PP \times \PP,\ \{[1:0]\} \times \PP \times \PP.
            \end{align*}
        \end{enumerate}
    \end{lemma}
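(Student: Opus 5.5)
The plan is to reduce everything to the orbit–stabilizer structure of $\mathbb{T}$ acting on each factor $\PP$ separately. The torus $\mathbb{T}\cong\mathbb{G}_m^3$ acts factorwise: the component $t_i$ acts on the $i$-th $\PP$ by $[x_i:y_i]\mapsto[t_ix_i:y_i]$ and trivially on the other factors. Hence the $\mathbb{T}$-orbit of a point $(p_1,p_2,p_3)$ is the product $O_1\times O_2\times O_3$, where $O_i$ is the $\mathbb{G}_m$-orbit of $p_i$ in $\PP$.

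The first step is the one-factor computation. On $\PP$, the diagonal $\mathbb{G}_m$ has exactly three orbits:
\begin{itemize}
\item the fixed point $[0:1]$;
\item the fixed point $[1:0]$;
\item the open orbit $\{[x:y]\mid xy\neq 0\}\cong\mathbb{G}_m$, which is dense.
\end{itemize}
This holds because $[x:1]\mapsto[tx:1]$ is transitive on $x\neq0$.

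Next I take products. Every $\mathbb{T}$-orbit on $\PPP$ is therefore a product of three factors, each either one of the two fixed points or the open orbit, giving $3^3=27$ orbits. They sort by dimension, which equals the number of factors taken to be the open orbit:
\begin{itemize}
\item dimension $0$: $2^3=8$ orbits, the torus-fixed points;
\item dimension $1$: $3\cdot2^2=12$ orbits, one factor open and the other two fixed;
\item dimension $2$: $3\cdot 2=6$ orbits;
\item dimension $3$: the single dense orbit.
\end{itemize}

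To finish, I identify the $\mathbb{T}$-invariant irreducible subvarieties. Any $\mathbb{T}$-invariant irreducible closed subvariety is a union of orbits. Since there are only finitely many orbits, it is the closure of a unique orbit, namely one open in it. The closures of the one-dimensional orbits are the curves $\PP\times\{q_2\}\times\{q_3\}$ and their permutations, with $q_j\in\{[0:1],[1:0]\}$. The closures of the two-dimensional orbits are the divisors $\{q\}\times\PP\times\PP$ and their permutations. This yields exactly the lists (1)–(3).

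There is no real obstacle. The only point needing care is the remark that invariant irreducible subvarieties correspond bijectively to orbit closures, which uses the finiteness of the orbit set.
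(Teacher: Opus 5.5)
Your argument is correct and complete. The paper states this lemma without proof and treats it as a standard fact: the orbit--cone correspondence for the toric variety $\PPP$. Its fan has rays $\pm e_1,\pm e_2,\pm e_3$, twelve two-dimensional cones and eight maximal cones (the octants). These correspond, with dimensions reversed, to the $6$ invariant prime divisors, the $12$ invariant curves and the $8$ fixed points; adding the zero cone gives the same total of $27$ orbits that you find.

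Your route replaces this general machinery with a direct computation that uses the product structure. Since $\mathbb{T}=\mathbb{G}_m^3$ acts factorwise, every orbit is a product of the three $\mathbb{G}_m$-orbits on $\PP$. Because there are finitely many orbits, invariant irreducible closed subvarieties correspond bijectively to orbit closures. Injectivity holds because each orbit is the unique dense open orbit in its closure, as you note, so the counts are exact.

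Your approach is more elementary and self-contained, and it makes the full $27$-orbit stratification of $\PPP$ explicit. The toric approach is less hands-on, but it applies verbatim to toric varieties that are not products, where the factorwise argument is unavailable.
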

    \begin{lemma}
        Let $G$ be a finite toric subgroup of $\Aut(\PPP)$ such that $W_G$ contains $\mathfrak{A}_4$. Then 
        \begin{enumerate}
            \item The $\mathbb{T}$-invariant points form a unique $G$-orbit if and only if $W_G$ is $\mathfrak{S}_4$, $\mathfrak{A}_4 \times \boldsymbol{\mu}_2$, or $\mathfrak{S}_4 \times \boldsymbol{\mu}_2$. Otherwise, the set of $\mathbb{T}$-invariant points forms two $G$-orbits of length 4: \[\boldsymbol{\mathcal{O}}_{4,0}:\left([0:1],[0:1],[0:1]\right),\ \left([0:1],[1:0],[1:0]\right),\ \left([1:0],[0:1],[1:0]\right),\ \left([1:0],[1:0],[0:1]\right),\] and \[\boldsymbol{\mathcal{O}}_{4,\infty}: \left([1:0],[1:0],[1:0]\right),\ \left([1:0],[0:1],[0:1]\right),\ \left([0:1],[1:0],[0:1]\right),\ \left([0:1],[0:1],[1:0]\right).\]
            \item The $\mathbb{T}$-invariant irreducible curves form a unique $G$-irreducible curve $\boldsymbol{\mathcal{C}}_{\mathbb{T}}$.
            \item The $\mathbb{T}$-invariant prime divisors form a unique $G$-irreducible divisor $\boldsymbol{\mathcal{D}}_{\mathbb{T}}$.
        \end{enumerate}
    \end{lemma}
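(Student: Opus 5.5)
The plan is to understand how $G$ acts on the combinatorics of the fan. The Weyl group $\mathbb{W}\cong\mathfrak{S}_4\times\boldsymbol{\mu}_2$ acts on the eight $\mathbb{T}$-fixed points, which are the vertices of the cube $\{0,\infty\}^3$. The $\boldsymbol{\mu}_2$ factor is the central element $(\iota(1),\iota(1),\iota(1))$, acting as the antipodal map; this identification should be checked against the chosen lift. The cube's vertices split into two tetrahedra $\boldsymbol{\mathcal{O}}_{4,0}$ and $\boldsymbol{\mathcal{O}}_{4,\infty}$, distinguished by the parity of the number of coordinates equal to $[1:0]$. The group $\mathfrak{A}_4$ is the rotation group preserving each tetrahedron; it is generated by the cyclic permutations in $\mathfrak{S}_3$ and the double flips such as $(\iota(1),\iota(1),1)$. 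The elements of $\mathbb{W}$ that swap the two tetrahedra are exactly those changing parity. Since $T_G\subset\mathbb{T}$ fixes all eight points, the $G$-orbits on them coincide with the $W_G$-orbits.

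For part (1), list the subgroups of $\mathfrak{S}_4\times\boldsymbol{\mu}_2$ containing $\mathfrak{A}_4$. These are $\mathfrak{A}_4$, $\mathfrak{S}_4$, $\mathfrak{A}_4\times\boldsymbol{\mu}_2$, $\mathfrak{S}_4\times\boldsymbol{\mu}_2$, and the twisted group $\{(\sigma,\operatorname{sgn}\sigma)\}\cong\mathfrak{S}_4$. The group $\mathfrak{A}_4$ acts transitively on each tetrahedron, since it is generated by a 3-cycle and double flips. So the fixed points form either one orbit or two orbits of length four, according to whether $W_G$ contains a parity-changing element.
\begin{itemize}
\item The central element and a single flip $(\iota(1),1,1)$ both change parity. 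These settle $\mathfrak{A}_4\times\boldsymbol{\mu}_2$ and $\mathfrak{S}_4\times\boldsymbol{\mu}_2$, which then give a single orbit.
\item Which of the two $\mathfrak{S}_4$'s is called ``$\mathfrak{S}_4$'' depends on the identification. The statement's $\mathfrak{S}_4$ must be the one containing a parity-changing transposition, such as $(\iota(1),1,1)$. The twisted copy, containing pure transpositions of factors, preserves parity and falls into the ``otherwise'' case.
\end{itemize}
Identifying which subgroup is which under the fixed isomorphism $\mathbb{W}\cong\mathfrak{S}_4\times\boldsymbol{\mu}_2$ is the main delicate point. I would fix this isomorphism explicitly, with $\mathfrak{S}_4$ realised as the symmetries of the tetrahedron $\boldsymbol{\mathcal{O}}_{4,0}$ union the antipodal coset, and then check the parity condition on generators.

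For parts (2) and (3) it suffices to show that $\mathfrak{A}_4$ already acts transitively on the twelve invariant curves and on the six invariant divisors.
\begin{itemize}
\item The six divisors are the faces of the cube. The 3-cycles permute the three factors, and the flip $(\iota(1),\iota(1),1)$ swaps $x_1=0$ with $x_1=\infty$. Hence all six faces lie in one orbit.
\item The twelve curves are the edges of the cube. The 3-cycles move an edge parallel to the $i$-th axis to one parallel to any other axis. The double flips $(\iota(1),\iota(1),1)$, $(\iota(1),1,\iota(1))$ and $(1,\iota(1),\iota(1))$ act transitively on the four edges parallel to a given axis. For example, for edges $\PP\times\{p\}\times\{q\}$ they realise every change of $(p,q)$.
\end{itemize}
So the edges form one orbit of length $12$. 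Taking the union gives the unique $G$-irreducible curve $\boldsymbol{\mathcal{C}}_{\mathbb{T}}$ and the unique $G$-irreducible divisor $\boldsymbol{\mathcal{D}}_{\mathbb{T}}$.
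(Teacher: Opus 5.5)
Your reduction is sound. $T_G\subset\mathbb{T}$ acts trivially on the $\mathbb{T}$-invariant points, curves and divisors, so the $G$-orbits on them are $W_G$-orbits. Your treatment of parts (2) and (3), via $\mathfrak{A}_4$ acting on the edges and faces of the cube, is correct and complete. The paper states this lemma without proof, so a direct check of this kind is all that is needed.

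The gap is in part (1). Its only non-trivial content is deciding which of the five subgroups containing $\mathfrak{A}_4$ contain a parity-changing element, and there your argument fails in three ways.
\begin{enumerate}
\item Your witness $(\iota(1),1,1)$ does not lie in the paper's $\mathfrak{S}_4$. The quotient $\mathbb{W}/\mathfrak{A}_4\cong\boldsymbol{\mu}_2^2$ is detected by $\bigl(\operatorname{sgn}\sigma,(-1)^{e_1+e_2+e_3}\bigr)$ for $(\iota(1)^{e_1},\iota(1)^{e_2},\iota(1)^{e_3};\sigma)$. The element $(\iota(1),1,1)$ maps to $(1,-1)$, so it lies only in $\mathfrak{A}_4\times\boldsymbol{\mu}_2$ and $\mathbb{W}$.
\item Your proposed normalisation, ``$\mathfrak{S}_4$ realised as the symmetries of the tetrahedron $\boldsymbol{\mathcal{O}}_{4,0}$'', describes, taken literally, the setwise stabiliser of $\boldsymbol{\mathcal{O}}_{4,0}$. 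That is exactly the subgroup with two orbits, the opposite of what the statement requires.
\item Inferring which copy of $\mathfrak{S}_4$ is meant from the lemma's own conclusion is circular.
\end{enumerate}

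The fix is short, because the five subgroups are fixed by explicit generators at the start of Section~\ref{Groups}. Use the parity character
$\chi\colon\mathbb{W}\to\boldsymbol{\mu}_2$, $\chi(\iota(1)^{e_1},\iota(1)^{e_2},\iota(1)^{e_3};\sigma)=(-1)^{e_1+e_2+e_3}$.
It is a homomorphism because $\sigma$ only permutes the exponents. An element swaps $\boldsymbol{\mathcal{O}}_{4,0}$ and $\boldsymbol{\mathcal{O}}_{4,\infty}$ exactly when $\chi=-1$.
\begin{itemize}
\item Both generators $(\iota(1),1,\iota(1);\,(2,3))$ and $(1,1,1;\,(1,2))$ of $\widebar{\mathfrak{S}}_4$ have $\chi=1$. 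Since $\ker\chi$ has order $24$ and contains $\mathfrak{A}_4$, this gives $\ker\chi=\widebar{\mathfrak{S}}_4$, which therefore has two orbits.
\item The elements $(1,\iota(1),1;\,(1,2))\in\mathfrak{S}_4$, $(1,\iota(1),1)\in\mathfrak{A}_4\times\boldsymbol{\mu}_2$ and $(\iota(1),\iota(1),\iota(1);\,(1,2))\in\mathfrak{S}_4\times\boldsymbol{\mu}_2$ all have $\chi=-1$. So these three groups give a single orbit.
\end{itemize}
Geometrically, $\widebar{\mathfrak{S}}_4$ is the full symmetry group of the tetrahedron; it is your ``twisted'' copy containing the pure transpositions. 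The paper's $\mathfrak{S}_4$ is the rotation group of the cube, i.e.\ determinant one as signed permutation matrices. Your final assignment is therefore correct, but it needs this check in place of the argument you gave.
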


    Let us also recall the intersection theory on $\PPP$. Denote by $\pr_i$ the projection of $\PPP$ onto the $i$-th factor $\PP$, and denote \[H_i = \pr_i^*\mathcal{O}_{\mathbb{P}^1}(1),\ i = 1,2,3.\] Also denote by $\pr_{i,j}$ the projection of $\PPP$ onto the product of the $i$-th and $j$-th factors. Then the Chow ring of $\PPP$ is \[A^*(\PPP) = \mathbb{Z}[H_1,H_2,H_3]/(H_1^2,H_2^2,H_3^2).\] In particular, 
    \begin{align*}
        &A^0(X) = \mathbb{Z},\\
        &A^1(X) \cong \Pic(X) = \mathbb{Z}H_1 \oplus \mathbb{Z}H_2 \oplus \mathbb{Z}H_3,\\
        &A^2(X) = \mathbb{Z}H_2\cdot H_3 \oplus \mathbb{Z}H_1\cdot H_3 \oplus \mathbb{Z}H_1\cdot H_2,\\
        &A^3(X) = \mathbb{Z}H_1\cdot H_2\cdot H_3,
    \end{align*} with the $H_1 \cdot H_2 \cdot H_3 = 1$ convention. The \emph{tri-degree of a divisor} $D$ is a triple \[(D\cdot H_2\cdot H_3,D\cdot H_1\cdot H_3,D\cdot H_1\cdot H_2),\] and the \emph{tri-degree of a curve} $C$ is a triple \[(C\cdot H_1, C\cdot H_2, C\cdot H_3).\]

Suppose $D_1$ and $D_2$ are divisors with tri-degrees $(a_1,b_1,c_1)$ and $(a_2,b_2,c_2)$, respectively. Then $D_1 \cdot D_2$ has a tri-degree \[(b_1c_2+b_2c_1,a_1c_2+a_2c_1,a_1b_2+a_2b_1).\] Now suppose that $D$ is a divisor of tri-degree $(a,b,c)$ and $C$ is a curve of tri-degree $(k,l,m)$. Then \[D\cdot C = ak+bl+cm.\] 

Denote \[H = H_1+H_2+H_3,\] and call the \emph{$H$-degree} of a curve $C$ the number $C\cdot H$. We also know that \[K_X = -2H.\]
\section{Groups}
\label{Groups}
This section is dedicated to the classification of finite toric groups on $\PPP$. However, we do not need all finite toric groups. Namely, by Theorem~\ref{GminimalGsolid}, if $X$ is $G$-minimal and $G$-solid, then $W_G$ contains (the unique) $\mathfrak{A}_4$; only those groups are relevant for our analysis. From now on, whenever we consider subgroups of $\Norm_{\Aut(X)}(\mathbb{T})$, we always assume that $W_G = \Img \pi_{\mathbb{T}}$ contains $\mathfrak{A}_4$. In $\mathbb{W}$, there are five subgroups that contain $\mathfrak{A_4}$: 
    \begin{align*}
    \mathfrak{A}_4 &= \left\langle\left(1,1,1;\,(1,2,3)\right),\left(
   \iota(1),1,\iota(1)\right)\right\rangle \\
    \widebar{\mathfrak{S}}_4 &= \left\langle\left(
    \iota(1),1,\iota(1);\,(2,3)\right), \left(
   1,1,1;\,(1,2)\right)\right\rangle \\
    \mathfrak{S}_4 &= \left\langle\left(
    1,\iota(1),1;\,(1,2)\right),\left(
    \iota(1),\iota(1),\iota(1);\,(2,3)\right)\right\rangle \\
    \mathfrak{A}_4 \times \boldsymbol{\mu}_2 &= \left\langle\left(1,1,1;\,(1,2,3)\right),\left(
   1,\iota(1),1\right)\right\rangle \\
    \mathfrak{S}_4 \times \boldsymbol{\mu}_2 &= \left\langle\left(
    \iota(1),\iota(1),\iota(1);\,(1,2)\right),\left(
    \iota(1),1,\iota(1);\,(2,3)\right)\right\rangle 
    \end{align*} with the following presentations on the chosen generators: 
    \begin{align*}
        \mathfrak{A}_4 &= \left\langle a,b \mid a^3 = b^2 = (ab)^3 = 1\right\rangle\\
        \widebar{\mathfrak{S}}_4 \cong \mathfrak{S}_4 &= \left\langle a,b \mid a^4 = b^2 = (ab)^3 = 1\right\rangle\\
        \mathfrak{A}_4 \times \boldsymbol{\mu}_2 &= \left\langle a,b \mid a^3 = b^2 = (ab)^6 = [a,(ab)^3] =1\right\rangle\\
        \mathfrak{S}_4 \times \boldsymbol{\mu}_2 &= \left\langle a,b \mid a^2 = b^4 = (ab)^6 = [b,(ba)^3] =1\right\rangle
    \end{align*}
    
We now want to understand which finite groups are possible. For that, we need to know which subgroups of $\mathbb{T}$ are possible candidates for finite toric kernels $T_G$. We denote by $\ee_{n}$ the primitive $n$-th root of unity.
\begin{lemma}[\protect{\cite[Lemma~7.1]{ToricGsolid}}]
    Let $W_G$ be a subgroup of $\mathbb{W}$ that contains $\mathfrak{A}_4$ and let $T_G$ be a $W_G$-invariant subgroup of $\mathbb{T}$. Then there exists $n \in \mathbb{N}$ such that one of the following three possibilities holds: 
    \begin{enumerate}
        \item $T_G \cong \boldsymbol{\mu}^3_n=\left\langle\right(\ee_n,1,1),(1,\ee_n,1),(1,1,\ee_n)\rangle$;
        \item $n$ is even, and $T_G \cong \boldsymbol{\mu}^2_n \times \boldsymbol{\mu}_{\frac{n}{2}}=\left\langle\right(\ee_n,\ee_n,1),(1,\ee_n,\ee_n),(1,1,\ee^2_n)\rangle$;
        \item $n$ is even, and $T_G \cong \boldsymbol{\mu}_n \times \boldsymbol{\mu}_{\frac{n}{2}}^2=\left\langle\right(\ee_n,\ee_n,\ee_n),(1,\ee_n^2,1),(1,1,\ee_n^2)\rangle$,
    \end{enumerate}
    where $\ee_n$ is a primitive $n$-th root of unity.
\end{lemma}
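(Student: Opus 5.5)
The plan is to translate the problem into linear algebra over $\mathbb{Z}/2$ by squeezing $T_G$ between two products of cyclic groups. First I record how $\mathbb{W}$ acts on $\mathbb{T}$ by conjugation. The pure permutations in $\mathfrak{S}_3$ permute the coordinates $(t_1,t_2,t_3)$. In $\PGL_2$, conjugating the diagonal matrix $t$ by $\iota(1)$ gives $\mathrm{diag}(1,t)$, which equals $\mathrm{diag}(t^{-1},1)$. Hence $(\iota(1),1,\iota(1))\in\mathfrak{A}_4$ acts by $(t_1,t_2,t_3)\mapsto(t_1^{-1},t_2,t_3^{-1})$.

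Since $T_G$ is a subgroup, it is also stable under global inversion. Composing the two shows that $T_G$ is stable under inverting $t_2$ alone. Conjugating by the $3$-cycle $(1,1,1;(1,2,3))\in\mathfrak{A}_4$, $T_G$ is then stable under inverting any single coordinate and under cyclic permutation of the coordinates. This symmetry is all I will use, so only $\mathfrak{A}_4\subseteq W_G$ matters.

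Next I bound $T_G$ from above and below. Since $T_G$ is finite, each projection $\pr_i(T_G)\subset\mathbb{C}^*$ is a cyclic group. By cyclic symmetry all three projections equal the same $\boldsymbol{\mu}_n$, so $T_G\subseteq\boldsymbol{\mu}_n^3$. For $g=(a,b,c)\in T_G$, let $\iota_1(g)=(a^{-1},b,c)\in T_G$ be its image under inverting the first coordinate. Then $g\cdot\iota_1(g)^{-1}=(a^2,1,1)\in T_G$. Let $\boldsymbol{\mu}_m=T_G\cap(\mathbb{C}^*\times1\times1)$. This group contains all squares of $\boldsymbol{\mu}_n$ and is contained in $\boldsymbol{\mu}_n$, so $m=n$, or $n$ is even and $m=n/2$. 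By cyclic symmetry the same $m$ occurs on every axis, hence $\boldsymbol{\mu}_m^3\subseteq T_G\subseteq\boldsymbol{\mu}_n^3$. If $m=n$, we are in case (1).

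If $m=n/2$, I pass to the quotient $V=T_G/\boldsymbol{\mu}_m^3\subseteq(\boldsymbol{\mu}_n/\boldsymbol{\mu}_m)^3\cong\mathbb{F}_2^3$. This $V$ has three properties:
\begin{enumerate}
\item it is invariant under the cyclic shift;
\item it surjects onto each coordinate, because $\pr_i(T_G)=\boldsymbol{\mu}_n\neq\boldsymbol{\mu}_m$;
\item it meets each coordinate axis trivially, by the definition of $m$.
\end{enumerate}
The cyclically invariant subspaces of $\mathbb{F}_2^3$ are $0$, $\langle(1,1,1)\rangle$, the even-weight plane, and $\mathbb{F}_2^3$. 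Property (2) rules out $0$ and property (3) rules out $\mathbb{F}_2^3$.

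The two survivors give the remaining cases:
\begin{itemize}
\item The even-weight plane, lifted by $(\ee_n,\ee_n,1)$ and $(1,\ee_n,\ee_n)$ together with $\boldsymbol{\mu}_{n/2}^3$, gives $\langle(\ee_n,\ee_n,1),(1,\ee_n,\ee_n),(1,1,\ee_n^2)\rangle$, which is case (2).
\item The line $\langle(1,1,1)\rangle$, lifted by $(\ee_n,\ee_n,\ee_n)$, gives case (3).
\end{itemize}
In each case I check directly that the listed generators generate the preimage, and read off the abstract isomorphism types from the orders.

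The main obstacle is the step establishing stability under single-coordinate inversions, since everything else follows from it. The point is to use only the element $(\iota(1),1,\iota(1))\in\mathfrak{A}_4$, and not to assume that $(\iota(1),1,1)$ itself lies in $W_G$.
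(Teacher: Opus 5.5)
Your proof is correct and complete. There is no proof of this statement in the paper to compare it with: the lemma is quoted from \cite[Lemma~7.1]{ToricGsolid} and used only as input for the group classification in Section~\ref{Groups}.

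Your argument is a clean, self-contained replacement. It has three steps:
\begin{itemize}
\item Single-coordinate inversions come from $(\iota(1),1,\iota(1))$ combined with global inversion.
\item The identity $g\cdot\iota_1(g)^{-1}=(a^2,1,1)$ gives the sandwich $\boldsymbol{\mu}_m^3\subseteq T_G\subseteq\boldsymbol{\mu}_n^3$ with $m\in\{n,\tfrac n2\}$.
\item You then list the cyclically stable subspaces of $\mathbb{F}_2^3$. As a module over the shift, $\mathbb{F}_2^3\cong\mathbb{F}_2\oplus\mathbb{F}_4$ with non-isomorphic simple summands, so there are exactly four.
\end{itemize}
The argument uses only $\mathfrak{A}_4\subseteq W_G$. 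It also identifies $T_G$ as an actual subgroup of $\mathbb{T}$, not merely up to isomorphism, which is the form in which the explicit generators are used later.

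Two small points to tighten:
\begin{itemize}
\item You use finiteness of $T_G$ to get cyclic projections, but the statement leaves this hypothesis implicit. It is genuinely needed, since $T_G=\mathbb{T}$ is also $W_G$-invariant.
\item ``Read off the isomorphism types from the orders'' should be made precise. In case (2) the listed generators have orders $n,n,\tfrac n2$; in case (3) they have orders $n,\tfrac n2,\tfrac n2$. So the obvious map from $\boldsymbol{\mu}_n^2\times\boldsymbol{\mu}_{n/2}$, respectively $\boldsymbol{\mu}_n\times\boldsymbol{\mu}_{n/2}^2$, onto the generated group is a well-defined surjection. Both sides have order $n^3/2$, respectively $n^3/4$, so it is an isomorphism. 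Orders alone would not determine a finite abelian group.
\end{itemize}
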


Let us classify groups with $W_G = \mathfrak{A}_4$.
\begin{proposition}
\label{groupsodd}
For each odd $n$ and $W_G = \mathfrak{A}_4$ there are, up to conjugation, two possible groups: 
\begin{enumerate}
    \item $G_{12n^3} = \left\langle (\varepsilon_n,1,1),(1,\varepsilon_n,1),(1,1,\varepsilon_n),\left(
    1,1,1;\,(1,2,3)\right), \left(
    \iota(1),1,\iota(1)\right)\right\rangle$.

    \item $G_{12n^3}' = \left\langle (\varepsilon_n,1,1),(1,\varepsilon_n,1),(1,1,\varepsilon_n),\left(
    i,-i,1;\,(1,2,3)\right), \left(
    \iota(1),-1,\iota(1)\right)\right\rangle$.
\end{enumerate}
\end{proposition}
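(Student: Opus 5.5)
The plan is to treat the classification as a group-cohomology problem for complements in the normalizer of the torus. First, since $n$ is odd, the Lemma on $W_G$-invariant subgroups of $\mathbb{T}$ (\cite[Lemma~7.1]{ToricGsolid}) leaves only $T_G\cong\boldsymbol{\mu}_n^3$, because its cases (2) and (3) require $n$ even.

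Put $N=\pi_{\mathbb{T}}^{-1}(\mathfrak{A}_4)\subset \Norm_{\Aut(X)}(\mathbb{T})$. A group $G$ with $T_G=\boldsymbol{\mu}_n^3$ and $W_G=\mathfrak{A}_4$ is the same as a complement to $\mathbb{T}/\boldsymbol{\mu}_n^3$ in the extension
\[
1\to \mathbb{T}/\boldsymbol{\mu}_n^3\to N/\boldsymbol{\mu}_n^3\to \mathfrak{A}_4\to 1 .
\]
This extension is split, since our fixed lift of $\mathbb{W}$ gives a complement. Conjugation by elements of $\mathbb{T}$ preserves $T_G$ and $W_G$. Hence complements up to $\mathbb{T}$-conjugacy are classified by $H^1(\mathfrak{A}_4,\mathbb{T}/\boldsymbol{\mu}_n^3)$. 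The $n$-th power map is an $\mathfrak{A}_4$-equivariant isomorphism $\mathbb{T}/\boldsymbol{\mu}_n^3\cong\mathbb{T}$, so it suffices to compute $H^1(\mathfrak{A}_4,\mathbb{T})$.

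For this I would use the exponential sequence $0\to L\to L\otimes\mathbb{C}\to\mathbb{T}\to 0$, where $L\cong\mathbb{Z}^3$ is the cocharacter lattice. Here $\mathfrak{A}_4$ acts on $L$ by signed permutations: $(1,2,3)$ permutes the coordinates, and $(\iota(1),1,\iota(1))$ negates the first and third. The cohomology of the vector space $L\otimes\mathbb{C}$ vanishes in positive degrees, which gives $H^1(\mathfrak{A}_4,\mathbb{T})\cong H^2(\mathfrak{A}_4,L)$. The $3$-primary part vanishes: restricted to $\boldsymbol{\mu}_3$, $L$ is the regular permutation module, and Shapiro's lemma gives $H^2(\boldsymbol{\mu}_3,\mathbb{Z}[\boldsymbol{\mu}_3])=H^2(1,\mathbb{Z})=0$. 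Therefore
\[
H^2(\mathfrak{A}_4,L)=H^2(V_4,L)^{\boldsymbol{\mu}_3}.
\]
As a $V_4$-module, $L=\bigoplus_\chi \mathbb{Z}_\chi$ over the three nontrivial sign characters $\chi$, and each summand contributes $H^2(V_4,\mathbb{Z}_\chi)\cong\mathbb{Z}/2$. The element $(1,2,3)$ permutes these three summands cyclically, so the invariants are $\mathbb{Z}/2$. This gives exactly two conjugacy classes, matching the two groups in the statement.

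It remains to realise both classes. The trivial class is $G_{12n^3}$. For $G'_{12n^3}$, I would first check directly that the relations $a^3$, $b^2$, $(ab)^3$ of $\mathfrak{A}_4$ evaluate inside $\boldsymbol{\mu}_n^3$. For instance, $(i,-i,1;(1,2,3))^3$ is a scalar tuple equal to the identity in $\PGL_2^3$. Consequently $G'_{12n^3}$ is a group with $T_G=\boldsymbol{\mu}_n^3$ and $W_G=\mathfrak{A}_4$.

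\textbf{Main obstacle.} The delicate step is showing that $G'_{12n^3}$ is the nontrivial class. Because $n$ is odd, $T_G$ has no $2$-torsion, so the class is detected on the Sylow $2$-subgroup. I would show that the preimage of $V_4$ in $G'_{12n^3}$, modulo $T_G$, is not isomorphic to $V_4$ via any lift. Concretely, I would exhibit two lifts of commuting involutions of $V_4$ whose commutator, or whose square, is a nontrivial element of $\mathbb{T}$ of order $2$ not lying in $\boldsymbol{\mu}_n^3$. Equivalently, a lift of $V_4$ behaves like a quaternion-type group, so no conjugate of $G'_{12n^3}$ can contain a splitting of the Sylow $2$-subgroup. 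This requires an explicit matrix computation with $(\iota(1),-1,\iota(1))$ and its conjugates under $(i,-i,1;(1,2,3))$.

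Finally, I would check that conjugation by the full normalizer, including $\mathbb{W}$ and elements of $\mathbb{T}$ of finite order, does not merge the two classes. This holds because the cohomology class is invariant under the outer action of $\Norm_{\mathbb{W}}(\mathfrak{A}_4)$.
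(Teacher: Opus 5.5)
Your cohomological count is correct, and it takes a genuinely different and cleaner route than the paper, which normalizes $a_3=b_1=b_3=1$ by $\mathbb{T}$-conjugation and solves $A^3,\,B^2,\,(AB)^3\in T_G$ by hand. The chain $H^1(\mathfrak{A}_4,\mathbb{T}/\boldsymbol{\mu}_n^3)\cong H^1(\mathfrak{A}_4,\mathbb{T})\cong H^2(\mathfrak{A}_4,L)\cong\mathbb{Z}/2$ checks out, so there are exactly two $\mathbb{T}$-conjugacy classes. However, this gives the statement only once you know that $G'_{12n^3}$ represents the nontrivial class; otherwise that class could be missing from the list. The mechanism you propose for this step cannot work. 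Any square or commutator of elements of $G'_{12n^3}$ that lies in $\mathbb{T}$ lies in $G'_{12n^3}\cap\mathbb{T}=\boldsymbol{\mu}_n^3$, so lifts inside $G'_{12n^3}$ can never produce an order-$2$ element of $\mathbb{T}$ outside $\boldsymbol{\mu}_n^3$. Concretely, take $A'=(i,-i,1;\,(1,2,3))$ and $B'=(\iota(1),-1,\iota(1))$. Then $B'^2=1$, $C'=A'B'A'^{-1}=(\iota(-1),\iota(-1),-1)$, $C'^2=1$ and $B'C'=C'B'=(-1,\iota(1),\iota(-1))$. So $V_4$ lifts to an honest Klein four-group, and $G'_{12,3}\cong\mathfrak{A}_4\cong G_{12,3}$. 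The nontrivial class records how the complement sits inside $\Norm_{\Aut(X)}(\mathbb{T})$, not its isomorphism type, so no quaternion-type argument is available.

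What you need instead is an invariant of $\mathbb{T}$-conjugacy. Write the lift of $(\iota(1),1,\iota(1))$ in $G$ as $(\iota(b_1),b_2,\iota(b_3))$.
\begin{itemize}
\item Conjugation by $(t_1,t_2,t_3)\in\mathbb{T}$ replaces $b_1,b_3$ by $t_1^2b_1,\,t_3^2b_3$ and leaves $b_2$ unchanged.
\item Changing the lift by an element of $T_G$ multiplies $b_2$ by an element of $\boldsymbol{\mu}_n$.
\item The square of the lift is $(1,b_2^2,1)\in T_G$, so $b_2\in\boldsymbol{\mu}_{2n}=\pm\boldsymbol{\mu}_n$ because $n$ is odd.
\end{itemize}
Hence the sign of $b_2$ modulo $\boldsymbol{\mu}_n$ is well defined. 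It equals $+1$ for $G_{12n^3}$ and $-1$ for $G'_{12n^3}$. It is precisely the restriction of your class to $H^1(\langle(\iota(1),1,\iota(1))\rangle,\mathbb{T})\cong\mathbb{Z}/2$, coming from the coordinate of $L$ on which this involution acts trivially. This puts the two groups in different classes and completes your argument.

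Separately, your last paragraph only concerns conjugation inside $\Norm_{\Aut(X)}(\mathbb{T})$. For non-conjugacy in $\Aut(X)$ you need an intrinsic invariant. For example, the image of $G\cap(\PGL_2)^3$ in one factor $\PGL_2$ has order $2n$ for $G_{12n^3}$ but $4n$ for $G'_{12n^3}$. The paper does not prove this distinctness either.
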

\begin{proof}
    Let \[A = \left(
a_1,a_2,a_3;\,(1,2,3)\right),\ B = \left(
\iota(b_1),b_2,\iota(b_3)\right)\] be elements in $G$ such that $\pi_{\mathbb{T}}(A)$ and $\pi_{\mathbb{T}}(B)$ are the chosen generators of $\mathfrak{A}_4$. Conjugating by elements of $\mathbb{T}$, we may assume that $a_3 = b_1 = b_3 =1$. Then $A^3 = (a_1a_2,a_1a_2,a_1a_2) \in T_G$, $B^2 = (1, b_2^2,1) \in T_G$, and \[(AB)^3 = \left(\frac{a_1}{a_2b_2},\frac{a_2b_2}{a_1},\frac{a_2b_2}{a_1}\right) \in T_G.\] Since $n$ is odd, \[T_G = \boldsymbol{\mu}^3_n = \left\langle (\varepsilon_n,1,1),(1,\varepsilon_n,1),(1,1,\varepsilon_n)\right\rangle.\] Hence \[
\begin{cases}
a_1a_2 = \varepsilon^{i_1}_n\\
b_2^2 = \varepsilon^{i_2}_n\\
\frac{a_1}{a_2b_2} = \varepsilon^{i_3}_n
\end{cases},\] for some $i_1,i_2,i_3$. Solving and then conjugating by the elements of the normalizer of $\mathbb{T}$, we get the possibilities in the list.
\end{proof}

\begin{proposition}
\label{groupseven}
For each even $n$ and $W_G = \mathfrak{A}_4$ there are, up to conjugation, the following possibilities: 
\begin{enumerate}
    \item $G_{12n^3} = \left\langle (\varepsilon_n,1,1),(1,\varepsilon_n,1),(1,1,\varepsilon_n),\left(
    1,1,1;\,(1,2,3)\right), \left(
    \iota(1),1,\iota(1)\right)\right\rangle.$
    \item $G_{12n^3}' = \left\langle (\varepsilon_n,1,1),(1,\varepsilon_n,1),(1,1,\varepsilon_n),\left(
    \ee_{4n},\ee_{4n}^{-1},1;\,(1,2,3)\right), \left(
    \iota(1),\ee_{2n},\iota(1)\right)\right\rangle.$
    \item $G_{6n^3} = \left\langle (\varepsilon_n,\varepsilon_n,1),  (1,\varepsilon_n,\varepsilon_n),(1,1,\varepsilon^2_n),\left(
    1,1,1;\,(1,2,3)\right), \left(
    \iota(1),1,\iota(1)\right)\right\rangle$.
    \item $G_{3n^3} = \left\langle (\varepsilon_n,\varepsilon_n,\varepsilon_n),(1,\varepsilon^2_n,1),(1,1,\varepsilon^2_n),\left(
    1,1,1;\,(1,2,3)\right), \left(
    \iota(1),1,\iota(1)\right)\right\rangle$.
    \item $G_{3n^3}' = \left\langle (\varepsilon_n,\varepsilon_n,\varepsilon_n),(1,\varepsilon^2_n,1),(1,1,\varepsilon^2_n),\left(
    \ee_{2n},\ee_{2n}^{-1},1;\,(1,2,3)\right), \left(
    \iota(1),1,\iota(1)\right)\right\rangle$.
\end{enumerate}
\end{proposition}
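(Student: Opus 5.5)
We follow the same scheme as in Proposition~\ref{groupsodd}, now using the three possible toric kernels from the lemma of \cite[Lemma~7.1]{ToricGsolid}. Take lifts
\[A = \left(a_1,a_2,a_3;\,(1,2,3)\right),\qquad B = \left(\iota(b_1),b_2,\iota(b_3)\right)\]
of the chosen generators of $\mathfrak{A}_4$. First we conjugate by elements of $\mathbb{T}$ to normalize $a_3 = b_1 = b_3 = 1$; this is the same computation as in the odd case, since conjugation by $(t_1,t_2,t_3)$ changes the $a_i$ by ratios $t_i/t_{i-1}$ and $b_1,b_3$ by $t_1^2,t_3^2$. The relations then give
\[A^3=(a_1a_2,a_1a_2,a_1a_2),\qquad B^2=(1,b_2^2,1),\qquad (AB)^3=\left(\tfrac{a_1}{a_2b_2},\tfrac{a_2b_2}{a_1},\tfrac{a_2b_2}{a_1}\right),\]
and all three must lie in $T_G$. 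For $n$ even, $T_G$ is now one of $\boldsymbol{\mu}_n^3$, $\boldsymbol{\mu}_n^2\times\boldsymbol{\mu}_{n/2}$ or $\boldsymbol{\mu}_n\times\boldsymbol{\mu}_{n/2}^2$. We treat each kernel separately.

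\emph{Kernel $\boldsymbol{\mu}_n^3$.} The conditions are $a_1a_2\in\boldsymbol{\mu}_n$, $b_2^2\in\boldsymbol{\mu}_n$ and $a_1/(a_2b_2)\in\boldsymbol{\mu}_n$. Modulo $T_G$, $b_2$ is determined up to $\boldsymbol{\mu}_n$ by a class in $\boldsymbol{\mu}_{2n}/\boldsymbol{\mu}_n\cong\boldsymbol{\mu}_2$. Likewise $a_1$ is determined modulo $T_G$ and the residual torus conjugations preserving the normalization. We expect exactly two classes:
\begin{itemize}
\item the trivial extension $G_{12n^3}$;
\item the twisted one with $b_2=\ee_{2n}$ and $a_1=a_2^{-1}=\ee_{4n}$, which is forced by $a_1/(a_2b_2)\in\boldsymbol{\mu}_n$, giving $a_1^2\in\ee_{2n}\boldsymbol{\mu}_n$. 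This is $G'_{12n^3}$.
\end{itemize}

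\emph{Kernel $\boldsymbol{\mu}_n^2\times\boldsymbol{\mu}_{n/2}$.} Here the conditions are stronger: $(1,b_2^2,1)\in T_G$ forces $b_2^2\in\boldsymbol{\mu}_{n/2}$-type constraints, and the diagonal element $A^3$ must lie in $T_G$. We expect the residual freedom to be absorbed by conjugation and by multiplying $A$, $B$ by kernel elements, leaving only $G_{6n^3}$.

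\emph{Kernel $\boldsymbol{\mu}_n\times\boldsymbol{\mu}_{n/2}^2$.} The element $(1,b_2^2,1)$ must have $b_2^2\in\boldsymbol{\mu}_{n/2}$, which allows $b_2$ to be normalized to $1$. Then $a_1a_2$ and $a_1/a_2$ are constrained modulo the diagonal subgroup, and we expect the extra solution $a_1=a_2^{-1}=\ee_{2n}$. This gives $G_{3n^3}$ and $G'_{3n^3}$.

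The remaining step is the equivalence question. We show that the listed groups are pairwise non-conjugate and that every other solution is conjugate to one of them. For this we use conjugation by the full normalizer $\Norm_{\Aut(X)}(\mathbb{T})$, which acts on the data $(a_1,a_2,b_2)$ modulo $T_G$. To distinguish the groups we use an invariant. The cleanest is whether the extension $1\to T_G\to G\to\mathfrak{A}_4\to 1$ splits with a complement conjugate to the standard $\mathfrak{A}_4$. Equivalently, we ask whether $B$ can be chosen with $B^2=1$ and $A$ with $A^3=1$ simultaneously. For $G'_{12n^3}$ this fails because $b_2^2=\ee_n$ is a generator obstruction not removable by $T_G$-multiplication, and we plan to check this directly.

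The main obstacle is this bookkeeping. One must verify that for the two smaller kernels no further twisted classes survive, which amounts to computing a small cohomology group $H^2(\mathfrak{A}_4,T_G)$ restricted to the normalized lifts, or its $2$-part. We would first try to reduce everything to the Sylow-$2$ part: the $3$-part of $T_G$ is cohomologically trivial for the twist, as in the odd case where only the standard $G_{12n^3}$ and the $i$-twist occur. Then we would enumerate the few possible $(a_1,a_2,b_2)$ modulo $T_G$ and conjugation by hand.
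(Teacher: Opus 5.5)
Your set-up is the paper's: normalize $a_3=b_1=b_3=1$, impose $A^3,B^2,(AB)^3\in T_G$, solve for each of the three kernels, and conjugate inside $\Norm_{\Aut(X)}(\mathbb{T})$. The outcome you predict for each kernel is also correct. The gap is that the solving itself is left as ``we expect'', and the two devices you propose for exactly that step do not work.

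First, $H^2(\mathfrak{A}_4,T_G)$ only sees the abstract extension, not how $G$ sits inside $\mathbb{T}\rtimes\mathfrak{A}_4$. So it can neither count nor separate the conjugacy classes. Without normalization one has $(AB)^3=(c,c^{-1},c^{-1})$ with $c=a_1b_3/(a_2b_1a_3b_2)$. Apply this to $A=(\ee_{2n},\ee_{2n}^{-1},1;\,(1,2,3))$ and $B=(\iota(1),1,\iota(1))$ in $G'_{3n^3}$: it shows that $\langle A,(\ee_n,-1,-1)B\rangle\cong\mathfrak{A}_4$ is a complement to $T_G$ whenever $n\equiv 2\pmod 4$. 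For such $n$, $G_{3n^3}$ and $G'_{3n^3}$ therefore define the same split class. For $n=2$ both groups are $\mathfrak{A}_4\times\boldsymbol{\mu}_2$, yet they are not conjugate: the unique $\mathfrak{A}_4$ in $G_{24,13}$ fixes $([1:1],[1:1],[1:1])$, whereas the unique $\mathfrak{A}_4$ in $G'_{24,13}$, namely $\langle A,(-1,-1,-1)B\rangle$, has no fixed point. An $H^2$ count would therefore throw away $G'_{3n^3}$.

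Second, your ``equivalently'' is false. The listed generators of $G'_{3n^3}$ already satisfy $A^3=1$ and $B^2=1$, since $a_1a_2a_3=1$ and $b_2=1$. The twist lives only in $(AB)^3=(\ee_n,\ee_n^{-1},\ee_n^{-1})$.

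The correct bookkeeping is this: $G/T_G$ is a complement to $\mathbb{T}/T_G$ in $(\mathbb{T}\rtimes\mathfrak{A}_4)/T_G$. Hence $\mathbb{T}$-conjugacy classes of admissible $G$ are parametrized by $H^1(\mathfrak{A}_4,\mathbb{T}/T_G)$, and one then divides by conjugation by the rest of $\Norm_{\Aut(X)}(\mathbb{T})$.

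Concretely, you must also allow torus conjugations that break your normalization, provided the new $b_1=t_1^2$ and $b_3=t_3^2$ can be reabsorbed by multiplying $B$ by elements of $T_G$. Suppose you only use $t_1,t_3=\pm1$ together with multiplication by $T_G$. Then for $T_G=\boldsymbol{\mu}_n^3$ the solution $a_1=a_2=\ee_{2n}$, $b_2=1$ looks like a third class. In fact, conjugating by $(\ee_{2n},\ee_{2n}^2,1)$ and then multiplying $A,B$ by elements of $\boldsymbol{\mu}_n^3$ turns it into $G_{12n^3}$.

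For this kernel the $H^1$ formulation gives the count at once. The map $t\mapsto t^n$ gives an equivariant isomorphism $\mathbb{T}/\boldsymbol{\mu}_n^3\cong\mathbb{T}$. Moreover $\mathbb{T}\cong\mathrm{Ind}_{V_4}^{\mathfrak{A}_4}\mathbb{C}^*_\chi$, where $V_4\subset\mathfrak{A}_4$ is the Klein subgroup and $\chi$ is the sign by which it acts on the first factor. Hence $H^1(\mathfrak{A}_4,\mathbb{T})\cong H^1(V_4,\mathbb{C}^*_\chi)\cong\mathbb{Z}/2$, i.e.\ exactly $G_{12n^3}$ and $G'_{12n^3}$. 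The other two kernels are handled the same way, or by the direct enumeration the paper alludes to.

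Finally, the paper only claims completeness of the list, so pairwise non-conjugacy is optional. If you keep it, you need an invariant that detects $(AB)^3$, not just $A^3$ and $B^2$.
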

\begin{proof}
As in the case of an odd $n$, we get \[A^3 = (a_1a_2,a_1a_2,a_1a_2),\ B^2 = (1, b_2^2,1),\ (AB)^3 = \left(\frac{a_1}{a_2b_2}, \frac{a_2b_2}{a_1},\frac{a_2b_2}{a_1}\right) \in T_G.\] Solving for each possible $T_G$ and then conjugating by the elements of the normalizer of $\mathbb{T}$, we get the possibilities in the list.
\end{proof}
Note that $G_{12\cdot(\nicefrac{n}{2})^3} \subset G_{6n^3}$ and $G_{12\cdot(\nicefrac{n}{2})^3} \subset G_{3n^3}$. Also note that there is a subgroup conjugate to $G_{12\cdot(\nicefrac{n}{2})^3}'$ inside $G_{3n^3}'$. Indeed, in the case of $T_G = \boldsymbol{\mu}_n \times \boldsymbol{\mu}^2_{\frac{n}{2}}$ we can always multiply $a_1$, $a_2$, or $b_2$ by $\ee_n$. In the case of an even $\frac{n}{2}$, we can assume that $b_2 = \ee_n$. But then a subgroup 
\[\left\langle (\varepsilon^2_{n},1,1),(1,\varepsilon^2_{n},1),(1,1,\varepsilon^2_{n}),\left(
\ee_{2n},\ee_{2n}^{-1},1;\,(1,2,3)\right), \left(
\iota(1),\ee_n,\iota(1)\right)\right\rangle\] is precisely $G_{12\cdot(\nicefrac{n}{2})^3}'$. On the other hand, in the case of an odd $\frac{n}{2}$, we can assume $a_1 = i$, $a_2 = -i$, and~$b_2 = -1$ (since $\ee^{\frac{n}{2}}_{2n} = i$ and $\frac{n}{2}-1$ is even). Hence, a subgroup 
\[\left\langle (\varepsilon^2_{n},1,1),(1,\varepsilon^2_{n},1),(1,1,\varepsilon^2_{n}),\left(
i,-i,1;\,(1,2,3)\right), \left(
\iota(1),-1,\iota(1)\right)\right\rangle\] is $G_{12\cdot(\nicefrac{n}{2})^3}'$.

Groups with small $n$ will require extra care in our analysis, so we list them separately to fix the notation. We denote groups in the lists by the twofold indices in GAP's SmallGroups library, where the first number is the order of the group and the second number is the internal index in the library's list of groups of that order.
\begin{corollary}
\label{groupsA4}
For $n \in \{1,2,3,4\}$ and $W_G = \mathfrak{A}_4$, the group $G$ is conjugate in $\Aut(\mathbb{P}^1 \times \mathbb{P}^1 \times \mathbb{P}^1)$ to one of the following groups: 
\begin{enumerate}
\item ${G_{12,3}}  = \left\langle\left(
    1,1,1;\,(1,2,3)\right), \left(
    \iota(1),1,\iota(1)\right)\right\rangle$.

\item ${G_{12,3}'}  = \left\langle\left(
    i,-i,1;\,(1,2,3)\right), \left(
    \iota(1),-1,\iota(1)\right)\right\rangle$.

\item ${G_{96,229}}  =\left\langle (-1,1,1),(1,-1,1),(1,1,-1),\left(
    1,1,1;\,(1,2,3)\right), \left(
    \iota(1),1,\iota(1)\right)\right\rangle$.

\item ${G_{96,3}}  = \left\langle (-1,1,1),(1,-1,1),(1,1,-1),\left(
    \ee_8,\ee_8^{-1},1;\,(1,2,3)\right), \left(
    \iota(1),i,\iota(1)\right)\right\rangle$.

\item ${G_{48,50}} = \left\langle (-1,-1,1),(1,-1,-1),\left(
1,1,1;\,(1,2,3)\right), \left(\iota(1),1,\iota(1)\right)\right\rangle$.

\item $G_{24,13}  = \left\langle (-1,-1,-1),\left(
    1,1,1;\,(1,2,3)\right), \left(
    \iota(1),1,\iota(1)\right)\right\rangle$.
    
\item $G_{24,13}'  = \left\langle (-1,-1,-1),\left(
    i,-i,1;\,(1,2,3)\right), \left(
    \iota(1),1,\iota(1)\right)\right\rangle$.

\item ${G_{324,160}}  = \left\langle (\varepsilon_3,1,1),(1,\varepsilon_3,1),(1,1,\varepsilon_3),\left(
    1,1,1;\,(1,2,3)\right), \left(
    \iota(1),1,\iota(1)\right)\right\rangle$.

\item ${G_{324,160}'}  = \left\langle (\varepsilon_3,1,1),(1,\varepsilon_3,1),(1,1,\varepsilon_3),\left(
    i,-i,1;\,(1,2,3)\right), \left(
    \iota(1),-1,\iota(1)\right)\right\rangle$.

\item ${G_{768, 1084559}}  = \left\langle (i,1,1),(1,i,1),(1,1,i),\left(
    1,1,1;\,(1,2,3)\right), \left(
    \iota(1),1,\iota(1)\right)\right\rangle$.

\item ${G_{768, 1083476}}  = \left\langle (i,1,1),(1,i,1),(1,1,i),\left(
    \ee_{16},\ee_{16}^{-1},1;\,(1,2,3)\right), \left(
    \iota(1),\ee_8,\iota(1)\right)\right\rangle$.

\item ${G_{384, 5863}} =  \left\langle (i,i,1),(1,i,i),(1,1,-1),\left(
1,1,1;\,(1,2,3)\right), \left(\iota(1),1,\iota(1)\right)\right\rangle$.

\item ${G_{192, 191}}  = \left\langle (i,i,i),(1,-1,1),(1,1,-1),\left(
    1,1,1;\,(1,2,3)\right), \left(
    \iota(1),1,\iota(1)\right)\right\rangle$.

\item ${G_{192, 194}}  = \left\langle (i,i,i),(1,-1,1),(1,1,-1),\left(
    \ee_8,\ee_8^{-1},1;\,(1,2,3)\right), \left(
    \iota(1),1,\iota(1)\right)\right\rangle$.
\end{enumerate}
\end{corollary}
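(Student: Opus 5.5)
The plan is to specialize Propositions~\ref{groupsodd} and~\ref{groupseven} to $n\in\{1,2,3,4\}$ and then identify each resulting group in the SmallGroups library.

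First I record which toric kernels occur. For $n=1$ only $T_G=\boldsymbol{\mu}_1^3$ is possible. For $n=3$ (odd) only $\boldsymbol{\mu}_3^3$ is possible. For $n=2$ we have $\boldsymbol{\mu}_2^3$, $\boldsymbol{\mu}_2^2\times\boldsymbol{\mu}_1$ (generated by $(-1,-1,1),(1,-1,-1)$) and $\boldsymbol{\mu}_2\times\boldsymbol{\mu}_1^2$ (generated by $(-1,-1,-1)$). For $n=4$ we have $\boldsymbol{\mu}_4^3$, $\boldsymbol{\mu}_4^2\times\boldsymbol{\mu}_2$ and $\boldsymbol{\mu}_4\times\boldsymbol{\mu}_2^2$.

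Next I substitute into the lists. Proposition~\ref{groupsodd} with $n=1$ gives $G_{12,3}$ and $G_{12,3}'$, and with $n=3$ gives $G_{324,160}$ and $G_{324,160}'$; the generators written in the corollary are exactly those of the proposition. Proposition~\ref{groupseven} with $n=2$ gives five groups:
\begin{itemize}
\item $G_{12\cdot 8}$ and $G_{12\cdot 8}'$, where we set $\ee_8$ and $\ee_{2n}=i$;
\item $G_{6\cdot 8}=G_{48,50}$;
\item $G_{3\cdot 8}$ and $G'_{3\cdot 8}$, where $\ee_{2n}=\ee_4=i$, giving $(i,-i,1;(1,2,3))$.
\end{itemize}
Similarly, $n=4$ gives $G_{768}$, $G_{768}'$ (with $\ee_{16},\ee_8$), $G_{384}$, $G_{192}$ and $G_{192}'$ (with $\ee_8$).

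Some of these duplicate groups already listed. The remark after Proposition~\ref{groupseven} shows that the pair $G_{3n^3}, G'_{3n^3}$ for $n=2$ lies over the $n=1$ groups with the extra central element $(-1,-1,-1)$, which is consistent. I would keep only one representative per conjugacy class and check that no further coincidences occur.

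Finally I identify each group abstractly, for instance by feeding the explicit matrix generators (lifted to $\mathrm{GL}_2^3\rtimes\mathfrak{S}_3$ modulo scalars, or realized as permutation groups on a suitable finite orbit) into SageMath/GAP and calling \texttt{IdGroup}. The orders check out: $12n^3$, $6n^3$ and $3n^3$ give $12, 96, 48, 24, 324, 768, 384, 192$.

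The main obstacle is twofold. Conjugation within the normalizer does not tell us whether two groups with the same SmallGroups id (e.g.\ $G_{24,13}$ and $G_{24,13}'$) are genuinely non-conjugate in $\Aut(X)$. The corollary only claims that every $G$ is conjugate to some member of the list, so for that direction, redundancy is harmless. It only matters if I want the list to be irredundant, and then I would distinguish the groups by invariants such as their fixed-point or orbit structure on $X$. The remaining effort is bookkeeping: the \texttt{IdGroup} computations, done by computer.
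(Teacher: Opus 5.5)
Your proposal is correct and follows the paper's route: the corollary is Propositions~\ref{groupsodd} and~\ref{groupseven} specialized to $n\in\{1,2,3,4\}$, which gives $2+5+2+5=14$ groups, with the SmallGroups labels obtained by computer. One small wording fix: the $n=2$ groups $G_{24,13}$ and $G_{24,13}'$ contain the $n=1$ groups rather than duplicating them (their orders differ), so no pruning of the list is needed for the stated direction.
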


Define a partial order on the subgroups of $\Aut(\PPP)$ in the following way: for two subgroups $G_1$, $G_2$, we say that $G_1 \preceq G_2$ if and only if there is a subgroup in $G_2$ conjugate to $G_1$ in $\Aut(\PPP)$. Here is the Hasse diagram for this partial order for groups in Corollary~\ref{groupsA4} for $n \in \{1,2,3\}$:
\begin{figure}[H]
    \centering
\begin{tikzpicture}[
    scale=0.82,
    transform shape,
    x=1.55cm,
    y=1.05cm,
    group/.style={
        font=\small,
        inner sep=1.5pt
    },
    hasseedge/.style={
        semithick
    }
]

\node[group] (g123)      at (-2,0)   {$G_{12,3}$};
\node[group] (g123p)     at ( 2,0)   {$G_{12,3}'$};

\node[group] (g2413)     at (-2,1.4) {$G_{24,13}$};
\node[group] (g4850)     at ( 0,1.4) {$G_{48,50}$};
\node[group] (g2413p)    at ( 2,1.4) {$G_{24,13}'$};

\node[group] (g96229)    at (0,2.8)  {$G_{96,229}$};

\node[group] (g963)      at (-5.0,2.8) {$G_{96,3}$};

\node[group] (g324160)   at (-4.4,4.2) {$G_{324,160}$};
\node[group] (g324160p)  at ( 4.4,4.2) {$G_{324,160}'$};

\draw[hasseedge] (g123.north)       -- (g2413.south);
\draw[hasseedge] (g123.north east)  -- (g4850.south west);
\draw[hasseedge] (g2413.north east) -- (g96229.south west);
\draw[hasseedge] (g4850.north)      -- (g96229.south);

\draw[hasseedge] (g123p.north)       -- (g2413p.south);
\draw[hasseedge] (g123p.north west)  -- (g4850.south east);
\draw[hasseedge] (g2413p.north west) -- (g96229.south east);

\draw[hasseedge] (g123.north west)   -- (g324160.south east);
\draw[hasseedge] (g123p.north east)  -- (g324160p.south west);

\draw[hasseedge] (g123.north west)       -- (g963.south east);

\end{tikzpicture}
    \label{hasse}
\end{figure}

We also need groups with other possibilities for $W_G$, but only for $n \in \{1,2\}$. Cases with $n = 4$ and $W_G \neq \mathfrak{A}_4$ are treated uniformly with cases with $n\geq 5$ and do not require further analysis, and cases with $n = 3$ and $W_G \neq \mathfrak{A}_4$ will follow from Lemma~\ref{overgroupsGbirsuperrig}.
\begin{proposition}
\label{groupsS4odd}
For $n \in \{1,2\}$ and $W_G = \widebar{\mathfrak{S}}_4$, the group $G$ is conjugate in $\Aut(\mathbb{P}^1 \times \mathbb{P}^1 \times \mathbb{P}^1)$ to one of the following groups:

\begin{enumerate}
\item ${\widebar{G}_{24,12}}  = \left\langle \left(
    \iota(1),1,\iota(1);\,(2,3)\right),\left(1,1,1;\,(1,2)\right)\right\rangle$.

\item ${\widebar{G}_{24,12}'}  = \left\langle \left(
    \iota(1),1,\iota(1);\,(2,3)\right),\left(i,-i,-1;\,(1,2)\right)\right\rangle$.

\item ${\widebar{G}_{192,1538}} = \left\langle (-1,1,1),(1,-1,1),(1,1,-1),\left(
    \iota(1),1,\iota(1);\,(2,3)\right),\left(
    1,1,1;\,(1,2)\right)\right\rangle$.

\item ${\widebar{G}_{192,1495}}  = \left\langle (-1,1,1),(1,-1,1),(1,1,-1),\left(
    \iota(1),1,\iota(1);\,(2,3)\right),\left(
    \ee_8,\ee_8^{-1},i;\,(1,2)\right)\right\rangle$.

\item ${\widebar{G}_{96,227}}  = \left\langle (-1,-1,1),(1,-1,-1),\left(
    \iota(1),1,\iota(1);\,(2,3)\right),\left(
    1,1,1;\,(1,2)\right)\right\rangle$.

\item ${\widebar{G}_{96,227}'}  = \left\langle (-1,-1,1),(1,-1,-1),\left(
    \iota(1),1,\iota(1);\,(2,3)\right),\left(
    i,-i,-1;\,(1,2)\right)\right\rangle$.

\item ${\widebar{G}_{48,48}}  = \left\langle (-1,-1,-1),\left(
    \iota(1),1,\iota(1);\,(2,3)\right),\left(
    1,1,1;\,(1,2)\right)\right\rangle$.

\item ${\widebar{G}_{48,30}}  = \left\langle (-1,-1,-1),\left(
    \iota(1),1,\iota(1);\,(2,3)\right),\left(
    \ee_8,\ee_8^3,i;\,(1,2)\right)\right\rangle$.
\end{enumerate}
\end{proposition}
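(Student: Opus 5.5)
The plan is to follow the proofs of Propositions~\ref{groupsodd} and~\ref{groupseven}, now with the generators of $\widebar{\mathfrak{S}}_4$ in place of those of $\mathfrak{A}_4$. Since $W_G=\widebar{\mathfrak{S}}_4$ contains $\mathfrak{A}_4$, the subgroup $G^{\circ}=\pi_{\mathbb{T}}^{-1}(\mathfrak{A}_4)\cap G$ has index two in $G$. It has the same toric kernel $T_G$ and $W_{G^\circ}=\mathfrak{A}_4$. For $n\in\{1,2\}$, Corollary~\ref{groupsA4} therefore says that $G^\circ$ is conjugate to one of $G_{12,3}$, $G_{12,3}'$, $G_{96,229}$, $G_{96,3}$, $G_{48,50}$, $G_{24,13}$, $G_{24,13}'$. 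I will conjugate so that $G^\circ$ is exactly one of these. Then I look for an element
\[C=\left(c_1,c_2,c_3;\,(1,2)\right),\]
with $c_i$ diagonal, such that $\pi_{\mathbb{T}}(C)$ is the transposition-type generator of $\widebar{\mathfrak{S}}_4$; by the presentation, this generator together with $\pi_{\mathbb{T}}((\iota(1),1,\iota(1);(2,3)))$ generates $\widebar{\mathfrak{S}}_4$. This requires checking that $(\iota(1),1,\iota(1);(2,3))$ is obtained from the generators of $\mathfrak{A}_4$ after adjoining $C$. In practice I would take $G=\langle G^\circ, C\rangle$ and verify that it contains the listed $(2,3)$-generator, possibly up to a correction by an element of $T_G$.

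The constraints on $c_1,c_2,c_3$ come from requiring that $C$ normalize $G^\circ$ and that $C^2\in G^\circ$. First, $C^2=(c_1c_2,c_1c_2,c_3^2)$ must lie in $T_G$. Second, the conjugates $CAC^{-1}$ and $CBC^{-1}$ of the generators $A,B$ of $G^\circ$ must equal the appropriate elements of $G^\circ$ times elements of $T_G$. For example, $CBC^{-1}$ is a lift of $\pi_{\mathbb{T}}(B)$ conjugated by a transposition, which must match the corresponding product of $A,B$ in $G^\circ$. These conditions give a system of monomial equations in $c_1,c_2,c_3$ modulo $T_G$, of the same shape as the system in the proof of Proposition~\ref{groupsodd}.

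Next I remove the remaining freedom. One may multiply $C$ by elements of $T_G$, and one may conjugate by elements of $\Norm_{\Aut(X)}(\mathbb{T})$ that normalize $G^\circ$, namely the torus elements in the normalizer, which adjust $c_i$ by ratios. After this, the system should leave at most two classes of $C$ for each $G^\circ$: the trivial one $(1,1,1;(1,2))$, and a twisted one such as $(i,-i,-1;(1,2))$ or $(\ee_8,\ee_8^{-1},i;(1,2))$. Some choices of $G^\circ$ admit no compatible $C$; I expect this for $G_{12,3}'$ with $C$ trivial, or conversely, since the twisted $\mathfrak{A}_4$ lifts must match. I will also check that the outcomes for $G^\circ=G_{24,13}$ and $G^\circ=G_{24,13}'$ collapse to the two listed groups $\widebar{G}_{48,48}$ and $\widebar{G}_{48,30}$, with $\widebar{G}_{48,30}$ containing $G_{24,13}'$ or a conjugate of it. Finally, the SmallGroups identifiers follow from computing each group explicitly in SageMath/GAP, and the resulting different isomorphism types distinguish most of the conjugacy classes. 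The two pairs $\widebar{G}_{24,12}$, $\widebar{G}_{24,12}'$ and $\widebar{G}_{96,227}$, $\widebar{G}_{96,227}'$ are abstractly isomorphic, so they must be distinguished geometrically, for instance by the lengths of their orbits on the eight $\mathbb{T}$-fixed points or by the presence of $G$-fixed points.

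The main obstacle is this bookkeeping. One has to track which conjugations preserve $G^\circ$ and show that every solution of the system is equivalent to a listed one. One also has to prove that the primed and unprimed groups in each pair are really non-conjugate in $\Aut(X)$ and not just non-conjugate within $\Norm(\mathbb{T})$. For the latter I would use a conjugation invariant such as the existence of a $G$-invariant $\mathbb{T}$-fixed point orbit of length four, or the structure of fixed loci of the order-two elements.
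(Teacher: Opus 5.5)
Your route is sound but differs from the paper's. The paper does not pass through the index-two subgroup. It lifts the two generators of $\widebar{\mathfrak{S}}_4$ directly as $A=(\iota(a_1),a_2,\iota(a_3);\,(2,3))$ and $B=(b_1,b_2,b_3;\,(1,2))$, and uses the torus conjugation to make $a_1=a_2=a_3=1$. The presentation $a^4=b^2=(ab)^3=1$ then leaves only two conditions: $B^2=(b_1b_2,b_1b_2,b_3^2)\in T_G$ and $(AB)^3=\bigl(\frac{b_2}{b_1b_3},\frac{b_1b_3}{b_2},\frac{b_1b_3}{b_2}\bigr)\in T_G$, with $A^4=1$ automatic. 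The solutions are then already in the normal form of the list.

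Your extension approach reuses Corollary~\ref{groupsA4}. It also records which $\mathfrak{A}_4$-type group sits inside each $\widebar{\mathfrak{S}}_4$-type group, which is what Corollary~\ref{Gbirsuprigrest} later uses. The cost is that you spend the torus freedom on pinning down $G^\circ$, so a reconciliation step is unavoidable. Your expectation that $\langle G^\circ,C\rangle$ contains $(\iota(1),1,\iota(1);\,(2,3))$ up to $T_G$ is too optimistic. For $n=1$ and $G^\circ=G_{12,3}$, the normalizing conditions force $c_1=c_2=c_3=\pm1$. The group $\langle G_{12,3},(-1,-1,-1;\,(1,2))\rangle$ has $(\iota(-1),-1,\iota(-1);\,(2,3))$ as the unique lift of that Weyl element, since $T_G=1. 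It becomes exactly $\widebar{G}_{24,12}'$ only after conjugating by the torus element $(i,-1,1)$, which does not normalize $G_{12,3}$.

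Two further points.
\begin{itemize}
\item Take the conjugation bringing $G^\circ$ to normal form inside $\Norm_{\Aut(X)}(\mathbb{T})$, as in the proofs of Propositions~\ref{groupsodd} and~\ref{groupseven}. This keeps $C$ of the form $(c_1,c_2,c_3;\,(1,2))$ and keeps $W_G=\widebar{\mathfrak{S}}_4$, which is normal in $\mathbb{W}$.
\item With that in place, your guesses about empty branches can be settled without computation. Every lift $A$ has $A^2=(1,a_2\iota(a_3),\iota(a_3)a_2)$, a lift of a nontrivial element of the Klein subgroup with a trivial component. No such lift in $G_{12,3}'$ or in $G_{96,3}$ has a trivial component. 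Hence these two groups never occur as $G^\circ$, while $G_{12,3}$ produces both $n=1$ groups.
\end{itemize}

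Finally, the statement does not claim the list is irredundant, so your non-conjugacy arguments are not required. In any case, the $\mathbb{T}$-fixed points would not separate the pairs: for $W_G=\widebar{\mathfrak{S}}_4$ they always split as $\boldsymbol{\mathcal{O}}_{4,0}\sqcup\boldsymbol{\mathcal{O}}_{4,\infty}$.
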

\begin{proof}
Let \[A = \left(\iota(a_1),a_2,\iota(a_3);\,(2,3)\right),\ B = \left(b_1,b_2,b_3;\,(1,2)\right)\]
be elements in $G$ such that $\pi_{\mathbb{T}}(A)$ and $\pi_{\mathbb{T}}(B)$ are the chosen generators of $\widebar{\mathfrak{S}}_4$. Conjugating by elements of $\mathbb{T}$, we may assume that $a_1 = a_2 = a_3 =1$. Then, from the presentation, we get: \[B^2 = (b_1b_2,b_1b_2,b_3^2),\ (AB)^3 = \left(\frac{b_2}{b_1b_3},\frac{b_1b_3}{b_2},\frac{b_1b_3}{b_2}\right) \in T_G,\] and $A^4 = (1,1,1)$ automatically. Solving for each $n \in \{1,2\}$ and for each possible $T_G$ and then conjugating by the elements of the normalizer of $\mathbb{T}$, we get the possibilities in the list.
\end{proof}

\begin{proposition}
\label{groupsS4even}
For $n \in \{1,2\}$ and $W_G = \mathfrak{S}_4$, the group $G$ is conjugate in $\Aut(\mathbb{P}^1 \times \mathbb{P}^1 \times \mathbb{P}^1)$ to one of the following groups: 

\begin{enumerate}
\item ${G_{24,12}}  = \left\langle \left(1,\iota(1),1;\,(1,2)\right),\left(
    \iota(1),\iota(1),\iota(1);\,(2,3)\right)\right\rangle$.

\item ${G_{24,12}'}  = \left\langle \left(1,\iota(1),-1;\,(1,2)\right),\left(
    \iota(-1),\iota(1),\iota(1);\,(2,3)\right)\right\rangle$.

\item ${G_{24,12}''}  = \left\langle \left(1,\iota(1),i;\,(1,2)\right),\left(
    \iota(-i),\iota(1),\iota(1);\,(2,3)\right)\right\rangle$.

\item ${G_{192,1538}}  = \left\langle (-1,1,1),(1,-1,1),(1,1,-1),\left(
    1,\iota(1),1;\,(1,2)\right),\left(
    \iota(1),\iota(1),\iota(1);\,(2,3)\right)\right\rangle$.

\item ${G_{192,1538}'}  = \left\langle (-1,1,1),(1,-1,1),(1,1,-1),\left(
    1,\iota(1),i;\,(1,2)\right),\left(
    \iota(-i),\iota(1),\iota(1);\,(2,3)\right)\right\rangle$.

\item ${G_{192,181}} = \left\langle (-1,1,1),(1,-1,1),(1,1,-1),\left(
    1,\iota(1),\ee_8;\,(1,2)\right),\left(
    \iota(\ee_8^{-1}),\iota(1),\iota(1);\,(2,3)\right)\right\rangle$.

\item ${G_{96,227}} = \left\langle (-1,-1,1),(1,-1,-1),\left(
    1,\iota(1),1;\,(1,2)\right),\left(
    \iota(1),\iota(1),\iota(1);\,(2,3)\right)\right\rangle$.

\item ${G_{96,227}'} = \left\langle (-1,-1,1),(1,-1,-1),\left(
    1,\iota(1),i;\,(1,2)\right),\left(
    \iota(-i),\iota(1),\iota(1);\,(2,3)\right)\right\rangle$.

\item ${G_{48,48}} = \left\langle (-1,-1,-1),\left(
    1,\iota(1),1;\,(1,2)\right),\left(
    \iota(1),\iota(1),\iota(1);\,(2,3)\right)\right\rangle$.

\item ${G_{48,48}'} = \left\langle (-1,-1,-1),\left(
    1,\iota(1),i;\,(1,2)\right),\left(
    \iota(-i),\iota(1),\iota(1);\,(2,3)\right)\right\rangle$.
\end{enumerate}
\end{proposition}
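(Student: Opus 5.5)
The plan is to follow the proof of Proposition~\ref{groupsS4odd} verbatim, now with the chosen generators of $\mathfrak{S}_4\subset\mathbb{W}$. Let
\[A=\left(a_1,\iota(a_2),a_3;\,(1,2)\right),\qquad B=\left(\iota(b_1),\iota(b_2),\iota(b_3);\,(2,3)\right)\]
be elements of $G$ mapping under $\pi_{\mathbb{T}}$ to the two chosen generators of $\mathfrak{S}_4$. Since $G$ is generated by $T_G$ together with $A$ and $B$, and $T_G$ is one of the three $W_G$-invariant subgroups from \cite[Lemma~7.1]{ToricGsolid} with $n\in\{1,2\}$, it suffices to classify the pairs $(A,B)$ modulo $T_G$ and modulo conjugation. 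First I will use conjugation by elements of $\mathbb{T}$. Conjugating by $(t_1,t_2,t_3)$ rescales the entries $a_i,b_i$ by monomials in the $t_i$, and these monomials have enough freedom to normalize three of the six parameters. I expect to be able to take $a_1=a_2=1$ and $b_2=b_3=1$. This leaves $a_3$ and $b_1$ free, which matches the shape of the listed generators $\left(1,\iota(1),a_3;(1,2)\right)$ and $\left(\iota(b_1),\iota(1),\iota(1);(2,3)\right)$.

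Next I will impose the relations of $\mathfrak{S}_4$ modulo $T_G$. Using the multiplication rule in $\Aut(X)$, I will compute $A^2$, $B^4$ (or $B^2$, depending on the orders of the images), and $(AB)^3$. Each of these lies in $\mathbb{T}$ and must belong to $T_G$. For instance, $A^2=(\iota(1)\cdot 1,\,1\cdot\iota(1),\,a_3^2)$ up to the permutation bookkeeping, so it gives the condition $a_3^2\in$ the corresponding component. Similarly $(AB)^3$ yields a torus element whose entries are monomials in $a_3$ and $b_1$. I expect conditions of the form $a_3^2,\ b_1^2,\ a_3b_1\in\boldsymbol{\mu}_{2n}$ (up to sign), together with the precise constraint that the resulting torus element lies in $T_G$.

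Then, for each $n\in\{1,2\}$ and each of the three possible $T_G$ (for $n=1$ only $T_G$ trivial; for $n=2$ the options $\boldsymbol{\mu}_2^3$, $\boldsymbol{\mu}_2^2$ and $\boldsymbol{\mu}_2$ generated by $(-1,-1,-1)$), I will solve these equations for $(a_3,b_1)$. The solutions will be reduced modulo multiplication by $T_G$ and modulo conjugation by the part of $\Norm_{\Aut(X)}(\mathbb{T})$ that preserves the normalization. This includes $\mathbb{T}$-elements with $t_i=\pm1$ or $\pm i$ and the Weyl lifts normalizing $\mathfrak{S}_4$. In the case $n=1$, I expect the three solutions $a_3\in\{1,-1,i\}$ with $b_1=a_3^{-1}$ up to sign, giving $G_{24,12}$, $G_{24,12}'$ and $G_{24,12}''$. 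For $T_G=\boldsymbol{\mu}_2^3$, the value $-1$ is absorbed by $T_G$ and a new solution $\ee_8$ appears, giving $G_{192,1538}$, $G_{192,1538}'$ and $G_{192,181}$. For the smaller kernels I expect two classes each.

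The main obstacle is the final step: showing that the resulting groups are pairwise non-conjugate and that no further identifications occur. Non-conjugacy between groups of different isomorphism type is immediate from the SmallGroups identifiers. For groups with the same identifier, such as $G_{24,12}$, $G_{24,12}'$ and $G_{24,12}''$, I would distinguish them by a conjugation invariant. The natural choices are the orbit structure on the eight $\mathbb{T}$-fixed points or the existence of fixed points, or the characters of the lift acting on $H^0(\mathcal{O}_X(1,1,1))$. These would be checked computationally, as the paper does elsewhere.
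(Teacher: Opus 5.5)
Your overall strategy (lift the two generators of $\mathfrak{S}_4$, normalize by $\mathbb{T}$-conjugation, impose the relations modulo $T_G$, solve for each kernel, reduce by the normalizer) is the paper's. However, two steps fail as written.

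The first problem is the normalization. $\mathbb{T}$ is three-dimensional, but you normalize four parameters. Conjugation by $t=(t_1,t_2,t_3)$ sends
\[(a_1,a_2,a_3)\mapsto(a_1t_1/t_2,\,a_2t_1t_2,\,a_3),\qquad (b_1,b_2,b_3)\mapsto(b_1t_1^2,\,b_2t_2t_3,\,b_3t_2t_3).\]
So $a_3$ and $b_2/b_3$ are invariants. Equivalently, $B^2=(1,b_2/b_3,b_3/b_2)\in\mathbb{T}$ is fixed by $\mathbb{T}$-conjugation, since $\mathbb{T}$ is abelian. You can only reach $a_1=a_2=b_3=1$, which is what the paper does. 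The remaining $b_2$ is then constrained by $B^2=(1,b_2,b_2^{-1})\in T_G$. Only after that can you set $b_2=1$, by replacing $B$ with $tB$ for some $t\in T_G$ (using $t\iota(b)=\iota(tb)$):
for trivial $T_G$ and for $T_G=\langle(-1,-1,-1)\rangle$ the relation already forces $b_2=1$;
for $\boldsymbol{\mu}_2^3$ and $\boldsymbol{\mu}_2^2$ one uses $t=(1,-1,1)$, resp.\ $t=(-1,-1,1)$.
This is what justifies the shape $(\iota(b_1),\iota(1),\iota(1);(2,3))$ of the listed generators. Your argument skips the relation that makes this possible.

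The second problem is the relations. In the presentation $a^4=b^2=(ab)^3=1$ the generator $a$ has order $4$. So $A^2=(\iota(1),\iota(1),a_3^2)$ is not a torus element and yields no condition. Read literally, your condition ``$a_3^2$ lies in the third component of $T_G$'' would force $a_3=\pm1$ for trivial $T_G$ and lose $G_{24,12}''$. The relations to impose are
\[A^4=(1,1,a_3^4),\qquad B^2=(1,b_2,b_2^{-1}),\qquad (AB)^3=\Bigl(\frac{b_2}{a_3b_1},\frac{b_2}{a_3b_1},\frac{a_3b_1}{b_2}\Bigr),\]
all required to lie in $T_G$. With these, the case analysis leads to the list exactly as in the paper.

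Finally, the step you call the main obstacle is not needed. The proposition only claims that every such $G$ is conjugate to some group on the list, and the paper does not prove pairwise non-conjugacy.
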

\begin{proof}
Let \[A = \left(
a_1,\iota(a_2),a_3;\,(1,2)\right),\ B = \left(\iota(b_1),\iota(b_2),\iota(b_3);\,(2,3)\right)\] be elements in $G$ such that $\pi_{\mathbb{T}}(A)$ and $\pi_{\mathbb{T}}(B)$ are the chosen generators of $\mathfrak{S}_4$. Conjugating by elements of $\mathbb{T}$, we may assume that $a_1 = a_2 = b_3 =1$. Then, from the presentation, we get: \[A^4 = (1,1,a_3^4),\ B^2 = \left(1, b_2,\frac{1}{b_2}\right) ,\ (AB)^3 = \left(\frac{b_2}{a_3 b_1},\frac{b_2}{a_3 b_1},\frac{a_3 b_1}{b_2}\right) \in T_G.\] Solving for each $n \in \{1,2\}$ and for each possible $T_G$ and then conjugating by the elements of the normalizer of $\mathbb{T}$, we get the possibilities in the list.
\end{proof}

\begin{proposition}
\label{groupsA4C2}
For $n \in \{1,2\}$ and $W_G = \mathfrak{A}_4 \times \boldsymbol{\mu}_2$, the group $G$ is conjugate in $\Aut(\mathbb{P}^1 \times \mathbb{P}^1 \times \mathbb{P}^1)$ to one of the following groups: 

\begin{enumerate}
\item ${G_{24,13}''}  = \left\langle \left(1,1,1;\,(1,2,3)\right),\left(
    1,\iota(1),1\right)\right\rangle$.

\item ${G_{24,13}'''}  = \left\langle \left(1,1,1;\,(1,2,3)\right),\left(
    -1,\iota(1),1\right)\right\rangle$.

\item ${G_{24,13}^{IV}}  = \left\langle \left(1,1,1;\,(1,2,3)\right),\left(
    -1,\iota(1),-1\right)\right\rangle$.

\item ${G_{192,1540}}  = \left\langle (-1,1,1),(1,-1,1),(1,1,-1),\left(
    1,1,1;\,(1,2,3)\right),\left(
    1,\iota(1),1\right)\right\rangle$.

\item ${G_{192,197}}  = \left\langle (-1,1,1),(1,-1,1),(1,1,-1),\left(
    1,1,1;\,(1,2,3)\right),\left(
    i,\iota(1),1\right)\right\rangle$.

\item ${G_{192,1000}}  = \left\langle (-1,1,1),(1,-1,1),(1,1,-1),\left(
    1,1,1;\,(1,2,3)\right),\left(
    i,\iota(1),i\right)\right\rangle$.

\item ${G_{96,229}'}  = \left\langle (-1,-1,1),(1,-1,-1),\left(
    1,1,1;\,(1,2,3)\right),\left(
    1,\iota(1),1\right)\right\rangle$.

\item ${G_{96,70}} = \left\langle (-1,-1,1),(1,-1,-1),\left(
    1,1,1;\,(1,2,3)\right),\left(
    i,\iota(1),i\right)\right\rangle$.

\item ${G_{48,49}}  = \left\langle (-1,-1,-1),\left(
    1,1,1;\,(1,2,3)\right),\left(
    1,\iota(1),1\right)\right\rangle$.

\item ${G_{48,49}'}  = \left\langle (-1,-1,-1),\left(
    1,1,1;\,(1,2,3)\right),\left(
    -1,\iota(1),1\right)\right\rangle$.
\end{enumerate}
\end{proposition}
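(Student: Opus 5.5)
We follow the same strategy as in Propositions~\ref{groupsS4odd} and~\ref{groupsS4even}. Write lifts of the chosen generators of $\mathfrak{A}_4\times\boldsymbol{\mu}_2$ as
\[A=\left(a_1,a_2,a_3;\,(1,2,3)\right),\qquad B=\left(b_1,\iota(b_2),b_3\right),\]
so that $\pi_{\mathbb{T}}(A)$ and $\pi_{\mathbb{T}}(B)$ are the generators $\left(1,1,1;\,(1,2,3)\right)$ and $\left(1,\iota(1),1\right)$. First we conjugate by elements of $\mathbb{T}$ to normalize as many parameters as possible. Conjugating by $(t_1,t_2,t_3)$ multiplies $a_1,a_2,a_3$ by $t_1/t_3,\ t_2/t_1,\ t_3/t_2$, which lets us make $a_1=a_2=1$. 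The remaining freedom is the diagonal $(t,t,t)$, which changes $b_2$ by $t^2$, so we can also make $b_2=1$. We are then left with $a_3,b_1,b_3$.

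Next we impose the relations of the presentation $a^3=b^2=(ab)^6=[a,(ab)^3]=1$. Each of $A^3$, $B^2$, $(AB)^6$ and $[A,(AB)^3]$ must lie in $T_G$, and we compute them explicitly.
\begin{itemize}
\item $A^3=(a_3,a_3,a_3)$.
\item $B^2=(b_1^2,1,b_3^2)$, using $\iota(1)^2=1$ in $\PGL_2$.
\item $(AB)^3$ is a diagonal element whose entries are monomials in $a_3,b_1,b_3$.
\item The commutator condition amounts to $(AB)^3$ being, modulo $T_G$, central over $\pi_{\mathbb{T}}(A)$. This yields one further monomial constraint.
\end{itemize}
For $n\in\{1,2\}$ there are only four possibilities for $T_G$ by \cite[Lemma~7.1]{ToricGsolid}: the trivial group, $\boldsymbol{\mu}_2^3$, $\boldsymbol{\mu}_2^2$ and $\langle(-1,-1,-1)\rangle$. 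Note that $n=2$ in the third case of that lemma gives $\boldsymbol{\mu}_2$, and in the second case gives $\boldsymbol{\mu}_2^2$ embedded as $\langle(-1,-1,1),(1,-1,-1)\rangle$. For each of these we solve the monomial system. The solutions are finitely many roots of unity, namely $a_3\in\{\pm1\}$ and $b_1,b_3\in\{\pm1,\pm i\}$ up to $T_G$.

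The key step, and the main obstacle, is reducing the solution sets modulo conjugation. The tools are the residual stabilizer of the normalization inside $\Norm_{\Aut(X)}(\mathbb{T})$ (elements of $\mathbb{T}$ of order two, and lifts of $\mathbb{W}$ normalizing $W_G$), multiplication of generators by elements of $T_G$, and replacement of generators by other lifts. These reduce the list to the representatives given: three groups for $T_G=1$, three for $\boldsymbol{\mu}_2^3$, two for $\boldsymbol{\mu}_2^2$ and two for $\boldsymbol{\mu}_2$.

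Distinctness is then certified by invariants. Where the orders agree, we compare the GAP SmallGroup IDs, which separates $G_{192,1540}$, $G_{192,197}$ and $G_{192,1000}$, and likewise the $G_{96}$ pair. For pairs that are abstractly isomorphic, such as the three groups $G_{24,13}''$, $G_{24,13}'''$, $G_{24,13}^{IV}$ or the two $G_{48,49}$, we compare conjugation-invariant geometric data instead. Candidates are the existence of fixed points of the lift of the central involution $(AB)^3$ on $X$, and the structure of the orbits of $\mathbb{T}$-fixed points. We expect these comparisons to be carried out with SageMath, consistent with the rest of the section.
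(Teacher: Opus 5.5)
Your plan is the paper's proof: you lift the generators, use $\mathbb{T}$-conjugation to normalize (your choice $a_1=a_2=1$, $b_2=1$ instead of the paper's $a_1=a_3=1$, $b_2=1$ makes no difference), require $A^3$, $B^2$ and $[A,(AB)^3]$ to lie in each of the four possible $T_G$, and then reduce by $\Norm_{\Aut(X)}(\mathbb{T})$; your distinctness check goes beyond what is needed, since the statement only asserts that every such $G$ is conjugate to one of the listed groups. One correction: $(AB)^3$ is not diagonal. It maps to the central element $(\iota(1),\iota(1),\iota(1))$ of $W_G$, so it is anti-diagonal in each factor, $(AB)^6=1$ holds automatically, and the constraint it produces is $[A,(AB)^3]=\bigl(b_3^{-2},b_1^{-2},(a_3b_1b_3)^2\bigr)\in T_G$, which is the paper's formula with $a_2$ replaced by $a_3$.
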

\begin{proof}
Let \[A = \left(
a_1,a_2,a_3;\,(1,2,3)\right),\ B = \left(
b_1,\iota(b_2),b_3\right)\] be elements in $G$ such that $\pi_{\mathbb{T}}(A)$ and $\pi_{\mathbb{T}}(B)$ are the chosen generators of $\mathfrak{A}_4 \times \boldsymbol{\mu}_2$. Conjugating by elements of $\mathbb{T}$, we may assume that $a_1 = a_3 = b_2 =1$. Then, from the presentation, we get: 
\[A^3 = \left(a_2,a_2,a_2\right),\ B^2 = \left(b_1^2,1,b_3^2\right),\ [A,(AB)^3]=\left(\frac{1}{b_3^2},\frac{1}{b_1^2},(a_2 b_1 b_3)^2\right) \in T_G,\] and $(AB)^6 = (1,1,1)$ automatically. Solving for each $n \in \{1,2\}$ and for each possible $T_G$ and then conjugating by the elements of the normalizer of $\mathbb{T}$, we get the possibilities in the list.
\end{proof}

\begin{proposition}
\label{groupsS4C2}
For $n \in \{1,2\}$ and $W_G = \mathfrak{S}_4 \times \boldsymbol{\mu}_2$, the group $G$ is conjugate in $\Aut(\mathbb{P}^1 \times \mathbb{P}^1 \times \mathbb{P}^1)$ to one of the following groups: 

\begin{enumerate}
\item ${G_{48,48}''}  = \left\langle \left(\iota(1),\iota(1),\iota(1);\,(1,2)\right),\left(
    \iota(1),1,\iota(1);\,(2,3)\right)\right\rangle$.

\item ${G_{48,48}'''}  = \left\langle \left(\iota(1),\iota(1),\iota(-1);\,(1,2)\right),\left(
    \iota(1),1,\iota(1);\,(2,3)\right)\right\rangle$.

\item ${G_{48,48}^{IV}}  = \left\langle \left(\iota(i),\iota(i),\iota(1);\,(1,2)\right),\left(
    \iota(1),1,\iota(1);\,(2,3)\right)\right\rangle$.

\item ${G_{48,48}^{V}}  = \left\langle \left(\iota(i),\iota(i),\iota(-1);\,(1,2)\right),\left(
    \iota(1),1,\iota(1);\,(2,3)\right)\right\rangle$.

\item ${G_{384,20163}}  = \left\langle (-1,1,1),(1,-1,1),(1,1,-1),\left(
    \iota(1),\iota(1),\iota(1);\,(1,2)\right),\left(
    \iota(1),1,\iota(1);\,(2,3)\right)\right\rangle$.

\item ${G_{384,17948}}  = \left\langle (-1,1,1),(1,-1,1),(1,1,-1),\left(
    \iota(1),\iota(1),\iota(i);\,(1,2)\right),\left(
    \iota(1),1,\iota(1);\,(2,3)\right)\right\rangle$.

\item ${G_{384,5657}}  = \left\langle (-1,1,1),(1,-1,1),(1,1,-1),\left(
    \iota(\ee_8),\iota(\ee_8),\iota(1);\,(1,2)\right),\left(
    \iota(1),1,\iota(1);\,(2,3)\right)\right\rangle$.

\item ${G_{384,20100}}  = \left\langle (-1,1,1),(1,-1,1),(1,1,-1),\left(
    \iota(\ee_8),\iota(\ee_8),\iota(i);\,(1,2)\right),\left(
    \iota(1),1,\iota(1);\,(2,3)\right)\right\rangle$.

\item ${G_{192,1538}''}  = \left\langle (-1,-1,1),(1,-1,-1),\left(
    \iota(1),\iota(1),\iota(1);\,(1,2)\right),\left(
    \iota(1),1,\iota(1);\,(2,3)\right)\right\rangle$.

\item ${G_{192,1538}'''}  = \left\langle (-1,-1,1),(1,-1,-1),\left(
    \iota(i),\iota(i),\iota(1);\,(1,2)\right),\left(
    \iota(1),1,\iota(1);\,(2,3)\right)\right\rangle$.

\item ${G_{192,955}}  = \left\langle (-1,-1,1),(1,-1,-1),\left(
    \iota(1),\iota(1),\iota(i);\,(1,2)\right),\left(
    \iota(1),1,\iota(1);\,(2,3)\right)\right\rangle$.

\item ${G_{192,955}'}  = \left\langle (-1,-1,1),(1,-1,-1),\left(
    \iota(i),\iota(i),\iota(i);\,(1,2)\right),\left(
    \iota(1),1,\iota(1);\,(2,3)\right)\right\rangle$.

\item ${G_{96,226}}  = \left\langle (-1,-1,-1),\left(
    \iota(1),\iota(1),\iota(1);\,(1,2)\right),\left(
    \iota(1),1,\iota(1);\,(2,3)\right)\right\rangle$.

\item ${G_{96,195}}  = \left\langle (-1,-1,-1),\left(
    \iota(\ee_8),\iota(\ee_8),\iota(i);\,(1,2)\right),\left(
    \iota(1),1,\iota(1);\,(2,3)\right)\right\rangle$.
\end{enumerate}
\end{proposition}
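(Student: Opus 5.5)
We follow the pattern of the proofs of Propositions~\ref{groupsS4odd}, \ref{groupsS4even} and \ref{groupsA4C2}. Choose lifts
\[A = \left(\iota(a_1),\iota(a_2),\iota(a_3);\,(1,2)\right),\qquad B = \left(\iota(b_1),b_2,\iota(b_3);\,(2,3)\right)\]
in $G$ of the chosen generators of $\mathfrak{S}_4 \times \boldsymbol{\mu}_2$. Conjugating by an element $(t_1,t_2,t_3)\in\mathbb{T}$ rescales $a_i$ and $b_i$ by explicit monomials in the $t_j$. We use this freedom to normalize $b_1=b_2=b_3=1$: the three parameters of $B$ can be absorbed by $\mathbb{T}$-conjugation, exactly as for $A$ in Proposition~\ref{groupsS4odd}. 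The group is then determined by $(a_1,a_2,a_3)$ modulo $T_G$.

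Next we impose the relations of the presentation
\[\left\langle a,b \mid a^2 = b^4 = (ab)^6 = [b,(ba)^3] =1\right\rangle .\]
Compute $A^2$, $B^4$, $(AB)^6$ and $[B,(BA)^3]$ in $\mathrm{N}_{\Aut(X)}(\mathbb{T})$ using the multiplication rule of Section~\ref{Notationsandbasics}. Note that $\iota(s)\iota(t)=\mathrm{diag}(s,t)\sim s/t$ in $\PGL_2$, so each of these words is a diagonal triple of monomials in $a_1,a_2,a_3$. For instance, $A^2$ is of the form $\left(a_1/a_2,\,a_2/a_1,\,a_3^{\pm1}/a_3^{\mp1}\right)$ up to the exact order of factors, and $B^4$ is trivial automatically. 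Each of these triples must lie in $T_G$.

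For $n\in\{1,2\}$ there are four possibilities for $T_G$: the trivial group, $\boldsymbol{\mu}_2^3$, $\boldsymbol{\mu}_2^2$ (case (2) with $n=2$), and $\boldsymbol{\mu}_2$ (case (3) with $n=2$). For each one we solve the resulting system of equations in roots of unity. This forces each $a_i$ into a small finite set of roots of unity of order dividing $8$.

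Finally, we reduce the solution set modulo two operations. The first is multiplying generators by elements of $T_G$. The second is conjugation by elements of $\mathrm{N}_{\Aut(X)}(\mathbb{T})$ that preserve the normalization, namely residual torus elements, such as $(-1,-1,-1)$ or $(i,i,i)$ type scalings compatible with $B$, together with the lifts of elements of $\mathbb{W}$ normalizing $W_G$. This reduction produces the listed representatives: four for trivial $T_G$, and four, four and two for $T_G=\boldsymbol{\mu}_2^3$, $\boldsymbol{\mu}_2^2$ and $\boldsymbol{\mu}_2$ respectively. Their SmallGroups identifiers are then computed directly in SageMath/GAP.

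The main obstacle is this last step: proving that the listed representatives are pairwise non-conjugate in $\Aut(\PPP)$, and not merely distinct after normalization. Groups with the same abstract type, such as $G_{48,48}''$ through $G_{48,48}^{V}$, are the delicate cases. To separate them we would compare conjugation invariants. Since conjugating $G$ moves the torus it normalizes, the first invariant to check is the set of fixed points or orbit lengths of the order-$2$ and order-$4$ elements lifting specific classes of $\mathbb{W}$; for example, whether the lift of the central element of $\mathbb{W}$ acts with isolated fixed points on $\PPP$. If that fails to separate all cases, we would compare the $G$-representations on $H^0(\mathcal{O}_X(1,1,1))$.
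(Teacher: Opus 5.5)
Your route is the paper's: the same lifts $A$, $B$, the same $\mathbb{T}$-normalization $b_1=b_2=b_3=1$, imposing the relators in $T_G$, and solving separately for the four possible $T_G$. There are two corrections, however.

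\textbf{The relator you write out is wrong.} Since $(1,2)$ fixes the third factor, the third entry of $A^2$ is $\iota(a_3)\iota(a_3)\sim a_3/a_3=1$, by the very rule you quote. So
\[
A^2=(a_1/a_2,\,a_2/a_1,\,1).
\]
With $a_3^{\pm2}$ in that slot, you would wrongly discard $G_{192,955}$, $G_{192,955}'$ and $G_{96,195}$. All three have $a_3=i$, so your version gives $(1,1,-1)\notin T_G$. You would also admit spurious candidates such as $(\ee_8,-\ee_8,i)$ for $T_G=\boldsymbol{\mu}_2$.

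The constraint on $a_3$ comes instead from the commutator
\[
[B,(BA)^3]=\bigl((a_1/(a_2a_3))^2,\,(a_1a_2)^{-2},\,a_3^{-2}\bigr),
\]
while $B^4$ and $(AB)^6$ are automatic. First arrange $a_1=a_2$. This is forced when $T_G$ is trivial or $\boldsymbol{\mu}_2$; otherwise it is achieved by multiplying $A$ by an element of $T_G$. The commutator condition then reads $(a_3^{-2},a_1^{-4},a_3^{-2})\in T_G$.

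For the reduction step, the residual conjugations are torus elements $t$ with $(t_1^2,\,t_2/t_3,\,t_2t_3)\in T_G$. They act by $a_{1,2}\mapsto t_1t_2a_{1,2}$ and $a_3\mapsto t_3^2a_3$. Together with multiplying $A$ by $T_G$, this gives exactly the $4+4+4+2$ listed representatives. Note that $(-1,-1,-1)$ acts trivially on the data, so it is not a useful example. For $T_G=\boldsymbol{\mu}_2^3$ you need something like $t=(i,1,1)$ to cut $a_1\in\boldsymbol{\mu}_8$ down to $\{1,\ee_8\}$.

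\textbf{The ``main obstacle'' is not needed.} The proposition only asserts that every such $G$ is conjugate to one of the listed groups. It does not claim the list is irredundant, and the paper does not prove pairwise non-conjugacy either. The actual content of the proof is the case analysis above.
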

\begin{proof}
Let \[A = \left(
\iota(a_1),\iota(a_2),\iota(a_3);\, (1,2)\right),\ B = \left(
\iota(b_1),b_2,\iota(b_3);\,(2,3)\right)\] be elements in $G$ such that $\pi_{\mathbb{T}}(A)$ and $\pi_{\mathbb{T}}(B)$ are the chosen generators of $\mathfrak{S}_4 \times \boldsymbol{\mu}_2$. Conjugating by elements of $\mathbb{T}$, we may assume that $b_1 = b_2 = b_3 =1$. Then, from the presentation, we get: 
\[A^2 = \left(\frac{a_1}{a_2},\frac{a_2}{a_1},1\right),\ [B,(BA)^3] =\left(\left(\frac{a_1}{a_2a_3}\right)^2,\left(\frac{1}{a_1a_2}\right)^2,\left(\frac{1}{a_3}\right)^2\right) \in T_G,\] and $B^4 = (AB)^6 = (1,1,1)$ automatically. Solving for each $n \in \{1,2\}$ and for each possible $T_G$ and then conjugating by the elements of the normalizer of $\mathbb{T}$, we get the possibilities in the list.
\end{proof}

Note that any finite subgroup $G$ of $\mathbb{T}\rtimes \mathbb{W}$ with $W_G \supset\mathfrak{A}_4$ has a subgroup in $\mathbb{T}\rtimes \mathfrak{A}_4$ with equal toric parts and such that its projection is surjective onto $\mathfrak{A}_4$. More precisely, the following is true:

\begin{lemma}
\label{A4sub}
Let $G$ be a finite toric group with $W_G \supset \mathfrak{A}_4$. Then there exists a subgroup $H$ in $G$, such that $T_H \cong T_G$ and $W_H \cong \mathfrak{A}_4$.
\end{lemma}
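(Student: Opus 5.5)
The plan is to take $H=\pi_{\mathbb{T}}^{-1}(\mathfrak{A}_4)\cap G$, the full preimage in $G$ of the unique subgroup $\mathfrak{A}_4\subset W_G$. By construction $\pi_{\mathbb{T}}(H)=\mathfrak{A}_4$, since $\pi_{\mathbb{T}}\colon G\to W_G$ is surjective and $\mathfrak{A}_4\subset W_G$. Also
\[
T_H=H\cap\mathbb{T}=G\cap\mathbb{T}=T_G,
\]
because every element of $T_G$ maps to the identity of $\mathbb{W}$, which lies in $\mathfrak{A}_4$. This gives $T_H=T_G$ as subgroups, not just abstractly, and $W_H=\mathfrak{A}_4$. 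Restricting the exact sequence $1\to T_G\to G\to W_G\to 1$ to $H$ produces the exact sequence $1\to T_G\to H\to\mathfrak{A}_4\to 1$.

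The only points to check are the following.
\begin{enumerate}
\item $H$ is a subgroup. This holds because it is the preimage of a subgroup under a homomorphism.
\item $\mathfrak{A}_4$ is actually contained in $W_G$ as the specific subgroup listed earlier. Since $\mathfrak{A}_4$ is unique in $\mathbb{W}\cong\mathfrak{S}_4\times\boldsymbol{\mu}_2$, any subgroup of $W_G$ isomorphic to $\mathfrak{A}_4$ is that one. Uniqueness follows because $\mathfrak{A}_4$ is generated by its elements of order $3$, and every element of order $3$ of $\mathfrak{S}_4\times\boldsymbol{\mu}_2$ lies in $\mathfrak{A}_4\times\{1\}$.
\end{enumerate}

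I expect no real obstacle here. The one subtlety is making sure the conjugation conventions are consistent. $G$ is toric only up to conjugation, so we first conjugate $G$ into $\Norm_{\Aut(X)}(\mathbb{T})$ and then take the preimage there. Consequently $H$ will then be conjugate to one of the groups classified in Propositions~\ref{groupsodd} and~\ref{groupseven}, with the same $n$ and the same type of toric kernel as $G$.
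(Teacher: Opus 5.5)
Your proposal is correct and is essentially the paper's argument: take the full preimage in $G$ of the unique subgroup $\mathfrak{A}_4\subset W_G$, which has toric kernel exactly $T_G$ and maps onto $\mathfrak{A}_4$. Your justification of the uniqueness of $\mathfrak{A}_4$ in $\mathfrak{S}_4\times\boldsymbol{\mu}_2$ via elements of order $3$ fills in a detail the paper leaves implicit.
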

\begin{proof}
Note, that there is a unique subgroup isomorphic to $\mathfrak{A}_4$ in $W_G$, since there is a unique subgroup isomorphic to $\mathfrak{A}_4$ in $\mathbb{W}$. Take its preimage in $G$ and denote it $H$. Then $H$ is exactly the subgroup in question.
\end{proof}

\subsection{Orbit descriptions}
Here we list the orbit decompositions for the relevant groups. The computation is uniform in all cases: we first treat the open torus orbit, then deal separately with the toric boundary. Concretely, we proceed as follows:
\begin{enumerate}
    \item Let
    \[
        U = X \setminus \boldsymbol{\mathcal{D}}_{\mathbb{T}}.
    \]
    The action of $\mathbb{T}$ on $U$ is free, and hence the action of $T_G$ on $U$ is free as well.
    
    \item We pass to the quotient $U/T_G$, which is described explicitly by invariant monomials.
    
    \item The quotient group $G/T_G \cong W_G$ acts naturally on $U/T_G$. We compute this action explicitly: for each element of $W_G$, we compute its fixed locus, and then use the intersections of these loci to determine the $W_G$-orbit decomposition of $U/T_G$.
    
    \item We then lift this decomposition to obtain the $G$-orbit decomposition of $X \setminus \boldsymbol{\mathcal{D}}_{\mathbb{T}}.$ It remains to analyze the toric boundary $\boldsymbol{\mathcal{D}}_{\mathbb{T}}$.
    
    \item Since $G$ acts transitively on the irreducible components of $\boldsymbol{\mathcal{D}}_{\mathbb{T}}$, it suffices to choose one component $S$ and compute the $\Stab_G(S)$-orbit decomposition on $S$. The $\Stab_G(S)$-orbits on $S$ are then computed explicitly, and their $G$-translates give the full $G$-orbit decomposition of $\boldsymbol{\mathcal{D}}_{\mathbb{T}}$.
\end{enumerate}
Combining the decompositions on $X \setminus \boldsymbol{\mathcal{D}}_{\mathbb{T}}$ and on $\boldsymbol{\mathcal{D}}_{\mathbb{T}}$ gives the required $G$-orbit decomposition of $X$.

First, let us list the orbit decompositions for all groups with $W_G = \mathfrak{A}_4$ and $T_G \cong \boldsymbol{\mu}^3_n$, as well as for the small groups relevant to our analysis.
\begin{proposition}
\label{orbitdescription}
For the following groups $G$, the orbit decompositions are as follows, where $G_{12n^3}$ and $G_{12n^3}'$ refer to both even and odd cases of $n$:
\begingroup\footnotesize
\begin{longtblr}[
    theme = mytheme,
    entry = none,
    label = none,
]{
    stretch = 1.25,
    colspec = {|Q[c,m,co=0.8]|Q[c,m,co=0.6]|Q[l,m,co=6.6]|},
    rowhead = 1,
}
    \hline
    Group & Orbit length & Orbits\\
    \hline
    \hline
    $G_{12n^3}$ & $4$ & {$G\cdot\left([0:1],[0:1],[0:1]\right)$, $G\cdot\left([1:0],[1:0],[1:0]\right)$}\\
    \cline{2-3}
    & $12n$ & $G\cdot\left([0:1],p,[0:1]\right)$, where $p\neq[0:1],[1:0]$\\
    \cline{2-3}
    & $6n^2$ & {$G\cdot\left([1:1],[0:1],[1:1]\right)$, $G\cdot\left([\varepsilon_{2n}:1],[0:1],[1:1]\right)$\\ $G\cdot\left([1:1],[0:1],[\varepsilon_{2n}:1]\right)$, $G\cdot\left([\varepsilon_{2n}:1],[0:1],[\varepsilon_{2n}:1]\right)$}\\
    \cline{2-3}
    & $12n^2$ & {$G\cdot\left(p,[0:1],q\right)$, where $p,q\neq[0:1],[1:0]$,\\ and at least one of $p,q$ is not equal to $[\varepsilon_{2n}^j:1]$ for some $j\in\mathbb{Z}$}\\
    \cline{2-3}
    & $n^3$ & {$G\cdot\left([1:1],[1:1],[1:1]\right)$, $G\cdot\left([\varepsilon_{2n}:1],[\varepsilon_{2n}:1],[\varepsilon_{2n}:1]\right)$}\\
    \cline{2-3}
    & $3n^3$ & {$G\cdot\left([1:1],[1:1],[\varepsilon_{2n}:1]\right)$, $G\cdot\left([1:1],[\varepsilon_{2n}:1],[\varepsilon_{2n}:1]\right)$}\\
    \cline{2-3}
    & $4n^3$ & {$G\cdot\left(p,p,p\right)$, where $p\neq[\varepsilon_{2n}^j:1],[0:1],[1:0]$ for some $j\in\mathbb{Z}$}\\
    \cline{2-3}
    & $6n^3$ & {$G\cdot\left([1:1],p,[1:1]\right)$, $G\cdot\left([\varepsilon_{2n}:1],p,[1:1]\right)$\\ $G\cdot\left([1:1],p,[\varepsilon_{2n}:1]\right)$, $G\cdot\left([\varepsilon_{2n}:1],p,[\varepsilon_{2n}:1]\right)$\\ where $p\neq[\varepsilon_{2n}^j:1],[0:1],[1:0]$ for some $j\in\mathbb{Z}$}\\
    \cline{2-3}
    & $12n^3$ & All other orbits\\
    \hline

    $G_{12n^3}'$ & $4$ & {$G\cdot\left([0:1],[0:1],[0:1]\right)$, $G\cdot\left([1:0],[1:0],[1:0]\right)$}\\
    \cline{2-3}
    & $12n$ & $G\cdot\left([0:1],p,[0:1]\right)$, where $p\neq[0:1],[1:0]$\\
    \cline{2-3}
    & $6n^2$ & {$G\cdot\left([1:1],[0:1],[1:1]\right)$, $G\cdot\left([\varepsilon_{2n}:1],[0:1],[1:1]\right)$\\ $G\cdot\left([1:1],[0:1],[\varepsilon_{2n}:1]\right)$, $G\cdot\left([\varepsilon_{2n}:1],[0:1],[\varepsilon_{2n}:1]\right)$}\\
    \cline{2-3}
    & $12n^2$ & {$G\cdot\left(p,[0:1],q\right)$, where $p,q\neq[0:1],[1:0]$,\\ and at least one of $p,q$ is not equal to $[\varepsilon_{2n}^j:1]$ for some $j\in\mathbb{Z}$}\\
    \cline{2-3}
    & $4n^3$ & {$G\cdot\left([\theta x:y],[x:y],[x:y]\right)$, where $\theta=\ee_{4n}$ for even $n$\\ and $\theta=i$ for odd $n$, and $[x:y]\neq[0:1],[1:0]$}\\
    \cline{2-3}
    & $12n^3$ & All other orbits\\
    \hline

    $G_{96,3}$ & $4$ & {$G\cdot\left([0:1],[0:1],[0:1]\right)$, $G\cdot\left([1:0],[1:0],[1:0]\right)$}\\
    \cline{2-3}
    & $24$ & {$G\cdot\left([1:1],[0:1],[1:1]\right)$, $G\cdot\left([i:1],[0:1],[1:1]\right)$\\ $G\cdot\left([1:1],[0:1],[i:1]\right)$, $G\cdot\left([i:1],[0:1],[i:1]\right)$\\ $G\cdot\left([0:1],p,[0:1]\right)$, where $p\neq[0:1],[1:0]$}\\
    \cline{2-3}
    & $32$ & {$G\cdot\left([\varepsilon_8x:y],[x:y],[x:y]\right)$, where $[x:y]\neq[0:1],[1:0]$}\\
    \cline{2-3}
    & $48$ & {$G\cdot\left(p,[0:1],q\right)$, where $p,q\neq[0:1],[1:0]$,\\ and at least one of $p,q$ is not equal to $[i^j:1]$ for some $j\in\mathbb{Z}$}\\
    \cline{2-3}
    & $96$ & All other orbits\\
    \hline

    $G_{48,50}$ & $4$ & {$G\cdot\left([0:1],[0:1],[0:1]\right)$, $G\cdot\left([1:0],[1:0],[1:0]\right)$\\ $G\cdot\left([1:1],[1:1],[1:1]\right)$, $G\cdot\left([1:1],[1:1],[-1:1]\right)$\\ $G\cdot\left([i:1],[i:1],[i:1]\right)$, $G\cdot\left([i:1],[i:1],[-i:1]\right)$}\\
    \cline{2-3}
    & $16$ & {$G\cdot\left(p,p,p\right)$, where $p\neq[0:1],[1:0],[i^j:1]$ for some $j\in\mathbb{Z}$}\\
    \cline{2-3}
    & $24$ & {$G\cdot\left([1:1],p,[1:1]\right)$, where $p\neq[\pm1:1]$\\ $G\cdot\left([i:1],p,[i:1]\right)$, where $p\neq[\pm i:1]$\\ $G\cdot\left([i:1],[0:1],[1:1]\right)$, $G\cdot\left([1:1],[0:1],[i:1]\right)$\\ $G\cdot\left([0:1],p,[0:1]\right)$, where $p\neq[0:1],[1:0]$}\\
    \cline{2-3}
    & $48$ & All other orbits\\
    \hline
\end{longtblr}
\endgroup
\end{proposition}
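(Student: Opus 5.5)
We will follow the five-step procedure described just before the statement, applied separately to $G_{12n^3}$, $G_{12n^3}'$, $G_{96,3}$ and $G_{48,50}$. First, on $U=X\setminus\boldsymbol{\mathcal{D}}_{\mathbb{T}}$ we use affine coordinates $z_i=x_i/y_i\in\mathbb{C}^*$. For $T_G\cong\boldsymbol{\mu}_n^3$ the quotient $U/T_G$ is the torus with coordinates $w_i=z_i^n$, and $T_G$ acts freely, so every $G$-orbit in $U$ has length $n^3\cdot|W_G\text{-orbit in }U/T_G|$. For $G_{48,50}$ we have $T_G=\langle(-1,-1,1),(1,-1,-1)\rangle$, and we use the invariant monomials $z_1z_2$, $z_2z_3$, $z_3^2$ (or $z_1^2,z_2^2,z_3^2,z_1z_2z_3$) instead.

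Next we write down the induced action of $W_G=\mathfrak{A}_4$ on $U/T_G$. For $G_{12n^3}$, the generator $(1,1,1;(1,2,3))$ cyclically permutes the $w_i$, and $(\iota(1),1,\iota(1))$ sends $(w_1,w_2,w_3)\mapsto(w_1^{-1},w_2,w_3^{-1})$. For the primed groups the same maps are twisted by the constants coming from $i,-i,\ee_{4n},\ee_{2n},\ee_8$ (after raising to the $n$-th power). We then compute fixed loci of representatives of the three conjugacy types in $\mathfrak{A}_4$: the involutions (three, forming the Klein group $V_4$) and two classes of $3$-cycles. For an involution of the form $(w_1,w_2,w_3)\mapsto(w_1^{-1},w_2,w_3^{-1})$, the fixed locus is $w_1,w_3\in\{\pm1\}$ with $w_2$ free. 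This gives the curves $([\pm\text{-type}:1],p,[\pm\text{-type}:1])$, i.e.\ $z_1,z_3\in\{\ee_{2n}^j\}$, with stabilizer $\boldsymbol{\mu}_2$ and orbit length $6n^3$. Intersections of two involution loci give the points with $w_i=\pm1$, which have stabilizer $V_4$ or bigger. A $3$-cycle fixes the diagonal $w_1=w_2=w_3$, giving orbit length $4n^3$ generically, and the points $w_i=\pm1$ on it give lengths $n^3$ and $3n^3$. In the twisted groups $G'$, the twist destroys the fixed points of the involutions on $U$: their fixed equations become $w_1w_1'=-1$ type, which is inconsistent. The $3$-cycle fixed locus moves to $z_1=\theta z_2=\theta z_3$, which explains why only the $4n^3$ (respectively $32$) stratum survives. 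All remaining points have trivial stabilizer in $W_G$, giving orbits of length $12n^3$ (respectively $96$, $48$).

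Then we treat the boundary. The stabilizer of the component $S=\{y_2=0\}$, i.e.\ of $\PP\times\{[0:1]\}\times\PP$ after relabelling, in $G$ maps onto a $\boldsymbol{\mu}_2\subset V_4$ (the one fixing that ruling point), extended by $T_G$. On $S\cong\PP\times\PP$ we repeat the analysis:
\begin{itemize}
\item the open torus of $S$ gives orbits of length $12n^2$ generically, and $6n^2$ on the fixed points $z_1,z_3\in\{\ee_{2n}^j\}$ of the involution;
\item the boundary curves of $S$ give the $12n$ orbits $([0:1],p,[0:1])$;
\item the $\mathbb{T}$-fixed points split into $\boldsymbol{\mathcal{O}}_{4,0}$ and $\boldsymbol{\mathcal{O}}_{4,\infty}$ by the earlier lemma.
\end{itemize}
For small $n$ the boundary lengths coincide with some interior lengths. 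This explains the merged rows for $G_{96,3}$, where $6n^2=24=12n$, and for $G_{48,50}$, where several orbits of length $4$ and $24$ appear.

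The main obstacle is bookkeeping rather than theory. We need correct formulas for the twisted action on $U/T_G$ for the primed groups, including sign and root-of-unity constants, and we need to check which $\mathfrak{A}_4$-stabilizers actually occur. The quotient for $G_{48,50}$ (non-diagonal $T_G$) is the most delicate part. Here we would first verify the stabilizers of the listed special points directly, for example $([1:1],[1:1],[1:1])$ being fixed by a subgroup of order $12$, and use orbit–stabilizer to confirm the lengths. As in the paper, we would cross-check all counts against a computer computation.
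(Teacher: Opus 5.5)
Your proposal is the paper's own five-step procedure: the free $T_G$-quotient of $U$, the fixed loci of the elements of $W_G\cong\mathfrak{A}_4$ on $U/T_G$, and then the stabilizer of one component of $\boldsymbol{\mathcal{D}}_{\mathbb{T}}$ together with its translates; the strata and lengths it produces agree with the table. Three local slips need fixing: (i) $z_1z_2$, $z_2z_3$, $z_3^2$ are not invariant under $T_{G_{48,50}}=\langle(-1,-1,1),(1,-1,-1)\rangle$, since the invariant monomials are the $z^m$ with $m_1\equiv m_2\equiv m_3 \pmod 2$, so only your alternative $z_1^2,z_2^2,z_3^2,z_1z_2z_3$ is correct; (ii) the $3n^3$ orbits are those of $w=(1,1,-1)$ and $w=(1,-1,-1)$, which have stabilizer exactly $V_4$ and do not lie on the $3$-cycle locus, on which only $w=\pm(1,1,1)$ is special (giving the two $n^3$ orbits); and (iii) in the twisted groups the involutions act freely on $U/T_G$ because the non-inverted coordinate picks up a sign (both $(\iota(1),-1,\iota(1))$ and $(\iota(1),\varepsilon_{2n},\iota(1))$ give $w_2\mapsto -w_2$), not because of equations like $w_1^2=-1$, which are solvable in $\mathbb{C}^*$.
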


For the group $G_{48,49}'$ we require a more detailed orbit decomposition.
\begin{proposition}
\label{orbits4849}
For $G=G_{48,49}'$, the non-generic $G$-orbits are as follows:
\begingroup\footnotesize
\begin{longtblr}[
    theme = mytheme,
    entry = none,
    label = none,
]{
    stretch = 1.25,
    colspec = {|Q[c,m,co=0.8]|Q[l,m,co=3.6]|Q[l,m,co=3.6]|},
    rowhead = 1,
}
    \hline
    Group & Family or isolated orbit & Exceptional members\\
    \hline
    \hline
    $G_{48,49}'$ & {$\Gamma_{16}=G\cdot\left([x:y],[x:y],[x:y]\right)$} & {$\mathcal{O}_{8,1}=G\cdot\left([1:0],[1:0],[1:0]\right)$\\ $\mathcal{O}_{8,2}=G\cdot\left([1:1],[1:1],[1:1]\right)$\\ $\mathcal{O}_{8,3}=G\cdot\left([i:1],[i:1],[i:1]\right)$}\\
    \cline{2-3}
    & {$\Gamma_{24,1}=G\cdot\left([0:1],[-1:1],[x:y]\right)$\\ $\Gamma_{24,2}=G\cdot\left([x:y],[-i:1],[1:0]\right)$\\ $\Gamma_{24,3}=G\cdot\left([-i:1],[x:y],[-1:1]\right)$} & {$\mathcal{O}_{12,+}=G\cdot\left([0:1],[-1:1],[-i:1]\right)$\\ $\mathcal{O}_{12,-}=G\cdot\left([0:1],[-1:1],[i:1]\right)$}\\
    \cline{2-3}
    & {$\mathcal{O}_{24}=G\cdot\left([-1:1],[1:0],[-i:1]\right)$} & ---\\
    \hline
\end{longtblr}
\endgroup
\noindent
All other $G$-orbits have length $48$.
\end{proposition}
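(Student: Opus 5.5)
The plan is to follow the same pipeline used for Proposition~\ref{orbitdescription}: treat the open torus orbit $U=X\setminus\boldsymbol{\mathcal{D}}_{\mathbb{T}}$ and the toric boundary $\boldsymbol{\mathcal{D}}_{\mathbb{T}}$ separately. For $G=G_{48,49}'$ we have $T_G=\langle(-1,-1,-1)\rangle\cong\boldsymbol{\mu}_2$ and $W_G=\mathfrak{A}_4\times\boldsymbol{\mu}_2$, so $|G|=48$. The action is faithful, so a point has a degenerate orbit exactly when its stabilizer is nontrivial. It therefore suffices to list, for each nontrivial element $g\in G$ up to conjugacy, the fixed locus $X^g$. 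A degenerate orbit is then a point of $\bigcup_g X^g$, and its length is $48/|\Stab_G(P)|$.

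\textbf{Step 1: elements and their fixed loci.} Write the generators as $A=(1,1,1;(1,2,3))$ and $B=(-1,\iota(1),1)$, together with $c=(-1,-1,-1)$. The conjugacy classes of $G$ fall into three kinds.
\begin{itemize}
\item Elements with trivial permutation part lie in the preimage of $\boldsymbol{\mu}_2^2\times\boldsymbol{\mu}_2\subset\mathfrak{A}_4\times\boldsymbol{\mu}_2$. They are products $(M_1,M_2,M_3)$ with each $M_i\in\{\pm1,\iota(\pm1)\}$. Each nonidentity $M_i$ has exactly two fixed points on $\mathbb{P}^1$, namely $[0:1],[1:0]$ or $[\pm1:1]$ or $[\pm i:1]$. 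So $X^g$ is a finite set of $8$ points, or a product containing a curve factor where some $M_i=1$.
\item Elements whose permutation part is a $3$-cycle have fixed locus the twisted diagonal $\{(p,M p, M'p)\}\cong\mathbb{P}^1$.
\item Elements whose permutation part is a $3$-cycle multiplied by the central $\boldsymbol{\mu}_2$ element have finitely many fixed points.
\end{itemize}
Conjugating by $A$ reduces everything to one representative per class.

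\textbf{Step 2: assemble the families.} The curves among the fixed loci give the one-parameter families.
\begin{itemize}
\item The twisted diagonal fixed by $A$, namely $\{(p,p,p)\}$, has stabilizer of order $3$ at a generic point, giving orbits of length $16$. This is $\Gamma_{16}$.
\item The curves $\{[0:1]\}\times\{[-1:1]\}\times\mathbb{P}^1$, and their images in the other two factor positions, are fixed by an involution of $T$-type. Their generic orbits have length $24$, giving $\Gamma_{24,1}$, $\Gamma_{24,2}$, $\Gamma_{24,3}$. One must check that these three are genuinely distinct $G$-families rather than one family. Since $G$ permutes the factors cyclically, I expect that the three representatives are related but the parametrizations differ. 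I will verify by computing $\Stab$ of a generic point and counting components of the $G$-orbit of the curve.
\end{itemize}
The exceptional members are the points where these curves meet other fixed loci, where the stabilizer jumps. On the diagonal this happens at the fixed points of the involutions restricted to it, namely $p\in\{[0:1],[1:0],[\pm1:1],[\pm i:1]\}$. These points group into $\mathcal{O}_{8,1}$, $\mathcal{O}_{8,2}$, $\mathcal{O}_{8,3}$, each with stabilizer of order $6$. On $\Gamma_{24,1}$ the special values are $x=\pm i$, giving $\mathcal{O}_{12,\pm}$ with stabilizer of order $4$. The remaining intersection points should collapse onto these orbits after applying $G$.

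\textbf{Step 3: isolated fixed points.} The remaining isolated fixed points of the elements in the third class of Step~1 should form the single orbit $\mathcal{O}_{24}$. I will check that $([-1:1],[1:0],[-i:1])$ has stabilizer of order exactly $2$ and does not lie on any of the curves above.

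\textbf{Step 4: completeness.} Finally I will do a counting check: the union of all fixed loci must be exactly the union of the listed orbits, so every other point has a free orbit of length $48$.

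The main obstacle is bookkeeping. One has to make sure the intersection points of the various fixed curves are correctly identified up to $G$, so that no orbit is double-counted or missed. It also has to be confirmed that points on the boundary $\boldsymbol{\mathcal{D}}_{\mathbb{T}}$, such as $[0:1]$ or $[1:0]$ coordinates, are not hidden extra exceptional members. I would handle this by computing, for each listed exceptional point, its full stabilizer directly, and by verifying via Sage that the set $\bigcup_g X^g$ matches.
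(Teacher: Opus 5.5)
Your framework is sound, and it is a different route from the paper's. Since $G$ acts faithfully, the degenerate orbits are exactly the points of $\bigcup_{g\neq1}X^g$, with lengths $48/|\Stab_G(P)|$. Computing element by element avoids the paper's two-part pipeline: the quotient $U/T_G$ with its induced $W_G$-action, then a separate analysis of $\boldsymbol{\mathcal{D}}_{\mathbb{T}}$ via the stabilizer of one boundary component. The paper's pipeline is uniform in $n$; yours is more transparent for this single small group.

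However, the plan never determines which triples $(M_1,M_2,M_3)$ actually lie in $K=G\cap(\PGL_2)^3$, and everything hinges on this. As a result, two of your attributions are wrong. Put $a=\mathrm{diag}(-1,1)$ and $V=\{1,a,\iota(1),\iota(-1)\}\subset\PGL_2$, a Klein group. Then $G=K\rtimes\langle A\rangle$ with $A=(1,1,1;(1,2,3))$. The kernel $K\subset V^3$ has order $16$ and is generated by $c=(a,a,a)$ and the cyclic rotations of $(a,\iota(1),1)$. Its $15$ nontrivial elements are:
\begin{itemize}
\item the three diagonal involutions;
\item the three rotations of $(a,\iota(-1),\iota(1))$;
\item nine elements with exactly one trivial coordinate, forming three $G$-conjugacy classes: the rotations of $(a,\iota(1),1)$, of $(\iota(-1),a,1)$, and of $(\iota(1),\iota(-1),1)$.
\end{itemize}

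In Step 2, the images of $\{[0:1]\}\times\{[-1:1]\}\times\PP$ in the other factor positions are its $A$-translates, so they lie in $\Gamma_{24,1}$ itself. Followed literally, your description yields only one $24$-family. The three families are the fixed lines of the three classes above, namely $\{[0:1],[1:0]\}\times\{[\pm1:1]\}\times\PP$, $\PP\times\{[\pm i:1]\}\times\{[0:1],[1:0]\}$, $\{[\pm i:1]\}\times\PP\times\{[\pm1:1]\}$, and their rotations. They are distinct because their generic stabilizers are non-conjugate involutions. The orbits $\mathcal{O}_{12,\pm}$ are where lines of all three families meet: at $([0:1],[-1:1],[\mp i:1])$ the stabilizer is $\{1,(a,\iota(1),1),(1,\iota(1),\iota(-1)),(a,1,\iota(-1))\}$.

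In Step 3, $\mathcal{O}_{24}$ does not come from elements outside $K$. It is the set of $24$ isolated fixed points of the rotations of $(a,\iota(-1),\iota(1))$. In fact, elements outside $K$ contribute nothing beyond $G\cdot\Delta$, where $\Delta=\{(p,p,p)\}$:
\begin{itemize}
\item each such $g$ has order $3$ or $6$;
\item a fixed point of $g$ is fixed by $g^2$, which has order $3$ and fixes a twisted diagonal;
\item if $g$ has order $6$, its fixed points are also fixed by $g^3$, which is one of the three diagonal involutions, whose fixed sets are exactly $\mathcal{O}_{8,1},\mathcal{O}_{8,2},\mathcal{O}_{8,3}$.
\end{itemize}
This also corrects your Step~1 trichotomy: the elements $gc$ with $g$ of order $3$ lie over $3$-cycles of $W_G$ but have only two fixed points, not a fixed curve.

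Once $K$ is computed, the rest of your plan closes. This includes the disjointness of the twisted diagonals from the lines: on a twisted diagonal all three coordinates lie in a single $V$-orbit of $\PP$, whereas on the lines two coordinates lie in different $V$-orbits of length $2$.
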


\section{Curves}
\label{Curves}
To determine $G$-birational behaviour of $\PPP$ in each case, it is important to understand which $G$-invariant curves of \say{small} tri-degrees are possible. For every group $G$ considered below, we have $W_G \supset \mathfrak{A}_4$, so $G$ permutes the three rulings of $\PPP$ transitively. Hence the tri-degree of any $G$-invariant curve is symmetric, i.e. of the form $(a,a,a)$ for some $a$. Here by \say{small} we mean curves with $a\leq 7$. The reason for this restriction is that only those curves can appear as $G$-invariant centers of non-canonical singularities of log-pairs in Lemma~\ref{mobilesystems}. Note that we can restrict ourselves to $G$-irreducible $G$-invariant curves. Also note that we always have a $G$-invariant $G$-irreducible curve $\boldsymbol{\mathcal{C}}_{\mathbb{T}}$ of tri-degree $(4,4,4)$. In further considerations we will omit it from our lists of curves and assume that our curves are distinct from $\boldsymbol{\mathcal{C}}_{\mathbb{T}}$, unless stated otherwise.
\subsection{Invariant curves for sufficiently large groups}
Here we prove that for groups with $n\geq 4$ and $W_G \supset \mathfrak{A}_4$, the only $G$-invariant curve of tri-degree $(a,a,a)$ with $a \leq 7$ is $\boldsymbol{\mathcal{C}}_{\mathbb{T}}$.
\begin{proposition}
\label{oddcurves}
For an odd $n \geq 5$ and $W_G = \mathfrak{A}_4$ there are no $G$-invariant $G$-irreducible curves of tri-degree $(a,a,a)$ with $a \leq 7$, other than $\boldsymbol{\mathcal{C}}_{\mathbb{T}}$.
\end{proposition}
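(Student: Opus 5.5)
The plan is to combine the orbit decomposition of Proposition~\ref{orbitdescription} with intersection bounds. For odd $n$, Proposition~\ref{groupsodd} shows that $G$ is $G_{12n^3}$ or $G_{12n^3}'$, so $T_G=\boldsymbol{\mu}_n^3$. Let $C$ be a $G$-irreducible $G$-invariant curve of tri-degree $(a,a,a)$ with $a\le 7$ and $C\neq\boldsymbol{\mathcal{C}}_{\mathbb{T}}$. We split into two cases: either some component of $C$ lies in the toric boundary $\boldsymbol{\mathcal{D}}_{\mathbb{T}}$, or no component of $C$ is contained in $\boldsymbol{\mathcal{D}}_{\mathbb{T}}$.

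\emph{Curves meeting the open torus orbit.} Take a component $C_0$ and its stabilizer $\Stab_G(C_0)$; the number of components $r$ divides $|G|$. Consider the subgroup $T_G\cap\Stab_G(C_0)$, which has index at most $r$ in $T_G$. A component not contained in $\boldsymbol{\mathcal{D}}_{\mathbb{T}}$ and not a torus-invariant curve has nonconstant projection to some factor, say $\pr_1$, of degree $d_1$ with $r d_1\le a\le 7$. The subgroup $(\boldsymbol{\mu}_n)^3\cap\Stab(C_0)$ acts on $C_0$, and its image in the first factor permutes the fibers of $\pr_1|_{C_0}$. To get a contradiction we use orbits: the generic orbit of the torus part on a general point of $C_0$ has length $\ge n^3/r$. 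Intersecting $C$ with a $G$-invariant divisor gives a sharper bound. Take the $T_G$-invariant divisor $\{x_1=0\}\cup\{y_1=0\}$ summed over $G$. Then $C\cdot\boldsymbol{\mathcal{D}}_{\mathbb{T}}=C\cdot 2H=6a\le 42$, so $C$ meets $\boldsymbol{\mathcal{D}}_{\mathbb{T}}$ in a $G$-invariant set of at most $42$ points. By Proposition~\ref{orbitdescription}, every $G$-orbit in $\boldsymbol{\mathcal{D}}_{\mathbb{T}}$ other than the two length-$4$ orbits of fixed points has length at least $12n\ge 60>42$. Hence $C\cap\boldsymbol{\mathcal{D}}_{\mathbb{T}}\subset\boldsymbol{\mathcal{O}}_{4,0}\cup\boldsymbol{\mathcal{O}}_{4,\infty}$.

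So $\pr_1(C_0)$ meets $\{0,\infty\}$ only at the images of these points, and each component is a curve in the torus closing up only at the torus-fixed points. Projecting to a factor pair $\pr_{1,2}$, the image is a $T$-stable-type curve of bidegree at most $(7,7)$ whose boundary intersections are only at the four corners. Such an irreducible curve meets the toric boundary of $\PP\times\PP$ only at fixed points, so it is the closure of a one-parameter subgroup coset, $x_1^{p}y_2^{q}=c\,y_1^{p}x_2^{q}$ in affine terms. The same holds for every pair, so $C_0$ is the closure of a translate of a one-parameter subgroup $t\mapsto(t^{k_1},t^{k_2},t^{k_3})$ with $|k_i|\le 7$. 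Its $T_G$-orbit consists of $n^3/|{\rm stab}|$ translates, and the stabilizer of such a coset in $\boldsymbol{\mu}_n^3$ is the intersection of $\boldsymbol{\mu}_n^3$ with the subgroup, of order at most $n\max|k_i|$-type. This gives at least $n^2/7$ components. Thus $a\ge (n^2/7)\cdot\min|k_i|\ge n^2/7$, which for $n\ge 5$ must be balanced against $a\le 7$. This forces $n^2\le 49$, so only $n=5,7$ survive. For those, a finer count uses $\gcd$'s: the stabilizer has order $n\cdot\gcd$-data, and the components then number at least $n^2$ unless $n\mid k_i$. This makes $a\ge n>7$ for $n=7$, and for $n=5$ a direct check of the $W_G=\mathfrak{A}_4$ symmetry finishes it.

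\emph{Curves inside the boundary.} If $C_0\subset S=\PP\times\PP\times\{[0:1]\}$, then $\Stab_G(S)$ contains $\boldsymbol{\mu}_n^2$ acting on $S$, and the same boundary-intersection argument inside $S$ applies. The curve meets the boundary of $S$ only in torus-fixed points, since the other orbits in $\boldsymbol{\mathcal{C}}_{\mathbb{T}}$ have length $12n$. So $C_0$ is again a torus-coset curve, and its $\boldsymbol{\mu}_n^2$-orbit has at least $n$ members. Summing over the $G$-translates of $S$ gives $a\ge 2n>7$, unless $C_0$ is a torus-invariant curve, which gives $\boldsymbol{\mathcal{C}}_{\mathbb{T}}$.

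The main obstacle is the component count for coset curves when $n=5$ or $7$: one must carefully bound $|\boldsymbol{\mu}_n^3\cap\{(t^{k_1},t^{k_2},t^{k_3})\}|$ and use $\mathfrak{A}_4$-symmetry to exclude small $k_i$.
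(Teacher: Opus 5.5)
\textbf{Gap: the coset lemma is false.} Your argument rests on the claim that an irreducible curve in $\mathbb{P}^1\times\mathbb{P}^1$ meeting the toric boundary only at the four torus-fixed points is the closure of a coset of a one-parameter subgroup. You then conclude that $C_0$ is a translate of $t\mapsto(t^{k_1},t^{k_2},t^{k_3})$. The hypothesis only tells you that $u=x_1/y_1$ and $v=x_2/y_2$ are units on the normalization minus the preimage of the corners. Once that preimage has more than two points, or the normalization has positive genus, the unit group has rank bigger than one, and nothing forces a relation $u^p=cv^q$.

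For example, take the closure of $t\mapsto\bigl(t(t-1),\,t(t-1)^2\bigr)$, i.e.\ the irreducible curve $u^3=v^2+uv$. It meets the boundary only at $(0,0)$ (for $t=0,1$) and at $(\infty,\infty)$ (for $t=\infty$), and its bidegree is far below $(7,7)$. It is not a coset curve, because $u^pv^{-q}=t^{p-q}(t-1)^{p-2q}$ is constant only for $p=q=0$. So the component counts in both of your cases are unsupported.

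Even granting the lemma, your count only reaches $a\ge n^2/7$, which does not exclude $n=5,7$. The promised gcd refinement and the ``direct check'' for $n=5$ are never carried out, as you acknowledge yourself.

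\textbf{Repair: no structure theorem is needed.}

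\emph{Component meeting the open orbit.} Pick $i$ with $d_i=C_0\cdot H_i>0$, say $i=1$, and let $S_0=\Stab_{T_G}(C_0)$. The elements of $S_0$ with trivial first coordinate preserve the fibres of $\pr_1|_{C_0}$. They act freely on the open orbit, hence freely on a general fibre, which consists of $d_1$ points. Therefore $|S_0|\le nd_1$, the $T_G$-orbit of $C_0$ has at least $n^2/d_1$ members, and $a=C\cdot H_1\ge n^2\ge 25$.

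\emph{Component in the boundary.} Suppose $C_0\subset\PP\times\PP\times\{[0:1]\}$ and $C\neq\boldsymbol{\mathcal{C}}_{\mathbb{T}}$. Run the same argument with $T_G$ acting through $\boldsymbol{\mu}_n^2$: the $T_G$-orbit of $C_0$ has total $H_i$-degree at least $n$ for some $i\in\{1,2\}$. An element of $G$ lying over $(\iota(1),1,\iota(1))$ moves these components to the opposite divisor $\PP\times\PP\times\{[1:0]\}$ without permuting factors. This gives $a\ge 2n\ge 10$.

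This route makes your intersection with $\boldsymbol{\mathcal{D}}_{\mathbb{T}}$ superfluous. It uses little beyond $T_G=\boldsymbol{\mu}_n^3$. The paper instead builds base-point-free $G$-invariant systems of tri-degree $(2n,2n,2n)$ or $(4n,4n,4n)$, obtained from $G_{12,3}$- and $G_{12,3}'$-invariants via $x_i\mapsto x_i^n$, $y_i\mapsto y_i^n$. It then compares $3ad\le 84n$ with the orbit lengths $12n^3$ off the boundary and $12n^2$ on it.
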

\begin{proof}
For each $n$, both groups $G_{12n^3}$ and $G_{12n^3}'$ have a positive-dimensional base-point-free linear system $U$ of $G$-invariant divisors of tri-degree $(d, d, d)$, where $d = 2n$ for $G_{12n^3}$, and $d = 4n$ for $G_{12n^3}'$. Indeed, a direct computation gives a positive-dimensional base-point-free linear system $U_1$ of $G_{12,3}$-invariant $(2,2,2)$-divisors such that \[x_1y_1x_2y_2x_3y_3 \in U_1.\] Similarly, there is a positive-dimensional base-point-free linear system $U_1'$ of $G_{12,3}'$-invariant $(4,4,4)$-divisors such that \[(x_1y_1x_2y_2x_3y_3)^2 \in U_1'.\] For arbitrary $n$, define $U_n$ and $U_n'$ by applying the substitution \[x_i \mapsto x_i^n,\ y_i \mapsto y_i^n\] to the elements of $U_1$ and $U_1'$, respectively. Then $U_n$ and $U_n'$ are positive-dimensional base-point-free linear systems of $G_{12n^3}$-invariant $(2n,2n,2n)$-divisors and $G_{12n^3}'$-invariant $(4n,4n,4n)$-divisors, respectively.

Now, if $C$ were the required curve, it would either intersect the general member of the system $U$ in $3ad \leq 84n$ points or lie in the locus of degenerate orbits of $\PPP$. Since $n\geq5$, $12n^3\geq 84n$. Hence $C$ must lie in the locus of degenerate orbits. From the description of the orbits, it follows that the only possibility for $C$ is to lie in $\boldsymbol{\mathcal{D}}_{\mathbb{T}}$. For $G_{12n^3}$ the number of intersection points is no greater than $42n$, but the length of the general orbits in $\boldsymbol{\mathcal{D}}_{\mathbb{T}}$ is $12n^2$. Since for $n\geq5$ we have $12n^2 \geq 42n$, this curve cannot exist. As the actions of $G_{12n^3}$ and $G_{12n^3}'$ on $\mathbb{P}^1 \times \{[0:1]\} \times \mathbb{P}^1$ coincide, this curve does not exist for $G_{12n^3}'$ either.
\end{proof}

\begin{proposition}
For an even $n \geq 4$ and $W_G = \mathfrak{A}_4$ there are no $G$-invariant $G$-irreducible curves of tri-degree $(a,a,a)$ with $a \leq 7$, other than $\boldsymbol{\mathcal{C}}_{\mathbb{T}}$.
\end{proposition}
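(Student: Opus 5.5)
I would follow the same strategy as in Proposition~\ref{oddcurves}, now for the five families of Proposition~\ref{groupseven}. The key observation is that each of $G_{6n^3}$, $G_{3n^3}$ contains $G_{12(n/2)^3}$, and $G_{3n^3}'$ contains a conjugate of $G_{12(n/2)^3}'$. A $G$-invariant $G$-irreducible curve is invariant under any subgroup, though it may split into several irreducible pieces for that subgroup. So it suffices to show that for $G_{12m^3}$ and $G_{12m^3}'$ with $m\geq 2$ there is no invariant curve of tri-degree $(a,a,a)$ with $a\le 7$ other than (components of) $\boldsymbol{\mathcal{C}}_{\mathbb{T}}$. Here $m=n$ for the families $G_{12n^3},G_{12n^3}'$ and $m=n/2$ for the other three. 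The dropping of irreducibility is harmless, since the counting argument below only uses the total tri-degree and the orbit lengths.

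For $G_{12m^3}$ and $G_{12m^3}'$ with $m$ even I would use the base-point-free invariant systems $U_m$, $U_m'$ of tri-degree $(d,d,d)$, obtained by the substitution $x_i\mapsto x_i^m$, $y_i\mapsto y_i^m$. This substitution works for any $m$, and the orbit table of Proposition~\ref{orbitdescription} holds for both parities. A curve $C\ne\boldsymbol{\mathcal{C}}_{\mathbb{T}}$ of tri-degree $(a,a,a)$ that is not contained in a general member of $U$ meets it in $3ad$ points. Since $C$ is not in the degenerate locus, these points form a union of generic orbits of length $12m^3$, so $12m^3\le 3ad$. For $G_{12m^3}$ this gives $12m^3\le 42m$, and for $G_{12m^3}'$ it gives $12m^3\le 84m$.

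It is cleaner, however, to avoid passing to the subgroup with a weaker bound and to work directly with the larger group $G$ of order $12n^3$, $6n^3$ or $3n^3$. The invariant system $U_{n/2}$ (or $U_n$) is still invariant under the toric kernel, up to a possible character, which can be repaired by squaring. I would check, from the orbit decomposition of the open torus orbit, that generic orbits of $G$ outside $\boldsymbol{\mathcal{D}}_{\mathbb{T}}$ still have length $|G|\ge 3n^3$. The degenerate orbits off the boundary lie on finitely many invariant subvarieties, such as the loci $(p,p,p)$ and $([\varepsilon^j:1],p,[\varepsilon^k:1])$, and I would check each directly using its explicit tri-degree. Then the inequality $|G|\le 3a\cdot(\text{degree of }U)$ with $a\le7$ fails for $n\ge 4$ once the degree of $U$ is at most $4n$: indeed $3n^3>84n$ requires $n^2>28$, which already fails for $n=4$.

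The main obstacle is therefore the borderline small cases $n=4,6$, where the crude count does not suffice. For these I would sharpen the argument in two ways. First, by choosing a lower-degree invariant system: for the index-$2$ and index-$4$ kernels, products such as $x_1^{n/2}y_1^{n/2}\cdots$ give invariants of degree $n$ rather than $2n$. Second, by using the finer orbit lengths on the boundary. A curve inside $\boldsymbol{\mathcal{D}}_{\mathbb{T}}$ restricts to a curve on $S=\mathbb{P}^1\times\{[0:1]\}\times\mathbb{P}^1$ of bidegree at most $(7,7)$. It must be invariant under $\Stab_G(S)$, whose generic orbits on $S$ have length of order $n^2$. Intersecting with the invariant curves $x_1^ny_1^n x_3^ny_3^n=\lambda\,(\ldots)$ on $S$ then bounds the length of a generic orbit by $14n$, which rules out $n\ge 4$ except for a possible finite check. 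Any remaining case with $n=4$ I would settle by the explicit computation of invariant $(a,a,a)$-sections described in the introduction.
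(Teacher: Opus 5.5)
Your reduction in the first paragraph rests on a false claim. For $m=2,3$ the groups $G_{12m^3}$, $G'_{12m^3}$ \emph{do} have invariant curves of tri-degree $(a,a,a)$ with $a\le 7$ other than $\boldsymbol{\mathcal{C}}_{\mathbb{T}}$:
\begin{itemize}
\item $G_{96,3}=G'_{12\cdot 2^3}$ has the unions of rulings $\boldsymbol{\mathcal{F}}_i$ and the irreducible $(4,4,4)$-curves $\boldsymbol{\mathcal{S}}_i$ (Proposition~\ref{Gcurves963}). Your own inequality $12m^3\le 84m$ gives no contradiction at $m=2$.
\item $G_{324,160}$ and $G'_{324,160}$ have the four $(6,6,6)$-curves $\boldsymbol{\mathcal{F}}_{3,i}$, each a union of $18$ rulings in $\boldsymbol{\mathcal{D}}_{\mathbb{T}}$ (Proposition~\ref{Gcurves324}).
\end{itemize}
So passing to $G_{12(n/2)^3}$ or $G'_{12(n/2)^3}$ is only legitimate for $n\ge 8$, where $n/2\ge 4$. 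That part of your argument agrees with the paper.

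This leaves $n=6$ for $G_{6n^3}$, $G_{3n^3}$, $G'_{3n^3}$ open, and your direct argument does not close it on the boundary. The comparison ``generic $\Stab_G(S)$-orbits of order $n^2$ versus $14n$'' depends on the constant. For $G_{3n^3}$ and $G'_{3n^3}$, the pointwise stabilizer of a general point of $S=\mathbb{P}^1\times\{[0:1]\}\times\mathbb{P}^1$ is $\{(1,\ee_n^{2k},1)\}$, of order $n/2$. Hence generic $\Stab_G(S)$-orbits on $S$ have length only $n^2=36<84=14n$. Equivalently, generic boundary $G$-orbits have length $6n^2=216$, while a boundary $(6,6,6)$-curve has intersection number exactly $216$ with a general member of $U_6$, which is $G_{3n^3}$-invariant. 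So the count does not exclude boundary curves, and the candidates left are unions of boundary rulings of the type $\boldsymbol{\mathcal{F}}_{3,i}$. You propose an explicit computation only for $n=4$, and you do not show that the degree-$n$ invariants span a base-point-free system.

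The missing step is the paper's, which uses the subgroup together with its actual curve list:
\begin{itemize}
\item Each of the three groups contains $G_{324,160}$ or $G'_{324,160}$ up to conjugacy.
\item So a $G$-invariant $G$-irreducible curve $C\neq\boldsymbol{\mathcal{C}}_{\mathbb{T}}$ is a union of some of the $\boldsymbol{\mathcal{F}}_{3,i}$. Since $a\le 7$, it equals a single one of them, which would then have to be $G$-invariant.
\item None is: an element of $T_G$ with entries $\ee_6$ moves the rulings through $[1:1]$ to rulings through $[\ee_6:1]=[-\ee_3^2:1]$.
\end{itemize}
At $n=4$ the subgroup trick fails for the same reason as above. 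There one needs invariant base-point-free systems of degree $d$ with $21d<|G|$, e.g.\ $d=8$ for the groups of order $192$, not merely $d\le 4n$, followed by a direct check of the degenerate locus, as in the paper.
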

\begin{proof} Let $C$ be a $G$-invariant $G$-irreducible curve of tri-degree $(a,a,a)$ with $a\leq7$, other than $\boldsymbol{\mathcal{C}}_{\mathbb{T}}$. Consider cases:
\begin{enumerate}
    \item Let us first consider the case of $G_{12n^3}$ and $G_{12n^3}'$. Then the proof is essentially the same as in the case of an odd $n$.

    \item Then let us consider the case of $n=4$. For each group it is possible to find a positive-dimensional base-point-free linear system $U$ of $G$-invariant divisors of tri-degree $(d,d,d)$, where $d = 16$ for $G_{768,1083476}$, and $d = 8$ otherwise. If $C$ does not lie in the locus of degenerate orbits, then either $C$ intersects the general member $D$ of the system $U$ in points with trivial stabilizers or $C \subset D$. In the former case, $|C \cap D| \geq |G|$. But from the degree consideration, we get that $C\cdot D = 3ad \leq 21d$. This is a contradiction, since in every case $21d<|G|$. The only other option is for $C$ to lie in $D$, but since $D$ is general and $U$ is base-point-free, this is impossible. Hence $C$ lies in the locus of degenerate orbits of $\PPP$. This is also impossible by direct computations.
    
    \item Now let us consider the case of $n = 6$. Recall from Section~\ref{Groups} that each of the three remaining groups contains, up to conjugacy, either $G_{324,160}$ or $G_{324,160}'$ as a subgroup. Any $G$-invariant curve must also be invariant under such a subgroup. We can list all $G$-invariant curves that satisfy our constraints for $G = G_{324,160}$ and $G_{324,160}'$. Indeed, by the same argument as in Proposition~\ref{oddcurves} they must lie in $\boldsymbol{\mathcal{D}}_{\mathbb{T}}$. Such curves are computed directly; for both group, there are four curves of tri-degree $(6,6,6)$ of the following form:
    \begin{align*}
        &G \cdot \left(\mathbb{P}^1 \times \{[0:1]\} \times \{[1:1]\}\right),\\ 
        &G \cdot \left(\mathbb{P}^1 \times \{[0:1]\} \times \{[-1:1]\}\right),\\
        &G \cdot \left(\{[1:1]\} \times \{[0:1]\} \times \mathbb{P}^1\right),\\ 
        &G \cdot \left(\{[-1:1]\} \times \{[0:1]\} \times \mathbb{P}^1\right).
    \end{align*}
    Since in both cases the only possibility for $a$ is $6$, for the $G$-orbits of those curves to be of the required tri-degree, those curves must already be $G$-invariant. But they clearly are not. Hence there are no $G$-invariant curves for $n = 6$.
    
    \item Now, for $n\geq 8$, we always have $G_{12\cdot (\nicefrac{n}{2})^3}$ or $G_{12\cdot (\nicefrac{n}{2})^3}'$ as a subgroup in $G = G_{6n^3}$, $G_{3n^3}$, or $G_{3n^3}'$. Since $\frac{n}{2} \geq 4$, and since for such $n$ we know that there are no curves invariant under the action of the subgroup and satisfying our conditions, we conclude that there are no required $G$-invariant curves.
    \end{enumerate}
\end{proof}

Applying Lemma~\ref{A4sub}, we have the following:
\begin{corollary}
\label{nocurves}
For any finite subgroup $G$ of $\mathbb{T}\rtimes \mathbb{W}$ with the toric part $T_G \cong \boldsymbol{\mu}^3_n$, $\boldsymbol{\mu}^2_n \times \boldsymbol{\mu}_{\frac{n}{2}}$ or $\boldsymbol{\mu}_n \times \boldsymbol{\mu}^2_{\frac{n}{2}}$ with $n\geq 4$, there are no $G$-invariant $G$-irreducible curves of tri-degree $(a,a,a)$ with $a \leq 7$, other than $\boldsymbol{\mathcal{C}}_{\mathbb{T}}$.
\end{corollary}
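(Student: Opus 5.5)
The plan is to reduce the statement to the two propositions just proved, namely the odd case $n\geq 5$ (Proposition~\ref{oddcurves}) and the even case $n\geq 4$, which cover all toric groups with $W_G=\mathfrak{A}_4$ and toric part of the given shape. The reduction goes through Lemma~\ref{A4sub}. The key observation is monotonicity: if $H\subset G$, then every $G$-invariant curve is $H$-invariant. A $G$-irreducible curve decomposes into at most finitely many $H$-irreducible components, and each of these has symmetric tri-degree $(a',a',a')$ with $a'\leq a$. This holds because $W_H=\mathfrak{A}_4$ still permutes the rulings transitively, so the tri-degree of every $H$-invariant curve is symmetric.

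\textbf{Step 1 (choose the subgroup).} Let $G\subset\mathbb{T}\rtimes\mathbb{W}$ be finite with $T_G$ of one of the three listed forms, $n\geq 4$. I read $W_G\supset\mathfrak{A}_4$ as implicit, by the standing convention of Section~\ref{Groups}. By Lemma~\ref{A4sub}, take $H\subset G$ with $T_H=T_G$ and $W_H=\mathfrak{A}_4$. Then $H$ is conjugate to one of the groups classified in Propositions~\ref{groupsodd} and~\ref{groupseven} with the same $n$.

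\textbf{Step 2 (the odd case $n=4$ does not arise).} The odd proposition requires $n\geq 5$. This is harmless: for odd $n$ only $T_G\cong\boldsymbol{\mu}_n^3$ occurs, and $n\geq4$ odd already forces $n\geq5$. For even $n\geq4$ the even proposition covers all three shapes of $T_H$.

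\textbf{Step 3 (restrict to $H$).} Let $C$ be a $G$-invariant $G$-irreducible curve of tri-degree $(a,a,a)$ with $a\leq 7$, $C\neq\boldsymbol{\mathcal{C}}_{\mathbb{T}}$. Then $C$ is a union of $H$-orbits of irreducible curves, i.e.\ of $H$-irreducible curves $C_1,\dots,C_k$. Each $C_j$ is $H$-invariant of tri-degree $(a_j,a_j,a_j)$ with $\sum a_j=a\leq7$, so each $a_j\leq 7$. By the propositions for $H$, each $C_j$ must equal $\boldsymbol{\mathcal{C}}_{\mathbb{T}}$. But $\boldsymbol{\mathcal{C}}_{\mathbb{T}}$ has tri-degree $(4,4,4)$, so at most one $C_j$ can be it. The alternative is that $C$ has a component in common with $\boldsymbol{\mathcal{C}}_{\mathbb{T}}$. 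However, the $\mathbb{T}$-invariant curves form a single $H$-orbit, and hence a single $G$-orbit. By $G$-irreducibility $C$ would then contain all of $\boldsymbol{\mathcal{C}}_{\mathbb{T}}$ and equal it, a contradiction.

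The only subtle point is this bookkeeping in Step 3: making sure that $G$-irreducible curves split into $H$-irreducible pieces of symmetric, bounded tri-degree, and that coincidence with components of $\boldsymbol{\mathcal{C}}_{\mathbb{T}}$ is excluded via transitivity. I expect no genuine obstacle beyond this.
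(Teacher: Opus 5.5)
Your proposal is correct and matches the paper's argument. The paper proves this in one line: Lemma~\ref{A4sub} gives $H\subset G$ with $T_H=T_G$ and $W_H=\mathfrak{A}_4$, and the odd and even propositions applied to $H$ finish the proof. Your Step~3 makes explicit what the paper leaves implicit: you split $C$ into $H$-irreducible pieces of symmetric tri-degree, and you note that a $G$-irreducible curve sharing a component with $\boldsymbol{\mathcal{C}}_{\mathbb{T}}$ must equal it.
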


\subsection{Invariant curves for small groups}
In this subsection, we compute the $G$-invariant $G$-irreducible curves for the small groups that require separate analysis. The relevant groups are
\begin{equation}
\label{eq:small-superrigidity-groups}
\left\{G_{96,3},\ G_{48,50},\ G_{324,160},\ G_{324,160}',\ G_{48,49}'\right\}.
\end{equation} The role of the list~\eqref{eq:small-superrigidity-groups} is as follows. The groups in this list are the remaining minimal groups, with respect to containment up to conjugacy in $\Aut(\PPP)$, for which the $G$-birational superrigidity argument has to be carried out. More precisely, if $G$ is a group with $n<4$ and $W_G\supset \mathfrak{A}_4$ for which $\PPP$ is to be proved $G$-birationally superrigid, then $G$ contains, up to conjugacy, one of the groups in the list above. After proving $G$-birational superrigidity for these minimal groups, we will apply Lemma~\ref{overgroupsGbirsuperrig} to pass to their overgroups.

We begin with the following lemma on curves and divisors containing them, which follows easily from the ideal exact sequence.
\begin{lemma}
    \label{222sections} 
    Let $C$ be an irreducible curve of tri-degree $(k,l,m)$ and consider the space of divisors of tri-degree $(a,b,c)$. Suppose that \[N = (a+1)(b+1)(c+1)-(ak+bl+cm)-1 >0.\] Then the linear system of divisors of tri-degree $(a,b,c)$ containing $C$ has projective dimension at least $N - 1$. In particular, the following are true:
    \begin{enumerate}
        \item An irreducible curve of tri-degree $(2,2,2)$ lies in a divisor of tri-degree $(1,1,1)$.
        \item An irreducible curve of tri-degree $(4,4,4)$ lies in a pencil of divisors of tri-degree $(2,2,2)$.
    \end{enumerate}
\end{lemma}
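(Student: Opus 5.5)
The plan is to use the ideal sheaf exact sequence
\[
0 \to \mathcal{I}_C\otimes\mathcal{O}_X(a,b,c) \to \mathcal{O}_X(a,b,c) \to \mathcal{O}_X(a,b,c)|_C \to 0 .
\]
Taking global sections, the sections vanishing on $C$ form the kernel of the restriction map $H^0(X,\mathcal{O}_X(a,b,c)) \to H^0(C,\mathcal{O}_X(a,b,c)|_C)$. By the Künneth formula, $h^0(X,\mathcal{O}_X(a,b,c)) = (a+1)(b+1)(c+1)$, so the whole problem reduces to bounding $h^0(C,\mathcal{O}_X(a,b,c)|_C)$ from above by $ak+bl+cm+1$. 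Granting that bound, the kernel has vector-space dimension at least
\[
(a+1)(b+1)(c+1)-(ak+bl+cm)-1 = N .
\]
Its projectivization is exactly the linear system of $(a,b,c)$-divisors containing $C$, which therefore has projective dimension at least $N-1$ whenever $N>0$.

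The upper bound on $h^0(C,L)$ for $L=\mathcal{O}_X(a,b,c)|_C$ is the main step. Here $\deg L = D\cdot C = ak+bl+cm$, using the intersection formula from \S\ref{Notationsandbasics}. For an irreducible, possibly singular, reduced projective curve and a line bundle $L$ of degree $d\geq 0$, one has $h^0(C,L)\leq d+1$. I would prove this by induction on $d$.
\begin{itemize}
\item If $d=0$, a nonzero section of $L$ has no zeros, so $L$ is trivial and $h^0=1$ because $C$ is integral.
\item If $d>0$, choose a smooth point $p\in C$. The sections vanishing at $p$ form a subspace of codimension at most one, and that subspace is $H^0(C,L(-p))$, where $\deg L(-p)=d-1$.
\end{itemize}
One could instead pass to the normalization $\nu\colon\widetilde C\to C$ and apply Riemann--Roch or Clifford-type reasoning there, since $H^0(C,L)\hookrightarrow H^0(\widetilde C,\nu^*L)$ and $\deg \nu^*L = d$. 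The elementary induction is cleaner, though. All of our multidegrees are nonnegative, so $d\geq 0$ and the bound applies.

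For the two special cases I would substitute the numbers directly.
\begin{itemize}
\item For $C$ of tri-degree $(2,2,2)$ and divisors of tri-degree $(1,1,1)$, we get $N = 8-6-1 = 1$. So the system has dimension at least $0$, i.e.\ there is at least one $(1,1,1)$-divisor containing $C$.
\item For $C$ of tri-degree $(4,4,4)$ and divisors of tri-degree $(2,2,2)$, we get $N = 27-24-1 = 2$. So the system has projective dimension at least $1$, i.e.\ $C$ lies in a pencil.
\end{itemize}

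I do not expect a serious obstacle; the only point needing care is the bound $h^0(C,L)\leq \deg L+1$ for a singular integral curve. I would handle it by the point-by-point induction above, removing only smooth points so that $L(-p)$ remains a line bundle.
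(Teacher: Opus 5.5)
Your proposal is correct and takes the same route as the paper, which says only that the lemma follows from the ideal sheaf exact sequence. Your induction over smooth points supplies the bound $h^0(C,\mathcal{O}_X(a,b,c)|_C)\le ak+bl+cm+1$ for an integral curve, which the paper leaves implicit; your numerical checks $N=1$ and $N=2$ for the two special cases are also right.
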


Next, we list several types of curves that are easy to compute. In the subsequent propositions, we will not mention their computations and just list them.
\begin{enumerate}
    \item Curves that decompose as a union of curves of tri-degrees $(m,0,0)$, $(0,m,0)$, and~$(0,0,m)$ for some positive $m$. A curve of tri-degree $(m,0,0)$ is a union of $m$ fibers of $\pr_{2,3}$. Thus, the $F_1$-invariant $F_1$-irreducible curves of this tri-degree correspond to $F_1$-orbits of length $m$ on the base $\PP\times\PP$. Taking the full $G$-orbit gives the corresponding curve of tri-degree $(m,m,m)$.
    
    \item Curves whose decomposition contains a component $C_0$ of tri-degree $(0,a,b)$ for some positive $a,b$; note that this case includes the curves that lie in $\boldsymbol{\mathcal{D}}_{\mathbb{T}}$. Put \[F_{C_0}=\Stab_G(C_0). \]Such a component is supported in fibers of $\pr_1$. To enumerate them, choose an $F_{C_0}$-orbit $O$ of length $m$ on the base of $\pr_1$, with $m\mid a$ and $m\mid b$, and a point $p\in O$. On the fiber \[S_p=\pr_1^{-1}(p)\cong\PP\times\PP,\] compute the $\Stab_{F_{C_0}}(S_p)$-invariant irreducible curves of bi-degree $\left(\frac{a}{m},\frac{b}{m}\right)$ from the invariant one-dimensional subspaces of \[H^0\left(S_p,\mathcal{O}_{S_p}\left(\frac{a}{m},\frac{b}{m}\right)\right).\] Taking the $F_{C_0}$-translates and then the full $G$-orbit gives all curves of this type.

    \item Irreducible curves of tri-degree $(1,1,1)$. These are of the form \[\{\left([x:y],\varphi([x:y]),\psi([x:y])\right)\mid [x:y] \in \mathbb{P}^1\},\] for some $\varphi,\psi\in \PGL_2$. To find the $G$-invariant curves among them, we write a curve in the normalized form above and impose that every generator of $G$ sends it to itself. This gives explicit equations on $\varphi$ and $\psi$, whose solutions give all $G$-invariant irreducible curves of tri-degree $(1,1,1)$.

    \item Irreducible curves of tri-degree $(2,2,2)$. By Lemma~\ref{222sections}, they must lie in a $(1,1,1)$-divisor. Thus, to compute such curves, we decompose $H^0(\mathcal{O}(1,1,1))$ into irreducible representations and examine the corresponding base loci. 
\end{enumerate}
\begin{lemma}
\label{rulingsstab}
Let $G$ be one of the groups from list~\eqref{eq:small-superrigidity-groups}. Let $F$ be the stabilizer of the rulings, and let $K_i$ be the subgroup of $F$ that stabilizes a general divisor in $|H_i|$. Let $C$ be a $G$-irreducible $G$-invariant curve. Then there is the following alternative:
\begin{enumerate}
    \item For any $i = 1,2,3$ and for each $F$-irreducible component $C'$ of $C$, $|K_i|$ divides $C'\cdot H_i$;
    \item $F$-components of $C$ are contained in the locus of degenerate $K_i$-orbits for some $i$, possibly depending on the component.
\end{enumerate}
Moreover,
\[
|K_1|=|K_2|=|K_3|.
\]
\end{lemma}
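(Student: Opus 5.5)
The plan is to restrict to $F$-components and intersect each of them with a general member of $|H_i|$. First I record the setup. Let $F\subset G$ be the kernel of the action of $G$ on the set of three rulings, i.e. the preimage of the subgroup of $\mathfrak{S}_3$ acting trivially. Then $F$ is normal in $G$, and $G/F$ is a transitive subgroup of $\mathfrak{S}_3$, since $W_G\supset\mathfrak{A}_4$. Every element of $F$ has the form $(A_1,A_2,A_3;\id)$, so $F$ acts on each factor via $\pr_i$ through the image $F_i\subset\PGL_2$. A divisor in $|H_i|$ is a fiber $\pr_i^{-1}(p)$, and its stabilizer in $F$ is
\[
K_i=\{(A_1,A_2,A_3)\in F \mid A_i p=p\}.
\]
For a general point $p\in\PP$ the stabilizer of $p$ in $F_i$ is trivial, so $K_i=\ker(F\to F_i)$. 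Because $F$ is normal in $G$ and $G$ permutes the rulings transitively, conjugation by an element $g\in G$ sending the $i$-th ruling to the $j$-th one carries $\ker(F\to F_i)$ onto $\ker(F\to F_j)$. This proves $|K_1|=|K_2|=|K_3|$. Alternatively, one can read the equality off the explicit generators of the five groups in list~\eqref{eq:small-superrigidity-groups}.

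Next I prove the alternative. Let $C'$ be an $F$-irreducible component of $C$, and fix $i$. Take a general divisor $D=\pr_i^{-1}(p)$. If $C'$ has a component contained in a fiber of $\pr_i$, then $C'\cdot H_i=0$ on that part, and the divisibility holds trivially for it. So I may assume $C'\cap D$ is a finite set. Since $p$ is general, this intersection is transversal, so $C'\cdot H_i=|C'\cap D|$. The group $K_i$ fixes $D$ and preserves $C'$, because $C'$ is $F$-invariant and $K_i\subset F$. Hence $K_i$ acts on $C'\cap D$.

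Now suppose that alternative (2) fails for $C'$. Then a general point of $C'$ lies in a free $K_i$-orbit. Here ``degenerate'' means a stabilizer larger than the kernel of the action, and $K_i$ acts faithfully on $X$. The points of $C'$ lying in degenerate $K_i$-orbits therefore form a finite set $Z$. As $p$ varies, the fibers $D$ sweep out $C'$, so a general $D$ avoids $Z$. Consequently $K_i$ acts freely on $C'\cap D$, and $|K_i|$ divides $|C'\cap D|=C'\cdot H_i$. If instead (2) holds for some $i$, we are in the second case of the alternative. This gives exactly the dichotomy in the statement.

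The main point needing care is the passage from a general fiber to the finite set $Z$, together with transversality. Transversality follows from generic smoothness of $\pr_i|_{C'}$ over an open subset of $\PP$. Components of $C'$ lying in fibers of $\pr_i$ must be treated separately, and they contribute zero. Finally, one should check that degeneracy is measured for $K_i$ acting on $X$, which is faithful, so no kernel correction is needed.
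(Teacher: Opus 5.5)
Your proposal is correct and matches the paper's argument. Like the paper, you intersect each $F$-component with a general fiber of $\pr_i$, observe that $K_i$ acts freely there unless the component lies in the degenerate $K_i$-locus, and get $|K_1|=|K_2|=|K_3|$ from transitivity on the rulings, which you make explicit through $K_i=\ker(F\to F_i)$ and conjugation. The only step you use without stating it is that $K_i\lhd F$: this makes the degenerate $K_i$-locus $F$-invariant, so it either contains every irreducible component of $C'$ or meets $C'$ in a finite set.
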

\begin{proof}
    Consider a general divisor $S$ of tri-degree $(1,0,0)$. Then, for each group in \eqref{eq:small-superrigidity-groups}, $S$ is stabilized by some subgroup $K_1 \subset F$, and $K_1$ acts effectively on $S$. Hence, a general $K_1$-orbit on $S$ has length $|K_1|$. Intersecting $S$ with $C$ and applying the same argument to general divisors of tri-degrees $(0,1,0)$ and $(0,0,1)$, we get the required alternative. Finally, note that \[|K_1| = |K_2| = |K_3|\] by transitivity of the action on the rulings.
\end{proof}
Let us now compute $G$-invariant curves in each required case. In the tables, \say{$12$ rulings} means four curves of each of the tri-degrees $(1,0,0)$, $(0,1,0)$, and $(0,0,1)$; \say{$r\times(a,b,c)$} means $r$ irreducible components of tri-degree $(a,b,c)$; and \say{$(a,b,c)$} means that the curve is irreducible of tri-degree $(a,b,c)$.
\begin{proposition}
\label{Gcurves963}
Let $G=G_{96,3}$. The complete list of $G$-invariant $G$-irreducible curves of tri-degree $(a,a,a)$ with $a\leq 7$, other than $\boldsymbol{\mathcal{C}}_{\mathbb{T}}$, is as follows:
\begingroup\footnotesize
\begin{longtblr}[
theme = mytheme,
entry = none,
label = none,
]{
stretch = 1.25,
colspec = {|Q[c,m,co=0.3]|Q[c,m,co=0.5]|Q[c,m,co=1]|Q[l,m,co=5.8]|},
rowhead = 1
}
\hline
$a$ & Curve & Components & Representative component\\
\hline
\hline
$4$ & $\boldsymbol{\mathcal{F}}_1$ & $12$ rulings & $\mathbb{P}^1\times\{[0:1]\}\times\{[1:1]\}$\\
\cline{2-4}
& $\boldsymbol{\mathcal{F}}_2$ & $12$ rulings & $\mathbb{P}^1\times\{[0:1]\}\times\{[i:1]\}$\\
\cline{2-4}
& $\boldsymbol{\mathcal{F}}_3$ & $12$ rulings & $\{[1:1]\}\times\{[0:1]\}\times\mathbb{P}^1$\\
\cline{2-4}
& $\boldsymbol{\mathcal{F}}_4$ & $12$ rulings & $\{[i:1]\}\times\{[0:1]\}\times\mathbb{P}^1$\\
\cline{2-4}
& $\boldsymbol{\mathcal{S}}_1$ & $(4,4,4)$ & {$\displaystyle\begin{cases}-x_1^2x_3^2-ix_1^2y_3^2+iy_1^2x_3^2+y_1^2y_3^2=0,\\ -ix_2^2x_3^2+ix_2^2y_3^2+x_3^2y_2^2+y_2^2y_3^2=0.\end{cases}$}\\
\cline{2-4}
& $\boldsymbol{\mathcal{S}}_2$ & $(4,4,4)$ & {$\displaystyle\begin{cases}-x_1^2x_3^2+ix_1^2y_3^2-iy_1^2x_3^2+y_1^2y_3^2=0,\\ -ix_2^2x_3^2-ix_2^2y_3^2-x_3^2y_2^2+y_2^2y_3^2=0.\end{cases}$}\\
\cline{2-4}
& $\boldsymbol{\mathcal{S}}_3$ & $(4,4,4)$ & {$\displaystyle\begin{cases}x_1^2x_3^2-ix_1^2y_3^2-iy_1^2x_3^2+y_1^2y_3^2=0,\\ ix_2^2x_3^2-ix_2^2y_3^2+x_3^2y_2^2+y_2^2y_3^2=0.\end{cases}$}\\
\cline{2-4}
& $\boldsymbol{\mathcal{S}}_4$ & $(4,4,4)$ & {$\displaystyle\begin{cases}x_1^2x_3^2+ix_1^2y_3^2+iy_1^2x_3^2+y_1^2y_3^2=0,\\ ix_2^2x_3^2+ix_2^2y_3^2-x_3^2y_2^2+y_2^2y_3^2=0.\end{cases}$}\\
\hline
\end{longtblr}
\endgroup
\end{proposition}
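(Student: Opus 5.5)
The plan is to sort a putative $G$-irreducible curve $C$ of tri-degree $(a,a,a)$, $a\le 7$, by the tri-degrees of its irreducible components and to handle each class with the enumeration recipes listed before Lemma~\ref{rulingsstab}. For $G=G_{96,3}$ the stabilizer of the rulings is $F=T_G\rtimes\langle(\iota(1),i,\iota(1)),\dots\rangle$, the preimage of the Klein four-group in $\mathfrak{A}_4$, which has order $32$. I would first compute the groups $K_i$ of Lemma~\ref{rulingsstab}. A general divisor $\{p\}\times\PP\times\PP$ is preserved by the elements of $F$ acting trivially on the first factor. I expect $|K_i|=8$, coming from the toric part $(1,\pm1,\pm1)$ together with the involutions that fix the first coordinate. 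Then Lemma~\ref{rulingsstab} forces one of two things. Either $8\mid C'\cdot H_i$ for each $F$-component $C'$, which is impossible when $a\le7$ unless $C'$ has degree $0$ in that direction. Or the components lie in the degenerate $K_i$-orbit locus, which is a finite union of explicit surfaces and curves. I would read this locus off the orbit table in Proposition~\ref{orbitdescription}: it consists of $\boldsymbol{\mathcal{D}}_{\mathbb{T}}$, the divisors $x_j^2=\pm y_j^2$ and $x_j^2=\pm i y_j^2$, and the surface swept by the family $([\varepsilon_8x:y],[x:y],[x:y])$.

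Next I would run through the component types. (i) Unions of fibres of the projections. Components of type $(m,0,0)$ correspond to $F_1$-orbits of length $m\le 7$ on $\PP\times\PP$. The orbit table gives only orbits of length $4$, and these are the points $([0:1],[1:1])$, $([0:1],[i:1])$ and their analogues. This yields exactly $\boldsymbol{\mathcal{F}}_1,\dots,\boldsymbol{\mathcal{F}}_4$, each with $a=4$, plus the curve $\boldsymbol{\mathcal{C}}_{\mathbb{T}}$, which is excluded. (ii) Components of type $(0,a',b')$ lying in fibres of $\pr_1$. Here $p$ lies in an orbit of length $m$ on the base, and I would compute the invariant curves of bidegree $(a'/m,b'/m)$ on $S_p$. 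Since $a\le 7$ and nondegenerate orbits are long, $m$ must be small and $p$ must be one of $[0:1],[1:0],[\pm1:1],[\pm i:1]$. In each case the stabilizer acting on $S_p$ is large, and I expect there to be no invariant curves of the allowed bidegree beyond those already listed. (iii) Irreducible $(1,1,1)$-curves. I would solve the graph equations for $\varphi,\psi$ against the generators. The $\ee_8$-twist in the $3$-cycle should make this system inconsistent. (iv) $(2,2,2)$-curves. By Lemma~\ref{222sections} such a curve lies in a $(1,1,1)$-divisor, so I would decompose $H^0(\mathcal{O}(1,1,1))$ under a central lift of $G$. If no invariant subspace of small dimension has a curve in its base locus, this case is empty.

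The remaining case is that of $G$-irreducible curves whose components are irreducible and of full type, with tri-degree summing to $(a,a,a)$ and $a\le7$. I expect this to be the hardest step. I would use the divisibility alternative again: each component must lie in a degenerate $K_i$-locus, so it lies in an intersection of two of the surfaces $x_j^2=\lambda y_j^2$ or in the $32$-orbit family surface. Curves in the $\boldsymbol{\mathcal{S}}_k$ form, cut out by pairs of biquadratic equations, arise exactly this way. The $G$-invariant ones should be exactly $\boldsymbol{\mathcal{S}}_1,\dots,\boldsymbol{\mathcal{S}}_4$, obtained from invariant one-dimensional subspaces of the span of $x_j^2x_3^2,x_j^2y_3^2,y_j^2x_3^2,y_j^2y_3^2$. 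I would verify irreducibility and smoothness by direct computation in SageMath.

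The main obstacle is making the enumeration in the last step exhaustive. One has to rule out curves of degree $5$ to $7$ that lie in the degenerate loci but are not complete intersections of this shape. For this I would intersect $C$ with a base-point-free invariant $(2,2,2)$-system, as in the proof of Proposition~\ref{oddcurves}. This bounds the number of points on degenerate orbits by $6a\le 42$. Checking it against the possible orbit lengths $4,24,32$ from Proposition~\ref{orbitdescription} should leave only $a=4$.
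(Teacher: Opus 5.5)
There is a genuine gap, and it sits in the case you flag as hardest: the irreducible $(4,4,4)$-curves.

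It starts with $|K_i|$, which is $4$, not $8$. Modulo $T_G$, the group $F$ is represented by $(\iota(1),i,\iota(1))$ and its conjugates $(\iota(i),\iota(-i),i)$ and $(i,\iota(1),\iota(-i))$ under the $3$-cycle. So every element of $F$ outside $T_G$ has first component $\iota(\pm1)$, $\iota(\pm i)$ or $\pm i$, and $K_1=\{(1,\pm1,\pm1)\}$. Its degenerate locus is $\{p_2\in\{[0:1],[1:0]\}\}\cup\{p_3\in\{[0:1],[1:0]\}\}\subset\boldsymbol{\mathcal{D}}_{\mathbb{T}}$. It is not the set of divisors $x_j^2=\pm y_j^2$, $x_j^2=\pm iy_j^2$ that you read off the $G$-orbit table. (Also, the length-$32$ family sweeps out a curve, not a surface.)

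With $|K_i|=4$, Lemma~\ref{rulingsstab} gives $a=4$ at once for curves outside $\boldsymbol{\mathcal{D}}_{\mathbb{T}}$, but it says nothing about where such a curve lies. The $\boldsymbol{\mathcal{S}}_k$ are not contained in any degenerate $K_i$-locus; with your $|K_i|=8$ the alternative would actually exclude them. Nor do they lie in an intersection of two surfaces $x_j^2=\lambda y_j^2$, since such intersections are unions of rulings. Searching among $F$-semi-invariant $(2,0,2)$- and $(0,2,2)$-biquadrics may well produce the $\boldsymbol{\mathcal{S}}_k$. However, nothing in your argument shows that every invariant irreducible $(4,4,4)$-curve lies on such a pair, so completeness is not proved.

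Your proposed fix via a base-point-free invariant $(2,2,2)$-system also fails. For $G_{96,3}$ there is no base-point-free system of invariant $(d,d,d)$-divisors with $d\le 7$. A general member would meet each $\boldsymbol{\mathcal{S}}_k$ in $12d<96$ points lying in generic orbits, which is impossible. In fact the only invariant $(2,2,2)$-divisor is $\boldsymbol{\mathcal{D}}_{\mathbb{T}}$.

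The count must be done with a system that has base points, and this is the missing idea. The paper uses the invariant system $V_{4,4,4}$, whose base locus is four $G$-orbits $O_{24,k}$ of length $24$. Since $D\cdot C=48<96$, an invariant irreducible $(4,4,4)$-curve $C$ outside the degenerate locus must pass through some $O_{24,k}$. By Lemma~\ref{222sections}, $H^0(\mathcal{I}_C(2,2,2))$ is then a $G$-invariant space of dimension at least $2$. It sits inside the $8$-dimensional space of $(2,2,2)$-forms vanishing on $O_{24,k}$, which splits as $1\oplus2\oplus2'\oplus3$ into pairwise non-isomorphic irreducibles. So $C$ lies in the base locus of a summand of dimension $2$ or $3$, and computing those base loci yields exactly the $\boldsymbol{\mathcal{S}}_k$.
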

\begin{proof}
Let $C$ be a $G$-invariant $G$-irreducible curve of tri-degree $(a,a,a)$, $a\leq 7$, and $C \neq \boldsymbol{\mathcal{C}}_{\mathbb{T}}$. Assume that $C$ does not lie in either the locus of degenerate $G$-orbits or $\boldsymbol{\mathcal{D}}_{\mathbb{T}}$; the ones that do are the curves $\boldsymbol{\mathcal{F}}_i$ for $i \in \{1,2,3,4\}$. First, apply Lemma~\ref{rulingsstab}. We compute that $|K_i| = 4$; hence, either $C$ is contained in the union of degenerate loci of $K_i$ or every $F$-component of $C$ has tri-degree divisible by 4. The curves contained in these degenerate loci are precisely the curves $\boldsymbol{\mathcal{F}}_{i}$. Thus, we can assume that $C$ is not one of these curves and satisfies the divisibility condition. We get that the tri-degree of $C$ is $(4,4,4)$, and there are the following possibilities for the decomposition of $C$ into irreducible components:
\begin{enumerate}
    \item an irreducible $(4,4,4)$-curve;
    \item two irreducible $(2,2,2)$-curves: impossible since there are no index 2 subgroups in $G_{96,3}$;
    \item four irreducible $(1,1,1)$-curves: none;
    \item four $(1,0,0)$-, four $(0,1,0)$-, and four $(0,0,1)$-curves: $\boldsymbol{\mathcal{F}}_{i}$ for $i \in \{1,2,3,4\}$.
\end{enumerate}

Assume that $C$ is an irreducible curve of tri-degree $(4,4,4)$. By Lemma~\ref{222sections}, $C$ must lie in a pencil of divisors of tri-degree $(2,2,2)$. Note that there is a linear system $V_{4,4,4}$ of $G$-invariant divisors of tri-degree $(4,4,4)$, generated by 
    \begin{multline*}
        -x_3^4 y_1^4 y_2^4-x_2^4 y_1^4 y_3^4+x_1^4 y_2^4 y_3^4+x_1^4 x_2^4 x_3^4,
        -i x_1^2 x_2^4 y_1^2 y_3^4+i x_1^2 x_3^4 x_2^4 y_1^2-x_3^2 x_2^4 y_1^4 y_3^2+x_1^4 x_3^2 x_2^4 y_3^2+x_1^4 x_2^2 y_2^2 y_3^4\\+x_2^2 y_1^4 y_2^2 y_3^4+x_1^4 x_3^4 x_2^2 y_2^2+x_3^4 x_2^2 y_1^4 y_2^2+i x_1^2 x_3^4 y_1^2 y_2^4-i x_1^2 y_1^2 y_2^4 y_3^4+x_3^2 y_1^4 y_2^4 y_3^2-x_1^4 x_3^2 y_2^4 y_3^2,\\-x_2^4 x_3^4 y_1^4+x_1^4 x_3^4 y_2^4+x_1^4 x_2^4 y_3^4-y_2^4 y_3^4 y_1^4,x_1^2 x_2^2 x_3^2 y_1^2 y_2^2 y_3^2.
    \end{multline*}
    Its base locus is zero-dimensional. If $C$ does not lie in the locus of degenerate orbits and does not pass through base points of $V_{4,4,4}$, then for a general divisor $D \in V_{4,4,4}$, \[D\cdot C\geq|D \cap C|\geq96,\] but $D\cdot C = 48$, which is absurd. Hence either $C$ lies in the locus of degenerate orbits or $C$ passes through base points of $V_{4,4,4}$. The former case is already accounted for. Consider the latter case. The base locus of $V_{4,4,4}$ consists of $96$ points, which split into four orbits of length 24 each: 
    \begin{align*}
        &O_{24,1} = G_{96,3}\cdot\left([1:1],[1:0],[1:1]\right),\\ 
        &O_{24,2} = G_{96,3}\cdot\left([1:1],[0:1],[1:1]\right),\\
        &O_{24,3} = G_{96,3}\cdot\left([1:1],[0:1],[i:1]\right),\\ 
        &O_{24,4} = G_{96,3}\cdot\left([1:1],[1:0],[i:1]\right).
    \end{align*}
    Hence, $C$ must pass through one of these orbits. Assume that $C$ passes through the first one; the other cases are done similarly. Since $C$ must also lie in a divisor of tri-degree $(2,2,2)$, $C$ must lie in some $G$-invariant positive-dimensional linear subsystem of the linear system $U_1$ of divisors of tri-degree $(2,2,2)$ that pass through $O_{24,1}$. By a direct computation, this gives an $8$-dimensional linear space.
    Its decomposition into irreducible representations consists of four non-isomorphic irreducible representations: 
    \[U_1 = \langle x_1x_2x_3y_1y_2y_3\rangle\oplus W_2 \oplus W_2' \oplus W_3,\] where $W_2$ and $W_2'$ are two-dimensional, and $W_3$ is three-dimensional. So, if such $C$ exists, it must lie in the base locus of $W_2$, $W_2'$ or $W_3$. Computing base loci, we see that $\Bs(W_3)$ consists of the curve $\boldsymbol{\mathcal{S}}_1$ of tri-degree $(4,4,4)$ and eight $\mathbb{T}$-invariant points. The base loci of $W_2$ and $W_2'$ are both non-reduced curves of tri-degree $(8,8,8)$ that set-theoretically coincide with $\boldsymbol{\mathcal{S}}_1$. Thus, the only possible irreducible curve of tri-degree $(4,4,4)$ that passes through $O_{24,1}$ is $\boldsymbol{\mathcal{S}}_1$. Considering orbits $O_{24,2}$, $O_{24,3}$, and~$O_{24,4}$, we get $G$-invariant $G$-irreducible curves $\boldsymbol{\mathcal{S}}_2$, $\boldsymbol{\mathcal{S}}_3$, and~$\boldsymbol{\mathcal{S}}_4$, respectively.
\end{proof}

\begin{proposition}
\label{Gcurves4850}
Let $G=G_{48,50}$. The complete list of $G$-invariant $G$-irreducible curves of tri-degree $(a,a,a)$ with $a\leq 7$, other than $\boldsymbol{\mathcal{C}}_{\mathbb{T}}$, is as follows:
\begingroup\footnotesize
\begin{longtblr}[
theme = mytheme,
entry = none,
label = none,
]{
stretch = 1.25,
colspec = {|Q[c,m,co=0.3]|Q[c,m,co=0.5]|Q[c,m,co=1]|Q[l,m,co=5.8]|},
rowhead = 1,
}
\hline
$a$ & Curve & Components & Representative component\\
\hline
\hline
$4$ & $\boldsymbol{\mathcal{F}}_1$ & $12$ rulings & $\mathbb{P}^1\times\{[0:1]\}\times\{[1:1]\}$\\
\cline{2-4}
& $\boldsymbol{\mathcal{F}}_2$ & $12$ rulings & $\mathbb{P}^1\times\{[0:1]\}\times\{[i:1]\}$\\
\cline{2-4}
& $\boldsymbol{\mathcal{F}}_3$ & $12$ rulings & $\{[1:1]\}\times\{[0:1]\}\times\mathbb{P}^1$\\
\cline{2-4}
& $\boldsymbol{\mathcal{F}}_4$ & $12$ rulings & $\{[i:1]\}\times\{[0:1]\}\times\mathbb{P}^1$\\
\cline{2-4}
& $\boldsymbol{\mathcal{F}}_5$ & $12$ rulings & $\{[i:1]\}\times\mathbb{P}^1\times\{[1:1]\}$\\
\cline{2-4}
& $\boldsymbol{\mathcal{F}}_6$ & $12$ rulings & $\{[1:1]\}\times\mathbb{P}^1\times\{[i:1]\}$\\
\cline{2-4}
& $\boldsymbol{\mathcal{F}}'_1$ & $12$ rulings & $\{[1:1]\}\times\mathbb{P}^1\times\{[1:1]\}$\\
\cline{2-4}
& $\boldsymbol{\mathcal{F}}'_2$ & $12$ rulings & $\{[i:1]\}\times\mathbb{P}^1\times\{[i:1]\}$\\
\cline{2-4}
& $\boldsymbol{\mathcal{R}}_1$ & $4\times(1,1,1)$ & $\left\{\left([x:y],[x-iy:-x-iy],[x-y:-ix-iy]\right)\mid[x:y]\in\mathbb{P}^1\right\}$\\
\cline{2-4}
& $\boldsymbol{\mathcal{R}}_2$ & $4\times(1,1,1)$ & $\left\{\left([x:y],[x-iy:-x-iy],[x-y:ix+iy]\right)\mid[x:y]\in\mathbb{P}^1\right\}$\\
\cline{2-4}
& $\boldsymbol{\mathcal{R}}_3$ & $4\times(1,1,1)$ & $\left\{\left([x:y],[x-iy:-x-iy],[x+y:-ix+iy]\right)\mid[x:y]\in\mathbb{P}^1\right\}$\\
\cline{2-4}
& $\boldsymbol{\mathcal{R}}_4$ & $4\times(1,1,1)$ & $\left\{\left([x:y],[x-iy:-x-iy],[x+y:ix-iy]\right)\mid[x:y]\in\mathbb{P}^1\right\}$\\
\cline{2-4}
& $\boldsymbol{\mathcal{R}}_5$ & $4\times(1,1,1)$ & $\left\{\left([x:y],[x-y:-ix-iy],[x-iy:-x-iy]\right)\mid[x:y]\in\mathbb{P}^1\right\}$\\
\cline{2-4}
& $\boldsymbol{\mathcal{R}}_6$ & $4\times(1,1,1)$ & $\left\{\left([x:y],[x-y:-ix-iy],[x-iy:x+iy]\right)\mid[x:y]\in\mathbb{P}^1\right\}$\\
\cline{2-4}
& $\boldsymbol{\mathcal{R}}_7$ & $4\times(1,1,1)$ & $\left\{\left([x:y],[x-y:-ix-iy],[x+iy:-x+iy]\right)\mid[x:y]\in\mathbb{P}^1\right\}$\\
\cline{2-4}
& $\boldsymbol{\mathcal{R}}_8$ & $4\times(1,1,1)$ & $\left\{\left([x:y],[x-y:-ix-iy],[x+iy:x-iy]\right)\mid[x:y]\in\mathbb{P}^1\right\}$\\
\hline
\end{longtblr}
\endgroup
\end{proposition}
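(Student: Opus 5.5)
I would mirror the proof of Proposition~\ref{Gcurves963}. Let $C\neq\boldsymbol{\mathcal{C}}_{\mathbb{T}}$ be a $G$-invariant $G$-irreducible curve of tri-degree $(a,a,a)$ with $a\leq 7$, where $G=G_{48,50}$. I would first dispose of the curves contained in $\boldsymbol{\mathcal{D}}_{\mathbb{T}}$ or in the locus of degenerate $G$-orbits. By Proposition~\ref{orbitdescription}, the one-dimensional degenerate loci are the families $G\cdot([1:1],p,[1:1])$, $G\cdot([i:1],p,[i:1])$ and $G\cdot([0:1],p,[0:1])$ (orbits of length $24$), together with the diagonal-type family $G\cdot(p,p,p)$ (length $16$). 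The third family lies in $\boldsymbol{\mathcal{C}}_{\mathbb{T}}$, and the first two sweep out $\boldsymbol{\mathcal{F}}_1'$ and $\boldsymbol{\mathcal{F}}_2'$. Curves in $\boldsymbol{\mathcal{D}}_{\mathbb{T}}$ are handled by type (2) of the enumeration: on $S=\PP\times\{[0:1]\}\times\PP$ one lists the $\Stab_G(S)$-invariant curves of small bi-degree. This should give exactly the ruling configurations $\boldsymbol{\mathcal{F}}_1,\dots,\boldsymbol{\mathcal{F}}_4$, coming from the special orbits $[1:1],[i:1]$ (and their $\pm$ variants) on the factors. The rulings $\boldsymbol{\mathcal{F}}_5,\boldsymbol{\mathcal{F}}_6$ through the mixed points $[i:1],[1:1]$ come from type (1), that is, from $F$-orbits of length $4$ on $\PP\times\PP$.

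Next I would apply Lemma~\ref{rulingsstab}. I expect $|K_i|=4$ here as well, since $T_G$ has order $4$ and the stabilizer of a general fibre of $\pr_i$ inside $F$ should be $T_G$-like. Then either every $F$-component of $C$ has $H_i$-degree divisible by $4$, or the components lie in degenerate $K_i$-loci, and the latter give only the ruling curves already listed. This forces $a=4$. The possible decompositions are then: one irreducible $(4,4,4)$-curve; two $(2,2,2)$-curves, which needs an index-$2$ subgroup and must be checked, since $G_{48,50}$ may have one; four $(1,1,1)$-curves; or $12$ rulings.

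For four $(1,1,1)$-curves I use type (3). I take the index-$4$ subgroup stabilizing a component, which should be $T_G\rtimes\langle(1,1,1;(1,2,3))\rangle$ up to conjugacy, and solve for the graphs $\{([x:y],\varphi,\psi)\}$ invariant under it. I expect exactly eight solutions up to the $G$-action, namely $\boldsymbol{\mathcal{R}}_1,\dots,\boldsymbol{\mathcal{R}}_8$. For two $(2,2,2)$-curves, Lemma~\ref{222sections} places each component in a $(1,1,1)$-divisor. I would then decompose $H^0(\mathcal{O}(1,1,1))$ under the index-$2$ subgroup and check that the base loci of the invariant subsystems contain no irreducible $(2,2,2)$-curve whose orbit has two components.

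The main obstacle is the irreducible $(4,4,4)$ case. Here I would find a $G$-invariant linear system $V$ of $(4,4,4)$-divisors, or of $(2,2,2)$-divisors, with zero-dimensional base locus. A general $D\in V$ gives $D\cdot C=48$ or $24$, which should contradict $|D\cap C|\geq 48$ unless $C$ meets $\Bs(V)$ or lies in a degenerate locus. The troublesome part is that length-$16$ and length-$24$ orbits can make the count tight. The fallback is then to use that $C$ lies in a pencil of $(2,2,2)$-divisors, by Lemma~\ref{222sections}. I would decompose the $G$-invariant part of the $(2,2,2)$-system through each base-point orbit into irreducible summands and compute the base loci of the summands, as was done for $G_{96,3}$. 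I expect these base loci to contain no new irreducible $(4,4,4)$-curve, which is why the table has no $\boldsymbol{\mathcal{S}}_i$.
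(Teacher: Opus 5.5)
Your outline follows the paper up to the irreducible $(4,4,4)$ case: disposing of boundary and degenerate curves, applying Lemma~\ref{rulingsstab} with $|K_i|=4$ to force $a=4$, and splitting by decomposition type. The gap is in that irreducible case. The invariant net $V_{2,2,2}=\langle f_1,f_2,f_3\rangle$ of $(2,2,2)$-divisors has base locus $O_{24,1}\cup O_{24,2}$, and it does force $C$ through one of these orbits. (A $(4,4,4)$-system would not help, since there $D\cdot C=48=|G|$.) After that, your $G_{96,3}$-style fallback breaks down. The $(2,2,2)$-divisors through $O_{24,1}$ decompose as $V_{2,2,2}\oplus W_3$, and $V_{2,2,2}$ is a $3$-dimensional \emph{isotypic} component: $f_1,f_2,f_3$ are all $G$-invariant. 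Hence every line in $\mathbb{P}(V_{2,2,2})\cong\mathbb{P}^2$ is a $G$-invariant pencil, and every such pencil already contains $O_{24,1}\cup O_{24,2}$. So ``the base loci of the irreducible summands'' is neither well defined nor exhaustive. For $G_{96,3}$ the method worked because the decomposition there is multiplicity-free, giving only finitely many invariant subsystems. Here you are left with a two-parameter family of invariant pencils, and nothing in your plan decides which of them contain an irreducible $(4,4,4)$-curve in their base locus.

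The paper cuts this family down with an idea your proposal lacks: it first shows that $C$ lies on no smooth $(2,2,2)$-divisor. Such a divisor is a K3 surface, so the Hodge index theorem gives $C^2\le 12$ and hence $p_a(C)\le 7$. Riemann--Hurwitz for $\widetilde{C}\to\widetilde{C}/G$ with $|G|=48$ then forces $g(\widetilde{C}/G)\in\{0,1\}$. Both are excluded: LMFDB shows $G_{48,50}$ acts on no curve of genus $2\le g\le 7$ with rational quotient, and it acts neither on $\mathbb{P}^1$ nor faithfully on an elliptic curve. Consequently the pencil through $C$ consists of singular divisors. It lies in $V_{2,2,2}$ because $\Bs(W_3)$ is finite. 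There are exactly six such lines in $\mathbb{P}(V_{2,2,2})$, and each contains $f_3$, $f_1+f_2$ or $f_1-f_2$. The zero loci of these three are reducible, $G$-irreducible, and their components have empty common intersection. An irreducible $G$-invariant curve would have to lie in that intersection, which is impossible.

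Two smaller points:
\begin{itemize}
\item $G_{48,50}\cong\boldsymbol{\mu}_2^4\rtimes\boldsymbol{\mu}_3$, with $\boldsymbol{\mu}_3$ fixing no non-trivial element of $\boldsymbol{\mu}_2^4$. Thus $G/[G,G]\cong\boldsymbol{\mu}_3$, there is no index-$2$ subgroup, and the two-$(2,2,2)$ case is empty.
\item No $(1,1,1)$-graph is $T_G$-invariant, since invariance under $(-1,-1,1)$ would force $\psi(-x)=\psi(x)$. So the stabilizer of a component of $\boldsymbol{\mathcal{R}}_i$ is not $T_G\rtimes\langle(1,1,1;(1,2,3))\rangle$, and the type (3) computation has to be run over all index-$4$ subgroups.
\end{itemize}
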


\begin{proof}
Let $C$ be a $G$-invariant $G$-irreducible curve of tri-degree $(a,a,a)$, $a\leq 7$, and $C \neq \boldsymbol{\mathcal{C}}_{\mathbb{T}}$. Assume that $C$ does not lie in either the locus of degenerate $G$-orbits or $\boldsymbol{\mathcal{D}}_{\mathbb{T}}$; the ones that do are the curves $\boldsymbol{\mathcal{F}}_i$ for $i \in \{1,2,3,4\}$ and $\boldsymbol{\mathcal{F}}_i'$ for $i\in \{1,2\}$. First, apply Lemma~\ref{rulingsstab}. We compute that $|K_i| = 4$; hence, either $C$ is contained in the union of degenerate loci of $K_i$ or every $F$-component of $C$ has tri-degree divisible by 4. The curves contained in these degenerate loci are precisely the curves $\boldsymbol{\mathcal{F}}_{1}'$ and $\boldsymbol{\mathcal{F}}_{2}'$. Thus, we can assume that $C$ is not one of these curves and satisfies the divisibility condition. We get that the tri-degree of $C$ is $(4,4,4)$, and there are the following possibilities for the decomposition of $C$ into irreducible components:
\begin{enumerate}
    \item an irreducible $(4,4,4)$-curve;
    \item two irreducible $(2,2,2)$-curves; impossible since there are no index 2 subgroups in $G_{48,50}$;
    \item four irreducible $(1,1,1)$-curves: $\boldsymbol{\mathcal{R}}_{i}$ for $i \in \{1,\dots,8\}$;
    \item four $(1,0,0)$-, four $(0,1,0)$-, and four $(0,0,1)$-curves: $\boldsymbol{\mathcal{F}}_{i}$ for $i \in \{1,\dots,6\}$ and $\boldsymbol{\mathcal{F}}_{i}'$ for $i \in \{1,2\}$.
\end{enumerate}

Assume that $C$ is an irreducible $(4,4,4)$-curve. We know, by Lemma~\ref{222sections}, that $C$ must lie in a pencil of $(2,2,2)$-divisors. Suppose that $C$ lies in a smooth $(2,2,2)$-divisor. A smooth $(2,2,2)$-divisor is a K3 surface. Then, by the Hodge index theorem, \[\det\begin{pmatrix}
H^2 & C\cdot H\\
C\cdot H & C^2 
\end{pmatrix}\leq0.\] Thus, \[C^2\leq 12.\] By the Riemann--Roch theorem for surfaces: \[p_a(C) = 1+\frac{1}{2}(C^2+K\cdot C) = 1+\frac{1}{2}C^2,\] since for K3 surfaces $K = 0$. Therefore, \[p_a(C) \leq 7.\] Consider the Riemann-Hurwitz formula for the cover $\pi: \widetilde{C} \rightarrow \widetilde{C}/G$, where $\widetilde{C}$ is the normalization of $C$: \[2g(\widetilde{C})-2 = |G|(2g(\widetilde{C}/G)-2)+R,\ R\geq0.\] Since $g(\widetilde{C})\leq7$, \[48(2g(\widetilde{C}/G)-2)\leq48(2g(\widetilde{C}/G)-2)+R\leq12.\] Thus, $g(\widetilde{C}/G) = 0$ or $1$. For $g(\widetilde{C}/G)=0$, by \cite{LMFDB}, there are no curves $\widetilde{C}$ with an action of $G_{48,50}$ such that $2\leq g(\widetilde{C})\leq7$ and $g(\widetilde{C}/G_{48,50})=0$. The genus of $\widetilde{C}$ is not 0 either, since $G_{48,50}$ cannot act effectively on $\mathbb{P}^1$. Thus, the only possibility is $g(\widetilde{C}) = 1$. This case is also not possible, since $G_{48,50}$ is not of the form~\eqref{eq:Gellipticcurve}. Hence, the case $g(\widetilde{C}/G)=0$ is impossible. Thus, the only remaining case is $g(\widetilde{C}/G) = 1$. Consider the ramification divisor's degree: \[R = \sum_{i}(|G|-|O_i|),\] where $O_i$ denote degenerate orbits in $\widetilde{C}$. The possible lengths for $O_i$ are \[\{1,2,3,4,6,8,12,24\},\] hence if $R \neq 0$, $R \geq 24$. In the latter case: \[24\leq 2g(\widetilde{C})-2 = R\leq12,\] which is absurd. Hence, $R = 0$, $g(\widetilde{C})=1$, and the action of $G$ on $\widetilde{C}$ is free. But we have already seen that $G_{48,50}$ cannot act on an elliptic curve. Hence, there can be no $G$-curves $C$ of tri-degree $(4,4,4)$ lying on a smooth K3 surface.

The only remaining case is when $C$ lies on a singular $(2,2,2)$-divisor. We know that $C$ must lie in a pencil of singular $(2,2,2)$-divisors. Note that there is a $G$-invariant linear system $V_{2,2,2}$ of $G$-invariant $(2,2,2)$-divisors, generated by: \[f_1 = x_3^2 y_1^2 y_2^2+x_2^2 y_1^2 y_3^2+x_1^2 y_2^2 y_3^2+x_1^2 x_2^2 x_3^2,\ f_2 = x_2^2 x_3^2 y_1^2+x_1^2 x_3^2 y_2^2+x_1^2 x_2^2 y_3^2+y_2^2 y_3^2 y_1^2,\ f_3 = x_1 x_2 x_3 y_1 y_2 y_3,\] whose base locus consists of $48$ points and splits into two orbits of length $24$; denote them $O_{24,1}$ and $O_{24,2}$. Let $D$ be a general divisor in $V_{2,2,2}$. If $C$ does not pass through base points of $V_{2,2,2}$ and does not lie in the locus of degenerate orbits, $D$ must intersect $C$ in a non-degenerate orbit; thus, $D\cdot C\geq 48$. But $D\cdot C = 24$, hence, if such $C$ exists, it must either lie in the locus of degenerate orbits of $G$ or pass through one of the orbits $O_{24,1}$ or $O_{24,2}$. The former case is already accounted for. Hence, assume we are in the latter case. The linear systems $U_1$ and $U_2$ of divisors of tri-degree $(2,2,2)$ passing through $O_{24,1}$ and $O_{24,2}$, respectively, are both of dimension $6$. Decomposing them into the isotypical components, we get: \[U_1 = V_{2,2,2}\oplus W_3,\ U_2 = V_{2,2,2}\oplus W_3',\] where both $W_3$ and $W_3'$ are irreducible of dimension 3. In both cases the three-dimensional linear subsystems $W_3$ and $W_3'$ have zero-dimensional base locus, so they cannot contain a curve. Thus, if we want to have a pencil of singular $(2,2,2)$-divisors, it must be a subsystem of $V_{2,2,2}$. Parametrizing the system as \[Af_1+Bf_2+Cf_3,\] we compute that there are six pencils of singular divisors, defined by the following equations on $A,B,C$: \[A=0,\ B=0,\ 4A+4B-C=0,\ 4A+4B+C=0,\ 4A-4B+iC=0,\ 4A-4B-iC=0.\] In particular, we see that one of $f_3,\ f_1+f_2,\ f_1-f_2$ must lie in the pencil. But $V(f_3) = \boldsymbol{\mathcal{D}}_{\mathbb{T}}$, $V(f_1 + f_2)$, and~$V(f_1 - f_2)$ are all reducible and $G$-irreducible; hence, since $C$ is irreducible, it must lie in the intersection of irreducible components. But in each case the intersection is empty. Therefore, $C$ cannot lie in a singular pencil. Thus, the list of $G_{48,50}$-invariant curves is complete.
\end{proof}

\begin{proposition}
\label{Gcurves324}
Let $G=G_{324,160}$ or $G_{324,160}'$. The complete list of $G$-invariant
$G$-irreducible curves of tri-degree $(a,a,a)$ with $a\leq 7$, other
than $\boldsymbol{\mathcal{C}}_{\mathbb T}$, is as follows:
\begingroup\footnotesize
\begin{longtblr}[
theme = mytheme,
entry = none,
label = none,
]{
stretch = 1.25,
colspec = {|Q[c,m,co=0.3]|Q[c,m,co=0.5]|Q[c,m,co=1]|Q[l,m,co=5.8]|},
rowhead = 1
}
\hline
$a$ & Curve & Components & Representative component\\
\hline
\hline
$6$ & $\boldsymbol{\mathcal{F}}_{3,1}$ & $18$ rulings & $\mathbb{P}^1\times\{[0:1]\}\times\{[1:1]\}$\\
\cline{2-4}
& $\boldsymbol{\mathcal{F}}_{3,2}$ & $18$ rulings & $\mathbb{P}^1\times\{[0:1]\}\times\{[-1:1]\}$\\
\cline{2-4}
& $\boldsymbol{\mathcal{F}}_{3,3}$ & $18$ rulings & $\{[1:1]\}\times\{[0:1]\}\times\mathbb{P}^1$\\
\cline{2-4}
& $\boldsymbol{\mathcal{F}}_{3,4}$ & $18$ rulings & $\{[-1:1]\}\times\{[0:1]\}\times\mathbb{P}^1$\\
\hline
\end{longtblr}
\endgroup
\end{proposition}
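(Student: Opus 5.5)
The plan follows the same scheme as in Propositions~\ref{Gcurves963} and~\ref{Gcurves4850}. Let $C\neq\boldsymbol{\mathcal{C}}_{\mathbb{T}}$ be a $G$-invariant $G$-irreducible curve of tri-degree $(a,a,a)$ with $a\leq 7$, where $G=G_{324,160}$ or $G_{324,160}'$.

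\textbf{Step 1: $C$ lies in $\boldsymbol{\mathcal{D}}_{\mathbb{T}}$.} I rerun the argument of Proposition~\ref{oddcurves} with $n=3$. The base-point-free invariant system $U_3$ (tri-degree $(6,6,6)$ for $G_{324,160}$) or $U_3'$ (tri-degree $(12,12,12)$ for $G_{324,160}'$) meets $C$ in at most $3ad$ points. This is at most $126$, respectively $252$, which is less than $|G|=324$. So $C$ lies in the locus of degenerate orbits. By Proposition~\ref{orbitdescription} with $n=3$, the degenerate orbits off $\boldsymbol{\mathcal{D}}_{\mathbb{T}}$ are of two kinds.
\begin{itemize}
\item[(i)] For $G_{324,160}$: the orbits of $(p,p,p)$, of $([1:1],p,[1:1])$-type points, and finitely many others.
\item[(ii)] For $G_{324,160}'$: the orbits of $([\theta x:y],[x:y],[x:y])$.
\end{itemize}
Each one-parameter family of type $(p,p,p)$ sweeps out translates of the diagonal-type $(1,1,1)$-curves, and each family of type $([1:1],p,[1:1])$ sweeps out rulings through points $([\pm1:1],\cdot,[\pm1:1])$. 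I check directly that the $G$-orbits of these curves have tri-degree exceeding $(7,7,7)$, using the orbit lengths $4n^3=108$ and $6n^3=162$ divided by the stabilizer sizes. Hence $C\subset\boldsymbol{\mathcal{D}}_{\mathbb{T}}$.

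\textbf{Step 2: reduce to one face.} Take the component $S=\PP\times\{[0:1]\}\times\PP$, on which the actions of the two groups coincide. Let $F_S=\Stab_G(S)$, of index $6$. The curve $C\cap S$ is then an $F_S$-invariant curve whose $G$-orbit has tri-degree $(a,a,a)$. By the intersection count with the general orbits of length $12n^2=108$ on $S$, we again have $3a\cdot 2n$ bounded. So either $C\cap S$ lies in the degenerate $F_S$-orbit locus of $S$, or its bidegree forces at least $108$ intersection points with a general invariant curve of bidegree $(2n,2n)=(6,6)$ on $S$, which contradicts $a\leq7$. The degenerate locus on $S$ consists of:
\begin{itemize}
\item the torus-invariant lines;
\item the lines $\{p\}\times\PP$ and $\PP\times\{q\}$ with $p,q\in\{[\varepsilon_6^j:1]\}$, coming from the orbits of length $6n^2=54$.
\end{itemize}

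\textbf{Step 3: enumerate.} The lines through $[\pm1:1]$, $[\pm\varepsilon_3^{\pm1}:1]$ split under $\boldsymbol{\mu}_3$ into classes with representatives $[1:1]$ and $[-1:1]$. The $G$-orbit of each line consists of $18$ rulings, $6$ in each direction, giving tri-degree $(6,6,6)$. These four orbits are the curves $\boldsymbol{\mathcal{F}}_{3,1},\dots,\boldsymbol{\mathcal{F}}_{3,4}$. Unions of several such orbits exceed $a=7$, and the torus-invariant lines give only $\boldsymbol{\mathcal{C}}_{\mathbb{T}}$. This completes the list.

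The main obstacle is Step 1's exclusion of curves inside the one-dimensional degenerate families off $\boldsymbol{\mathcal{D}}_{\mathbb{T}}$, i.e. the fixed loci of non-toric elements. For these I plan a direct computation of the fixed curves of elements of $G\setminus T_G$ and of their $G$-orbits' tri-degrees.
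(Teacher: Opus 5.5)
\textbf{There is a genuine gap in Step 2: the face count does not exclude $a=6$, and Step 3 enumerates the wrong set of curves.}

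Step 1 is correct and is exactly the paper's argument: the inequality $12n^3>84n$ still holds at $n=3$, since $324>252$. Your check that the one-parameter degenerate families off $\boldsymbol{\mathcal{D}}_{\mathbb{T}}$ give $G$-orbits of tri-degree $(36,36,36)$ and $(9,9,9)$ is also right.

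The problem starts in Step 2.
\begin{itemize}
\item The stabilizer $F_S$ (of order $54$) acts on $S$ through $\boldsymbol{\mu}_3^2\rtimes\boldsymbol{\mu}_2$, because $(1,t,1)$ acts trivially. So a general $F_S$-orbit on $S$ has $18$ points, not $108$: the $G$-orbit of length $108$ meets each of the six faces in $18$ points.
\item Let $C_S$ be the union of the components of $C$ lying in $S$, of tri-degree $(\alpha,0,\beta)$. Then $a=2(\alpha+\beta)$ and $C_S\cdot(6,6)=3a$, so the count only gives $3a\geq 18$, i.e.\ $a\geq 6$. Globally the same statement reads $18a\leq 126$ versus $108$. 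There is no contradiction for $a=6$; this is precisely the inequality $12n^2\geq 42n$ of Proposition~\ref{oddcurves} failing at $n=3$.
\item Your description of the degenerate locus on $S$ is also wrong. The orbits of length $6n^2=54$ are the isolated points $([\varepsilon_6^j:1],[0:1],[\varepsilon_6^k:1])$. A general point of $\PP\times\{[0:1]\}\times\{[1:1]\}$ has trivial stabilizer in $\boldsymbol{\mu}_3^2\rtimes\boldsymbol{\mu}_2$, so its $G$-orbit has length $108$.
\end{itemize}
So the curves $\boldsymbol{\mathcal{F}}_{3,i}$ are exactly of the kind your dichotomy claims to exclude. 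Step 3 therefore enumerates the wrong set, and you reach the correct list only because the two errors cancel. Nothing in your argument rules out other $F_S$-invariant curves in $S$ with $a=6$ that lie outside the degenerate locus. Note also that the real obstacle is on $\boldsymbol{\mathcal{D}}_{\mathbb{T}}$, not off it as your final remark suggests.

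What is missing is a direct classification of $F_S$-invariant curves on $S$ with $\alpha+\beta=3$.
\begin{itemize}
\item $C_S$ is $F_S$-irreducible, since a component that is not torus-invariant lies in only one face. The torus-invariant case gives $\boldsymbol{\mathcal{C}}_{\mathbb{T}}$.
\item A component with both degrees positive must then be a single $F_S$-invariant irreducible curve of bidegree $(1,2)$ or $(2,1)$.
\item Intersect such a component with a general member of the base-point-free invariant pencil $\{q^6+sq^3+1=0\}$ of six fibres. The intersection consists of free orbits, so it has at least $18$ points, giving $6\beta'\geq 18$. This is impossible since $\beta'\leq 2$.
\item Hence $C_S$ is a union of at most three fibres in one direction, forming an orbit of $q\mapsto\varepsilon_3^kq^{\pm1}$ (or the same in $p$).
\item The orbits of length at most $3$ are $\{0,\infty\}$, $\boldsymbol{\mu}_3$ and $-\boldsymbol{\mu}_3$. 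The first gives $\boldsymbol{\mathcal{C}}_{\mathbb{T}}$; the other two, in the $q$- and $p$-directions, give exactly $\boldsymbol{\mathcal{F}}_{3,1},\dots,\boldsymbol{\mathcal{F}}_{3,4}$.
\end{itemize}
The same argument applies to $G_{324,160}'$, since its action on $S$ coincides with that of $G_{324,160}$. This is the computation the paper compresses into ``computed directly''.
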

\begin{proof}
    Let $C$ be a $G$-invariant $G$-irreducible curve of tri-degree $(a,a,a)$, $a\leq 7$, and $C \neq \boldsymbol{\mathcal{C}}_{\mathbb{T}}$. By a direct computation, there exists a base-point-free linear system $U$ of $G$-invariant divisors of tri-degree $(d,d,d)$ with $d\leq\frac{|G|}{24}$. For a general member $D \in U$, if $C$ is a $G$-invariant curve of tri-degree $(a,a,a)$ with $a<8$ and $C$ does not lie in the locus of degenerate orbits, then we have the following inequality: \[24d>3ad=D\cdot C \geq |D\cap C|\geq|G|,\] which is a contradiction. Thus, $C$ lies in the locus of degenerate orbits. Such curves are computed directly and are exactly the curves in the list.
\end{proof}

\begin{proposition}
\label{Gcurves4849}
Let $G=G_{48,49}'$. The complete list of $G$-invariant $G$-irreducible curves of tri-degree $(a,a,a)$ with $a\leq 7$, other than $\boldsymbol{\mathcal{C}}_{\mathbb{T}}$, is as follows:
\begingroup\footnotesize
\begin{longtblr}[
theme = mytheme,
entry = none,
label = none,
]{
stretch = 1.25,
colspec = {|Q[c,m,co=0.3]|Q[c,m,co=0.5]|Q[c,m,co=1]|Q[l,m,co=5.8]|},
rowhead = 1
}
\hline
$a$ & Curve & Components & Representative component\\
\hline
\hline
$4$ & $\boldsymbol{\mathcal{F}}_1$ & $12$ rulings & $\mathbb{P}^1\times\{[0:1]\}\times\{[1:1]\}$\\
\cline{2-4}
& $\boldsymbol{\mathcal{F}}_2$ & $12$ rulings & $\mathbb{P}^1\times\{[0:1]\}\times\{[i:1]\}$\\
\cline{2-4}
& $\boldsymbol{\mathcal{F}}_3$ & $12$ rulings & $\mathbb{P}^1\times\{[1:1]\}\times\{[0:1]\}$\\
\cline{2-4}
& $\boldsymbol{\mathcal{F}}_4$ & $12$ rulings & $\mathbb{P}^1\times\{[i:1]\}\times\{[0:1]\}$\\
\cline{2-4}
& $\boldsymbol{\mathcal{F}}_5$ & $12$ rulings & $\mathbb{P}^1\times\{[1:1]\}\times\{[i:1]\}$\\
\cline{2-4}
& $\boldsymbol{\mathcal{F}}_6$ & $12$ rulings & $\mathbb{P}^1\times\{[i:1]\}\times\{[1:1]\}$\\
\cline{2-4}
& $\boldsymbol{\mathcal{F}}'_1$ & $12$ rulings & $\mathbb{P}^1\times\{[1:1]\}\times\{[1:1]\}$\\
\cline{2-4}
& $\boldsymbol{\mathcal{F}}'_2$ & $12$ rulings & $\mathbb{P}^1\times\{[i:1]\}\times\{[i:1]\}$\\
\cline{2-4}
& $\boldsymbol{\mathcal{R}}_1$ & $4\times(1,1,1)$ & $\left\{\left([x:y],[x:y],[x:y]\right)\mid[x:y]\in\mathbb{P}^1\right\}$\\
\cline{2-4}
& $\boldsymbol{\mathcal{R}}_2$ & $4\times(1,1,1)$ & $\left\{\left([x:y],[x+y:ix-iy],[x-iy:x+iy]\right)\mid[x:y]\in\mathbb{P}^1\right\}$\\
\cline{2-4}
& $\boldsymbol{\mathcal{Q}}_{2,1}$ & $2\times(2,2,2)$ & {$\displaystyle\begin{cases}x_1x_2x_3+\varepsilon_{12}^{7}y_1x_2x_3-\varepsilon_{12}^{11}x_1y_2x_3+\varepsilon_{3}^{2}y_1x_2y_3-\varepsilon_{3}x_1y_2y_3-iy_1y_2y_3=0,\\ \varepsilon_{3}^{2}y_1x_2x_3+\varepsilon_{3}x_1y_2x_3+iy_1y_2x_3+x_1x_2y_3+\varepsilon_{12}^{7}y_1x_2y_3+\varepsilon_{12}^{11}x_1y_2y_3=0.\end{cases}$}\\
\cline{2-4}
& $\boldsymbol{\mathcal{Q}}_{2,2}$ & $2\times(2,2,2)$ & {$\displaystyle\begin{cases}x_1x_2x_3+\varepsilon_{12}^{11}y_1x_2x_3-\varepsilon_{12}^{7}x_1y_2x_3+\varepsilon_{3}y_1x_2y_3-\varepsilon_{3}^{2}x_1y_2y_3-iy_1y_2y_3=0,\\ \varepsilon_{3}y_1x_2x_3+\varepsilon_{3}^{2}x_1y_2x_3+iy_1y_2x_3+x_1x_2y_3+\varepsilon_{12}^{11}y_1x_2y_3+\varepsilon_{12}^{7}x_1y_2y_3=0.\end{cases}$}\\
\cline{2-4}
& $\boldsymbol{\mathcal{Q}}_{2,3}$ & $2\times(2,2,2)$ & {$\displaystyle\begin{cases}x_1x_2x_3-\varepsilon_{3}y_1x_2x_3+\varepsilon_{3}^{2}x_1y_2x_3+\varepsilon_{3}^{2}y_1x_2y_3-\varepsilon_{3}x_1y_2y_3+y_1y_2y_3=0,\\ \varepsilon_{3}^{2}y_1x_2x_3+\varepsilon_{3}x_1y_2x_3+y_1y_2x_3+x_1x_2y_3+\varepsilon_{3}y_1x_2y_3+\varepsilon_{3}^{2}x_1y_2y_3=0.\end{cases}$}\\
\cline{2-4}
& $\boldsymbol{\mathcal{Q}}_{2,4}$ & $2\times(2,2,2)$ & {$\displaystyle\begin{cases}x_1x_2x_3-\varepsilon_{3}^{2}y_1x_2x_3+\varepsilon_{3}x_1y_2x_3+\varepsilon_{3}y_1x_2y_3-\varepsilon_{3}^{2}x_1y_2y_3+y_1y_2y_3=0,\\ \varepsilon_{3}y_1x_2x_3+\varepsilon_{3}^{2}x_1y_2x_3+y_1y_2x_3+x_1x_2y_3+\varepsilon_{3}^{2}y_1x_2y_3+\varepsilon_{3}x_1y_2y_3=0.\end{cases}$}\\
\cline{2-4}
& $\boldsymbol{\mathcal{Q}}_{2,5}$ & $2\times(2,2,2)$ & {$\displaystyle\begin{cases}x_1x_2x_3+\varepsilon_{3}^{2}y_1y_2x_3-\varepsilon_{3}y_1x_2y_3=0,\\ \varepsilon_{3}y_1y_2x_3+\varepsilon_{3}^{2}y_1x_2y_3+x_1y_2y_3=0.\end{cases}$}\\
\cline{2-4}
& $\boldsymbol{\mathcal{Q}}_{2,6}$ & $2\times(2,2,2)$ & {$\displaystyle\begin{cases}x_1x_2x_3+\varepsilon_{3}y_1y_2x_3-\varepsilon_{3}^{2}y_1x_2y_3=0,\\ \varepsilon_{3}^{2}y_1y_2x_3+\varepsilon_{3}y_1x_2y_3+x_1y_2y_3=0.\end{cases}$}\\
\hline
\end{longtblr}
\endgroup
\end{proposition}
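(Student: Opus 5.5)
We follow the template of Propositions~\ref{Gcurves963} and~\ref{Gcurves4850}. Let $C\neq\boldsymbol{\mathcal{C}}_{\mathbb{T}}$ be a $G$-invariant $G$-irreducible curve of tri-degree $(a,a,a)$ with $a\leq 7$.

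\emph{Step 1: degenerate loci.} Suppose first that $C$ lies in $\boldsymbol{\mathcal{D}}_{\mathbb{T}}$ or in the locus of degenerate $G$-orbits. Proposition~\ref{orbits4849} describes this locus as the one-parameter families $\Gamma_{16},\Gamma_{24,1},\Gamma_{24,2},\Gamma_{24,3}$. We read off directly which $G$-orbits of curves they sweep out. The families $\Gamma_{24,j}$ and the boundary components give the twelve-ruling curves $\boldsymbol{\mathcal{F}}_1,\dots,\boldsymbol{\mathcal{F}}_6$ of tri-degree $(4,4,4)$. The closure of $\Gamma_{16}$ is the diagonal orbit $\boldsymbol{\mathcal{R}}_1$.

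\emph{Step 2: divisibility.} For curves outside these loci we apply Lemma~\ref{rulingsstab}. We compute $F$ and $|K_i|$, expecting $|K_i|=4$. Then each $F$-component has tri-degree divisible by $4$, unless it lies in a degenerate $K_i$-locus; those components give $\boldsymbol{\mathcal{F}}_1'$ and $\boldsymbol{\mathcal{F}}_2'$. This forces $a=4$, and $|G|=48$ leaves four possible decompositions:
\begin{itemize}
\item twelve rulings, already listed;
\item four $(1,1,1)$-curves, found by solving for $\varphi,\psi$ as in type (3) of the list of easy curves; this should give $\boldsymbol{\mathcal{R}}_1$ and $\boldsymbol{\mathcal{R}}_2$;
\item two $(2,2,2)$-curves;
\item one irreducible $(4,4,4)$-curve.
\end{itemize}

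\emph{Step 3: two $(2,2,2)$-curves.} Unlike $G_{48,50}$, the group $G_{48,49}'\cong \mathfrak{A}_4\times\boldsymbol{\mu}_2\times\boldsymbol{\mu}_2$-type group has index-two subgroups. For each index-two subgroup $H$, a component $C_0$ is $H$-invariant and, by Lemma~\ref{222sections}, lies on a $(1,1,1)$-divisor. We decompose $H^0(\mathcal{O}(1,1,1))$, which is $8$-dimensional, into irreducible (projective) $H$-representations. The candidates for $C_0$ are then the one-dimensional base loci of pairs of $H$-semi-invariant $(1,1,1)$-forms, and of the subsystems spanned by isotypic pieces. We check irreducibility and that the $G$-orbit has exactly two components. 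This should produce $\boldsymbol{\mathcal{Q}}_{2,1},\dots,\boldsymbol{\mathcal{Q}}_{2,6}$, given as complete intersections of two $(1,1,1)$-divisors.

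\emph{Step 4: irreducible $(4,4,4)$-curves.} This is the main obstacle. By Lemma~\ref{222sections}, $C$ lies in a pencil of $(2,2,2)$-divisors. Suppose first that $C$ lies on a smooth member, which is a K3 surface. The Hodge index and Riemann--Roch argument gives $p_a(C)\leq 7$. Riemann--Hurwitz for $G$ acting on the normalization $\widetilde C$ then forces $g(\widetilde C/G)\in\{0,1\}$. We rule out $g(\widetilde C/G)=0$ using the LMFDB classification of $G_{48,49}$-actions in genera $2$ to $7$, together with the facts that $G$ does not embed in $\PGL_2$ and is not of the form~\eqref{eq:Gellipticcurve}. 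In the case $g(\widetilde C/G)=1$, since $R\geq 24$ whenever $R\neq0$, the action would be free on an elliptic curve, which is again impossible.

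Otherwise the pencil consists of singular members. Take the space $V_{2,2,2}$ of $G$-invariant $(2,2,2)$-forms. A general member meets $C$ in $24<48$ points, so $C$ passes through the base locus of $V_{2,2,2}$; we expect this base locus to be among $\mathcal{O}_{24}$, $\mathcal{O}_{12,\pm}$ and $\mathcal{O}_{8,j}$. We then decompose the $(2,2,2)$-forms through each such orbit into isotypic pieces, and show that every pencil of singular divisors either has zero-dimensional base locus or contains a reducible $G$-irreducible member. In the latter case $C$ would lie in the intersection of two of its components, which we check is finite or empty.

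The delicate point is that this group has more invariants and semi-invariants than $G_{48,50}$. The singular-pencil analysis must therefore be done carefully, computer-assisted in Sage, to exclude new $(4,4,4)$-curves.
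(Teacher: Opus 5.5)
Your Steps 1--3 follow the paper. Step~4, however, has a genuine gap in both of its branches.

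\textbf{The K3 branch.} Consulting the LMFDB does \emph{not} exclude $g(\widetilde C/G)=0$. The group $G_{48,49}'\cong\boldsymbol{\mu}_2^2\times\mathfrak{A}_4$ does act on a curve of genus $5$ with quotient $\mathbb{P}^1$ and signature $[0;2,6,6]$. Riemann--Hurwitz gives $2g-2=48\cdot\frac{1}{6}=8$. A generating vector is given by $(c_1,(1\,2\,3))$ and $(c_2,(1\,2\,4))$ with $c_1\neq c_2$ non-trivial in $\boldsymbol{\mu}_2^2$: these generate the group and their product has order $2$. The paper's proof records exactly this possibility and has to kill it geometrically, as follows.
\begin{itemize}
\item The two short orbits on $\widetilde C$ have length $8$. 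So $C$ passes through two of $\mathcal{O}_{8,1},\mathcal{O}_{8,2},\mathcal{O}_{8,3}$, or through one of them with two branches.
\item This is ruled out using the reducible invariant divisors $D_1,D_2,D_3$. Each $D_j$ contains $\mathcal{O}_{8,j}$ with multiplicity $3$, and $D_j\cdot C=24$.
\item A singular point of $C$ on $\mathcal{O}_{8,j}$ would give $D_j\cdot C\geq 48$.
\item If $C$ meets $\mathcal{O}_{8,j}$, then $C\cap D_j=\mathcal{O}_{8,j}$. The remaining degenerate orbits of each $D_k$ (the families $\Gamma_{24,\ast}$ and $\mathcal{O}_{24}$) lie on the other two of these divisors. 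Hence meeting two of the $\mathcal{O}_{8,j}$ forces $C$ through all three.
\item That gives three orbits of length $8$ on $\widetilde C$, one too many for the signature $[0;2,6,6]$.
\end{itemize}

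\textbf{The singular-pencil branch.} The device you borrow from $G_{48,50}$ is not available here. There is no positive-dimensional linear system $V_{2,2,2}$ of $G$-invariant $(2,2,2)$-divisors: the $G$-invariant $(2,2,2)$-divisors are exactly the six divisors $D_1,\dots,D_6$, with pairwise distinct characters. So there is no ``general member'' whose base locus localizes $C$ to a few orbits, and your guess for that base locus has nothing behind it.

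The paper instead uses each $D_i$ separately.
\begin{itemize}
\item Since $D_i\cdot C=24<48$ and $G$ acts faithfully on $D_i$, the intersection $C\cap D_i$ is a sum of degenerate orbits of $D_i$ of total weight $24$.
\item $C\not\subset D_1,D_2,D_3$, because these are $G$-irreducible and their components have empty common intersection.
\item $D_4$ is smooth. The dichotomy is therefore $C\subset D_4$ (the K3 case above, with $D_4$ as the K3) versus a finite list of possibilities for $D_4\cdot C$: $\mathcal{O}_{24}$; $\mathcal{O}_{8,1}+\mathcal{O}_{8,2}+\mathcal{O}_{8,3}$; a generic member of some $\Gamma_{24,k}$; $2\mathcal{O}_{8,j}+\mathcal{O}_{8,k}$; or $3\mathcal{O}_{8,j}$.
\item Some of these cases die by cross-checking against $D_1,D_2,D_3$.
\item For the rest, one computes the $(2,2,2)$-divisors through the prescribed $0$-cycle and the base loci of their pencils. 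In the $3\mathcal{O}_{8,1}$ case one additionally prescribes $G_{P_0}$-semi-invariant tangent directions at $P_0$.
\end{itemize}
Without this localization, your final check of singular pencils cannot actually be carried out.
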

\begin{proof}
Let $C$ be a $G$-invariant $G$-irreducible curve of tri-degree $(a,a,a)$, $a\leq 7$, and $C \neq \boldsymbol{\mathcal{C}}_{\mathbb{T}}$. Assume that $C$ does not lie in either the locus of degenerate $G$-orbits or $\boldsymbol{\mathcal{D}}_{\mathbb{T}}$; the ones that do are the curves $\boldsymbol{\mathcal{F}}_i$ for $i \in \{1,\dots,5\}$ and $\boldsymbol{\mathcal{R}}_1$. First, apply Lemma~\ref{rulingsstab}. We compute that $|K_i| = 4$; hence, either $C$ is contained in the union of degenerate loci of $K_i$ or every $F$-component of $C$ has tri-degree divisible by 4. The curves contained in these degenerate loci are precisely the curves $\boldsymbol{\mathcal{F}}_{i}$ and $\boldsymbol{\mathcal{F}}_{i}'$ for some $i$. Thus, we can assume that $C$ is not one of these curves and satisfies the divisibility condition. We get that the tri-degree of $C$ is $(4,4,4)$, and there are the following possibilities for the decomposition of $C$ into irreducible components:
        \begin{enumerate}
            \item an irreducible $(4,4,4)$-curve;
            \item two irreducible $(2,2,2)$-curves: $\boldsymbol{\mathcal{Q}}_{2,i}$ for $i \in \{1,\dots,6\}$;
            \item four irreducible $(1,1,1)$-curves: $\boldsymbol{\mathcal{R}}_{1}$ and $\boldsymbol{\mathcal{R}}_{2}$;
            \item four $(1,0,0)$-, four $(0,1,0)$-, and four $(0,0,1)$-curves: $\boldsymbol{\mathcal{F}}_{i}$ for $i \in \{1,\dots,6\}$ and $\boldsymbol{\mathcal{F}}_{i}'$ for $i \in \{1,2\}$.
        \end{enumerate}
        
        From now on assume that $C$ is an irreducible curve of tri-degree $(4,4,4)$. Consider $G$-invariant divisors of tri-degree $(2,2,2)$. They are the following:
        \begin{enumerate}
    \item $D_1=\boldsymbol{\mathcal{D}}_{\mathbb{T}}$:
    \[
        f_1=x_1y_1x_2y_2x_3y_3=0;
    \]

    \item $D_2$:
    \[
        f_2=(x_1-y_1)(x_1+y_1)(x_2-y_2)(x_2+y_2)(x_3-y_3)(x_3+y_3)=0;
    \]

    \item $D_3$:
    \[
        f_3=(x_1-iy_1)(x_1+iy_1)(x_2-iy_2)(x_2+iy_2)(x_3-iy_3)(x_3+iy_3)=0;
    \]

    \item $D_4$:
    \begin{multline*}
        f_4=x_1y_1x_2^2x_3^2+x_1^2x_2y_2x_3^2+y_1^2x_2y_2x_3^2-x_1y_1y_2^2x_3^2+x_1^2x_2^2x_3y_3-y_1^2x_2^2x_3y_3\\
        {}+x_1^2y_2^2x_3y_3-y_1^2y_2^2x_3y_3+x_1y_1x_2^2y_3^2-x_1^2x_2y_2y_3^2-y_1^2x_2y_2y_3^2-x_1y_1y_2^2y_3^2=0;
    \end{multline*}

    \item $D_5$:
    \begin{multline*}
        f_5=\ee_3x_1y_1x_2^2x_3^2+\ee_3^2x_1^2x_2y_2x_3^2+\ee_3^2y_1^2x_2y_2x_3^2-\ee_3x_1y_1y_2^2x_3^2+x_1^2x_2^2x_3y_3-y_1^2x_2^2x_3y_3\\
        {}+x_1^2y_2^2x_3y_3-y_1^2y_2^2x_3y_3+\ee_3x_1y_1x_2^2y_3^2-\ee_3^2x_1^2x_2y_2y_3^2-\ee_3^2y_1^2x_2y_2y_3^2-\ee_3x_1y_1y_2^2y_3^2=0;
    \end{multline*}

    \item $D_6$:
    \begin{multline*}
        f_6=\ee_3^2x_1y_1x_2^2x_3^2+\ee_3x_1^2x_2y_2x_3^2+\ee_3y_1^2x_2y_2x_3^2-\ee_3^2x_1y_1y_2^2x_3^2+x_1^2x_2^2x_3y_3-y_1^2x_2^2x_3y_3\\
        {}+x_1^2y_2^2x_3y_3-y_1^2y_2^2x_3y_3+\ee_3^2x_1y_1x_2^2y_3^2-\ee_3x_1^2x_2y_2y_3^2-\ee_3y_1^2x_2y_2y_3^2-\ee_3^2x_1y_1y_2^2y_3^2=0,
    \end{multline*}
\end{enumerate}
 with pairwise distinct characters. Note that $D_1$, $D_2$, and $D_3$ are reducible, $G$-irreducible, and have an empty intersection of their components. Hence, $C$ cannot lie in them. Also note that $D_4$ is smooth. We also need the $G$-orbit description for each of them. From Proposition~\ref{orbits4849}, we have the orbit decomposition. Here we list all degenerate orbits for each $G$-invariant divisor of tri-degree $(2,2,2)$:
\begingroup \footnotesize\begin{longtblr}[
theme = mytheme,
entry = none,
label = none,
]{
stretch = 1.25,
colspec = {|Q[c,m]|Q[l,m]|},
rowhead = 1,
}
\hline
Divisor & Degenerate orbits\\
\hline
\hline
$D_1$
&
$3\cdot\mathcal{O}_{8,1}$,\
$\Gamma_{24,1}$,\
$\Gamma_{24,2}$,\
$\mathcal{O}_{24}$
\\
\hline
$D_2$
&
$3\cdot\mathcal{O}_{8,2}$,\
$\Gamma_{24,1}$,\
$\Gamma_{24,3}$,\
$\mathcal{O}_{24}$
\\
\hline
$D_3$
&
$3\cdot\mathcal{O}_{8,3}$,\
$\Gamma_{24,2}$,\
$\Gamma_{24,3}$,\
$\mathcal{O}_{24}$
\\
\hline
$D_4$
&
$\mathcal{O}_{8,1}$,\
$\mathcal{O}_{8,2}$,\
$\mathcal{O}_{8,3}$,\
$\mathcal{O}_{24,1}^{(4)}$,\
$\mathcal{O}_{24,2}^{(4)}$,\
$\mathcal{O}_{24,3}^{(4)}$,\
$\mathcal{O}_{24}$
\\
\hline
$D_5$
&
$\Gamma_{16}$;
$\mathcal{O}_{24,1}^{(5)}$,\
$\mathcal{O}_{24,2}^{(5)}$,\
$\mathcal{O}_{24,3}^{(5)}$,\
$\mathcal{O}_{24}$
\\
\hline
$D_6$
&
$\Gamma_{16}$;
$\mathcal{O}_{24,1}^{(6)}$,\
$\mathcal{O}_{24,2}^{(6)}$,\
$\mathcal{O}_{24,3}^{(6)}$,\
$\mathcal{O}_{24}$
\\
\hline
\end{longtblr}\endgroup \noindent where the numbers before the orbits denote the multiplicities, and $\mathcal{O}_{24,i}^{(k)}$ denotes generic members of the corresponding families $\Gamma_{24,i}$. 

        Let us utilize the intersection numbers with $G$-invariant divisors. Since $C$ is a curve of tri-degree $(4,4,4)$, we have: \[D_i\cdot C = 24,\] for each $G$-invariant divisor of tri-degree $(2,2,2)$. As both $C$ and $D_i$ are $G$-invariant, $D_i\cdot C$ must be $G$-invariant. Moreover, since the order of $G_{48,49}'$ is 48, and the action of $G$ on each $G$-invariant divisor is faithful, $C$ cannot intersect $D_i$ in a generic orbit; else it will overcount $D_i \cdot C$. Consider $D_4 \cdot C$. We have the following cases:
        \begin{enumerate}
            \item $C$ lies in $D_4$. Then $C$ lies on a K3 surface, since $D_4$ is smooth. By the Hodge index theorem, $C^2\leq 12$ and $p_a(C)\leq 7$. Consider the Riemann-Hurwitz theorem for the cover $\pi: \widetilde{C} \rightarrow \widetilde{C}/G$, where $\widetilde{C}$ is the normalization of $C$:
        \[2g(\widetilde{C})-2 = |G|(2g(\widetilde{C}/G)-2)+R,\ R\geq 0.\] It follows then that $g(\widetilde{C}/G) = 0$ or $1$. If $g(\widetilde{C}/G) = 1$, then $g(\widetilde{C}) = 1$ and $R = 0$. But this is impossible because $G_{48,49}'$ cannot act faithfully on an elliptic curve. Hence $g(\widetilde{C}/G) = 0$. The case of $g(\widetilde{C}) = 0$ or $1$ is impossible, because $G_{48,49}'$ acts faithfully neither on $\mathbb{P}^1$ nor on an elliptic curve. Thus, $2\leq g(\widetilde{C}) \leq 7$ and $g(\widetilde{C}/G)=0$. In such a case, by \cite{LMFDB}, the only possibility is $g(\widetilde{C}) = 5$, and the action has the signature $[0;2,6,6]$. Thus, non-free orbits on $\widetilde{C}$ are of length $24$, $8$, and~$8$, respectively. On $D_4$ we have three orbits of length 8: $\mathcal{O}_{8,1}$, $\mathcal{O}_{8,2}$, and~$\mathcal{O}_{8,3}$. Hence, either $C$ passes through two of them or $C$ passes through one of them with multiplicity (at least) two. Neither of these cases is realized, which can be seen by examining the intersection of $C$ and $D_1$, $D_2$, and~$D_3$.
        \item $D_4 \cdot C = \mathcal{O}^{(4)}_{24,k}$ for some $k = 1,2,3$ or $D_4 \cdot C$ consists of two orbits of length 8 with multiplicities 2 and 1, respectively. These cases are also impossible by considering intersections of $C$ with $D_1$, $D_2$, and~$D_3$.
        \item $D_4 \cdot C = \mathcal{O}_{24}$ or $\mathcal{O}_{8,1} + \mathcal{O}_{8,2}+\mathcal{O}_{8,3}$. By Lemma~\ref{222sections}, $C$ must lie in a pencil of $(2,2,2)$-divisors. Computing the space of $(2,2,2)$-divisors passing through $D_4 \cdot C$, computing its decomposition into irreducible $G$-modules, and finding base loci of pencils, we find no $G$-invariant irreducible curves of tri-degree $(4,4,4)$.
        \item $D_4 \cdot C$ is an orbit of length 8 with multiplicity 3. For concreteness, suppose \[D_4 \cdot C = 3\cdot \mathcal{O}_{8,1}.\] Note that $C$ must be smooth at $\mathcal{O}_{8,1}$, else it will overcount $D_1 \cdot C$. It turns out that this case also is not realized. Indeed, let \[P_0=([0:1],[0:1],[0:1]),\] and let $G_{P_0}$ be its stabilizer in $G$. We compute the $G_{P_0}$-semi-invariant tangent directions to $D_4$ at $P_0$, transport them to the remaining points of $\boldsymbol{\mathcal O}_{8,1}$ via the action of $G$, and, for each resulting collection of tangent directions, decompose the space of $(2,2,2)$-divisors passing through $\boldsymbol{\mathcal O}_{8,1}$ with these prescribed tangent directions into irreducible $G$-modules and compute the base loci of its pencils. In every case, no $G$-invariant irreducible curve of tri-degree $(4,4,4)$ occurs.
        \end{enumerate}
        Thus, we have exhausted all possibilities for an irreducible $G$-invariant curve of tri-degree $(4,4,4)$. Hence, our list is complete.
\end{proof}

\section{Birational rigidity}
\label{Gbirrig}
In this section we finally prove Theorem~\ref{maintheorem}. The section is organized as follows. In Subsection~\ref{subsecGbirsupriglarge}, we prove $G$-birational superrigidity for groups with sufficiently large toric kernel. In Subsection~\ref{subsecGbirsuprigspecial}, we prove $G$-birational superrigidity for our list of special groups~\eqref{eq:small-superrigidity-groups}. In Subsection~\ref{subsecnotGsolid}, we deal with the cases where $X$ is not $G$-birationally superrigid; all of them are in fact not $G$-solid.
\subsection{$G$-birational superrigidity for sufficiently large groups}
\label{subsecGbirsupriglarge}
First, let us prove $G$-birational superrigidity for all sufficiently large groups. Recall that we always assume that $W_G$ contains $\mathfrak{A}_4$.
\begin{theorem}
\label{Gbirsupriglarge}
For any finite toric group $G$ with $W_G \supset \mathfrak{A}_4$ and $n\geq 4$, the variety $X = \mathbb{P}^1 \times \mathbb{P}^1 \times \mathbb{P}^1$ is $G$-birationally superrigid.
\end{theorem}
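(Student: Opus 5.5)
We will apply the Noether--Fano criterion of Lemma~\ref{mobilesystems}. Let $\M$ be a non-empty $G$-invariant mobile linear system on $X$ with $\M\sim_{\mathbb{Q}} \frac{1}{\lambda}(-K_X)=\frac{2}{\lambda}H$. Since $\rank\Pic(X)^G=1$ and $G$ permutes the rulings transitively, $\M\subset|\mathcal{O}_X(k,k,k)|$ with $k=\frac{2}{\lambda}$. Suppose, for contradiction, that $(X,\lambda\M)$ is not canonical, and let $Z$ be a $G$-center of non-canonical singularities. Then $Z$ is either a $G$-irreducible curve or a finite $G$-orbit. We rule out both cases.

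\emph{Curves.} Let $Z=C$ be a $G$-irreducible curve of tri-degree $(a,a,a)$. By Lemma~\ref{noncancurve}, $\mult_C(\M)>\frac{1}{\lambda}=\frac{k}{2}$. Take general $M_1,M_2\in\M$. The cycle $M_1\cdot M_2$ has tri-degree $(2k^2,2k^2,2k^2)$ and contains $C$ with multiplicity $>\frac{k^2}{4}$. Intersecting with $H$ gives
\[
\frac{k^2}{4}\cdot 3a<M_1\cdot M_2\cdot H=6k^2,
\]
hence $a<8$, i.e.\ $a\leq 7$. By Corollary~\ref{nocurves}, the only such curve is $\boldsymbol{\mathcal{C}}_{\mathbb{T}}$, of tri-degree $(4,4,4)$. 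To exclude it, we take a general $G$-invariant divisor $D$ of small tri-degree that does not contain $\boldsymbol{\mathcal{C}}_{\mathbb{T}}$, for instance a general member of the base-point-free systems $U_n$ or $U_n'$ from the proof of Proposition~\ref{oddcurves}. We then restrict to a component $S\cong\PP\times\PP$ of $\boldsymbol{\mathcal{D}}_{\mathbb{T}}$. On $S$, the restriction $\M|_S$ contains the four torus-invariant rulings of $S$ with multiplicity $>\frac{k}{2}$. Since $\M|_S$ has bidegree $(k,k)$ and the rulings come in pairs in each class, subtracting them leaves a residual divisor of bidegree $(k-2m,k-2m)$ with $m>\frac{k}{2}$, which is negative. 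This contradicts effectivity. Equivalently, one can argue via $M_1\cdot S$ restricted to $S$ and use Lemma~\ref{ineqCorti}.

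\emph{Points.} Let $Z$ be a $G$-orbit $\Sigma$ of a point $P$. By Lemma~\ref{ineqPukhlikov}, $\mult_P(M_1\cdot M_2)>\frac{4}{\lambda^2}=k^2$. Now take a general member $D$ of a $G$-invariant base-point-free system of tri-degree $(d,d,d)$ passing through $P$. The system of such divisors through $\Sigma$ still has no curves in its base locus, apart possibly from finitely many special orbits. Then
\[
|\Sigma|\,k^2<D\cdot M_1\cdot M_2=6dk^2,
\]
so $|\Sigma|<6d$. It remains to check this against the orbit tables in Proposition~\ref{orbitdescription} and the analogous lists for $n\geq4$.
\begin{enumerate}
\item Generic orbits have length $|G|\geq 12n^3$, which exceeds $6d$ for $d=2n$ or $4n$ when $n\geq4$.
\item Orbits of length $n^3,3n^3,4n^3,6n^3$ similarly exceed $6d$.
\item For orbits of lengths $12n$ and $12n^2$, we use a lower-degree divisor through $P$, such as a component of $\boldsymbol{\mathcal{D}}_{\mathbb{T}}$ or an invariant divisor in $|H_i|$-type classes, and restrict the argument to the surface. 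Alternatively, we use $\mult_P(\M)\leq k$ bounds obtained by intersecting with rulings through $P$: the ruling $L$ through $P$ gives $\mult_P\M\leq \M\cdot L$ summed over the orbit.
\item The most delicate case is the $\mathbb{T}$-fixed points, i.e.\ the orbits $\boldsymbol{\mathcal{O}}_{4,0}$ and $\boldsymbol{\mathcal{O}}_{4,\infty}$ of length $4$. There the stabilizer contains $\boldsymbol{\mu}_n^3\rtimes\boldsymbol{\mu}_3$, acting irreducibly on $T_P X$ for $n\geq2$. By Lemma~\ref{irrepmult}, $\mult_P\M>\frac{2}{\lambda}=k$. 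However, a ruling $L$ through $P$ not contained in the base locus has $\M\cdot L=k\geq\mult_P\M$. This is a contradiction, since $\M$ is mobile, so a general $M\in\M$ does not contain $L$.
\end{enumerate}

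The main obstacle will be the intermediate-length orbits on the boundary $\boldsymbol{\mathcal{D}}_{\mathbb{T}}$ (lengths $12n$, $6n^2$ and $12n^2$) for small $n$ such as $n=4$. There the naive bound $|\Sigma|<6d$ is not immediately contradicted, and one must instead work on the surface $S\cong\PP\times\PP$, using Lemma~\ref{ineqCorti} adjunction-style (inversion of adjunction giving $\mult_P(\M|_S)>\frac{2}{\lambda}$). One then bounds multiplicities via the invariant rulings of $S$, or via Theorem~\ref{alphaPP} applied to the stabilizer of $S$ acting on $S$.
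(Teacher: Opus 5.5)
\paragraph{The point case has a genuine gap.} Your treatment of curve centers is sound. The bound $a\le 7$ together with Corollary~\ref{nocurves} leaves only $\boldsymbol{\mathcal{C}}_{\mathbb{T}}$. Restricting to a component of $\boldsymbol{\mathcal{D}}_{\mathbb{T}}$, which is the argument of Lemma~\ref{Znotacurvespecial}(1), then excludes it directly; the paper instead leaves $\mathbb{T}$-invariant centers to the quoted Lemma~\ref{sufficientGbirsuprig}.

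The problem is with point centers. The inequality $|\Sigma|<6d$ needs a divisor $D\in|dH|$ through $\Sigma$ that contains no curve component of $M_1\cdot M_2$ passing through $\Sigma$. You only assert that such a $D$ exists.
\begin{itemize}
\item The $G$-invariant members of $U_n$ through $P$ form a hyperplane in $U_n$. If $U_n$ is a pencil this is a single surface; if it is a net, it is a pencil with a base curve through $\Sigma$. Nothing stops base curves of $\M$ through $\Sigma$ from lying on them.
\item For arbitrary members of $|2nH|$ through a generic orbit, the count $(2n+1)^3<12n^3$ for $n\ge 4$ gives no reason to expect any to exist.
\item You cannot control those base curves via Corollary~\ref{nocurves} either: a $G$-irreducible component of $M_1\cdot M_2$ with small multiplicity can have $H$-degree up to $6k^2$.
\end{itemize}
The boundary orbits of lengths $12n$, $6n^2$, $12n^2$ are left open. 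The ruling bound $\mult_P\M\le k$ contradicts nothing, since a non-canonical point only forces $\mult_P\M>k/2$ and $\mult_P(M_1\cdot M_2)>k^2$. The latter may come entirely from base curves.

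\paragraph{The missing idea is Lemma~\ref{ZonTinv}.}
\begin{itemize}
\item Pass to the pair $(X,\frac32\lambda\M)$, for which $Z$ is a non-log canonical center. By Lemma~\ref{ineqCorti}, any non-log canonical curve $C$ has multiplicity $>\frac{16}{9\lambda^2}$ in $M_1\cdot M_2$, hence $H\cdot C<\frac{27}{2}$.
\item So either $Z$ lies on an invariant curve of tri-degree at most $(4,4,4)$, which by Corollary~\ref{nocurves} is $\boldsymbol{\mathcal{C}}_{\mathbb{T}}$, or $Z$ is an isolated center.
\item In the isolated case, Nadel vanishing for $\mathcal{O}_X(-\frac12K_X)=\mathcal{O}_X(1,1,1)$ gives $|Z|\le 8$. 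No orbit off $\boldsymbol{\mathcal{C}}_{\mathbb{T}}$ is that short when $n\ge 4$.
\item The orbits on $\boldsymbol{\mathcal{C}}_{\mathbb{T}}$ that are not $\mathbb{T}$-fixed have length at least $12n$. Lemma~\ref{ZisTinv} excludes them by writing $M_1\cdot M_2=m\boldsymbol{\mathcal{C}}_{\mathbb{T}}+\Delta$ with $m\le\frac{2}{\lambda^2}$. One then intersects $\Delta$ with a general member of the subsystem of $|2H|$ whose base locus is exactly $\boldsymbol{\mathcal{C}}_{\mathbb{T}}$.
\end{itemize}

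\paragraph{Your $\mathbb{T}$-fixed case also needs a fix.} Mobility of $\M$ does not keep the rulings through $P$ out of the base locus, since they are components of $\boldsymbol{\mathcal{C}}_{\mathbb{T}}$. Take instead a general $(0,1,1)$-curve $C\subset\{[0:1]\}\times\PP\times\PP$ through two points $P_1,P_2$ of $\Sigma$. Then $2k=M\cdot C\ge\mult_{P_1}\M+\mult_{P_2}\M>2k$, as in Lemma~\ref{Znotanorbitspecial}.
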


We will prove this theorem in a series of lemmas. The argument essentially follows \cite[7.2]{ToricGsolid}. The proof, as well as all subsequent proofs of $G$-birational superrigidity, follows the same general strategy, based on Lemma~\ref{mobilesystems}:
\begin{enumerate}
\item We argue by contradiction and assume that $X$ is not $G$-birationally superrigid.
\item By Lemma~\ref{mobilesystems}, there exists a $G$-invariant mobile linear system $\M$ such that
\[K_X+\lambda\M \sim_{\mathbb Q}0\]
for some positive rational number $\lambda$, and the log-pair $(X,\lambda\M)$ has non-canonical singularities.
\item Hence there exists a non-empty $G$-irreducible $G$-center of non-canonical singularities $Z$.
\item Since $\M$ is mobile, the center $Z$ is either a $G$-invariant $G$-irreducible curve or the $G$-orbit of a point.
\item We first rule out all possibilities in which $Z$ is a curve. Thus, $Z$ must be the $G$-orbit of a point.
\item We then show that such an orbit must lie on a $G$-invariant curve of small tri-degree.
\item Finally, we eliminate the remaining cases by ruling out the possibility that $Z$ lies on any such $G$-invariant curve.
\end{enumerate}
The main difference between our proof and \cite[7.2]{ToricGsolid} is Corollary~\ref{nocurves}, which lets us eliminate curves for much smaller groups.

Let us start with the proof of Theorem~\ref{Gbirsupriglarge}. We use the following lemma from \cite{ToricGsolid} to prove that $X$ is $G$-birationally superrigid:
\begin{lemma}[\protect{\cite[7.2]{ToricGsolid}}]
\label{sufficientGbirsuprig}
Let $X = \PPP$ and let $G$ be a finite toric group acting on $X$, and assume $W_G \supset \mathfrak{A}_4$. Then the following two conditions are sufficient for $X$ to be $G$-birationally superrigid: 
\begin{enumerate}
    \item For every non-empty $G$-invariant mobile system $\mathcal{M}$ on the threefold $X$, all non-canonical centers (if any) of the mobile log-pair $(X,\lambda\mathcal{M})$ are $\mathbb{T}$-invariant, where $\lambda$ is a positive rational number such that $\lambda\mathcal{M}\sim_{\mathbb{Q}}-K_X$.
    \item If a $\mathbb{T}$-invariant smooth point $P\in X$ is a non-canonical center of the log-pair $(X,\lambda\mathcal{M})$, then $\mult_P(\mathcal{M})>\frac{2}{\lambda}$.
\end{enumerate}
\end{lemma}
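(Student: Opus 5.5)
The plan is to check the hypotheses of the Noether--Fano criterion, Lemma~\ref{mobilesystems}. Argue by contradiction: let $\M$ be a non-empty $G$-invariant mobile linear system with $\lambda\M\sim_{\mathbb Q}-K_X=2H$. Since $G$ permutes the rulings transitively, $\M\subset|\mathcal O_X(k,k,k)|$ for some $k\geq 1$ and $\lambda=2/k$. Suppose $(X,\lambda\M)$ is not canonical and let $Z$ be a centre of non-canonical singularities. By condition (1), $Z$ is $\mathbb T$-invariant. Since $\M$ is mobile, $Z$ is not a divisor. Hence $Z$ is either one of the twelve $\mathbb T$-invariant curves, which form the $G$-irreducible curve $\boldsymbol{\mathcal C}_{\mathbb T}$, or one of the eight $\mathbb T$-fixed points. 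I will exclude each case with an explicit bound for $\M$ restricted to a $\mathbb T$-invariant prime divisor.

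\emph{Curves.} Let $m=\mult_{\boldsymbol{\mathcal C}_{\mathbb T}}(\M)$; by $G$-invariance this is the same along all twelve components. Take $S=\{[0:1]\}\times\PP\times\PP\subset\boldsymbol{\mathcal D}_{\mathbb T}$. It is not contained in the base locus of $\M$, since $\M$ is mobile. For a general $M\in\M$, the restriction $M|_S$ is an effective curve of bidegree $(k,k)$ on $S\cong\PP\times\PP$. Now $S$ contains four $\mathbb T$-invariant curves: $L_2,L_2'$ of class $(1,0)$ and $L_3,L_3'$ of class $(0,1)$ (two fibres of each ruling). Each appears in $M|_S$ with multiplicity at least $m$. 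Therefore
\[
M|_S - m(L_2+L_2'+L_3+L_3')
\]
is effective of bidegree $(k-2m,k-2m)$, which gives $m\leq k/2=1/\lambda$. Lemma~\ref{noncancurve} then shows that no $\mathbb T$-invariant curve is a non-canonical centre.

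\emph{Points.} Now let $Z=P$ be a $\mathbb T$-fixed point, say $P=([0:1],[0:1],[0:1])\in S$. By condition (2), $\mult_P(\M)>2/\lambda=k$. Write
\[
M|_S=aL_2+a'L_2'+bL_3+b'L_3'+R
\]
with $R$ containing none of these four curves, and label them so that $L_2$ and $L_3$ pass through $P$. The element $(1,\iota(1),\iota(1))\in\mathfrak A_4\subset W_G$ lies in $G$ up to a torus factor, and it stabilises $S$. It swaps $L_2\leftrightarrow L_2'$ and $L_3\leftrightarrow L_3'$, and $\M$ is $G$-invariant, so for general $M$ we get $a=a'$ and $b=b'$. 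Intersecting $R$ with $L_2$ and with $L_3$ gives
\[
\mult_P(R)\leq \min\bigl(R\cdot L_2,\,R\cdot L_3\bigr)=\min(k-2b,\,k-2a).
\]
Hence
\[
\mult_P(\M)\leq\mult_P(M|_S)=a+b+\mult_P(R)\leq k+\min(a-b,\,b-a)\leq k,
\]
which contradicts condition (2). The same argument applies at every $\mathbb T$-fixed point, either by moving to another $\mathbb T$-invariant face through it or by symmetry. Thus $(X,\lambda\M)$ is canonical for every $\M$, and Lemma~\ref{mobilesystems} gives $G$-birational superrigidity.

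The main obstacle I expect is the symmetry step. I need an element of $G$, not just of $W_G$, that stabilises the face $S$ and swaps the two parallel $\mathbb T$-curves of each ruling. A torus factor does not affect this, since $\mathbb T$ preserves every $\mathbb T$-invariant curve. I also need to justify that the multiplicities $a,a',b,b'$ of a general member are the same along $G$-translates. This holds because for general $M\in\M$ these multiplicities equal the corresponding multiplicities of the restricted linear system $\M|_S$, which is invariant under $\Stab_G(S)$.
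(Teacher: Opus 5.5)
Your proof is correct. The paper itself gives no proof of this lemma: it imports it from \cite[7.2]{ToricGsolid}. The closest in-paper analogue is its treatment of $\mathbb T$-invariant centres for the small groups, namely case (1) of Lemma~\ref{Znotacurvespecial} and case (a) of Lemma~\ref{Znotanorbitspecial}, which the paper says follow \cite[7.2]{ToricGsolid}.

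\textbf{Curves.} Your curve step is the same as case (1) of Lemma~\ref{Znotacurvespecial}. You restrict to a face $S$ containing parallel $\mathbb T$-curves and compare coefficients with the bidegree $(k,k)$; two parallel curves already suffice.

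\textbf{Points.} Here you take a genuinely different route. The paper uses the element of $G$ lying over $(1,\iota(1),\iota(1))$ only to see that the orbit $Z$ contains two opposite vertices $P_1,P_2$ of $S$. It then intersects a general $M\in\M$ with a general curve $C$ of bidegree $(1,1)$ on $S$ through $P_1,P_2$. These curves sweep out $S$, so $C\not\subset M$, and this gives $4=\lambda M\cdot C\geq\lambda\bigl(\mult_{P_1}(\M)+\mult_{P_2}(\M)\bigr)>4$. You use the same element differently: it makes the fixed part of $\M|_S$ symmetric, giving $a=a'$ and $b=b'$. You then bound $\mult_P(R)$ by $R\cdot L_2$ and $R\cdot L_3$.

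\textbf{Comparison.} Both routes give the same pointwise bound $\mult_P(\M)\leq 2/\lambda$ at every $\mathbb T$-fixed point. The test-curve argument is shorter. It also avoids analysing the fixed components of $\M|_S$, which is exactly the step you flagged as delicate; your justification of that step via $\Stab_G(S)$-invariance of the restricted system is correct. Your version shows explicitly that the only possible obstruction comes from the four boundary curves $L_2,L_2',L_3,L_3'$. It also shows why the symmetry is indispensable: without $a=a'$ and $b=b'$ one only gets $\mult_P(M|_S)\leq 2k$, as $kL_2+kL_3$ shows.
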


So, let $\mathcal{M}$ be some non-empty $G$-invariant mobile linear system on $X$, and let $\lambda$ be a positive rational number such that $-K_X \sim_{\mathbb{Q}}\lambda\mathcal{M}.$
\begin{lemma}
\label{faithfulforT}
    Let $G$ be a finite toric group with $W_G \supset \mathfrak{A}_4$ and $n\geq 4$. Let $P \in X$ be a $\mathbb{T}$-invariant point such that it is a non-canonical center of $(X,\lambda\mathcal{M})$. Then $\mult_P(\mathcal{M})>\frac{2}{\lambda}$.
\end{lemma}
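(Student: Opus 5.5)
The plan is to apply Lemma~\ref{irrepmult}. It suffices to show that the stabilizer $G_P$ of a $\mathbb{T}$-invariant point $P$ acts irreducibly on the tangent space $T_P(X)$, since $P$ is a smooth point and is fixed by $G_P$. The statement is local at $P$, and $\mathcal{M}$ is $G_P$-invariant. So Lemma~\ref{irrepmult}, applied to the group $G_P$, gives $\mult_P(\mathcal{M})>\frac{2}{\lambda}$ directly. Since $G$ acts transitively on the $\mathbb{T}$-invariant points of each orbit $\boldsymbol{\mathcal{O}}_{4,0}$ and $\boldsymbol{\mathcal{O}}_{4,\infty}$ (or on all eight of them), it is enough to treat $P=([0:1],[0:1],[0:1])$ and, when needed, $P'=([1:0],[1:0],[1:0])$. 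Moreover, by Lemma~\ref{A4sub} we may pass to the subgroup $H\subset G$ with $W_H=\mathfrak{A}_4$ and $T_H=T_G$. Irreducibility for $H_P$ implies irreducibility for $G_P$, because $H_P\subset G_P$.

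Next we identify $T_P(X)$. In the affine chart $x_i/y_i$ around $P$ the tangent space is $\mathbb{C}^3$ with coordinates $u_1,u_2,u_3$. An element $(t_1,t_2,t_3)$ of $T_H$ acts diagonally with characters $t_1,t_2,t_3$. The stabilizer $H_P$ surjects onto the stabilizer in $\mathfrak{A}_4$ of the point, which contains the $3$-cycle lifted as $(a_1,a_2,1;(1,2,3))$. This element permutes the coordinate axes cyclically, up to scalars. So $T_P(X)$ restricted to $H_P$ is a monomial representation induced from the characters of $T_H$. Such a representation is irreducible once the three characters $(t_1,t_2,t_3)\mapsto t_i$ of $T_H$ are pairwise distinct and are cyclically permuted by the $3$-cycle. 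Then any invariant subspace is a sum of $T_H$-eigenlines, and these lines are permuted transitively.

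The main check is therefore that the three coordinate characters of $T_H$ are distinct for each of the three types $\boldsymbol{\mu}_n^3$, $\boldsymbol{\mu}_n^2\times\boldsymbol{\mu}_{n/2}$, and $\boldsymbol{\mu}_n\times\boldsymbol{\mu}_{n/2}^2$ with $n\geq4$. For $\boldsymbol{\mu}_n^3$ this is clear: $(\varepsilon_n,1,1)$ separates the first character from the others. For $\boldsymbol{\mu}_n^2\times\boldsymbol{\mu}_{n/2}$, the element $(1,1,\varepsilon_n^2)$ separates the third character when $n\geq 3$, and $(\varepsilon_n,\varepsilon_n,1)$ separates the first two from the third. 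A cyclic relabelling, or the generator $(1,\varepsilon_n,\varepsilon_n)$, then shows that all three are distinct. For $\boldsymbol{\mu}_n\times\boldsymbol{\mu}_{n/2}^2$, the elements $(1,\varepsilon_n^2,1)$ and $(1,1,\varepsilon_n^2)$ are nontrivial because $n\geq4$, and they separate the characters pairwise.

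This is exactly where $n\geq4$ is used. For $n=2$, the third type contains only $(-1,-1,-1)$, the characters coincide, and irreducibility fails. The step I expect to need the most care is verifying that the lifted $3$-cycle genuinely lies in the stabilizer of $P$ for every group (including the primed ones). I would handle this by conjugating by $\mathbb{T}$, as in Proposition~\ref{groupsodd} and Proposition~\ref{groupseven}, so that its toric part is diagonal. Diagonal elements fix all $\mathbb{T}$-fixed points, so a $3$-cycle with diagonal toric part fixes both $P$ and $P'$. Having settled that, Lemma~\ref{irrepmult} finishes the proof.
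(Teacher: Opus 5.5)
Your proof is correct and follows the paper's argument. You apply Lemma~\ref{irrepmult} to the stabilizer $G_P$, using that for $n\geq 4$ the three coordinate lines of $T_P(X)$ carry pairwise distinct $T_G$-characters and are permuted transitively by a lift of the $3$-cycle — the paper's reasoning, with the character check written out for each type of $T_G$. (As a minor remark, no conjugation is needed for the lifted $3$-cycle: any element of $G$ over $(1,1,1;\,(1,2,3))$ already has the form $(t_1,t_2,t_3;\,(1,2,3))$ with diagonal $t_i$, so it fixes $P$ and $P'$.)
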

\begin{proof}
    Let $G_P$ be the stabilizer of $P$ in $G$. Then, by Lemma~\ref{faithrep}, the action of $G_P$ on the tangent space $T_P(X)$ is faithful. Moreover, this action is irreducible. Indeed, $G_P$ contains $T_G$, and, since $n \geq 4$, the three $T_G$-irreducible submodules of $T_P(X)$ have distinct characters. But $G_P$ also contains elements that permute those submodules transitively, which proves the $G_P$-irreducibility. Hence $\mult_P(\mathcal{M})>\frac{2}{\lambda}$ by Lemma~\ref{irrepmult}.
\end{proof}

Now suppose $(X,\lambda\M)$ is non-canonical, and let $Z$ be a $G$-invariant center of non-canonical singularities. Let us show that $Z$ must be $\mathbb{T}$-invariant.
\begin{lemma}
\label{Znotacurve}
    For \textbf{\emph{any}} finite group $G$ with $\rank\Pic(X)^G = 1$, if $Z$ is a curve, then the tri-degree of $Z$ is less than $(8,8,8)$. In particular, if $G$ is a toric group with $W_G \supset \mathfrak{A}_4$ and $n\geq 4$, then $Z$ is $\mathbb{T}$-invariant.
\end{lemma}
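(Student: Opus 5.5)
We bound the multiplicity of $\M$ along $Z$ from below, then intersect with a general member of $|H|$. Suppose $Z$ is a $G$-irreducible curve that is a non-canonical $G$-center of $(X,\lambda\M)$. Since $\rank\Pic(X)^G=1$, the group $G$ permutes the three rulings transitively. Hence $\M\subset|nH|$ for some positive integer $n$, and $\lambda=\frac{2}{n}$, because $-K_X=2H$. For the same reason the tri-degree of $Z$ is $(a,a,a)$ for some $a$: the tri-degree of a $G$-invariant cycle is invariant under the permutation of rulings, and that permutation is transitive.

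\emph{Multiplicity along $Z$.} By Lemma~\ref{noncancurve}, every irreducible component $C$ of $Z$ satisfies $\mult_C(\M)>\frac{1}{\lambda}=\frac{n}{2}$. Take two general members $M_1,M_2\in\M$. Since $\M$ is mobile, $M_1\cdot M_2$ is an effective $1$-cycle, and it contains every component of $Z$ with multiplicity at least $\mult_C(M_1)\mult_C(M_2)>\frac{n^2}{4}$. Thus
\[M_1\cdot M_2=\mu Z+\Delta,\qquad \mu>\frac{n^2}{4},\]
where $\Delta$ is an effective cycle. (Corti's inequality, Lemma~\ref{ineqCorti}, would give the stronger bound $\frac{4}{\lambda^2}=n^2$, but only for non-log canonical centers, so we use the elementary bound above.)

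\emph{Degree comparison.} Intersect with the nef divisor $H$. By the intersection formulas of Subsection~\ref{Notationsandbasics}, the divisor $nH$ has tri-degree $(n,n,n)$, so $(nH)^2$ has tri-degree $(2n^2,2n^2,2n^2)$. Hence
\[H\cdot M_1\cdot M_2=6n^2.\]
On the other hand $H\cdot Z=3a$, and $H\cdot\Delta\ge0$ because $H$ is nef. Therefore
\[6n^2\ge\mu\cdot 3a>\frac{3an^2}{4},\]
which gives $a<8$. So the tri-degree of $Z$ is less than $(8,8,8)$, and this part uses only the $G$-minimality of $X$.

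\emph{The toric case.} Now let $G$ be toric with $W_G\supset\mathfrak{A}_4$ and $n\ge4$. The curve $Z$ is $G$-invariant, $G$-irreducible and of tri-degree $(a,a,a)$ with $a\le7$. Corollary~\ref{nocurves} then forces $Z=\boldsymbol{\mathcal{C}}_{\mathbb{T}}$, which is $\mathbb{T}$-invariant.

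The only step needing care is the multiplicity estimate $\mult_Z(M_1\cdot M_2)\ge\mult_Z(\M)^2$ for general members of a mobile system. This is standard: general members share no common component, and the local intersection multiplicity at a general point of $C$ is at least the product of the multiplicities.
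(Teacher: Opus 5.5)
Your argument is correct and is the same as the paper's: you bound $\mult_C(M_1\cdot M_2)>\frac{1}{\lambda^2}$ via Lemma~\ref{noncancurve}, intersect with the nef class $H$ to get $3a=H\cdot Z<24$ (your inequality $6n^2>\frac{3an^2}{4}$ is exactly the paper's $\frac{24}{\lambda^2}>\frac{H\cdot Z}{\lambda^2}$ with $\lambda=\frac{2}{n}$), and then invoke Corollary~\ref{nocurves} for the toric case. The only blemish is notational: you use $n$ both for the degree in $\M\subset|nH|$ and for the toric-kernel parameter in the second part, so rename one of them.
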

\begin{proof}
    Let $H$ be a divisor of tri-degree $(1,1,1)$. Then, for two general surfaces $M_1,M_2$ in $\M$, we have: \[\frac{24}{\lambda^2}=H\cdot M_1 \cdot M_2\geq H\cdot (\mult_Z(\M))^2 Z>\frac{H\cdot Z}{\lambda^2},\] where the last inequality is true by Lemma~\ref{noncancurve}. Thus, $H\cdot Z<24$. But on the other hand, if $Z$ is a curve of tri-degree $(a,a,a)$, then $H\cdot Z=3a$. Hence the tri-degree of $Z$ is less than $(8,8,8)$. Moreover, for the case of $n\geq 4$, by Corollary~\ref{nocurves} there are no $G$-invariant curves of tri-degrees less than 8 such that they do not lie in the union of $\mathbb{T}$-invariant curves. Therefore we see that $Z$ must be $\mathbb{T}$-invariant.
\end{proof}

Thus, the only possibility for $Z$ to be a curve is $\boldsymbol{\mathcal{C}}_{\mathbb{T}}$. Now we assume that $Z$ is a $G$-orbit of a point in $X$.
\begin{lemma}
\label{ZonTinv}
    For \textbf{\emph{any}} finite group $G$ with $\rank\Pic(X)^G = 1$, if $Z$ is a $G$-orbit of a point, then either $|Z|\leq 8$ or $Z$ lies on an invariant curve of tri-degree less than or equal to $(4,4,4)$. In particular, if $G$ is a toric group with $W_G \supset \mathfrak{A}_4$ and $n\geq 4$, then $Z$ lies on $\boldsymbol{\mathcal{C}}_{\mathbb{T}}$.
\end{lemma}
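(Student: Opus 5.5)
The plan is to read the statement in the setting fixed just before it: $Z$ is the $G$-orbit of a point that is a $G$-center of non-canonical singularities of $(X,\lambda\M)$, where $\M$ is $G$-invariant and mobile and $\lambda\M\sim_{\mathbb{Q}}-K_X=2H$. The dichotomy is false for an arbitrary $G$-orbit: for a large group a general orbit has $|G|$ points and need not lie on any invariant curve of small tri-degree. So the proof must use the singularity hypothesis. I would combine Lemma~\ref{ineqPukhlikov} at every point of $Z$ with intersection against $G$-invariant divisors in $|H|$ or $|2H|$ passing through $Z$. The goal is to show that $|Z|>8$ forces $Z$ into a one-dimensional base locus, and then to identify that base locus as an invariant curve of tri-degree at most $(4,4,4)$.

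\textbf{Step 1 (local bound).} Let $M_1,M_2\in\M$ be general. By Lemma~\ref{ineqPukhlikov}, $\mult_P(M_1\cdot M_2)>4/\lambda^2$ for each $P\in Z$, and by $G$-invariance the same bound holds at all points of $Z$. Since $M_i\sim_{\mathbb{Q}}\frac{2}{\lambda}H$, the tri-degree formulas give
\[
H\cdot M_1\cdot M_2=\frac{24}{\lambda^2},\qquad 2H\cdot M_1\cdot M_2=\frac{48}{\lambda^2}.
\]

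\textbf{Step 2 (divisors through $Z$).} Let $\mathcal{L}$ be the linear system of $(2,2,2)$-divisors containing $Z$. Suppose $\Bs(\mathcal{L})$ is zero-dimensional near $Z$. Then a general $D\in\mathcal{L}$ contains no component of the cycle $M_1\cdot M_2$ through $Z$. Intersecting and summing the local contributions gives
\[
\frac{48}{\lambda^2}\geq\sum_{P\in Z}\mult_P(D)\,\mult_P(M_1\cdot M_2)>\frac{4|Z|}{\lambda^2},
\]
so $|Z|<12$. To reach the sharper bound $|Z|\le 8$ I would do two things. First, I would run the same argument with $|H|$, which has projective dimension $7$, for orbits small enough to impose independent conditions. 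Second, I would use divisors that are singular along $Z$, so that $\mult_P(D)\geq 2$; this lowers the threshold on $|Z|$. The intermediate range $9\le|Z|\le 11$ is the part I would handle by these refinements, or else by showing that such orbits are already of the base-locus type treated in Step 3.

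\textbf{Step 3 (one-dimensional base locus).} In the remaining case $Z$ lies on a curve component $C$ of $\Bs(\mathcal{L})$; the same holds for $\Bs(|H-Z|)$ when $|Z|\le 7$. I would take $C$ to be $G$-invariant and $G$-irreducible, replacing it by its $G$-orbit. Since $C$ is contained in the complete intersection of two members of $|2H|$ (or of $|H|$), its tri-degree is at most that of $2H\cdot 2H$, namely $(8,8,8)$, or at most $(2,2,2)$ respectively. Combined with Lemma~\ref{222sections} and the bound $\mult_C(\M)\le 1/\lambda$ from Lemma~\ref{noncancurve}, this should bound the tri-degree of $C$ by $(4,4,4)$. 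The key point is that if $C$ had tri-degree above $(4,4,4)$, then restricting $M_1\cdot M_2$ to a general member of $\mathcal{L}$ containing $C$ would leave too little degree to carry multiplicity $>4/\lambda^2$ at $|Z|\ge 9$ points.

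\textbf{Main obstacle.} I expect the hardest step to be this degree bookkeeping in Step 3, namely excluding base curves of tri-degree between $(5,5,5)$ and $(8,8,8)$. My first approach would be to subtract the contribution of $C$ from $M_1\cdot M_2$ and apply Lemma~\ref{ineqPukhlikov} to the residual cycle.

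\textbf{Toric case.} The final assertion then follows quickly. For $n\geq4$, the degenerate orbits computed in Proposition~\ref{orbitdescription} all have length greater than $8$, apart from the $\mathbb{T}$-fixed points, which lie on $\boldsymbol{\mathcal{C}}_{\mathbb{T}}$. Corollary~\ref{nocurves} leaves $\boldsymbol{\mathcal{C}}_{\mathbb{T}}$ as the only invariant curve of tri-degree at most $(4,4,4)$. Hence $Z\subset\boldsymbol{\mathcal{C}}_{\mathbb{T}}$.
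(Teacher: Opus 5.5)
Your reading of the hypothesis is right: the lemma is about a non-canonical $G$-center $Z$ of $(X,\lambda\M)$. However, the intersection argument of Steps~2 and~3 does not close, and the gaps are more than bookkeeping.

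\textbf{Large orbits are not handled.} Step~2 presupposes that $\mathcal{L}=|2H-Z|$ is non-empty. Since $h^0(\mathcal{O}_X(2,2,2))=27$, an orbit with $|Z|\ge 27$ typically lies on no $(2,2,2)$-divisor. Then $\Bs(\mathcal{L})=X$ and Step~3 gives nothing, because $Z$ need not lie on a curve component of that base locus. This is the generic situation for an arbitrary finite $G$. Passing to $|kH-Z|$ only yields $|Z|<6k$, and it recreates the base-curve problem in higher degree.

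\textbf{The bound $|Z|\le 8$ is not reached.} Even when $\mathcal{L}$ has zero-dimensional base locus you only get $|Z|<12$, and neither proposed refinement works. An orbit of length $\ge 8$ need not lie on any member of $|H|$, since $h^0(\mathcal{O}_X(H))=8$. Requiring a $(2,2,2)$-divisor to be singular at every point of $Z$ imposes up to $4|Z|$ conditions on $27$ sections, so such divisors need not exist once $|Z|\ge 7$. The fallback that orbits with $9\le|Z|\le 11$ are ``already of base-locus type'' is unsupported.

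\textbf{Step~3 leaves the curves of tri-degree $(5,5,5)$ through $(8,8,8)$ open.} The inequalities you use, $\mult_P(M_1\cdot M_2)>\frac{4}{\lambda^2}$ and $\mult_C(\M)\le\frac1\lambda$, contain nothing that singles out $(4,4,4)$. The residual-cycle idea would need, curve by curve, a low-degree divisor cutting out $C$ and smoothness of $C$ along $Z$. That is not available uniformly in $G$.

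\textbf{The missing idea} is to pass to log canonical thresholds and use vanishing instead of intersection numbers.
\begin{itemize}
\item Every exceptional divisor whose center is a smooth point of a threefold has discrepancy at least $2$. Hence a non-canonical point center of $(X,\lambda\M)$ is a non-lc center of $(X,\frac32\lambda\M)$.
\item Suppose a curve $C$ is a non-lc center of this pair. Lemma~\ref{ineqCorti} gives $\mult_C(M_1\cdot M_2)>\frac{16}{9\lambda^2}$. Then $\frac{24}{\lambda^2}=H\cdot M_1\cdot M_2\ge \mult_C(M_1\cdot M_2)\,H\cdot C>\frac{16}{9\lambda^2}H\cdot C$ forces $H\cdot C<\frac{27}{2}$, i.e.\ tri-degree at most $(4,4,4)$. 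This is exactly where the bound in the statement comes from.
\item So if $Z$ avoids such curves, it is isolated in the non-klt locus of $(X,\frac32\mu\M)$ for a suitable $\mu<\lambda$.
\item Nadel vanishing for the multiplier ideal $\mathcal{I}$ then gives $H^1(X,\mathcal{I}\otimes\mathcal{O}_X(H))=0$, using $-\frac12K_X=H$. Thus $Z$ imposes independent conditions on $|H|$, and $|Z|\le h^0(\mathcal{O}_X(1,1,1))=8$.
\end{itemize}
This argument is uniform in $|Z|$ and in $G$, and it explains both numbers, $8$ and $(4,4,4)$.

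\textbf{Toric case.} Your deduction is essentially the paper's, with one omission. Proposition~\ref{orbitdescription} only treats $T_G\cong\boldsymbol{\mu}_n^3$ (and a few small groups). For the kernels $\boldsymbol{\mu}_n^2\times\boldsymbol{\mu}_{\frac n2}$ and $\boldsymbol{\mu}_n\times\boldsymbol{\mu}_{\frac n2}^2$ you still need orbit-length bounds. The paper gets $|Z|\ge 3n^2$ on $\boldsymbol{\mathcal{D}}_{\mathbb{T}}$ and $|Z|\ge\frac14 n^3$ off it, so non-$\mathbb{T}$-invariant orbits not on $\boldsymbol{\mathcal{C}}_{\mathbb{T}}$ have length at least $16$ for $n\ge 4$.
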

\begin{proof}
    Suppose that $Z$ does not lie on any curve of tri-degree less than or equal to $(4,4,4)$. We will prove that $|Z|\leq 8$ in that case. Observe then that if $Z$ is a $G$-center of non-canonical singularities of $(X,\lambda\M)$ then $Z$ is a $G$-center of non-log canonical singularities of $(X,\frac{3}{2}\lambda\M)$ by \cite[Remark 3.6]{BirRigAlpha}. Let us prove that $Z$ is an isolated $G$-center. Suppose that there is a curve $C$ that is a center of non-log canonical singularities of $(X,\frac{3}{2}\lambda \M)$. Let $M_1,M_2$ be two general surfaces from $\M$. Then $M_1 \cdot M_2 = mC+\Omega$, where $m$ is a non-negative integer and $\Omega$ is an effective one-cycle whose support does not contain components of  $C$. Then $m > \frac{16}{9\lambda^2}$ by Lemma~\ref{ineqCorti}. Now let $H$ be a divisor of tri-degree $(1,1,1)$. Then \[\frac{24}{\lambda^2} = H\cdot M_1 \cdot M_2 = mH \cdot C+H\cdot \Omega\geq m H\cdot C > \frac{16H \cdot C}{9\lambda^2},\] so $\frac{27}{2}>H\cdot C$. Hence, the tri-degree of $C$ is less than or equal to $(4,4,4)$. By our assumptions, $Z$ does not lie on such curves. Thus, $Z$ is an isolated center of non-log canonical singularities of $(X,\frac{3}{2}\lambda \M)$. 
    
    Let $\mu<\lambda$ be a positive rational number such that $Z$ is an isolated $G$-center of log canonical singularities of the log-pair $(X,\frac{3}{2}\mu\M)$. Let $\mathcal{I}$ be the multiplier ideal sheaf of the log-pair $(X,\frac{3}{2}\mu\M)$. Then $\mathcal{I}$ defines a subscheme $Z'$ in $X$ whose support contains $Z$. By Nadel's vanishing theorem~\cite[Theorem 2.3.5]{IcosahedronCheltsovShramov}, we get $H^1(X,\mathcal{I}\otimes\mathcal{O}_X(-\frac{1}{2}K_X))=0$. Now, using the ideal exact sequence \[0\rightarrow \mathcal{I}\otimes\mathcal{O}_X(-\frac{1}{2}K_X)\rightarrow \mathcal{O}_X(-\frac{1}{2}K_X) \rightarrow \mathcal{O}_{Z'}\otimes\mathcal{O}_X(-\frac{1}{2}K_X)\rightarrow0,\] we get $|Z|\leq h^0(X,\mathcal{O}_X(-\frac{1}{2}K_X))=8$, and we are done.
    
    For the case of $n\geq 4$, we see that, by Corollary~\ref{nocurves}, $Z$ can only lie in $\boldsymbol{\mathcal{C}}_{\mathbb{T}}$ or have length less than or equal to 8. But looking at the orbits and using the fact that for even $n$, $G$ always contains either $G_{12\cdot(\nicefrac{n}{2})^3}$ or $G'_{12\cdot(\nicefrac{n}{2})^3}$ up to conjugation, we can see that if $Z$ does not lie in $\boldsymbol{\mathcal{C}}_{\mathbb{T}}$, there exist two possibilities: \[\begin{cases}
        |Z|\geq 3n^2, & \text{if } Z \text{ lies in } \boldsymbol{\mathcal{D}}_{\mathbb{T}} \\
        |Z|\geq \frac{1}{4}n^3, & \text{if } Z \text{ does not lie in } \boldsymbol{\mathcal{D}}_{\mathbb{T}}
    \end{cases},\] and in both cases $|Z|\geq16$ if $n \geq 4$. Hence, we get a contradiction, and $Z$ must lie in $\boldsymbol{\mathcal{C}}_{\mathbb{T}}$.
\end{proof}

\begin{lemma}
\label{ZisTinv}
 $Z$ is $\mathbb{T}$-invariant.  
\end{lemma}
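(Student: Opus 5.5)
The reduction so far leaves two possibilities for the non-canonical $G$-center $Z$. By Lemma~\ref{Znotacurve}, if $Z$ is a curve then $Z=\boldsymbol{\mathcal{C}}_{\mathbb{T}}$, which is $\mathbb{T}$-invariant. So it remains to treat the case where $Z$ is a $G$-orbit of a point. By Lemma~\ref{ZonTinv}, $Z$ then lies on $\boldsymbol{\mathcal{C}}_{\mathbb{T}}$. The plan is to show that $Z$ must consist of $\mathbb{T}$-invariant points, i.e. $Z\subset\boldsymbol{\mathcal{O}}_{4,0}\cup\boldsymbol{\mathcal{O}}_{4,\infty}$. Equivalently, a point in the interior of a $\mathbb{T}$-invariant curve cannot be a non-canonical center.

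Suppose $P\in Z$ lies on a component $L$ of $\boldsymbol{\mathcal{C}}_{\mathbb{T}}$ and is not $\mathbb{T}$-fixed. Say $L=\mathbb{P}^1\times\{[0:1]\}\times\{[0:1]\}$ and $P=(p,[0:1],[0:1])$ with $p\neq[0:1],[1:0]$. From the orbit tables, the $G$-orbit of such a point has length at least $12n$ (for $G_{12n^3}$, $G_{12n^3}'$; for the other groups, use the subgroup $G_{12(n/2)^3}$ or $G'_{12(n/2)^3}$ to get at least $6n$). This is at least $24$ for $n\geq4$, and all these points lie on the $12$ components of $\boldsymbol{\mathcal{C}}_{\mathbb{T}}$. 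We use Lemma~\ref{ineqPukhlikov}: $\mult_Q(M_1\cdot M_2)>4/\lambda^2$ for every $Q\in Z$, where $\lambda=2/m$ if $\mathcal{M}\subset|\mathcal{O}(m,m,m)|$.

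The key estimate is to intersect with a suitable divisor, chosen so the relevant components are not contained in it. Take $S$ a general member of $|H_1|$ passing through $P$. This is the fibre $\{p\}\times\mathbb{P}^1\times\mathbb{P}^1$. Its stabilizer in $G$ contains the subgroup of $T_G$ acting on the last two factors. That subgroup has orbit of length at least $n/2$ among the $H_1$-fibres, so the $G$-orbit of $S$ provides many fibres of $\pr_1$ each containing at least one point of $Z$. Then
\[
\frac{4}{\lambda^2}\cdot(\#\text{points of }Z\text{ on }S)<S\cdot M_1\cdot M_2=2\cdot\frac{1}{\lambda^2}\cdot 2\cdot\frac{\lambda^2}{\lambda^2}\cdots
\]
Concretely, $H_1\cdot M_1\cdot M_2=2m^2=8/\lambda^2$. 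Therefore $S$ can contain at most one point of $Z$. One must first check that no component of $M_1\cdot M_2$ lies in $S$; this holds because $S$ is a general fibre through a point with positive-dimensional family, or else we replace $S$ by a $T_G$-translate.

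To get a contradiction we need at least two points of $Z$ on $S$. The points of $Z$ on the fibre $\pr_1^{-1}(p)$ include $(p,[0:1],[0:1])$, $(p,[1:0],[0:1])$, $(p,[0:1],[1:0])$ and $(p,[1:0],[1:0])$ whenever $G$ contains elements acting by $\iota$ on the second and third factors while fixing the first coordinate up to $T_G$. Using the generator $(\iota(1),1,\iota(1))$ conjugated by the $3$-cycle, we get $(1,\iota(1),\iota(1))$ up to torus elements. This sends $(p,[0:1],[0:1])$ to $(p',[1:0],[1:0])$ with $p'$ in the $T_G$-orbit of $p$. Hence, after adjusting by $T_G$, $S$ meets $Z$ in at least two points, which contradicts the bound.

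The main obstacle is making this intersection count rigorous. If that count is too weak, the fallback is Lemma~\ref{ineqCorti}/Lemma~\ref{ineqPukhlikov} applied to $H\cdot M_1\cdot M_2=24/\lambda^2$ restricted to the support. In that approach, $M_1\cdot M_2=\mu\boldsymbol{\mathcal{C}}_{\mathbb{T}}+\Omega$. The $\mu$ part is bounded by Lemma~\ref{noncancurve}, since $\boldsymbol{\mathcal{C}}_{\mathbb{T}}$ is either non-canonical, in which case it is handled separately, or has $\mu\leq 1/\lambda^2$. The rest is bounded by $\Omega\cdot H$ against $|Z|\geq24$ points, each of multiplicity exceeding $3/\lambda^2$.
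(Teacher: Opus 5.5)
Your proposal has a genuine gap. The main argument fails for two independent reasons, and the fallback, though it points toward the right method, uses the wrong test divisor.

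\textbf{The fibre argument.} First, $S=\pr_1^{-1}(p)$ is the \emph{only} member of $|H_1|$ through $P$, so you have no generality to exploit. Nothing prevents base curves of $\M$, i.e.\ components of $M_1\cdot M_2$, from lying in $S$. Passing to a $T_G$-translate does not help, because $\Bs(\M)$ is $G$-invariant. Without this properness, the inequality $S\cdot M_1\cdot M_2\geq\sum_{Q\in Z\cap S}\mult_Q(M_1\cdot M_2)$ is not available.

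Second, the claim that $S$ contains two points of $Z$ is false for the twisted groups. In $G'_{12n^3}$, conjugating $(\iota(1),\ee_{2n},\iota(1))$ twice by $(\ee_{4n},\ee_{4n}^{-1},1;\,(1,2,3))$ gives the lift $(\ee_{2n},\iota(1),\iota(\ee_{2n}^{-1}))$ of $(1,\iota(1),\iota(1))$. Since $\ee_{2n}\notin\boldsymbol{\mu}_n$, this element moves $P$ off the fibre $S$. For general $p$ one checks that $Z\cap S=\{P\}$; this is consistent with $|Z|=12n$ and $n$ points on each component of $\boldsymbol{\mathcal{C}}_{\mathbb{T}}$. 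So even granting properness, ``at most one point of $Z$ on $S$'' gives no contradiction.

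\textbf{The fallback.} This is essentially the paper's route, but the decisive step is wrong as written. A general $(1,1,1)$-divisor does not pass through $Z$, so $H\cdot\Omega$ gives no lower bound in terms of $\sum_{O\in Z}\mult_O(\Omega)$. Nor can you choose a $(1,1,1)$-divisor containing $\boldsymbol{\mathcal{C}}_{\mathbb{T}}$: the four components $\PP\times\{q\}\times\{r\}$, $q,r\in\{[0:1],[1:0]\}$, already force all eight coefficients to vanish.

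Moreover, Lemma~\ref{noncancurve} bounds $\mult_{\boldsymbol{\mathcal{C}}_{\mathbb{T}}}(\M)$, not the coefficient $\mu$ of $\boldsymbol{\mathcal{C}}_{\mathbb{T}}$ in $M_1\cdot M_2$. That coefficient is only bounded \emph{below} by $\mult_{\boldsymbol{\mathcal{C}}_{\mathbb{T}}}(\M)^2$, so $\mu\leq 1/\lambda^2$ is unjustified. Your phrase ``handled separately'' also leaves open exactly the case where $\boldsymbol{\mathcal{C}}_{\mathbb{T}}$ is itself a center; there you still have to exclude $Z$.

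\textbf{What is needed.}
\begin{enumerate}
\item From $24/\lambda^2=H\cdot M_1\cdot M_2\geq 12\mu$ you get $\mu\leq 2/\lambda^2$ in all cases.
\item Take $S$ general in the subsystem $\mathcal{B}\subset|2H|$ of $(2,2,2)$-divisors containing $\boldsymbol{\mathcal{C}}_{\mathbb{T}}$. Its base locus is exactly $\boldsymbol{\mathcal{C}}_{\mathbb{T}}$, so $S\supset Z$ and $S$ contains no component of $\Omega$.
\item Since $\boldsymbol{\mathcal{C}}_{\mathbb{T}}$ is smooth at points that are not $\mathbb{T}$-fixed, Lemma~\ref{ineqPukhlikov} gives $\mult_O(\Omega)>4/\lambda^2-\mu$ for every $O\in Z$. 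Hence
\[
\frac{48}{\lambda^2}-24\mu=S\cdot\Omega>|Z|\Bigl(\frac{4}{\lambda^2}-\mu\Bigr),
\]
which is impossible once $|Z|\geq 12$ and $\mu\leq 2/\lambda^2$.
\end{enumerate}
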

\begin{proof}
    We know by Lemma~\ref{ZonTinv} that $Z$ is contained in $\boldsymbol{\mathcal{C}}_{\mathbb{T}}$. Suppose $Z$ is not $\mathbb{T}$-invariant. Let $H$ be a divisor of tri-degree $(1,1,1)$ on $X$. Then $H \cdot \boldsymbol{\mathcal{C}}_{\mathbb{T}} = 12$.

    Recall that \[G \cap \mathbb{T} = T_G \cong \boldsymbol{\mu}^3_{n},\,\boldsymbol{\mu}^2_{n}\times\boldsymbol{\mu}_{\frac{n}{2}} \text{ or } \boldsymbol{\mu}_n \times \boldsymbol{\mu}^2_{\frac{n}{2}}\] with $n \geq 4$, by assumption. Therefore, since $Z$ is not $\mathbb{T}$-invariant, we get in each case that $|Z|\geq12n$.

    Let $M_1$ and $M_2$ be two general surfaces from $\M$. Then $M_1 \cdot M_2 = m \boldsymbol{\mathcal{C}}_{\mathbb{T}}+\Delta$, where $m$ is a non-negative integer, and $\Delta$ is an effective one-cycle whose support does not contain components of $\boldsymbol{\mathcal{C}}_{\mathbb{T}}$. Then \[\frac{24}{\lambda^2} = H \cdot M_1 \cdot M_2 = H\cdot (m\boldsymbol{\mathcal{C}}_{\mathbb{T}}+\Delta) \geq m H \cdot \boldsymbol{\mathcal{C}}_{\mathbb{T}} =12m,\] hence $m \leq \frac{2}{\lambda^2}$.

    Let $\mathcal{B}$ be a linear subsystem of $|2H|$, consisting of surfaces that pass through $\boldsymbol{\mathcal{C}}_{\mathbb{T}}$. It is easy to see that $\mathcal{B}$ is generated by \[x_2 x_3 y_1^2 y_2 y_3 ,\, x_1 x_3 y_1 y_2^2 y_3 ,\, x_1 x_2 y_1 y_2 y_3^2 ,\, x_1 x_2 x_3 y_1 y_2 y_3 ,\, x_1 x_2 x_3^2 y_1 y_2 ,\, x_1 x_2^2 x_3 y_1 y_3 ,\, x_1^2 x_2 x_3 y_2 y_3. \] Hence, $\boldsymbol{\mathcal{C}}_{\mathbb{T}}$ is the base locus of the linear system $\mathcal{B}$. By Lemma~\ref{ineqPukhlikov}, $\mult_O(M_1\cdot M_2)>\frac{4}{\lambda^2}$ for any point $O \in Z$. Thus, for any point $O \in Z$ we have \[\mult_O(\Delta)>\frac{4}{\lambda^2}-m,\] since the curve $\boldsymbol{\mathcal{C}}_{\mathbb{T}}$ is smooth at $O$. Now let $S$ be a general surface in $\mathcal{B}$. Then \[\frac{48}{\lambda^2}-24m=S\cdot(M_1\cdot M_2-m\boldsymbol{\mathcal{C}}_{\mathbb{T}})=S\cdot \Delta \geq \sum_{O \in Z}\mult_{O}(\Delta)>|Z|(\frac{4}{\lambda^2}-m)\geq12n(\frac{4}{\lambda^2}-m).\] Since $m\leq \frac{2}{\lambda^2}$ and $n \geq 2$, we get \[\frac{24}{\lambda^2}n=\frac{24}{\lambda^2}(n-2)+\frac{48}{\lambda^2}\geq12m(n-2)+\frac{48}{\lambda^2}>\frac{48}{\lambda^2}n,\] which is absurd. Hence, $Z$ must be $\mathbb{T}$-invariant.
\end{proof}

\begin{proof}[Proof of Theorem~\ref{Gbirsupriglarge}]
By Lemmas~\ref{Znotacurve} and~\ref{ZisTinv}, if $Z$ is a curve, it is $\boldsymbol{\mathcal{C}}_{\mathbb{T}}$, and if $Z$ is a $G$-orbit of a point, it must be $\mathbb{T}$-invariant. Then, by Lemma~\ref{faithfulforT}, $\mult_P(\M)>\frac{2}{\lambda}$ for every point $P \in Z$. Thus, the two conditions of Lemma~\ref{sufficientGbirsuprig} are satisfied and we conclude that $\PPP$ is $G$-birationally superrigid for all toric groups $G$ with $W_G \supset \mathfrak{A}_4$ and $n\geq4$.
\end{proof}

\subsection{$G$-birational superrigidity for small groups}
\label{subsecGbirsuprigspecial}
We study the groups from the list~\eqref{eq:small-superrigidity-groups} in a similar manner. However, since for these groups there are many more $G$-invariant curves of small tri-degrees and $G$-orbits of small lengths, these groups require a separate analysis.
\begin{theorem}
\label{Gsuperbirridspecial}
    For the groups in the list~\eqref{eq:small-superrigidity-groups}, $X = \PPP$ is $G$-birationally superrigid.
\end{theorem}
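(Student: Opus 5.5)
We argue by contradiction, following the seven-step strategy used for Theorem~\ref{Gbirsupriglarge}, separately for each $G$ in the list~\eqref{eq:small-superrigidity-groups}. Suppose $X$ is not $G$-birationally superrigid. By Lemma~\ref{mobilesystems} there is a $G$-invariant mobile linear system $\M$ with $K_X+\lambda\M\sim_{\mathbb{Q}}0$ such that $(X,\lambda\M)$ is not canonical, and we let $Z$ be a $G$-center of non-canonical singularities.

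\emph{Step 1: $Z$ is not a curve.} By Lemma~\ref{Znotacurve}, $Z$ would have tri-degree $(a,a,a)$ with $a\le 7$. Propositions~\ref{Gcurves963}, \ref{Gcurves4850}, \ref{Gcurves324}, \ref{Gcurves4849} then say that $Z$ is $\boldsymbol{\mathcal{C}}_{\mathbb{T}}$ or one of the listed curves, all of which have $a\in\{4,6\}$. Lemma~\ref{noncancurve} forces $\mult_Z(\M)>1/\lambda$. Intersecting $M_1\cdot M_2$ with $H$ gives $24/\lambda^2>3a/\lambda^2$, which does not yet exclude anything. So for each listed curve $Z$ we choose a $G$-invariant surface $S$ (or a mobile $G$-invariant linear system of surfaces) that contains $Z$ and has no common component with $M_i$. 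Natural candidates are a smooth $(2,2,2)$-divisor through a $(4,4,4)$-curve, the $(1,1,1)$-divisors through the $(2,2,2)$-components, or the system $\mathcal{B}$ used in Lemma~\ref{ZisTinv}. Restricting $\M$ to $S$ then yields a contradiction: on $S$ we get $M|_S=\mult_Z(\M)Z+\dots$, and the resulting inequality $(M|_S-\mult_Z(\M)Z)\cdot Z\ge0$, or a degree count against a curve on $S$ in a different class, violates $\mult_Z(\M)>1/\lambda$. For the families of $12$ or $18$ rulings we instead intersect with a general fiber of the appropriate $\pr_{i,j}$, or with a general member of $|H_i|$, and compute the degree $2/\lambda$ of $\M$ restricted to a ruling. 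For $\boldsymbol{\mathcal{C}}_{\mathbb{T}}$ we use the argument of \cite{ToricGsolid}.

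\emph{Step 2: $Z$ is an orbit.} By Lemma~\ref{ZonTinv}, either $|Z|\le 8$ or $Z$ lies on a $G$-invariant curve of tri-degree at most $(4,4,4)$. The orbit tables (Propositions~\ref{orbitdescription}, \ref{orbits4849}) give the orbits of length at most $8$: $\boldsymbol{\mathcal{O}}_{4,0}$, $\boldsymbol{\mathcal{O}}_{4,\infty}$, the further length-$4$ orbits for $G_{48,50}$, and $\mathcal{O}_{8,i}$ for $G_{48,49}'$. For each such orbit we compute the representation of the stabilizer $G_P$ on $T_P(X)$. Where it is irreducible, Lemma~\ref{irrepmult} gives $\mult_P(\M)>2/\lambda$. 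We then take a general $G$-invariant curve $L$ through $P$ that is not contained in $\operatorname{Bs}\M$, for instance a ruling or a $(1,1,1)$-curve, and obtain $2/\lambda=M\cdot L\ge \mult_P(\M)>2/\lambda$, a contradiction. Where the tangent representation is reducible (for example $G_{12,3}$-type stabilizers inside $G_{48,50}$), we apply Lemma~\ref{ineqPukhlikov} instead: choose a $G$-invariant surface $S$ through $Z$ from $|2H|$ whose base locus is zero-dimensional, and obtain $48/\lambda^2=S\cdot M_1\cdot M_2>|Z|\cdot 4/\lambda^2$. This suffices whenever $|Z|\ge12$. For shorter orbits we need to bound the multiplicity of $S$ at the points of $Z$, or use a pencil of $(1,1,1)$-divisors through $Z$.

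\emph{Step 3: orbits on small invariant curves.} If $Z$ lies on one of the listed curves $C$, we imitate Lemma~\ref{ZisTinv}. Write $M_1\cdot M_2=mC+\Delta$, bound $m$ via $H$, and choose a linear system whose base locus is $C$, such as $\mathcal{B}$, or the sections of $|2H|$ vanishing on $C$ provided by Lemma~\ref{222sections}. The resulting estimate $S\cdot\Delta>|Z|(4/\lambda^2-m)$ gives a contradiction once $|Z|$ is at least about $12$, using the orbit lengths on $C$ read off from the tables. I expect the main obstacle to be the short orbits lying on these curves, for example length-$4$ orbits of $G_{48,50}$ lying on $\boldsymbol{\mathcal{R}}_i$, and length-$8$ orbits of $G_{48,49}'$ lying on $\boldsymbol{\mathcal{Q}}_{2,i}$. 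For these I would first try to show that the tangent representation of $G_P$ is irreducible, and then invoke Lemma~\ref{irrepmult}. Failing that, I would use the local inequality of Lemma~\ref{ineqPukhlikov} on a surface $S\supset C$ that is smooth at $P$, for example a smooth member of the $(2,2,2)$-pencil.
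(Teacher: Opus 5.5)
Your outline has the same architecture as the paper's proof: Noether--Fano, then Lemma~\ref{Znotacurve} with the curve lists, then Lemma~\ref{ZonTinv} with the orbit tables. However, at the two places where the real work happens, you either give no mechanism or one that fails.

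\textbf{Curves.} Write $a=\lambda\mult_Z(\M)>1$. The inequality $(\lambda M|_S-aZ)\cdot Z\ge 0$ gives no contradiction for the non-ruling curves.
\begin{itemize}
\item $\boldsymbol{\mathcal{S}}_i$ is a complete intersection of a $(2,0,2)$- and a $(0,2,2)$-divisor, hence has genus $5$. On a smooth member of $|2H|$ through it, $Z^2=8$, so you only get $a\le 3$.
\item For $\boldsymbol{\mathcal{R}}_i$ on an anticanonical K3 surface, $Z^2=-8$ and the inequality is vacuous.
\item For $\boldsymbol{\mathcal{Q}}_{2,i}$, on a $(1,1,1)$-surface $S\supset C_1$ one has $C_1\sim -K_S$ and $C_1^2=6$, giving only $a\le 2$.
\end{itemize}
Each case needs a specific extra geometric input, which the paper supplies.
\begin{itemize}
\item For $\boldsymbol{\mathcal{S}}_i$: the $(0,2,2)$-surface $\PP\times E$ containing it. Its $(1,0,0)$-fibres $F$ are bisecants of $Z$ of $H$-degree $1$ sweeping out a surface, so $2=\lambda M\cdot F\ge 2a>2$.
\item For $\boldsymbol{\mathcal{R}}_i$: the residual curve $\mathcal{C}$ in the base locus of the anticanonical pencil through $Z$. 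Writing $\lambda\M|_S=aZ+b\mathcal{C}+\Delta$ and using $Z^2=\mathcal{C}^2=-8$, $Z\cdot\mathcal{C}=32$ and $\Delta\cdot Z,\Delta\cdot\mathcal{C}\ge 0$ forces $a\le 1$.
\item For $\boldsymbol{\mathcal{Q}}_{2,i}$: both components at once, via the nef classes $\pi^*H-E_1$ and $\pi^*H-E_2$ on the blow-up, giving $12(1-a)\ge 0$.
\end{itemize}
For the ruling families, a general fibre of $\pr_{i,j}$ or a general member of $|H_i|$ is the wrong test object. You need the special $(1,0,0)$-surface containing two parallel components of $Z$, in which the curves of the other ruling meet $Z$ twice.

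\textbf{Short orbits.} For the length-$4$ orbits your test curve does not work. There are only three rulings through $P$; they are components of $G$-invariant curves such as $\boldsymbol{\mathcal{C}}_{\mathbb{T}}$ or $\boldsymbol{\mathcal{F}}'_i$, and nothing prevents them from lying in $\Bs\M$. A general $(1,1,1)$-curve $L$ through $P$ has $\lambda M\cdot L=6$, so $\mult_P(\M)>2/\lambda$ gives no contradiction. What works is a movable curve of $H$-degree $2$ through two points of $Z$. For example, take a general member $C$ of the pencil of $(1,1)$-curves in a $(1,0,0)$-surface through $P_1,P_2\in Z$; this pencil sweeps out the surface, and one gets $4=\lambda M\cdot C>4$.

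For the length-$8$ orbits $\mathcal{O}_{8,i}$ of $G_{48,49}'$, the tangent representation is reducible. Your $|2H|$ count only gives $48/\lambda^2>32/\lambda^2$, and you leave this case open. The missing idea is the $\alpha$-invariant. Take a $(1,0,0)$-surface $S\cong\PP\times\PP$ through two points of $Z$. Its stabilizer $K$ acts faithfully with no invariant curves of bidegree $(1,0)$, $(0,1)$ or $(1,1)$, so $\alpha_K(S)\ge 1$ by Theorem~\ref{alphaPP}. On the other hand, inversion of adjunction makes $(S,\lambda\M|_S)$ not log canonical, and $\lambda\M|_S\sim_{\mathbb{Q}}-K_S$, which is a contradiction.
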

We will prove all the cases simultaneously. 

Applying our strategy, we suppose that there is some $G$-center of non-canonical singularities $Z$ for a log-pair $(X,\lambda \M)$.
\begin{lemma}
\label{Znotacurvespecial}
$Z$ cannot be a $G$-invariant curve for groups in the list~\eqref{eq:small-superrigidity-groups}.
\end{lemma}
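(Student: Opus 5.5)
By Lemma~\ref{Znotacurve}, if $Z$ is a curve then it is a $G$-invariant $G$-irreducible curve of tri-degree $(a,a,a)$ with $a\leq 7$. The complete lists of such curves for the groups in~\eqref{eq:small-superrigidity-groups} are given in Propositions~\ref{Gcurves963}, \ref{Gcurves4850}, \ref{Gcurves324} and~\ref{Gcurves4849}, together with $\boldsymbol{\mathcal{C}}_{\mathbb{T}}$. Every curve there has $a=4$ or $a=6$, so $H\cdot Z\in\{12,18\}$. The plan is to rule these out by bounding $\mult_Z(\M)$ from above by $\frac{1}{\lambda}$ and then applying Lemma~\ref{noncancurve}. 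Note that the bound from Lemma~\ref{Znotacurve} alone gives only $H\cdot Z<24$, so genuinely new input is needed.

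\textbf{Step 1: reduce to a surface containing $Z$.} Write $M_1\cdot M_2=m Z+\Delta$ with $m\geq \mult_Z(\M)^2>\frac{1}{\lambda^2}$. For each listed curve I will find an auxiliary surface $S$ with the following properties:
\begin{itemize}
\item $S$ contains $Z$ but is not contained in the base locus of $\M$;
\item $S$ is chosen from an invariant linear system of small tri-degree (typically $(1,1,1)$ or $(2,2,2)$) whose base locus is exactly $Z$ (or $Z$ plus finitely many points).
\end{itemize}
For the $\mathbb{T}$-curve $\boldsymbol{\mathcal{C}}_{\mathbb{T}}$ this is the system $\mathcal{B}$ used in Lemma~\ref{ZisTinv}. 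Restricting $\M$ to a general $S$, we get $\M|_S=\mult_Z(\M)\,Z+(\text{mobile part})$ as divisors on $S$. Intersecting with a nef class then yields an inequality of the form $\mult_Z(\M)\cdot(\text{degree of }Z)\leq \frac{1}{\lambda}\cdot(\text{degree of }-K_X|_S)$, and the goal is to make this sharp enough to give $\mult_Z(\M)\leq\frac{1}{\lambda}$.

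\textbf{Step 2: handle the unions of rulings.} For the curves $\boldsymbol{\mathcal{F}}_i$, $\boldsymbol{\mathcal{F}}'_i$ and $\boldsymbol{\mathcal{F}}_{3,i}$, I would argue more directly. Take a component $L$, say of tri-degree $(1,0,0)$, and a general surface $S\in|H_2|$ or $|H_3|$ passing through $L$. On $S\cong\PP\times\PP$ the restriction $\lambda\M|_S$ has class $(2,2)$. The components of $Z$ lying in $S$ include at least one full fibre of the ruling, and usually several parallel fibres from the same orbit. Comparing the class $(2,2)$ with $\mult_Z(\M)$ times the number of fibres forces $\lambda\mult_Z(\M)\leq 1$ as soon as two fibres of one ruling lie in $S$. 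When only one fibre lies in $S$, I would instead use $M_1\cdot M_2\cdot H_1=\frac{8}{\lambda^2}$ against $m\,(Z\cdot H_1)=4m$. This gives $m\leq\frac{2}{\lambda^2}$, which is too weak, so it must be combined with the fact that general orbits on $Z$ are large.

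\textbf{Step 3: the curves $\boldsymbol{\mathcal{R}}_i$, $\boldsymbol{\mathcal{Q}}_{2,i}$, $\boldsymbol{\mathcal{S}}_i$.}
\begin{itemize}
\item Each component of $\boldsymbol{\mathcal{R}}_i$ is a $(1,1,1)$ twisted curve. I would use the $(1,1,1)$-divisors containing the four components, whose base locus should be $Z$, and compute $S\cdot\Delta\geq 0$ with $S\sim H$ and $S\cdot Z$ replaced by the normal data on $S$.
\item For $\boldsymbol{\mathcal{Q}}_{2,i}$, each component lies on $(1,1,1)$-divisors by Lemma~\ref{222sections}, and I would proceed the same way.
\item For $\boldsymbol{\mathcal{S}}_i$, the curve is the base locus of $W_3\subset|2H|$ (plus points), and I would argue as in Lemma~\ref{ZisTinv}: $S\cdot(M_1\cdot M_2-mZ)\geq0$ gives $\frac{48}{\lambda^2}\geq 24m$, hence $m\leq\frac{2}{\lambda^2}$.
\end{itemize}

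\textbf{Main obstacle.} This last family of bounds only reaches $\mult_Z\leq\frac{\sqrt2}{\lambda}$, which is not enough for Lemma~\ref{noncancurve}. The main obstacle is therefore upgrading this to $\frac{1}{\lambda}$. My first attempt would be to restrict to a smooth (K3 or del Pezzo) member $S$ containing $Z$ and compute $Z^2$ on $S$. From $\lambda\M|_S-\lambda\mult_Z(\M)Z$ effective and mobile, I would get $(2H|_S-\lambda\mult_Z Z)\cdot Z\geq0$, that is $\lambda\mult_Z(\M)\leq \frac{2H\cdot Z}{Z^2}$. The hope is that $Z^2$ on $S$ is large enough, using the Hodge index computation already used in Proposition~\ref{Gcurves4850}. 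If this fails for some curve, I would fall back on Lemma~\ref{ineqCorti} combined with the orbit lengths on $Z$.
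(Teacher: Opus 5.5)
Your Step~2 is the paper's argument for $\boldsymbol{\mathcal{C}}_{\mathbb{T}}$, $\boldsymbol{\mathcal{F}}_i$, $\boldsymbol{\mathcal{F}}'_i$ and $\boldsymbol{\mathcal{F}}_{3,i}$. Note that $\boldsymbol{\mathcal{C}}_{\mathbb{T}}$ is also a union of rulings, so you do not need the system $\mathcal{B}$ for it. In every case some surface of tri-degree $(1,0,0)$, $(0,1,0)$ or $(0,0,1)$ contains two parallel components, so your one-fibre fallback is never needed.

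The gap is in the remaining families. For $\boldsymbol{\mathcal{S}}_i$, $\boldsymbol{\mathcal{R}}_i$ and $\boldsymbol{\mathcal{Q}}_{2,i}$ you give no argument, and the remedy you propose cannot work. If $S\in|2H|$ is a smooth K3 surface containing a $(4,4,4)$-curve $Z$, the Hodge index theorem gives $Z^2\le (H\cdot Z)^2/(H|_S)^2=144/12=12$. So your bound $\lambda\mult_Z(\M)\le 2H\cdot Z/Z^2=24/Z^2$ can never be better than $2$. For $\boldsymbol{\mathcal{R}}_i$ it is vacuous: the components are four disjoint smooth rational curves, hence $(-2)$-curves on $S$, so $Z^2=-8$. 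Moreover, no $(1,1,1)$-divisor contains $\boldsymbol{\mathcal{R}}_i$, since otherwise the $(2,2,2)$-divisors through it would not form just a pencil. The fallback via Lemma~\ref{ineqCorti} does not help either. That lemma needs a non-log canonical center, whereas you only know $\mult_Z(M_1\cdot M_2)>1/\lambda^2$, which is compatible with your own bound $m\le 2/\lambda^2$.

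What is missing is, for each of these three families, a second test object adapted to its geometry; this is what the paper supplies.

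\textbf{The curves $\boldsymbol{\mathcal{S}}_i$.} Here $Z$ is cut out by a $(2,0,2)$- and a $(0,2,2)$-divisor, the latter a smooth surface $S\cong\PP\times E$ with $E$ an elliptic $(2,2)$-curve. On $S$ one has $Z\equiv 2\Sigma+4F$, where $\Sigma=\{p\}\times E$ and $F$ is the $(1,0,0)$-fibre. Hence $Z^2=16$, and intersecting with $Z$ gives only $\lambda\mult_Z(\M)\le\frac32$. However, the fibres $F$ sweep out $S$, meet $Z$ in two points and satisfy $\lambda M\cdot F=2$. Therefore $2\ge 2\lambda\mult_Z(\M)$, contradicting Lemma~\ref{noncancurve}. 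You must intersect with the nef class $F$, not with $Z$.

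\textbf{The curves $\boldsymbol{\mathcal{R}}_i$.} The pencil of $(2,2,2)$-divisors through $Z$ has base locus $Z\cup\mathcal{C}$, where $\mathcal{C}$ is another union of four $(1,1,1)$-curves. So $\M|_S$ on a general member $S$ can also have $\mathcal{C}$ as a fixed component. One writes $\lambda\M|_S=aZ+b\mathcal{C}+\Delta$ and uses $Z^2=\mathcal{C}^2=-8$, $Z\cdot\mathcal{C}=32$, $\Delta\cdot Z\ge 0$ and $\Delta\cdot\mathcal{C}\ge0$. This forces $a\le 1$.

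\textbf{The curves $\boldsymbol{\mathcal{Q}}_{2,i}$.} Each of the two disjoint components $C_j$ is cut out by a pencil of $(1,1,1)$-divisors. So on the blow-up of $Z$ the classes $\pi^*H-E_j$ are base-point-free, hence nef. With $m=\lambda\mult_Z(\M)$, the inequality $(2\pi^*H-m(E_1+E_2))\cdot(\pi^*H-E_1)\cdot(\pi^*H-E_2)=12(1-m)\ge 0$ gives $m\le 1$.

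Without arguments of this kind, your proposal does not establish the lemma for these curves.
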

\begin{proof}
    Let the $G$-irreducible curve $Z$ be a $G$-center of non-canonical singularities of the log-pair $(X,\lambda\M)$. We know, by Lemma~\ref{Znotacurve}, that $H\cdot Z<24$; hence, $Z$ is either $\boldsymbol{\mathcal{C}}_{\mathbb{T}}$ or one of the curves from Propositions~\ref{Gcurves963}-\ref{Gcurves4849}. We divide the proof into cases according to the type of curve:
    \begin{enumerate}
        \item $Z$ is either $\boldsymbol{\mathcal{C}}_{\mathbb{T}}$ or one of the curves $\boldsymbol{\mathcal{F}}_i$, $\boldsymbol{\mathcal{F}}'_i$, or~$\boldsymbol{\mathcal{F}}_{3,i}$  for some $i$. Then it decomposes as four $(1,0,0)$-curves, four $(0,1,0)$-curves, and four $(0,0,1)$-curves or as six $(1,0,0)$-curves, six $(0,1,0)$-curves, and six $(0,0,1)$-curves. Pick a surface of tri-degree $(1,0,0)$ that contains two $(0,1,0)$- or two $(0,0,1)$-curves. Call those curves $C_1$ and $C_2$. Then $\lambda \M|_S$ is an effective $\mathbb{Q}$-linear system of bi-degree $(2,2)$. Since $Z$ is non-canonical, by Lemma~\ref{noncancurve} we see that \[\mult_{C_1}(\M) = \mult_{C_2}(\M) > \frac{1}{\lambda}.\] Thus, a general curve from $\lambda \M|_S$ contains $C_1$ and $C_2$ with coefficients greater than 1, contradicting the fact that $\lambda \M|_S$ is of bi-degree $(2,2)$.
        
        \item $Z$ is one of the smooth irreducible $(4,4,4)$-curves $\boldsymbol{\mathcal{S}}_i$. It lies on a smooth $(0,2,2)$-surface $S_{0,2,2}$, which is the product of $\mathbb{P}^1$ and a $(2,2)$-curve in $\mathbb{P}^1\times\mathbb{P}^1$. Let $\mathcal{B}\subset|2H|$ be the subsystem passing through $Z$. A direct computation gives $\Bs(\mathcal{B})=Z$, so for $\pi\colon Y=\Bl_ZX\rightarrow X$ with the exceptional divisor $E$, the divisor $-K_Y$ is base-point-free and hence nef. If $\widetilde{F}$ is the proper transform of a general fiber $F$ of $S_{0,2,2}$, then \[-K_Y\cdot\widetilde{F}=(-\pi^*K_X-E)\cdot\widetilde{F}=2-\len(Z\cap F)\leq0,\] since general $F$ intersects $Z$ in two points. Hence, $-K_Y\cdot\widetilde{F}=0$ and $\len(Z\cap F)=2$. On the other hand, \[\lambda\widetilde{\M}\sim_{\mathbb{Q}}2\pi^*H-mE,\] where $m>1$ by Lemma~\ref{noncancurve}, and therefore \[\lambda\widetilde{\M}\cdot\widetilde{F}=2-2m<0.\] This is impossible: since $\widetilde{\M}$ is mobile, it does not contain $\widetilde{S}_{0,2,2}$, while $\widetilde{F}$ moves in a positive-dimensional family on this surface, so its intersection with a general member of $\widetilde{\M}$ is non-negative.

        \item $Z$ is one of the curves $\boldsymbol{\mathcal{R}}_i$, with disjoint smooth $(1,1,1)$-components $\Gamma_1,\ \Gamma_2,\ \Gamma_3,\ \Gamma_4$. The anticanonical divisors through $Z$ form a pencil $\mathcal{B}$ with smooth general member, and $\Bs(\mathcal{B})=Z\cup\mathcal{C}$, where $\mathcal{C}$ is another such $G$-invariant $(4,4,4)$-curve, with components $R_1,\ R_2,\ R_3,\ R_4$. Direct computation shows that each $\Gamma_i$ meets each $R_j$ in two reduced points. Let $S\in\mathcal{B}$ be general. Then $S$ is a K3 surface and \[\lambda\M|_S=aZ+b\mathcal{C}+\Delta,\] where $a>1$ by Lemma~\ref{noncancurve}, $b\geq0$, and $\Delta$ is effective and contains no components of $Z$ or $\mathcal{C}$. Since all $\Gamma_i$ and $R_j$ are rational, adjunction on $S$ gives $\Gamma_i^2=R_j^2=-2$; hence, $Z^2=\mathcal{C}^2=-8$ and $Z\cdot\mathcal{C}=32$. Therefore, for a general $M\in\M|_S$, \[24=-8a+32b+\Delta\cdot Z,\ 24=32a-8b+\Delta\cdot\mathcal{C}.\] Since $\Delta\cdot Z,\ \Delta\cdot\mathcal{C}\geq0$, we obtain \[24+8a-32b\geq0,\ 24-32a+8b\geq0.\] The second inequality gives $b\geq4a-3$, and the first then gives $a\leq1$, a contradiction.
        
        \item $Z$ one of the curve $\boldsymbol{\mathcal{Q}}_{2,i}$ for some $i$. These are the curves that decompose as two smooth and disjoint $(2,2,2)$-curves. Denote the irreducible components of $Z$ by $C_1$ and $C_2$. We can see that $C_1$ and $C_2$ are complete intersections of two $(1,1,1)$-divisors. Thus, we get two pencils of $(1,1,1)$-divisors passing through $C_1$ and $C_2$, respectively. Moreover, it can be seen that those pencils are the full systems of $(1,1,1)$-divisors passing through those curves. Let $\pi$ denote the blow-up of $X$ in $Z$, which is smooth. Then the proper transform of $\lambda \M$ satisfies: \[\widetilde{\lambda \M} \sim_{\mathbb{Q}} 2\pi^*H-m(E_1+E_2),\] where $m>1$ by Lemma~\ref{noncancurve}, and $E_1$ and $E_2$ are the exceptional divisors over $C_1$ and $C_2$, respectively. Meanwhile, the proper transforms of the pencils are $\pi^*H-E_1$ and $\pi^*H-E_2$, and they are base-point-free, hence nef. Thus, since $\M$ is mobile, we get: \[(2\pi^*H-m(E_1+E_2))\cdot (\pi^*H-E_1)\cdot(\pi^*H-E_2)\geq 0.\] Computing the intersection numbers, we see: \[(\pi^*H)^3 = 6,\ (\pi^*H)^2\cdot E_i = 0,\ \pi^*H\cdot E_i^2 = -H\cdot C_i = -6.\] Hence, the inequality becomes: \[(2\pi^*H-m(E_1+E_2))\cdot (\pi^*H-E_1)\cdot(\pi^*H-E_2)=12(1-m)\geq0,\] which yields $m\leq 1$, which is a contradiction.
    \end{enumerate}
    We can see that these cases include all $G$-invariant curves of tri-degree $(a,a,a)$ with $a \leq 7$ for all groups $G$ in the list~\eqref{eq:small-superrigidity-groups}.
\end{proof}

\begin{lemma}
\label{Znotanorbitspecial}
$Z$ cannot be a $G$-orbit of a point for groups in the list~\eqref{eq:small-superrigidity-groups}.
\end{lemma}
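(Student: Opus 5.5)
We follow the strategy laid out before Theorem~\ref{Gbirsupriglarge}. Suppose the $G$-orbit $Z$ of a point $P$ is a $G$-center of non-canonical singularities of $(X,\lambda\M)$. By Lemma~\ref{Znotacurvespecial}, no $G$-invariant curve is a non-canonical center. Lemma~\ref{ZonTinv} then gives a dichotomy: either $|Z|\leq 8$, or $Z$ lies on a $G$-invariant curve $C$ of tri-degree at most $(4,4,4)$. Propositions~\ref{Gcurves963}--\ref{Gcurves4849} list all such curves: $\boldsymbol{\mathcal{C}}_{\mathbb{T}}$, the ruling configurations $\boldsymbol{\mathcal{F}}_i,\boldsymbol{\mathcal{F}}'_i$, the curves $\boldsymbol{\mathcal{S}}_i$, $\boldsymbol{\mathcal{R}}_i$, and $\boldsymbol{\mathcal{Q}}_{2,i}$. (For $G_{324,160}$ and $G'_{324,160}$ the only curve of tri-degree at most $(4,4,4)$ is $\boldsymbol{\mathcal{C}}_{\mathbb{T}}$.) The proof therefore splits into two parts: small orbits, and orbits lying on one of these curves.

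\emph{Small orbits.} From the orbit tables (Propositions~\ref{orbitdescription} and~\ref{orbits4849}), the orbits of length at most $8$ are $\boldsymbol{\mathcal{O}}_{4,0}$ and $\boldsymbol{\mathcal{O}}_{4,\infty}$, the four extra $4$-orbits for $G_{48,50}$, and $\mathcal{O}_{8,1},\mathcal{O}_{8,2},\mathcal{O}_{8,3}$ for $G_{48,49}'$. For each such point $P$ we compute the action of the stabilizer $G_P$ on $T_P(X)$.
\begin{enumerate}
\item If this action is irreducible (for example, when $G_P\supset\mathfrak{A}_3$ permutes three distinct characters), Lemma~\ref{irrepmult} gives $\mult_P(\M)>2/\lambda$.
\item We then choose a $G$-invariant curve $C$ of small degree through $Z$, not contained in the base locus of $\M$. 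A natural choice is $\boldsymbol{\mathcal{C}}_{\mathbb{T}}$, or a union of rulings through the torus-fixed points. Comparing $M\cdot C\geq |Z|\mult_P(\M)$ with $M\cdot C=2H\cdot C/\lambda$ yields a contradiction. For points outside $\boldsymbol{\mathcal{D}}_{\mathbb{T}}$ we instead use the rulings through $P$: for instance, a single $(1,0,0)$-fiber $L\ni P$ gives $2/\lambda=M\cdot L\geq \mult_P\M>2/\lambda$.
\item If the tangent representation is reducible, we use Lemma~\ref{ineqPukhlikov} instead. Take two general members $M_1,M_2$ of $\M$ and a $G$-invariant surface $S$ in $|kH|$ passing through $Z$ (for example $\boldsymbol{\mathcal{D}}_{\mathbb{T}}$, or one of the invariant $(2,2,2)$-divisors) that does not contain the support of $M_1\cdot M_2$. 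This gives
\[
\frac{24k}{\lambda^2}\;\geq\; S\cdot M_1\cdot M_2\;>\;|Z|\cdot\frac{4}{\lambda^2}\cdot\mult_P S .
\]
This inequality needs either $|Z|\mult_P S>6k$ or a sharper restriction argument. The cleaner route is to restrict to a fiber surface $S\cong\PP\times\PP$ through $P$ and apply Theorem~\ref{alphaPP} with the $G_P$-action.
\end{enumerate}

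\emph{Orbits on invariant curves.} Let $Z\subset C$ and write $M_1\cdot M_2=mC+\Delta$. The argument of Lemma~\ref{ZisTinv} gives $m\leq 24/(\lambda^2 H\cdot C)$. We then take a general member $S$ of a linear system $\mathcal{B}$ with $\Bs(\mathcal{B})=C$; such systems exist for each type of curve as used in Lemma~\ref{Znotacurvespecial}. This produces
\[
S\cdot\Delta>|Z|\left(\frac{4}{\lambda^2}-m\cdot\mult_P C\right).
\]
This forces $|Z|$ to be below a bound of order $12$--$24$, depending on $C$. The remaining orbits on $C$ are the degenerate orbits listed in Proposition~\ref{orbitdescription} and Proposition~\ref{orbits4849}, together with the special points where components of $C$ meet (for $\boldsymbol{\mathcal{F}}_i$, the points of the orbits of length $24$). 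These are handled by the small-orbit techniques above, this time using a length-$24$ orbit lying on a $(1,0,0)$-surface, restricting $\lambda\M$ to that surface, and counting via bi-degree $(2,2)$ as in case (1) of Lemma~\ref{Znotacurvespecial}.

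The main obstacle will be the low-order orbits with reducible tangent representations: the length-$4$ orbits of $G_{48,50}$ off the toric boundary, and the orbits $\mathcal{O}_{8,i}$ of $G_{48,49}'$. There neither Lemma~\ref{irrepmult} nor crude degree counting suffices. For these, we would first try restricting to the surface $S\cong\PP\times\PP$ through $P$ and invoking inversion of adjunction. The restricted pair $(S,\lambda\M|_S)$ would then have to be non-log-canonical at the $\Stab(S)$-orbit of $P$. A $\Stab(S)$-invariant curve analysis via Theorem~\ref{alphaPP} should rule this out.
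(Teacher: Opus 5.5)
Your treatment of the length-$4$ orbits (item (2) of your small-orbit part) has a genuine gap. After Lemma~\ref{irrepmult} gives $\mult_P(\M)>2/\lambda$, you test $\M$ against \emph{fixed} curves through $Z$, namely a ruling line $L\ni P$ or $\boldsymbol{\mathcal{C}}_{\mathbb{T}}$, and you assume they are not in $\Bs(\M)$. That assumption is not merely unverified; it is false. For a line $L$ of tri-degree $(1,0,0)$ through $P$ one has $M\cdot L=2/\lambda<\mult_P(M)$, so $L\subset M$ for every $M\in\M$. Your inequality $2/\lambda=M\cdot L\geq\mult_P(\M)>2/\lambda$ therefore only proves $L\subset\Bs(\M)$. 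Likewise, every component of $\boldsymbol{\mathcal{C}}_{\mathbb{T}}$ lies in $\Bs(\M)$ when $Z$ is a torus-fixed orbit.

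The missing idea is a \emph{moving} test curve through at least two points of $Z$. The paper takes a $(1,0,0)$-surface $S$ containing two points $P_1,P_2\in Z$ and a general curve $C\subset S$ of bi-degree $(1,1)$ through $P_1$ and $P_2$. These curves sweep out $S$, so $C\not\subset\Bs(\M)$, and
$4=\lambda M\cdot C\geq\lambda\left(\mult_{P_1}(\M)+\mult_{P_2}(\M)\right)>4$.
A single point would only give $4>2$. Note also that the length-$4$ orbits of $G_{48,50}$ off $\boldsymbol{\mathcal{D}}_{\mathbb{T}}$ are not the obstacle you expect. Their stabilizer is $G_{12,3}\cong\mathfrak{A}_4$, acting on $T_PX$ by its irreducible $3$-dimensional representation, so they fall under this argument. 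The route you correctly anticipate (restriction to $S\cong\PP\times\PP$, inversion of adjunction and Theorem~\ref{alphaPP}) is needed only for the orbits $\mathcal{O}_{8,i}$ of $G_{48,49}'$.

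Your reduction for orbits on invariant curves is also imprecise about what survives.
\begin{itemize}
\item For $\boldsymbol{\mathcal{R}}_i$, the $(2,2,2)$-divisors through the curve have base locus $\boldsymbol{\mathcal{R}}_i\cup\boldsymbol{\mathcal{R}}_j$, not $\boldsymbol{\mathcal{R}}_i$. So the length-$16$ orbits in $\boldsymbol{\mathcal{R}}_i\cap\boldsymbol{\mathcal{R}}_j$ escape the count.
\item On $\boldsymbol{\mathcal{F}}_i$, the count with $\mathcal{B}\subset|4H|$ leaves $|Z|<24$, i.e.\ the length-$12$ orbits of $G_{48,49}'$. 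The length-$24$ orbits you single out are already excluded by that count.
\end{itemize}
Your proposed treatment of the leftovers, restricting to a $(1,0,0)$-surface and counting against bi-degree $(2,2)$, runs into the same base-locus problem and is not an argument as stated. What closes these cases is your own item (3), done properly. Take $S$ to be a \emph{general} member of the $(2,2,2)$-divisors through $Z$, whose base locus is zero-dimensional. Do not use a fixed $G$-invariant surface such as $\boldsymbol{\mathcal{D}}_{\mathbb{T}}$, which may contain components of $M_1\cdot M_2$. Then Lemma~\ref{ineqPukhlikov} gives $48/\lambda^2=S\cdot M_1\cdot M_2>4|Z|/\lambda^2\geq 48/\lambda^2$ whenever $|Z|\geq 12$, a contradiction.
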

\begin{proof}
    Suppose that $Z$ is a $G$-orbit of a point. Then by Lemma~\ref{ZonTinv} we get that either $Z$ lies on a $G$-invariant curve of tri-degree less than or equal to $(4,4,4)$ or $|Z|\leq 8$. But, from Propositions~\ref{orbitdescription}-\ref{orbits4849}, for points not lying on $G$-invariant curves of tri-degree less than or equal to $(4,4,4)$ the orbit sizes are greater than 8, which is a contradiction. Hence, $Z$ must lie on one of the $G$-invariant $G$-irreducible curves of tri-degree less than or equal to $(4,4,4)$. Call this curve $C$. Consider cases:
    \begin{enumerate}
        \item $C$ is either $\boldsymbol{\mathcal{C}}_{\mathbb{T}}$ or the curves $\boldsymbol{\mathcal{F}}'_i$, $\boldsymbol{\mathcal{S}}_i$, or~$\boldsymbol{\mathcal{Q}}_{2,i}$ for some $i$. Then assume $|Z|\geq 12$. Let $M_1$ and $M_2$ be two general surfaces from $\M$. Then $M_1\cdot M_2 = mC + \Delta$, where $m$ is a non-negative integer and $\Delta$ is an effective one-cycle whose support does not contain components of  $C$. Then \[\frac{24}{\lambda^2}=H\cdot M_1 \cdot M_2 = H \cdot (mC+\Delta)\geq 12m,\] hence $m\leq \frac{2}{\lambda^2}$. Now let $\mathcal{B}$ be a linear subsystem of $|2H|$ of surfaces that pass through $C$. We can see that $\Bs(\mathcal{B}) = C$. By Lemma~\ref{ineqPukhlikov}, $\mult_{O}(M_1 \cdot M_2) >\frac{4}{\lambda^2}$ for any point $O \in Z$; hence $\mult_O(\Delta)>\frac{4}{\lambda^2}-m$, since from the description of curves and orbits, we see that all the orbits on $C$ of length $\geq 12$ are smooth. Then let $S$ be a general surface in $\mathcal{B}$. We have: \[ \frac{48}{\lambda^2}-24m= S\cdot(M_1\cdot M_2-mC) = S\cdot \Delta \geq \sum_{O\in Z}\mult_O(\Delta)>|Z|(\frac{4}{\lambda^2}-m).\] Rearranging, we get $m(|Z|-24)>\frac{4}{\lambda^2}(|Z|-12)$. Now there are two cases. If $12\leq|Z|\leq24$, then the left-hand side is non-positive and the right-hand side is non-negative, which is a contradiction. Else if $|Z|\geq 24$, then since $m\leq \frac{2}{\lambda^2}$, we have $\frac{2}{\lambda^2}(|Z|-24)>\frac{4}{\lambda^2}(|Z|-12)$, which simplifies to $\frac{2}{\lambda^2}|Z|>\frac{4}{\lambda^2}|Z|$, which is absurd. Hence, $|Z|<12$.

        \item $C$ is $\boldsymbol{\mathcal{F}}_i$ for some $i$. Then assume $|Z|\geq 24$. Following the same logic as before but for $\mathcal{B} \subset |4H|$, we get a contradiction. Hence, $|Z|<24$.

        \item $C$ is one of the curves $\boldsymbol{\mathcal{R}}_{i}$ for some $i$. Then consider a linear subsystem $\mathcal{B} \subset |2H|$ passing through $C$. By direct computation, $\Bs(\mathcal{B}) = C \cup \mathcal{R}$, where $\mathcal{R}$ is some $\boldsymbol{\mathcal{R}}_{j}$ for $j \neq i$. Suppose $Z \not\subset C\cap \mathcal{R}$. Moreover, suppose that $|Z|\geq 12$. Let $M_1$ and $M_2$ be two general surfaces from $\M$. Then $M_1\cdot M_2 = m_1C+m_2\mathcal{R} + \Delta$, where $m_1,m_2$ are non-negative integers and $\Delta$ is an effective one-cycle whose support does not contain components of  $C$ or $\mathcal{R}$. Then \[\frac{24}{\lambda^2}=H\cdot M_1 \cdot M_2 = H \cdot (m_1C+m_2\mathcal{R} + \Delta)\geq 12(m_1+m_2),\] hence $m_1+m_2\leq \frac{2}{\lambda^2}$. By Lemma~\ref{ineqPukhlikov}, $\mult_{O}(M_1 \cdot M_2) >\frac{4}{\lambda^2}$ for any point $O \in Z$; hence, $\mult_O(\Delta)>\frac{4}{\lambda^2}-m_1$, since $Z$ does not lie on $\mathcal{R}$, and from the description of curves and orbits, we see that all the orbits on $C\cup \mathcal{R}$ that are not in the intersection are smooth. Then let $S$ be a general surface in $\mathcal{B}$. We have: \[ \frac{48}{\lambda^2}-24m_1-24m_2= S\cdot(M_1\cdot M_2-m_1C-m_2\mathcal{R}) = S\cdot \Delta \geq \sum_{O\in Z}\mult_O(\Delta)>|Z|(\frac{4}{\lambda^2}-m_1).\] Rearranging, we get \[m_1(|Z|-24)-24m_2>\frac{4}{\lambda^2}(|Z|-12).\] As in the first case, this leads to a contradiction. Thus, either $Z$ lies in $C \cap \mathcal{R}$ or $|Z|<12$. 

    \end{enumerate}

    Thus, we have eliminated all but finitely many possibilities for $G$-orbits. Namely, we are left with the following cases:
    \begin{enumerate}
        \item $|Z|<12$. Then, from the orbit description, we see that $Z$ is one of the orbits of length 4 or 8. Consider cases:
        \begin{enumerate}
            \item $Z$ is one of the orbits of length $4$. Then, by a direct computation, the tangent space of $X$ at each of those points is an irreducible representation of the stabilizer of the point. By Lemma~\ref{irrepmult} we see that $\mult_{O}(\mathcal{M})>\frac{2}{\lambda}$, for any point $O \in Z$. Pick a surface $S$ of tri-degree $(1,0,0)$ which contains at least two of the points in $Z$; call those points $P_1$ and $P_2$. Let $C$ be a general curve of tri-degree $(1,1)$ which passes through $P_1$ and $P_2$. Such curves span $S$, hence $C$ is not contained in the base locus of $\mathcal{M}|_S$. Thus, for a general member $M \in \mathcal{M}$, we get: \[4 = \lambda M\cdot C\geq\lambda(\mult_{P_1}(\M)+\mult_{P_2}(\M))>4,\] which is a contradiction.
            \item $Z$ is one of the orbits of length $8$ for $G_{48,49}'$. The induced representation on the tangent space is reducible in this case, so we employ another strategy. We will use the alpha-invariant to rule those orbits out. Let $S \cong \mathbb{P}^1 \times \mathbb{P}^1$ be a $(1,0,0)$-surface that contains at least two points of $Z$. Let $K$ be the stabilizer of $S$ in $G$. We can see that it acts faithfully on $S$. By direct computation, there are no $K$-invariant curves of bi-degree $(1,0),\ (0,1)$ and $(1,1)$ in $S$. Then, it follows, from Theorem~\ref{alphaPP}, that $\alpha_{K}(S)\geq1$. This is a contradiction, since the log-pair $(S, \lambda\M|_S)$ is not log-canonical by~\cite[Theorem 5.50]{KollarMori}. Thus, $Z$ cannot be a center of non-canonical singularities of $(X, \lambda \M)$. 
        \end{enumerate}
        \item Either $Z$ lies on $\boldsymbol{\mathcal{F}}_i$ for some $i$ and $|Z|<24$, or $Z$ lies in $\boldsymbol{\mathcal{R}}_{i} \cap \boldsymbol{\mathcal{R}}_{j}$ for some $i$ and $j$ such that $\boldsymbol{\mathcal{R}}_{j}$ is the residual curve of the base locus of the system of divisors of tri-degree $(2,2,2)$ passing through $\boldsymbol{\mathcal{R}}_{i}$. In the latter case, each such $Z$ is a $G$-orbit of length 16. In the former case, we can assume that $Z$ does not lie on any other $G$-invariant curve of tri-degree less than $(4,4,4)$, since those cases are already eliminated. Then $Z$ is one of the orbits of length 12 for $G_{48,49}'$. In either case, let $\mathcal{B}$ be a linear subsystem of $|2H|$ passing through $Z$. Then, by a direct computation, $\Bs(\mathcal{B})$ is zero-dimensional. Hence, for two general surfaces $M_1,M_2$ in $\mathcal{M}$ and a general surface $S$ in $\mathcal{B}$, Lemma~\ref{ineqPukhlikov} implies that \[\frac{48}{\lambda^2}=S \cdot M_1 \cdot M_2\geq \sum_{O \in Z}\mult_{O}(M_1\cdot M_2)>|Z|\cdot \frac{4}{\lambda^2} \geq \frac{48}{\lambda^2},\] which is absurd.
    \end{enumerate}
\end{proof}

\begin{proof}[Proof of Theorem~\ref{Gsuperbirridspecial}]
   By Lemmas~\ref{Znotacurvespecial} and \ref{Znotanorbitspecial}, for any $G$-invariant mobile linear system $\M$, there are no $G$-centers of non-canonical singularities for the log-pair $(X, \lambda \M)$. Therefore, $X$ is $G$-birationally superrigid by Lemma~\ref{mobilesystems}.
\end{proof}

\begin{corollary}
\label{Gbirsuprigrest}
    $\PPP$ is $G$-birationally superrigid for the following groups: 
    \begingroup
    \setlength{\multicolsep}{3pt}
    \setlength{\premulticols}{0pt}
    \setlength{\postmulticols}{0pt}
    \begin{multicols}{3}
    \begin{enumerate}[itemsep=1.5pt, topsep=0pt, parsep=0pt, partopsep=0pt]
        \item $G_{96,229}$
        \item $\widebar{G}_{192,1538}$
        \item $\widebar{G}_{192,1495}$
        \item $\widebar{G}_{96,227}$
        \item $\widebar{G}_{96,227}'$
        \item $G_{192,1538}$
        \item $G_{192,1538}'$
        \item $G_{192,181}$
        \item $G_{96,227}$
        \columnbreak
        \item $G_{96,227}'$
        \item $G_{192,1540}$
        \item $G_{192,197}$
        \item $G_{192,1000}$
        \item $G_{96,229}'$
        \item $G_{96,70}$
        \item $G_{384,20163}$
        \item $G_{384,17948}$
        \item $G_{384,5657}$
        \columnbreak
        \item $G_{384,20100}$
        \item $G_{192,1538}''$
        \item $G_{192,1538}'''$
        \item $G_{192,955}$
        \item $G_{192,955}'$
        \item $G_{96,195}$
        \item toric groups with $n = 3$ and $W_G \supsetneq \mathfrak{A}_4$.
    \end{enumerate}
    \end{multicols}
    \endgroup
\end{corollary}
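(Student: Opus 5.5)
The plan is to deduce the corollary entirely from Theorem~\ref{Gsuperbirridspecial} via Lemma~\ref{overgroupsGbirsuperrig}. For each group $G$ in the list, I will exhibit a subgroup $H\subseteq G$ conjugate in $\Aut(\PPP)$ to one of the minimal groups in~\eqref{eq:small-superrigidity-groups}. Both $G$ and $H$ have $W\supset\mathfrak{A}_4$, so both act transitively on the rulings, and $X$ is both $G$-Fano and $H$-Fano. Lemma~\ref{overgroupsGbirsuperrig} then transfers $H$-birational superrigidity to $G$.

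The candidate subgroups come from Lemma~\ref{A4sub}. For every $G$ in the list, it gives a subgroup $H$ with $T_H=T_G$ and $W_H=\mathfrak{A}_4$. Then $H$ is conjugate to one of the groups in Corollary~\ref{groupsA4}, with the same toric kernel. I would proceed in three cases.
\begin{enumerate}
\item For $T_G\cong\boldsymbol{\mu}_2^3$, $H$ is $G_{96,229}$ or $G_{96,3}$. The Hasse diagram shows $G_{48,50}\preceq G_{96,229}$, so both cases reduce to groups in~\eqref{eq:small-superrigidity-groups}. This also covers $G_{96,229}$ itself.
\item For $T_G\cong\boldsymbol{\mu}_2^2\times\boldsymbol{\mu}_1$, generated by $(-1,-1,1)$ and $(1,-1,-1)$, $H$ has order $48$ and is conjugate to $G_{48,50}$, the only such group in Corollary~\ref{groupsA4}. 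This handles $\widebar G_{96,227}$, $\widebar G_{96,227}'$, $G_{96,227}$, $G_{96,227}'$, $G_{96,229}'$, $G_{96,70}$, $G_{192,1538}''$, $G_{192,1538}'''$, $G_{192,955}$ and $G_{192,955}'$.
\item For $n=3$, $H$ is $G_{324,160}$ or $G_{324,160}'$, which is the last item.
\end{enumerate}

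The remaining entries, $G_{96,195}$ and the other order-$192$ and order-$384$ groups, have $T_G\cong\boldsymbol{\mu}_2^3$ or $\boldsymbol{\mu}_2^2$ and fall into the first two cases. The exception is $G_{96,195}$, whose kernel $\boldsymbol{\mu}_2\times\boldsymbol{\mu}_1^2=\langle(-1,-1,-1)\rangle$ gives $H\in\{G_{24,13},G_{24,13}'\}$, and neither is in~\eqref{eq:small-superrigidity-groups}. For $G_{96,195}$, and likewise $\widebar G_{48,30}$ and similar groups if they needed treatment, I instead look for a subgroup conjugate to $G_{48,49}'$ among its index-$2$ subgroups with $W=\mathfrak{A}_4\times\boldsymbol{\mu}_2$ and kernel $\langle(-1,-1,-1)\rangle$. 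Since $W_{G_{96,195}}=\mathfrak{S}_4\times\boldsymbol{\mu}_2$, the preimage of $\mathfrak{A}_4\times\boldsymbol{\mu}_2$ is such a subgroup of order $48$. By Proposition~\ref{groupsA4C2} it is $G_{48,49}$ or $G_{48,49}'$, and I would check by computing its generators that it is $G_{48,49}'$.

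The main obstacle is this identification of conjugacy classes: verifying, for the groups with nontrivial cocycle twists such as $G_{96,195}$, $G_{192,1000}$ and $G_{384,20100}$, that the preimage subgroup lands on a superrigid representative, namely $G_{48,49}'$ rather than $G_{48,49}$, and $G_{96,3}$ or $G_{96,229}$. This requires explicitly computing the restricted generators and conjugating by elements of $\Norm_{\Aut(X)}(\mathbb{T})$, as in the proofs of Propositions~\ref{groupseven}--\ref{groupsS4C2}.
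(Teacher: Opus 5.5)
Your proof is correct and is essentially the paper's argument. Both take $F=G\cap(\mathbb{T}\rtimes\mathfrak{A}_4)$, which has the same toric kernel and so lands in $\{G_{96,229},G_{96,3},G_{48,50},G_{324,160},G_{324,160}'\}$, replace $G_{96,229}$ by $G_{48,50}\subset G_{96,229}$, handle $G_{96,195}$ through its index-$2$ subgroup conjugate to $G_{48,49}'$, and conclude with Lemma~\ref{overgroupsGbirsuperrig}; for $T_G\cong\boldsymbol{\mu}_2^3$ no identification is needed, since both possible $\mathfrak{A}_4$-preimages already work. One aside is off, though harmless for this statement: $\widebar{G}_{48,30}$ cannot be treated this way, because its $W=\widebar{\mathfrak{S}}_4$ does not contain $\mathfrak{A}_4\times\boldsymbol{\mu}_2$, and the paper in fact shows $\PPP$ is not $\widebar{G}_{48,30}$-solid.
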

\begin{proof}
    This follows immediately from Lemma~\ref{overgroupsGbirsuperrig}, since each of the groups in the corollary contains (up to conjugation) a subgroup $F$, for which we have already proved that $\PPP$ is $F$-birationally superrigid. Namely, for $G = G_{96,229}$, $F = G_{48,50}$, and for $G = G_{96,195}$, $F = G_{48,49}'$. All other groups in this list are toric groups with $T_G = \boldsymbol{\mu}_2^3$, $\boldsymbol{\mu}_2^2$ or $\boldsymbol{\mu}_3^3$ and $W_G \neq \mathfrak{A}_4$. For such a group, we can take $F = G \cap (\mathbb{T}\rtimes \mathfrak{A}_4)$. Since the toric part remains the same, $F$ is one of $G_{96,229}$, $G_{96,3}$, $G_{48,50}$, $G_{324,160}$ or $G_{324,160}'$. For $F = G_{96,229}$ we can replace it by $G_{48,50}$ by the previous discussion. For each remaining case, we know that $X$ is $F$-birationally superrigid by Theorem~\ref{Gsuperbirridspecial}. 
\end{proof}

\subsection{Non-$G$-birationally superrigid cases}
\label{subsecnotGsolid}
We now turn our attention to non-$G$-birationally superrigid cases. It turns out that each group gives us non-$G$-solidity of $\PPP$. 

The following lemma is a direct consequence of the relative $G$-MMP.
\begin{lemma}
\label{nonGsolidcriterion}
Let $X = \PPP$, and let $\varphi$ be a dominant $G$-equivariant map from $X$ of one of the following kinds: 
\begin{enumerate}
    \item $\varphi$ is a map to a surface with a rational curve as a general fiber.
    \item $\varphi$ is a map to a curve with a rational surface as a general fiber.
\end{enumerate}
Then $X$ is not $G$-solid.
\end{lemma}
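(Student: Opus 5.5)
The plan is to apply the $G$-equivariant Minimal Model Program relative to the base of $\varphi$ to a $G$-equivariant resolution of the indeterminacy of $\varphi$. Let $\varphi\colon X\dashrightarrow B$ be as in the statement, with $\dim B\in\{1,2\}$. First I take a $G$-equivariant resolution of indeterminacy $\pi\colon \widetilde{X}\to X$, for example by equivariant Hironaka, so that $\widetilde{\varphi}=\varphi\circ\pi\colon\widetilde{X}\to B$ is a $G$-equivariant morphism. If needed I also replace $B$ by a $G$-equivariant resolution, so that $\widetilde{X}$ is smooth, projective and $G$-equivariantly birational to $X$. I then arrange, via Stein factorization, that $\widetilde{\varphi}$ has connected fibers; the general fiber is then a rational curve in case (1) and a rational surface in case (2).

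Next I run the $G$-MMP on $\widetilde{X}$ relative to $B$, using the relative version of \cite[3.1.1]{GMMPProkhorov} mentioned right after that theorem. Since the general fiber $F$ of $\widetilde{\varphi}$ is rational, $K_F$ is not pseudo-effective. Hence $K_{\widetilde{X}}$ is not pseudo-effective over $B$: restricting to a general fiber, a relatively pseudo-effective $K_{\widetilde{X}}$ would make $K_F=K_{\widetilde{X}}|_F$ pseudo-effective, which is impossible. Therefore the relative $G$-MMP terminates with a $G$-Mori fiber space $f\colon Y\to S$ over $B$, meaning there is a $G$-morphism $S\to B$ that is surjective because $Y\to B$ is surjective. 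In particular $\dim S\geq\dim B\geq 1$.

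The composite $X\dashrightarrow\widetilde{X}\dashrightarrow Y$ is a $G$-birational map onto a $G$-Mori fiber space $Y/S$ with positive-dimensional base. By definition, this means $X$ is not $G$-solid.

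The only point needing care is the non-pseudo-effectivity of $K$ over $B$ and the fact that the relative $G$-MMP outputs a Mori fiber space whose base dominates $B$. I handle the first by restricting to a general fiber, which is a rational curve or rational surface and so has negative Kodaira dimension. The second holds because every step is over $B$, so $S$ comes with a $G$-morphism to $B$ and $\dim S\ge\dim B>0$. I expect this to be routine given the cited relative $G$-MMP.
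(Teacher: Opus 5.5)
Your proposal is correct and takes the same route as the paper, which simply records this lemma as a direct consequence of the relative $G$-MMP. Your equivariant resolution of indeterminacy, your argument that $K$ is not pseudo-effective over the base because the general fiber is rational, and your observation that the base of the resulting $G$-Mori fiber space dominates $B$ (so has positive dimension) supply exactly the details the paper leaves implicit.
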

By Lemma~\ref{pencilsnotGsolid}, one of the simplest ways to construct a non-$G$-solid link for $\PPP$ is to utilize $G$-invariant pencils of $|\mathcal{O}_X(1,1,1))|$.
\begin{lemma}
\label{invpencils}
   For every group $G$ in the following list, there exists a $G$-invariant pencil of divisors of tri-degree $(1,1,1)$ such that its base locus is a $G$-invariant curve of tri-degree $(2,2,2)$:
   \begingroup
    \setlength{\multicolsep}{3pt}
    \setlength{\premulticols}{0pt}
    \setlength{\postmulticols}{0pt}
    \begin{multicols}{3}
    \begin{enumerate}[itemsep=1.5pt, topsep=0pt, parsep=0pt, partopsep=0pt]
        \item $G_{12,3}$
        \item $G_{12,3}'$
        \item $G_{24,13}$
        \item $\widebar{G}_{24,12}$
        \item $\widebar{G}_{24,12}'$
        \item $\widebar{G}_{48,48}$
        \item $G_{24,12}$
        \columnbreak
        \item $G_{24,12}'$
        \item $G_{48,48}$
        \item $G_{24,13}'$ 
        \item $G_{24,13}''$
        \item $G_{24,13}'''$ 
        \item $G_{24,13}^{IV}$
        \item $G_{48,49}$
        \columnbreak
        \item $G_{48,48}''$
        \item $G_{48,48}'''$
        \item $G_{48,48}^{IV}$
        \item $G_{48,48}^{V}$
        \item $G_{96,226}$
    \end{enumerate}
    \end{multicols}
    \endgroup
    There are two possible types of curves of tri-degree $(2,2,2)$:
    \begin{enumerate}
        \item Smooth irreducible curve of tri-degree $(2,2,2)$, which is an elliptic curve.
        \item A union of two $(1,0,0)$-, two $(0,1,0)$-, and two $(0,0,1)$-curves, intersecting in a hexagon.
    \end{enumerate}
\end{lemma}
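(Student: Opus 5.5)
The plan is to reduce the whole list to a small number of minimal groups and then compute the invariant pencils directly. The key observation is monotonicity. If $F\subset G$ and $W\subset|\mathcal{O}_X(1,1,1)|$ is a $G$-invariant pencil, then $W$ is also $F$-invariant. So I would first identify, for each listed group $G$, its subgroup $H=G\cap(\mathbb{T}\rtimes\mathfrak{A}_4)$ from Lemma~\ref{A4sub}. By Corollary~\ref{groupsA4} and the subsequent propositions, $H$ is one of $G_{12,3}$, $G_{12,3}'$, $G_{24,13}$, $G_{24,13}'$ (for $T_G$ trivial or $\boldsymbol{\mu}_2$). The candidate pencils are then $H$-invariant pencils inside $H^0(\mathcal{O}_X(1,1,1))$, and for each $G$ only the extra generators beyond $H$ need checking.

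The central computation is to decompose the $8$-dimensional space $V=H^0(X,\mathcal{O}_X(1,1,1))$, after choosing a central lift, into irreducible modules for each listed group, and to look for a two-dimensional $G$-invariant subspace. Such a subspace arises either as an irreducible $2$-dimensional summand or as a sum of two $1$-dimensional summands.

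For $G_{12,3}$ the lift is the natural permutation-type action. Using $\mathfrak{A}_4$-representation theory, $V=\mathbb{C}^2\otimes\mathbb{C}^2\otimes\mathbb{C}^2$ with $\mathfrak{A}_4\cong V_4\rtimes\boldsymbol{\mu}_3$, and the $V_4$ part acts by $\iota$'s in pairs. I expect $V$ to contain the three nontrivial characters of $\boldsymbol{\mu}_3$ lifted, plus $\mathbb{C}^3$-type summands. Concretely, the monomials $x_1x_2x_3$ and $y_1y_2y_3$ span an invariant plane, since $(\iota(1),1,\iota(1))$ sends $x_1x_2x_3$ to $y_1x_2y_3$. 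So I would instead search explicitly for the span of suitable combinations such as $x_1x_2x_3+y_1y_2x_3+y_1x_2y_3+x_1y_2y_3$ and its complement $y_1y_2y_3+x_1x_2y_3+x_1y_2x_3+y_1x_2x_3$. These are $V_4$-eigen-sums. I would then check which pairs are preserved by the $3$-cycle, possibly up to $\ee_3$-twisted combinations, giving a pencil $W$. The same approach, with signs and $i$'s inserted, handles $G_{12,3}'$, $G_{24,13}$ and $G_{24,13}'$. For the overgroups, I would verify that the additional generator, for instance $(1,\iota(1),1)$ or the $\widebar{\mathfrak{S}}_4$ transposition, preserves the same plane, or a different one chosen among the $H$-invariant pencils. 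The case analysis by explicit generators in Propositions~\ref{groupsS4odd}--\ref{groupsS4C2} makes this a finite check.

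Next I would identify the base locus $\Bs(W)=D_1\cap D_2$, a complete intersection of two $(1,1,1)$-divisors, of class $H^2$, that is, tri-degree $(2,2,2)$. By adjunction it has $p_a=1$, since $K=(-2H+2H)|_C=0$. Two cases then occur. Either the intersection is a smooth irreducible curve, which is then elliptic, or it degenerates; $G$-invariance together with the transitive action on rulings forces the degeneration to be symmetric. For the $x_1x_2x_3,y_1y_2y_3$-type pencil, the base locus is the union of six coordinate lines forming a hexagon, of type two $(1,0,0)$, two $(0,1,0)$ and two $(0,0,1)$. I would verify in each case, by a Jacobian computation, that the base locus is reduced. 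I would also check that it is not something like an irreducible $(1,1,1)$-curve plus a hexagon-type residue, which is excluded by tri-degree symmetry and connectedness (the arithmetic genus $1$ with connected complete intersection).

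The main obstacle is the groups whose lift is genuinely twisted: $G_{12,3}'$, $G_{24,13}'$, and the $\mathfrak{S}_4\times\boldsymbol{\mu}_2$ groups $G_{48,48}^{IV}$ and $G_{48,48}^{V}$. For these it is not obvious a priori that $V$ contains an invariant plane rather than splitting as $\mathbb{C}^3\oplus\mathbb{C}^3\oplus\dots$ with no $2$-dimensional piece. Here I would compute characters of the lifted group on $V$ explicitly, via traces of the generators on the $8$ monomials, to confirm that a $2$-dimensional irreducible or two characters occur. I would then exhibit the pencil by explicit equations and compute its base locus in SageMath.
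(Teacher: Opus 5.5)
Your plan is essentially the paper's proof, which is exactly this direct computation: decompose $H^0(X,\mathcal{O}_X(1,1,1))$ under a central lift of each listed group, take an invariant plane, and compute its base locus. That base locus is automatically a complete intersection of tri-degree $(2,2,2)$ and arithmetic genus one, because a fixed component of a $G$-invariant pencil would be a $G$-invariant divisor of symmetric tri-degree smaller than $(1,1,1)$; your reduction to $G\cap(\mathbb{T}\rtimes\mathfrak{A}_4)$ is only bookkeeping.

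Several side remarks in your proposal should be corrected rather than relied on:
\begin{itemize}
\item For $G_{12,3}$ one has $V\cong 2\cdot\mathbf{1}\oplus 2\cdot\mathbb{C}^3$. The unique invariant pencil is the one spanned by your even and odd sums, which contains $\prod_i(x_i+y_i)$ and $\prod_i(x_i-y_i)$. Its base locus is therefore the hexagon of lines $\{x_i+y_i=x_j-y_j=0\}$, $i\neq j$, not a hexagon of $\mathbb{T}$-invariant lines. No hexagon of $\mathbb{T}$-invariant lines can be $G$-invariant, since $\boldsymbol{\mathcal{C}}_{\mathbb{T}}$ is $G$-irreducible.
\item For $G_{12,3}'$ invariant planes exist automatically. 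The lifts of the elements of $V_4$ anticommute on $V$, which forces $V$ to be a sum of four $2$-dimensional irreducibles.
\item The elliptic-or-hexagon dichotomy does not follow from symmetry and connectedness. For example, two $(1,1,1)$-curves meeting in two points form a symmetric connected configuration of arithmetic genus one. The type has to be read off from the explicit base-locus computation in each case, exactly as the paper does.
\end{itemize}
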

\begin{proof}
    Computed directly by decomposing the representation of $G$ on $H^0(\mathcal{O}(1,1,1))$ into irreducibles.
\end{proof}

\begin{corollary}
\label{notGsolidtoric}
    For each group in Lemma~\ref{invpencils}, $\PPP$ is not $G$-solid.
\end{corollary}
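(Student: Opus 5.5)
The plan is to apply Lemma~\ref{pencilsnotGsolid} directly, using the pencil produced by Lemma~\ref{invpencils}. For each group $G$ in the list, Lemma~\ref{invpencils} gives a $G$-invariant pencil $W\subset|\mathcal{O}_X(1,1,1)|$. The only other hypothesis of Lemma~\ref{pencilsnotGsolid} is $\rank\Pic(X)^G=1$. Every group in the list is toric with $W_G\supset\mathfrak{A}_4$, so this holds by Theorem~\ref{GminimalGsolid}(2). More concretely, the $3$-cycle in $W_G$ permutes the three rulings transitively, so the only invariant classes are multiples of $H$. Hence $X$ is not $G$-solid.

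To make the argument self-contained, I would also write down the link that Lemma~\ref{pencilsnotGsolid} encodes, arguing as in \cite[Lemma~2.4]{GbirPnChelstov} and via Lemma~\ref{nonGsolidcriterion}.

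\textbf{Step 1.} The pencil $W$ defines a $G$-equivariant rational map $\varphi_W\colon X\dashrightarrow\mathbb{P}^1$. The base locus of $W$ is the $G$-invariant $(2,2,2)$-curve $C$ described in Lemma~\ref{invpencils}: either a smooth elliptic curve, or a hexagon of rulings. The general fiber is a $(1,1,1)$-divisor, and a general member of $W$ is smooth off $C$. Such a divisor is a del Pezzo surface of degree $6$, so it is rational.

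\textbf{Step 2.} I would resolve $\varphi_W$ by blowing up $C$. When $C$ is the hexagon, I would first blow up its six singular points and then the strict transforms of the components, or simply take any $G$-equivariant resolution. This yields a $G$-equivariant dominant map to a curve whose general fiber is a rational surface.

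\textbf{Step 3.} Lemma~\ref{nonGsolidcriterion}(2) applies. Running the relative $G$-MMP over $\mathbb{P}^1$ produces a $G$-Mori fiber space with positive-dimensional base that is $G$-birational to $X$.

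The only delicate point is the fiber claim in Steps 1 and 2, namely that the general member of $W$ is a rational surface rather than something degenerate. I would check this by noting that $-K_X|_D=2H|_D$ while $K_D=-H|_D$ for a $(1,1,1)$-divisor $D$, so $D$ has ample anticanonical class. A reduced irreducible member is therefore a (possibly singular) del Pezzo surface, which is rational. Irreducibility of the general member holds because the base locus is a curve and not a surface.
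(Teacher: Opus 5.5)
Your proposal is correct and matches the paper's proof, which likewise just combines the invariant pencil of Lemma~\ref{invpencils} with Lemma~\ref{pencilsnotGsolid}; $G$-minimality holds by Theorem~\ref{GminimalGsolid}(2), since every listed group is toric with $W_G\supset\mathfrak{A}_4$. The self-contained sketch is unnecessary, and two of its claims are loose, although neither affects the proof because the general member of these pencils is a smooth sextic del Pezzo surface:
\begin{itemize}
\item A possibly singular surface with ample anticanonical class need not be rational (cones over elliptic curves are counterexamples).
\item Irreducibility of the general member needs more than the absence of a fixed component, for instance the fact that the class $(1,1,1)$ is primitive, so the pencil cannot be composed with a pencil.
\end{itemize}
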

\begin{proof}
    Follows directly from Lemmas~\ref{pencilsnotGsolid} and \ref{invpencils}.
\end{proof}

\begin{theorem}
    \label{notGsolid2412}
        $\PPP$ is not $G_{24,12}''$-solid.
    \end{theorem}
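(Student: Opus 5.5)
The plan is to realize one of the geometric obstructions listed in Theorem~\ref{maintheorem} for $G=G_{24,12}''$, and then to produce a $G$-equivariant dominant map of one of the two kinds in Lemma~\ref{nonGsolidcriterion}. Since $G_{24,12}''$ does not appear in Lemma~\ref{invpencils}, I do not expect a $G$-invariant pencil in $|\mathcal{O}_X(1,1,1)|$. The first step is therefore to decompose $H^0(\mathcal{O}_X(1,1,1))$ into irreducible $G$-modules and confirm that no two-dimensional summand occurs. The next step is to search, exactly as in item (3) of the list of easily computed curves in Section~\ref{Curves}, for $G$-invariant irreducible $(1,1,1)$-curves. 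Concretely, I would write such a curve as $\{([x:y],\varphi([x:y]),\psi([x:y]))\}$ and impose invariance under the two generators $\left(1,\iota(1),i;\,(1,2)\right)$ and $\left(\iota(-i),\iota(1),\iota(1);\,(2,3)\right)$. If no such curve exists, I would instead look for a $G$-invariant pair $C_1\sqcup C_2$ of disjoint $(1,1,1)$-curves swapped by the index-two subgroup $G_{12,3}$-type part. I expect the twist by $i$ in the generators is precisely what allows such a curve, or pair, to exist here but not for $G_{24,12}$ and $G_{24,12}'$.

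Suppose an invariant smooth rational $(1,1,1)$-curve $C$ is found. Since $H\cdot C=3$, the $(1,1,1)$-divisors through $C$ form a $G$-invariant linear system $\mathcal{B}=|H-C|$ with $h^0=8-4=4$. This gives a $G$-equivariant rational map $\phi\colon X\dashrightarrow\mathbb{P}^3$. Two general members of $\mathcal{B}$ meet in $C$ plus a residual $(1,1,1)$-curve $R$. I would check that $R$ meets $C$ in two points and meets a third general member only along $C$, so that $\phi$ contracts the residual curves $R$. In that case $\phi$ is a $G$-equivariant map onto a surface (the image should be a quadric or plane in $\mathbb{P}^3$) whose general fiber is a rational curve $R\cong\mathbb{P}^1$. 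Lemma~\ref{nonGsolidcriterion}(1) then finishes the argument.

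If instead $\phi$ turns out to be birational, I would blow up $C$: on $\Bl_C X$ the class $\pi^*H-E$ is base-point-free by the computation of $\Bs(\mathcal{B})$. I would then identify the resulting $G$-Sarkisov link and look on the target for a $G$-invariant point or line. Projection from that point or line gives a conic bundle or a $\mathbb{P}^2$-fibration.

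In the pair case $C_1\sqcup C_2$, I would use the two pencils $|H-C_1|$ and $|H-C_2|$ as in case (4) of Lemma~\ref{Znotacurvespecial}. Each of these is a base-point-free pencil on $\Bl_{C_1\sqcup C_2}X$. The product map to $\mathbb{P}^1\times\mathbb{P}^1$ is $G$-equivariant, with $G$ swapping the factors. Its general fiber is $(\pi^*H-E_1)\cdot(\pi^*H-E_2)$, whose image in $X$ is a $(2,2,2)$-curve minus $C_1,C_2$. I would check that this fiber is a rational curve, a residual $(1,1,1)$-curve, and conclude again by Lemma~\ref{nonGsolidcriterion}(1).

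The main obstacle is the middle step: determining the exact base locus of $\mathcal{B}$ and the structure of the resulting map. In particular, I must verify that the general fiber is irreducible and rational and not, say, an elliptic curve or a point. I would do this by explicit computation in coordinates once $C$ is known.
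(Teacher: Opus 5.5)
Your pair branch rests on a miscount. For a smooth rational $(1,1,1)$-curve $C_i$, the restriction map $H^0(\mathcal{O}_X(1,1,1))\to H^0(\mathcal{O}_{C_i}(3))$ is surjective. So $|H-C_i|$ is a web with $h^0=4$, exactly as you compute in your single-curve branch, and not a pencil. In case (4) of Lemma~\ref{Znotacurvespecial} the components are $(2,2,2)$-curves cut out by two $(1,1,1)$-divisors, which is why pencils occur there.

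Each web gives a birational map to $\mathbb{P}^3$ that is equivariant only for the index-two stabilizer of $C_i$, so no $G$-map to $\mathbb{P}^1\times\mathbb{P}^1$ arises. Moreover, a general member through $C_1$ and a general member through $C_2$ meet in a $(2,2,2)$-curve of arithmetic genus one containing neither $C_i$. The paper works with exactly such a pair, the orbit of $\{([x:y],[x-iy:ix-y],[x+y:-ix+iy])\}$, but uses a different linear system. It takes the pencils of tri-degrees $(2,1,1)$, $(1,2,1)$, $(1,1,2)$ through $C_1\cup C_2$ (note $12-5-5=2$). Together they map $X$ onto a smooth $(1,1,1)$-divisor of $\PPP$, a sextic del Pezzo surface, with rational general fibre. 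Your proposal has no analogue of this construction in the pair case.

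Your single-curve branch is in fact the one that applies. A direct check shows that $\Gamma=\{([x:y],[-ix+y:x-iy],[x+y:ix-iy])\}$ is invariant under both generators of $G_{24,12}''$. However, the structure you predict for $\phi=\phi_{|H-\Gamma|}$ is wrong. On $\Bl_\Gamma X$ one has $(\pi^*H-E)^3=6-9+4=1$, so $\phi$ is birational onto $\mathbb{P}^3$. The residual curve $R$ meets $\Gamma$ in two points and a third general member in one further point, and it is mapped to a line.

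Your fallback, a $G$-invariant point or line in $\mathbb{P}^3$, does not exist here. After moving $\Gamma$ to the diagonal, $G\cong\mathfrak{S}_4$ lies in $\Delta\PGL_2\times\mathfrak{S}_3$. Its projection to $\PGL_2$ is injective, because the kernel is a normal subgroup of $\mathfrak{S}_4$ meeting the Klein four kernel of $G\to\mathfrak{S}_3$ trivially. Hence the four-dimensional space of $(1,1,1)$-forms vanishing on $\Gamma$ is the tensor product of the natural two-dimensional representation of the binary octahedral group with the pull-back of the two-dimensional representation of $\mathfrak{S}_3$. This tensor product is irreducible (its character has norm one).

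The missing ingredient is the $G$-invariant pair of skew lines $L_1,L_2$ from the proof of Theorem~\ref{notGsoliddiagonal}. These lines are invariant under $\Delta\PGL_2\times\boldsymbol{\mu}_3$ and swapped by transpositions. Blowing them up gives a $G$-equivariant $\mathbb{P}^1$-bundle over $\mathbb{P}^1\times\mathbb{P}^1$, and Lemma~\ref{nonGsolidcriterion} applies. With this correction your first branch becomes a complete proof, shorter than and independent of the paper's $(2,1,1)$-pencil construction.
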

    \begin{proof}
        Consider the curve \[C = G_{24,12}'' \cdot  \left\{\left([x:y], [x-iy : ix-y], [x+y : -ix+iy]\right)\mid [x:y]\in \mathbb{P}^1 \right\}.\] This is a $G$-invariant curve of tri-degree $(2,2,2)$ that decomposes as a union of two disjoint smooth $(1,1,1)$-curves; call its components $C_1$ and $C_2$. Consider the linear system $4H-3C$ of divisors of tri-degree $(4,4,4)$ passing through $C$ with multiplicity 3. We can compute that the system $4H-3C$ can be decomposed as a sum of linear systems of tri-degrees $(2,1,1)$, $(1,2,1)$, $(1,1,2)$ passing through $C$. Call them $\mathcal{B}_{2,1,1}$, $\mathcal{B}_{1,2,1}$, and $\mathcal{B}_{1,1,2}$, respectively. Their descriptions are the following: \begin{enumerate}
        \item $\mathcal{B}_{2,1,1}$ is generated by 
        \begin{align*}
            &x_1^2x_2x_3-x_1^2y_2y_3-(1+i)x_1y_1x_2y_3+(1+i)x_1y_1y_2x_3+iy_1^2x_2x_3-iy_1^2y_2y_3,\\
            &x_1^2x_2y_3+x_1^2y_2x_3+(1-i)x_1y_1x_2x_3+(1-i)x_1y_1y_2y_3-iy_1^2x_2y_3-iy_1^2y_2x_3.
        \end{align*}
        \item $\mathcal{B}_{1,2,1}$ is generated by 
        \begin{align*}
             &x_1x_2^2x_3-(1+i)x_1x_2y_2y_3-ix_1y_2^2x_3-iy_1x_2^2y_3+(1-i)y_1x_2y_2x_3-y_1y_2^2y_3,\\
             &x_1x_2^2y_3+(1-i)x_1x_2y_2x_3+ix_1y_2^2y_3-iy_1x_2^2x_3+(1+i)y_1x_2y_2y_3+y_1y_2^2x_3.
        \end{align*}
        \item $\mathcal{B}_{1,1,2}$ is generated by 
        \begin{align*}
            &x_1x_2x_3^2+ix_1x_2y_3^2-(1-i)x_1y_2x_3y_3+(1-i)y_1x_2x_3y_3+y_1y_2x_3^2+iy_1y_2y_3^2,\\
            &-(1-i)x_1x_2x_3y_3+ix_1y_2x_3^2+x_1y_2y_3^2+iy_1x_2x_3^2+y_1x_2y_3^2+(1-i)y_1y_2x_3y_3.
\end{align*}
        \end{enumerate}
    Each of them is a pencil. Thus, these three linear systems give a rational map $\varphi$ \[\varphi\colon \PPP \dashrightarrow \mathbb{P}_{[u_1:v_1]}^1 \times \mathbb{P}_{[u_2:v_2]}^1 \times \mathbb{P}_{[u_3:v_3]}^1.\] By a direct computation, the image $S$ of $\varphi$ is a $(1,1,1)$-divisor given by the equation \[u_1u_2v_3 - iu_1v_2u_3 + v_1u_2u_3 + iv_1v_2v_3 = 0.\] This defines a smooth surface in $\PPP$, hence it is a del Pezzo surface of degree 6. Moreover, a general fiber of $\varphi$ is rational by a direct computation of a parametrization in coordinates. Hence, $\PPP$ is not $G_{24,12}''$-solid by Lemma~\ref{nonGsolidcriterion}.
\end{proof}
    
\begin{corollary}
\label{notGsolid4848}
    $\PPP$ is not $G_{48,48}'$-solid.
\end{corollary}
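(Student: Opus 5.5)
The natural approach is to deduce this from Theorem~\ref{notGsolid2412} by showing that $G_{48,48}'$ contains (a conjugate of) $G_{24,12}''$ and that the construction there is compatible with the larger group. Being non-$G$-solid does not automatically pass to overgroups. So rather than a formal inheritance argument, I would check that the $G_{24,12}''$-equivariant rational map $\varphi\colon\PPP\dashrightarrow S\subset\PPP$ of Theorem~\ref{notGsolid2412} is in fact $G_{48,48}'$-equivariant, for a suitable action of $G_{48,48}'$ on $S$.

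\textbf{Embedding.} Compare generators. $G_{48,48}'$ is generated by $(-1,-1,-1)$, $\left(1,\iota(1),i;\,(1,2)\right)$ and $\left(\iota(-i),\iota(1),\iota(1);\,(2,3)\right)$. The last two generators are exactly the generators of $G_{24,12}''$. Hence
\[
G_{48,48}'=G_{24,12}''\times\langle(-1,-1,-1)\rangle,
\]
the element $(-1,-1,-1)$ being central in the normalizer of $\mathbb{T}$ restricted to $W_G$ and of order $2$; an order count gives $48$. So $G_{24,12}''$ is an index-two normal subgroup, with complement generated by $\tau=(-1,-1,-1)$.

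\textbf{Invariance.} I would show that the curve $C$ of Theorem~\ref{notGsolid2412} is $\tau$-invariant, or else replace it by its $\tau$-translate. The map $\tau$ sends $([x:y],[x-iy:ix-y],[x+y:-ix+iy])$ to $([-x:y],[-x+iy:ix-y],[-x-y:-ix+iy])$. Reparametrising by $x\mapsto -x$ gives the points $([x:y],[x+iy:-ix-y],[x-y:ix+iy])$. This is projectively the same point as $([x:y],[x-iy:ix-y],[x+y:-ix+iy])$ only up to a sign check, which I expect to reduce to the other component $C_2$ or to $C_1$ itself. Either way $\tau(C)=C$.

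Consequently the pencils $\mathcal{B}_{2,1,1}$, $\mathcal{B}_{1,2,1}$, $\mathcal{B}_{1,1,2}$ are $\tau$-invariant. Each is the unique pencil of its tri-degree through $C$ occurring in the decomposition of $|4H-3C|$, and that decomposition is canonical. Then $\tau$ acts on each target $\mathbb{P}^1$, the map $\varphi$ becomes $G_{48,48}'$-equivariant, and the image $S$ is $G_{48,48}'$-invariant. One can also verify directly that the generators listed in Theorem~\ref{notGsolid2412} are eigenvectors of $\tau$ or are swapped by it: every monomial has the same parity of $y$-degree up to an overall sign, which makes this immediate.

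\textbf{Conclusion and main obstacle.} The fibres are unchanged, so a general fibre is still a rational curve over the surface $S$. Lemma~\ref{nonGsolidcriterion} then gives non-$G_{48,48}'$-solidity. The main obstacle is the invariance check of $C$ under $\tau$. If $\tau(C)\neq C$, the argument fails, and I would instead look for a $G_{48,48}'$-invariant $(2,2,2)$-curve made of two disjoint $(1,1,1)$-curves directly and redo the construction of Theorem~\ref{notGsolid2412} for it.
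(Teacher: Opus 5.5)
Your proposal is correct and follows the paper's route: $G_{24,12}''\subset G_{48,48}'$, the curve $C$ (equivalently, each of the three pencils) is invariant under the extra generator $\tau=(-1,-1,-1)$, so $\varphi$ is $G_{48,48}'$-equivariant and Lemma~\ref{nonGsolidcriterion} applies. The check you flagged does go through, since $\tau(C_1)$ coincides with the image of $C_1$ under $\left(1,\iota(1),i;\,(1,2)\right)\in G_{24,12}''$, i.e.\ with $C_2$. Your parity observation also settles it on its own: in each listed generator of $\mathcal{B}_{2,1,1}$, $\mathcal{B}_{1,2,1}$, $\mathcal{B}_{1,1,2}$ all monomials have the same parity of total $x$-degree, so every generator is a $\tau$-eigenvector and each pencil is $\tau$-invariant.
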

\begin{proof}
    We use the notation from the previous theorem. Note that $G_{24,12}''$
    is a subgroup of $G_{48,48}'$. Also note that the curve $C$ from the
    previous theorem is $G_{48,48}'$-invariant. Hence, the map $\varphi$
    is $G_{48,48}'$-equivariant. Thus, $\PPP$ is not $G_{48,48}'$-solid, by Lemma~\ref{nonGsolidcriterion}.
\end{proof}

\begin{theorem}
    \label{notGsolid4830}
    $\PPP$ is not $\widebar{G}_{48,30}$-solid.
\end{theorem}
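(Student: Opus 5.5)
We follow the same strategy as in Theorem~\ref{notGsolid2412} and Corollary~\ref{notGsolid4848}. The group $\widebar{G}_{48,30}$ has $T_G=\langle(-1,-1,-1)\rangle$ and $W_G=\widebar{\mathfrak{S}}_4$. By Lemma~\ref{A4sub}, its preimage of $\mathfrak{A}_4$ is a subgroup $H$ of order $24$ with $T_H=\boldsymbol{\mu}_2$, so $H$ is conjugate to $G_{24,13}$ or $G_{24,13}'$.

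\textbf{Pencil.} We first check, by decomposing $H^0(\mathcal{O}_X(1,1,1))$ into irreducible $\widebar{G}_{48,30}$-modules after a central lift, whether there is a $\widebar{G}_{48,30}$-invariant pencil in $|\mathcal{O}_X(1,1,1)|$. Both $G_{24,13}$ and $G_{24,13}'$ appear in Lemma~\ref{invpencils}, so such a pencil exists for the index-$2$ subgroup $H$. The eight-dimensional space therefore contains a two-dimensional $H$-summand. We must determine whether some two-dimensional summand survives as a $\widebar{G}_{48,30}$-module, or whether the extra element $\left(\ee_8,\ee_8^3,i;\,(1,2)\right)$ swaps two such summands. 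If an invariant pencil survives, Lemma~\ref{pencilsnotGsolid} finishes the proof immediately.

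\textbf{Fibration map.} If no invariant pencil survives, which is presumably why the theorem is stated separately, we imitate Theorem~\ref{notGsolid2412}. We look for a $\widebar{G}_{48,30}$-invariant curve $C$ of tri-degree $(2,2,2)$ that is a union of two disjoint smooth $(1,1,1)$-curves $C_1$ and $C_2$. These are found by the normalized-form computation for $(1,1,1)$-curves invariant under an index-$2$ subgroup. For instance, the base curve of one of the $H$-invariant pencils may split this way, or the two $H$-base curves may be swapped, giving $C_1,C_2$ with $C=C_1\cup C_2$.

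We then consider the linear systems $\mathcal{B}_{2,1,1}$, $\mathcal{B}_{1,2,1}$ and $\mathcal{B}_{1,1,2}$ of divisors of tri-degrees $(2,1,1)$, $(1,2,1)$ and $(1,1,2)$ passing through $C$. By the dimension count of Lemma~\ref{222sections} we expect each to be a pencil:
\[
12-(2\cdot2+2+2)-1=3
\]
sections, lowered to $2$ by the actual geometry, to be checked. The group permutes these three pencils according to its action on the rulings, so together they define a $\widebar{G}_{48,30}$-equivariant rational map
\[
\varphi\colon X\dashrightarrow \PPP
\]
with the induced action on the target. We then compute the image $S$, expecting a smooth $(1,1,1)$-divisor, that is, a sextic del Pezzo surface, and we verify that a general fiber of $\varphi$ is a rational curve. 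Lemma~\ref{nonGsolidcriterion}(1) then gives non-$G$-solidity.

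\textbf{Main obstacle.} The hard step is locating the invariant curve $C$ and verifying that the three systems through $C$ are genuinely pencils whose product map has a two-dimensional image with rational fibres. If no pair of disjoint $(1,1,1)$-curves is invariant, the fallback is to search for an invariant elliptic $(2,2,2)$-curve or a $(1,1,1)$-curve $C$ and use the analogous map given by $|H-C|$. A further fallback is a map to $\PP$ with rational surface fibres, taking a $\widebar{G}_{48,30}$-invariant pencil in $|2H|$ whose general member is rational, and applying Lemma~\ref{nonGsolidcriterion}(2).
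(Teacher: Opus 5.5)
\textbf{There is a genuine gap.} Every route in your proposal needs an invariant object that $\widebar{G}_{48,30}$ does not have. This is exactly why $\mathfrak{A}_4\rtimes\boldsymbol{\mu}_4$ is listed as the separate case (2) of Theorem~\ref{maintheorem}. Concretely:

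(i) \emph{There is no invariant pencil in $|\mathcal{O}_X(1,1,1)|$.} The group preserves the span $E$ of the four monomials with an even number of $x_j$'s and the span $O$ of the four with an odd number. Now view $E$ and $O$ as projective modules over the index-two subgroup $H$, on which $T_G$ acts by scalars. Neither has an $H$-invariant line, so each splits into two non-isomorphic two-dimensional spin summands. One checks that $(\ee_8,\ee_8^3,i;\,(1,2))$ interchanges the two summands in each case. Hence $E$ and $O$ are irreducible of dimension $4$, no two-dimensional invariant subspace exists, and Lemma~\ref{pencilsnotGsolid} is unavailable.

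(ii) \emph{There is no invariant $(1,1,1)$-curve and no invariant pair of disjoint ones.} The abelianization of $\widebar{G}_{48,30}$ is $\boldsymbol{\mu}_4$, so its only index-two subgroup is $H\cong\mathfrak{A}_4\times\boldsymbol{\mu}_2$. In either case $H$ would have to stabilize a $(1,1,1)$-curve $\Gamma\cong\PP$. The kernel of $H\to\Aut(\Gamma)$ meets $(\PGL_2)^3$ trivially, so it embeds in the permutation image $\boldsymbol{\mu}_3$. It is therefore trivial, since $H$ has no normal subgroup of order $3$. But $\mathfrak{A}_4\times\boldsymbol{\mu}_2$ is not a subgroup of $\PGL_2$.

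(iii) \emph{There is no invariant smooth elliptic $(2,2,2)$-curve $C$.} The restrictions $H_j|_C$ are pairwise non-isomorphic, since otherwise $C$ would lie on a $(1,1,1)$-curve. Hence $h^0(\mathcal{I}_C(1,1,1))=2$, and the $(1,1,1)$-divisors through $C$ form an invariant pencil, contradicting (i).

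So your fibration step has no input curve, and your first fallbacks are empty. The last fallback, an invariant pencil in $|2H|$ with rational general member, is unsupported. The general member of a pencil in $|-K_X|$ is a K3 surface unless it acquires non-canonical singularities along the base locus, and you give no candidate.

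\textbf{The missing idea is the paper's much less obvious curve.} The paper uses the $G$-equivariant degree-two map $\varphi\colon X\dashrightarrow\mathbb{P}^3$, $[x_1x_2x_3:x_1y_2y_3:y_1x_2y_3:y_1y_2x_3]$. This map is given by the odd subspace $O$ and is generically the quotient by $T_G=\langle(-1,-1,-1)\rangle$. It takes an invariant twisted cubic $\Theta\subset\mathbb{P}^3$ and lets $C$ be its proper preimage, an irreducible invariant curve of tri-degree $(6,6,6)$ with four ordinary triple points. The invariant linear system of $(9,9,9)$-divisors with multiplicity at least $5$ along $C$ is a pencil. An explicit rational parametrization shows that the associated map $\chi\colon X\dashrightarrow\PP$ has rational general fibre. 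Non-solidity then follows from part (2) of Lemma~\ref{nonGsolidcriterion}, a map to a curve, rather than part (1) as in your plan. Nothing in your outline leads to a curve of this degree or to a pencil of such high multiplicity, so the argument as proposed does not prove the theorem.
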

\begin{proof}
Let $\Theta$ be a twisted cubic in $\mathbb{P}^3$ defined as an intersection of three quadrics: 
\begingroup
\begin{align*}
q_{1}
={}&2z_0^2+i(z_3^2-z_2^2)
-\frac{1}{3}\bigl(\ee_8^{-1}-(1+i)\bigr)(z_0z_3-3z_1z_2)\\
&+\frac{1}{3}\bigl(\ee_8-(i-1)\bigr)(z_0z_2-3z_1z_3)
+\frac{2}{3}\bigl(-1+(\ee_8-\ee_8^{-1})\bigr)z_2z_3,\\
q_{2}
={}&2z_0z_1+\ee_8(z_0z_3+z_1z_2)
-\ee_8^{-1}(z_0z_2+z_1z_3)\\
&+\bigl(-1-(\ee_8-\ee_8^{-1})\bigr)(z_2^2+z_3^2),\\
q_{3}
={}&2z_1^2+i(z_2^2-z_3^2)
+\frac{1}{3}\bigl(\ee_8^{-1}-(1+i)\bigr)(3z_0z_3-z_1z_2)\\
&+\frac{1}{3}\bigl(\ee_8-(i-1)\bigr)(z_1z_3-3z_0z_2)
+\frac{2}{3}\bigl(-1+(\ee_8-\ee_8^{-1})\bigr)z_2z_3.
\end{align*}
\endgroup Consider the curve $C$ of tri-degree $(6,6,6)$ defined as a proper inverse image of the twisted cubic $\Theta$ via \[\varphi\colon\PPP\dashrightarrow\mathbb{P}^3,\ \left([x_1:y_1],[x_2:y_2],[x_3:y_3]\right)\mapsto\left[x_1x_2x_3:x_1y_2y_3:y_1x_2y_3:y_1y_2x_3\right].\] This is a $G$-invariant irreducible curve with four ordinary triple points.

Consider the $G$-invariant linear system $\mathcal{D}$ of divisors of
tri-degree $(9,9,9)$ having multiplicity at least $5$ at a general point of $C$.
By a direct computation, this is a pencil. We describe its associated
rational map in the affine coordinates $x=x_1/y_1$, $y=x_2/y_2$, $z=x_3/y_3$.
Put \[\rho=\ee_8-\ee_8^3,\ \eta=\ee_8+\ee_8^3,\] and set
\[
 U=q_{1}(xyz,x,y,z),\
 V=q_{2}(xyz,x,y,z),\
 W=q_{3}(xyz,x,y,z),
\] which are the equations of $C$ pulled back from the equations of the twisted cubic and written in affine coordinates.
Write \[g(u,v,w)=R_+(u,v,w)R_-(u,v,w),\] where
\[
 R_\pm(u,v,w)=3u^2+(1+\eta)uw+\eta v^2
                 \pm\bigl((i-2\rho)uv+3ivw\bigr).
\]
Put $G_0=g(U,V,W)$ and $G_1=g(W,V,U)$. The map defined by $\mathcal{D}$ is
\[
 \begin{aligned}
 \chi\colon\PPP&\dashrightarrow\mathbb{P}^1_{[\lambda_0:\lambda_1]},\\
 ([x:1],[y:1],[z:1])&\mapsto
 [WG_0-yzVG_1:VG_0-yzUG_1].
 \end{aligned}
\]
Use the affine coordinate $\lambda=\lambda_0/\lambda_1$.
To parametrize a general fiber, put $\delta=uw-v^2$ and
\begin{align*}
 A&=\lambda\bigl((2+\eta)u+(4-\eta)w\bigr)+\tfrac23(1+2\eta)v,\\
 B&=(\rho+4i)u+(2i-\rho)w-\tfrac23(2\rho-i)\lambda v,\\
 \Delta&=(1+2\eta)\left(u^2+\tfrac{4-2\eta}{3}uw
                       +\tfrac{2-2\eta}{9}v^2+w^2\right).
\end{align*}
For general parameters $a,b$, let $[u:v:w]\in\mathbb{P}^2$ be the
intersection of the two lines
\[
 v-\lambda u=a(w-\lambda v),\ b^2(A-B)=a(A+B).
\]
Thus $[u:v:w]$ depends rationally on $\lambda,a,b$.

The parametrization sends $(a,b)$ to $P=\left([x:1],[y:1],[z:1]\right)$, where
\[
 x=\frac{\rho a(w-\lambda v)\Delta}{b\delta(A-B)},\
 y=\frac{R_-(u,v,w)}{\ee_8 bR_-(w,v,u)},\
 z=\frac{\ee_8 bR_+(u,v,w)}{aR_+(w,v,u)}.
\]
By direct substitution, $[U:V:W]=[u:v:w]$ and
$yz=g(U,V,W)/(ag(W,V,U))$. Hence
$\chi(P)=[aW-V:aV-U]=[\lambda:1]$, and the parameters are recovered by
\[
 a=\frac{V-\lambda U}{W-\lambda V},\
 b=\frac{R_-(U,V,W)}{\ee_8 yR_-(W,V,U)}.
\]
Allowing $\lambda$ to vary, these formulas give mutually inverse
birational maps between $\PPP$ and $\mathbb{P}^1\times\mathbb{A}^2$
over $\mathbb{P}^1$. Thus, a general fiber of the $G$-equivariant map $\chi$
is rational, and the result follows from Lemma~\ref{nonGsolidcriterion}.
\end{proof}

\section{Non-toric groups}
\label{nontoricgroups}
\subsection{Preliminaries}
Consider a subgroup $G$ of $\Aut(X) \cong (\PGL_2)^3 \rtimes \mathfrak{S}_3$. There is an exact sequence \[1\rightarrow (\PGL_2)^3 \rightarrow \Aut(X) \xrightarrow{\pi} \mathfrak{S}_3 \rightarrow 1;\] hence, \[1 \rightarrow K \rightarrow G \xrightarrow{\pi |_G} Q \rightarrow 1,\] where $K = \ker (\pi|_G)$ and $Q = \im(\pi|_G)$. We assume $\rank\Pic(X)^G = 1$; hence, $\pi(G) = \boldsymbol{\mu}_3 \text{ or }\mathfrak{S}_3$. Consider the projections $p_i\colon K \rightarrow \PGL_2$ to the respective factors, and call $F_i$ the images. Since $\boldsymbol{\mu}_3 \subset Q$, the group $Q$ acts transitively on the three factors. Since $K$ is normal in $G$, this transitive action sends the projections $F_i$ of $K$ to one another up to conjugation in $\PGL_2$. Hence, $F_i$ are conjugate subgroups of $\PGL_2$. Thus, after conjugation, we can assume $F_1 = F_2 = F_3 = F$, and $K$ is a subdirect product of $F \times F \times F$.

Notice that, since $K\lhd G$, we have $G \leq \Norm_{\Aut(X)}(K)$. Pick an element $(n_1,n_2,n_3;\sigma) \in \Norm_{\Aut(X)}(K)$. Then, for any element $(k_1,k_2,k_3;\id)$ in $K$, we get \[(n_1,n_2,n_3;\sigma)(k_1,k_2,k_3;\id)(n_1,n_2,n_3;\sigma)^{-1} = (n_1k_{\sigma^{-1}(1)}n_1^{-1},n_2k_{\sigma^{-1}(2)}n_2^{-1},n_3k_{\sigma^{-1}(3)}n_3^{-1};\id) \in K.\] Thus, taking the projections, we get $n_iFn_i^{-1} = F$; hence, $n_i$ lies in $\Norm_{\PGL_2}(F)$, and \[G \leq \Norm_{\Aut(X)}(K) \leq (\Norm_{\PGL_2}(F))^3 \rtimes \mathfrak{S}_3.\]

Since $F \leq \PGL_2$, we have, by the classification of finite subgroups of $\PGL_2$, the following possibilities: \[F = \boldsymbol{\mu}_n,\ D_n,\ \mathfrak{A}_4,\ \mathfrak{S}_4, \text{ or } \mathfrak{A}_5.\]
\begin{lemma}
    Let $G$ be a finite subgroup of $\Aut(X)$ such that $\rank\Pic(X)^G = 1$. If $G$ is not toric, then $F = \mathfrak{A}_4$, $\mathfrak{S}_4$, or~$\mathfrak{A}_5$.
\end{lemma}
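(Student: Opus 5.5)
We prove the contrapositive: if $F$ is cyclic or dihedral, then $G$ is toric, i.e. $G$ normalizes a maximal torus of $\Aut(X)$. We use the containment established above,
\[
G\leq (\Norm_{\PGL_2}(F))^3\rtimes\mathfrak{S}_3 ,
\]
after conjugating so that $F_1=F_2=F_3=F$. The plan is to find, for each such $F$, a maximal torus $\mathbb{G}_m\subset\PGL_2$ that is normalized by $\Norm_{\PGL_2}(F)$. Then the torus $\mathbb{G}_m^3$ is normalized by $(\Norm_{\PGL_2}(F))^3$, and it is visibly normalized by the permutations $\mathfrak{S}_3$, since all three factors carry the same torus. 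This gives $G\subset\Norm_{\Aut(X)}(\mathbb{G}_m^3)$.

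\emph{Cyclic case.} Let $F=\boldsymbol{\mu}_n$ with $n\geq 2$. After conjugation $F$ is generated by $\mathrm{diag}(\ee_n,1)$, which has exactly two fixed points $0,\infty$ on $\mathbb{P}^1$. The normalizer $\Norm_{\PGL_2}(F)$ preserves the fixed-point set $\{0,\infty\}$, hence lies in the normalizer of the diagonal torus $\mathbb{G}_m$, namely $\mathbb{G}_m\rtimes\langle\iota(1)\rangle$. If $n=1$, then $K$ is trivial and $G\cong Q$ embeds in $\mathfrak{S}_3$ acting by pure permutations up to the lift. Here one should check that $G$ is then conjugate into $\mathbb{W}$, which normalizes $\mathbb{T}$. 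The idea is that a complement of the trivial group maps isomorphically to $Q$, and an element $(A_1,A_2,A_3;\sigma)$ of order $|\sigma|$ can be conjugated by $(\PGL_2)^3$ to $(1,1,1;\sigma)$, which follows from a standard cocycle computation.

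\emph{Dihedral case.} Let $F=D_n$. For $n\geq 3$, the cyclic subgroup $\boldsymbol{\mu}_n\subset D_n$ is characteristic, so $\Norm_{\PGL_2}(D_n)\subset\Norm_{\PGL_2}(\boldsymbol{\mu}_n)=\mathbb{G}_m\rtimes\boldsymbol{\mu}_2$, and we reduce to the cyclic case. The case $n=2$, where $F=D_2\cong\boldsymbol{\mu}_2^2$ is the Klein group, is the main obstacle. Here $\Norm_{\PGL_2}(D_2)=\mathfrak{S}_4$, which does not normalize any torus. In this case we must exploit the structure of $G$ more carefully. I would try the following approach.

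\begin{itemize}
\item Consider the image $\bar G$ of $G$ in $(\mathfrak{S}_4/D_2)^3\rtimes\mathfrak{S}_3=\mathfrak{S}_3^3\rtimes\mathfrak{S}_3$. The three involutions of $D_2$ correspond to three torus pairs of fixed points, $\{0,\infty\}$, $\{\pm1\}$, $\{\pm i\}$.
\item A torus $\mathbb{G}_m^3$ normalized by $G$ corresponds to a choice of one involution in each factor that is preserved by $G$. This amounts to a $\bar G$-fixed point in the set of triples of involutions.
\item When $K$ is large, for instance $K=D_2^3$, the $\mathfrak{S}_4$-parts may act transitively on the three involutions, and no such fixed point need exist. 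I expect this is where the statement needs refinement or a further argument. One would show that in such cases the $\mathfrak{A}_4$-part forces $F$ itself to be enlarged. Indeed, if an element of $G$ induces a $3$-cycle on the involutions of a factor while lying in $K$, then $F\supset\mathfrak{A}_4$, a contradiction.
\item So the 3-cycles must come from elements outside $K$. I would analyze these by a direct computation with the generators, in the same style as Propositions~\ref{groupsA4C2}--\ref{groupsS4C2}.
\end{itemize}

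Concretely, one shows that $K\subset D_2^3$ together with the elements mapping onto $Q$ can be conjugated by $\mathfrak{S}_4^3$ so that the $\mathfrak{S}_4/D_2$ components of the elements of $G$ fix the involution class of $\mathrm{diag}(-1,1)$ in every factor. This uses that the components lie in $\Norm(F)$ and that $G/K\cong Q$ is small.
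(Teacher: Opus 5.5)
Your cyclic, trivial and $D_n$ ($n\geq 3$) cases follow the paper. The case $F=D_2$, which is the genuinely non-trivial one, is not proved. You set up the right framework: a $G$-fixed point on the $27$-element set $\mathcal{T}_1\times\mathcal{T}_2\times\mathcal{T}_3$, where $\mathcal{T}_i$ is the set of the three maximal tori normalized by $F$, one for each involution of $D_2$. But you then suggest that no fixed point need exist and defer to an unspecified computation. Your closing ``concretely, one shows'' only restates the conclusion.

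No refinement of the statement is needed, and the missing argument is short.
\begin{itemize}
\item By definition every torus in $\mathcal{T}_i$ is normalized by $F$. Hence $K\leq F^3$ acts trivially on triples, and $G$ acts through $Q$.
\item A lift $g=(g_1,g_2,g_3;(1,2,3))$ sends $(T_1,T_2,T_3)$ to $(g_1T_3g_1^{-1},g_2T_1g_2^{-1},g_3T_2g_3^{-1})$. Since $g^3=(g_1g_3g_2,\,g_2g_1g_3,\,g_3g_2g_1)\in K$, we get $g_3g_2g_1\in F$, which fixes every element of $\mathcal{T}_3$.
\item Therefore, for each $T_3\in\mathcal{T}_3$, the triple $(g_1T_3g_1^{-1},\,g_2g_1T_3(g_2g_1)^{-1},\,T_3)$ is $g$-fixed, giving three $g$-fixed triples.
\end{itemize}
This is where your worry misfires. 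A single component $g_1$ may well permute the involutions cyclically, but only the composite $g_3g_2g_1$ matters, and it lies in $F$.

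If $Q=\mathfrak{S}_3$, take a lift $h=(h_1,h_2,h_3;(1,2))$. It preserves the three $g$-fixed triples because $\langle g\rangle K\lhd G$, and it permutes them as $h_3$ permutes $\mathcal{T}_3$. Since $h^2\in K$ gives $h_3^2\in F$, this is an involution (or the identity) on a three-element set, so it has a fixed point. The corresponding product torus is normalized by $G$. This is exactly the paper's argument: choose $T_3$ fixed by $h_3$, then set $T_1=g_1T_3g_1^{-1}$ and $T_2=g_2T_1g_2^{-1}$.

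There is also a smaller inaccuracy in the case $F=\{\id\}$. An element $(A_1,A_2,A_3;(1,2))$ of order $2$ has $A_2=A_1^{-1}$ and $A_3^2=1$. Conjugation by $(\PGL_2)^3$ only conjugates $A_3$, so the element cannot be brought to $(1,1,1;(1,2))$ when $A_3\neq 1$. Moreover, for $Q=\mathfrak{S}_3$ both generators must be normalized simultaneously. Doing this correctly gives the paper's two normal forms, $\langle(1,1,1;(1,2,3)),(1,1,1;(1,2))\rangle$ and $\langle(1,1,1;(1,2,3)),(-1,-1,-1;(1,2))\rangle$. Both normalize $\mathbb{T}$, so your conclusion that $G$ is toric survives, but the cocycle claim as you stated it is false.
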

\begin{proof}
    Consider cases:
    \begin{enumerate}
        \item Suppose $F = \boldsymbol{\mu}_n$ for $n\geq2$. By conjugation, we can assume that $F$ is \[F_0 = \left\langle
        \begin{pmatrix}
            \ee_n & 0\\
            0 & 1
        \end{pmatrix}\right\rangle = \langle f \rangle.\] Then, \[T_0 = \left\{\begin{pmatrix}
            \lambda & 0\\
            0 & 1
        \end{pmatrix} \mid \lambda \in \mathbb{C}^{*}\right\},\] and $F_0$ clearly lies in $T_0$. Now conjugate $G$ such that each $F_i$ coincides with $F_0$. Then we claim that $G$ normalizes the torus $\mathbb{T} = T_0^3$. Indeed, this follows from the fact that $\Norm_{\PGL_2}(F_0) = \Norm_{\PGL_2}(T_0)$, for $n\geq 2$, since we know that $G$ lies in \[(\Norm_{\PGL_2}(F_0))^3 \rtimes \mathfrak{S}_3 = (\Norm_{\PGL_2}(T_0))^3 \rtimes \mathfrak{S}_3 = \Norm_{\Aut(X)}(\mathbb{T}).\]
        \item $F = \{\id\}$. Then $G = \boldsymbol{\mu}_3$ or $\mathfrak{S}_3$. Consider cases:
        \begin{enumerate}
            \item $G \cong \boldsymbol{\mu}_3$. Then we can conjugate $G$ to $\langle (1,1,1;(1,2,3)) \rangle$. This group normalizes the standard torus $\mathbb{T}$.
            \item $G \cong \mathfrak{S}_3$. Then we can conjugate $G$ to either \[\langle (1,1,1;\,(1,2,3)), (1,1,1;\,(1,2))\rangle \text{ or } \langle (1,1,1;\,(1,2,3)), (-1,-1,-1;\,(1,2))\rangle.\] These groups also normalize the standard torus $\mathbb{T}$.
        \end{enumerate}
        Thus, in both cases for $F = \{\id\}$, $G$ normalizes a maximal torus.
        \item $F \cong D_n$ with $n\geq 3$. Then there is a unique normal subgroup $\boldsymbol{\mu}_n$ in $D_n$. Hence, if an element normalizes $D_n$, it must also normalize this subgroup, and $\Norm_{\PGL_2}(D_n) \subset \Norm_{\PGL_2}(\boldsymbol{\mu}_n)$, for $n\geq 3$. Thus, $G$ normalizes the same maximal torus as in the cyclic case.
        \item $F \cong D_2 \cong V_4$. For each factor write \[\mathcal{T}_i = \{T \subset \PGL_2 \mid T \text{ is a maximal torus and } F_i \leq \Norm_{\PGL_2}(T)\}.\] One can readily see that $|\mathcal{T}_i| =3$. Now choose tori $T_i \in \mathcal{T}_i$. Consider cases: 
        \begin{enumerate}
            \item $Q = \boldsymbol{\mu}_3$. Then pick a lift $g = (g_1,g_2,g_3;(1,2,3))$ of $(1,2,3)$. Choose arbitrary $T_3$ in $\mathcal{T}_3$ and define $T_1 = g_1 T_3 g_1^{-1}, T_2 = g_2 T_1 g_2^{-1}$.
            \item $Q = \mathfrak{S}_3$. Then pick lifts $g,h$ of $(1,2,3)$ and $(1,2)$, respectively: \[g = (g_1,g_2,g_3;(1,2,3)),\ h = (h_1,h_2,h_3;(1,2)).\] Then $h_3$ acts on $\mathcal{T}_3$. Since $h^2 = 1$ modulo $K$, this action has order at most 2; hence, it has a fixed point on a three-element set. Pick $T_3$ as this fixed point, and put $T_1 = g_1T_3g_1^{-1},\ T_2 = g_2T_1g_2^{-1}$.
        \end{enumerate}
       We can see then that $G$ normalizes the torus $T_1 \times T_2 \times T_3$.
    \end{enumerate}
Therefore, for any cyclic or dihedral group $F$, the group $G$ is toric. Hence, if $G$ is not toric, then $F$ must be one of three remaining possibilities for a finite subgroup in $\PGL_2$.
\end{proof}
Thus, to understand non-toric subgroups, it is enough to classify subdirect products of $F \times F \times F$ for $F = \mathfrak{A}_4$, $\mathfrak{S}_4$, and~$\mathfrak{A}_5$, and then classify all lifts of $Q = \boldsymbol{\mu}_3$ or~$\mathfrak{S}_3$.

Recall Goursat's lemma for the subdirect products.
\begin{lemma}
    Let $G$ be a subdirect product of $G_1 \times G_2$, that is, the two projections of $G$ onto the factors are surjective. Then there exist normal subgroups $M_i \lhd G_i$ and an isomorphism \[\varphi\colon G_1/M_1 \xrightarrow{\sim} G_2/M_2\] such that $G$ is of the form \[G = \{(g_1,g_2)\in G_1 \times G_2 \mid g_2M_2 =\varphi(g_1M_1)\}.\]
\end{lemma}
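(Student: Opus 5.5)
The plan is to build the subgroup directly from the two projections and then read off the normal subgroups and the isomorphism. Write $p_i\colon G\to G_i$ for the projections; by assumption both are surjective. Put
\[
M_1=\{g_1\in G_1 \mid (g_1,e)\in G\},\qquad M_2=\{g_2\in G_2 \mid (e,g_2)\in G\}.
\]
These are the images of $\ker p_2$ and $\ker p_1$ under $p_1$ and $p_2$. Each is a subgroup, since it is the intersection of $G$ with a factor subgroup, transported into $G_i$. Normality follows from surjectivity. For $g_1\in G_1$, choose $(g_1,g_2)\in G$. Then for $m\in M_1$,
\[
(g_1,g_2)(m,e)(g_1,g_2)^{-1}=(g_1mg_1^{-1},e)\in G,
\]
so $g_1mg_1^{-1}\in M_1$. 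The same argument works for $M_2$.

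Next, define $\varphi\colon G_1/M_1\to G_2/M_2$ by $\varphi(g_1M_1)=g_2M_2$, where $(g_1,g_2)\in G$ is any element lying over $g_1$. The key step is well-definedness. Suppose $(g_1,g_2)$ and $(g_1',g_2')$ are in $G$ with $g_1M_1=g_1'M_1$. Write $g_1'=g_1m$ with $m\in M_1$. Since $(m,e)\in G$, we get
\[
(g_1,g_2)^{-1}(g_1',g_2')(m,e)^{-1}=(e,g_2^{-1}g_2')\in G,
\]
so $g_2^{-1}g_2'\in M_2$. The map $\varphi$ is a homomorphism because $G$ is closed under multiplication. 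It is surjective because $p_2$ is surjective. It is injective by the symmetric argument: if $g_2\in M_2$, then $(e,g_2)\in G$, hence $(g_1,e)\in G$ and $g_1\in M_1$. Therefore $\varphi$ is an isomorphism.

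Finally, compare $G$ with $\Gamma=\{(g_1,g_2)\mid g_2M_2=\varphi(g_1M_1)\}$. The inclusion $G\subseteq\Gamma$ holds by the definition of $\varphi$. For the reverse inclusion, take $(g_1,g_2)\in\Gamma$ and pick $(g_1,h_2)\in G$. Then $h_2M_2=g_2M_2$, so $h_2^{-1}g_2\in M_2$ and $(e,h_2^{-1}g_2)\in G$. Multiplying, $(g_1,h_2)(e,h_2^{-1}g_2)=(g_1,g_2)\in G$.

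The only point needing care is the well-definedness of $\varphi$, which the computation above handles; everything else is formal. No earlier result of the paper is needed.
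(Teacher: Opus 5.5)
Your proof is correct and complete. It is the standard argument: take $M_1, M_2$ to be the projections of $\ker p_2$ and $\ker p_1$, get normality from surjectivity of the projections, define $\varphi$ through the fibres of $p_1$, and check the two inclusions. Each computation you give checks out, including the well-definedness step.

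The paper gives no proof of this lemma. It only recalls Goursat's lemma as a standard fact before its three-factor version, so there is no competing route to compare. Your argument supplies exactly the expected proof, and it works for arbitrary groups, not only finite ones.
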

Note that if there are two subdirect products of $G_1 \times G_2$ with the same normal subgroups such that the corresponding automorphisms differ by a inner automorphism of $G_1/M_1$ or $G_2/M_2$, then such subdirect products are conjugate in $G_1 \times G_2$.

Denote $N_1 = \ker p_{23}|_{K}$, where $p_{23}$ is the projection \[\PGL_2 \times \PGL_2 \times \PGL_2 \rightarrow \PGL_2 \times \PGL_2\] to the second and the third factors, and likewise $N_2 = \ker p_{13}|_{K},\ N_3 = \ker p_{12}|_{K}$. Then $p_i(N_i) \lhd F$. Now, since $Q$ is transitive, we see that $p_i(N_i)$ are conjugate by $\Norm_{\Aut(X)}(F)$. Since each of $p_i(N_i)$ is a normal subgroup in $F$, and for $F = \mathfrak{A}_4$, $\mathfrak{S}_4$, and~$\mathfrak{A}_5$, all normal subgroups are characteristic, we get that \[p_1(N_1) = p_2(N_2) = p_3(N_3) = N.\] In particular, \[N \times N \times N \lhd K.\]

 We require the following form of Goursat's lemma for the triple subdirect product.
 \begin{lemma}
 \label{threeGoursat}
     Let $R$ be a finite group and let $L$ be a subdirect product of $R^3$ such that \[L \cap (R \times 1 \times 1) = L \cap (1 \times R \times 1) = L \cap (1 \times 1 \times R) = 1.\] Then the projection $p_{12}|_L\colon L \rightarrow R^2$ on the first two factors is injective. Its image $P$ is a subdirect product of $R^2$, and \[L = \{(x,y,f(x,y)) \in R^3 \mid (x,y) \in P\},\] for a surjective homomorphism $f\colon P \rightarrow R$. Moreover, the restrictions of $f$ to the kernels of two projections $P \rightarrow R$ are injective.
 \end{lemma}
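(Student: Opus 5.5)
The plan is to deduce everything from the trivial-intersection hypotheses by elementary kernel computations, in the spirit of the classical two-factor Goursat lemma.

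\emph{Injectivity of $p_{12}|_L$.} The kernel of $p_{12}|_L$ consists of the elements of $L$ of the form $(1,1,z)$. This is exactly $L\cap(1\times1\times R)$, which is trivial by hypothesis, so $p_{12}|_L$ is injective.

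\emph{The image $P$ and the map $f$.} Put $P=p_{12}(L)$. Since the projections of $L$ to the first and second factors are surjective, so are the projections of $P$ to the two factors of $R^2$. Hence $P$ is a subdirect product of $R^2$. The map $p_{12}|_L\colon L\to P$ is an isomorphism, so we may define
\[f=p_3\circ(p_{12}|_L)^{-1}\colon P\to R.\]
This is a homomorphism, and it is surjective because $p_3(L)=R$. By construction
\[L=\{(x,y,f(x,y))\mid (x,y)\in P\}.\]

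\emph{Injectivity on the kernels.} The kernel of the projection $P\to R$ onto the first coordinate is $P\cap(1\times R)$. Take $(1,y)$ in this kernel with $f(1,y)=1$. Then $(1,y,1)\in L$, so it lies in $L\cap(1\times R\times1)=1$, which forces $y=1$. The same argument for $P\cap(R\times1)$ uses $L\cap(R\times1\times1)=1$.

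There is no real obstacle here. The only point needing care is to state which projection each kernel belongs to. One could also record that $f$ restricted to each kernel embeds it as a normal subgroup of $R$, which is what makes the later Goursat-type classification possible, but that is not required by the statement.
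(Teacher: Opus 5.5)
Your proof is correct and follows the paper's argument essentially step for step: the kernel of $p_{12}|_L$ is $L\cap(1\times1\times R)$, you set $f=p_3\circ(p_{12}|_L)^{-1}$, and $(1,y,1)\in L$ forces $y=1$. You also spell out why $P$ is a subdirect product and why $f$ is surjective, and you treat the second kernel explicitly; the paper leaves these points implicit or says only that the second case is ``identical''.
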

 \begin{proof}
First, note that $p_{12}|_{L}$ is indeed injective, since \[\ker p_{12}|_L = L \cap(1 \times 1 \times R) = 1.\] Then, define $f$ as $f = p_3 \circ (p_{12}|_L)^{-1}$. Hence, we only need to prove the last statement. Denote by $\pi_i$ the projections in question. Consider $\pi_1$; the proof for $\pi_2$ is identical. We compute $\ker( f|_{\ker\pi_1})$. Suppose $(1,y)$ lies in $\ker( f|_{\ker\pi_1})$. Then, $f(1,y) = 1$; hence, $(1,y,1)$ lies in $L$. But $L \cap (1 \times R \times 1) = 1$. Thus, $(1,y) = (1,1)$, and $f|_{\ker\pi_1}$ is injective.
 \end{proof}

Let us now talk about how we classify the lifts. Since we know that $G \leq \Norm_{\Aut(X)}(K)$, it is enough to classify subgroups of $\Norm_{\Aut(X)}(K)$ such that $K$ lies in them. This is equivalent to the classification of the subgroups of $\Norm_{\Aut(X)}(K)/K$. Thus, the general algorithm is:
\begin{enumerate}
    \item For each group $F = \mathfrak{A}_4$, $\mathfrak{S}_4$, and $\mathfrak{A}_5$, classify all subdirect products $K$ of $F \times F \times F$ that are normalized by some element of the form $(g_1,g_2,g_3;\,(1,2,3))$, up to conjugation by elements of $\Norm_{\PGL_2}(F)^3 \rtimes \mathfrak{S}_3$. To do so:
    \begin{enumerate}
        \item Use Lemma~\ref{threeGoursat} with $R = F/N$ for all possible normal subgroups $N$ of $F$.
        \item Classify all subdirect products $P$ of $R^2$, up to conjugation by elements of $\Norm_{\PGL_2}(F)^2 \rtimes \boldsymbol{\mu}_2$, via Goursat's lemma.
        \item Classify all homomorphisms $f\colon P \rightarrow R$ that satisfy Lemma~\ref{threeGoursat}, and construct all possible groups $L$, up to conjugation by elements of $\Norm_{\PGL_2}(F)^3 \rtimes \mathfrak{S}_3$. All groups $K$ are then obtained as the inverse images of these groups $L$.
        \item Finally, keep precisely those groups $K$ that are normalized by some element of the form $(g_1,g_2,g_3;\,(1,2,3))$ in $\Norm_{\PGL_2}(F)^3 \rtimes \mathfrak{S}_3$.
    \end{enumerate}
    \item For each group $K$, compute its normalizer in $\Aut(X)$, which is the same as the normalizer of $K$ in $\Norm_{\PGL_2}(F)^3 \rtimes \mathfrak{S}_3$.
    \item For each group $K$, classify, up to conjugation, all subgroups of $\Norm_{\Aut(X)}(K)/K$ that project isomorphically onto the subgroups $\boldsymbol{\mu}_3$ or $\mathfrak{S}_3$ of the permutation part $\mathfrak{S}_3$ of $\Aut(X)$.
\end{enumerate}
We now only list the results of the classification, omitting the details for brevity.
\subsection{Classification of non-toric groups}
Denote the following elements in $\PGL_2$: \[a = 
 \begin{pmatrix}
    -1 & 0\\
    0 & 1
 \end{pmatrix},\ b = 
 \begin{pmatrix}
    1 & -i\\
    1 & i
 \end{pmatrix},\ w =
 \begin{pmatrix}
    0 & i\\
    1 & 0
 \end{pmatrix},\ d = 
 \begin{pmatrix}
    0 & -1\\
    1 & 0
 \end{pmatrix},\ e = 
 \begin{pmatrix}
    \ee_{6} & \varphi\\
    0 & \ee^{-1}_{6}
 \end{pmatrix},\ c =bab^{-1},\] where \[\varphi = \frac{1+\sqrt{5}}{2}.\] Then $\mathfrak{A}_4 = \langle a,b \rangle,\ \mathfrak{S}_4=\langle a,b,w\rangle,\ \mathfrak{A}_5 = \langle d,e \rangle$, with the following relations: \[a^2 = b^3 = (ab)^3 = 1,\ waw^{-1} = a,\ wbw^{-1} = b^{-1},\ d^2 = e^3 = (de)^5=1.\] We know that \[\Norm_{\PGL_2}(\mathfrak{A}_4) = \Norm_{\PGL_2}(\mathfrak{S}_4)= \mathfrak{S}_4,\ \Norm_{\PGL_2}(\mathfrak{A}_5) = \mathfrak{A}_5.\] Also put $V_4 = \langle a,c \rangle \leq \mathfrak{A}_4$.
 
 Denote \[R= (1,1,1;\,(1,2,3)),\ T = (1,1,1;\,(1,2)),\ R_b = (1,1,b;\,(1,2,3)),\] \[T_w = (w,w,w;\,(1,2)),\ T_{bw} = (w,w,bw;\, (1,2)).\]
\begin{proposition}
For $F=\mathfrak{A}_4$, the kernels $K$, the normalizers $\Norm_{\Aut(X)}(K)$, and the corresponding groups $G$ are the following:
\begingroup\footnotesize
\begin{longtblr}[
theme = mytheme,
entry = none,
label = none,
]{
stretch = 1.25,
colspec = {|Q[l,m,co=1]|Q[l,m,co=1]|Q[l,m,co=1]|}, 
rowhead = 1,
}
\hline
$K$ & $\Norm_{\Aut(X)}(K)$ & $G$\\
\hline
\hline
$K_{12}=\Delta\mathfrak{A}_4$
& $\Delta\mathfrak{S}_4 \times \mathfrak{S}_3$
& {$G_{12,R}=\langle K_{12},R\rangle$\\
$G_{12,R,T}=\langle K_{12},R,T\rangle$\\
$G_{12,R,T_w}=\langle K_{12},R,T_w\rangle$}\\
\hline
{$K_{48}=\langle (a,1,a),(1,a,a),(b,b,b)\rangle$}
& $\langle K_{48},(w,w,w)\rangle\rtimes\mathfrak{S}_3$
& {$G_{48,R}=\langle K_{48},R\rangle$\\
$G_{48,R,T}=\langle K_{48},R,T\rangle$\\
$G_{48,R,T_w}=\langle K_{48},R,T_w\rangle$}\\
\hline
$K_{192}=\langle V_4^3,(b,b,b)\rangle$
& $\langle \mathfrak{A}_4^3,(w,w,w)\rangle\rtimes\mathfrak{S}_3$
& {$G_{192,R}=\langle K_{192},R\rangle$\\
$G_{192,R_b}=\langle K_{192},R_b\rangle$\\
$G_{192,R,T}=\langle K_{192},R,T\rangle$\\
$G_{192,R,T_w}=\langle K_{192},R,T_w\rangle$\\
$G_{192,R_b,T_{bw}}=\langle K_{192},R_b,T_{bw}\rangle$}\\
\hline
{$K_{576}=\langle V_4^3,(b,b^{-1},1),(1,b,b^{-1})\rangle$}
& $\langle \mathfrak{A}_4^3,(w,w,w)\rangle\rtimes\mathfrak{S}_3$
& {$G_{576,R}=\langle K_{576},R\rangle$\\
$G_{576,R_b}=\langle K_{576},R_b\rangle$\\
$G_{576,R,T}=\langle K_{576},R,T\rangle$\\
$G_{576,R,T_w}=\langle K_{576},R,T_w\rangle$\\
$G_{576,R_b,T_w}=\langle K_{576},R_b,T_w\rangle$}\\
\hline
$K_{1728}=\mathfrak{A}_4^3$
& $\mathfrak{S}_4^3\rtimes\mathfrak{S}_3$
& {$G_{1728,R}=\langle K_{1728},R\rangle$\\
$G_{1728,R,T}=\langle K_{1728},R,T\rangle$\\
$G_{1728,R,T_w}=\langle K_{1728},R,T_w\rangle$}\\
\hline
\end{longtblr}
\endgroup
\end{proposition}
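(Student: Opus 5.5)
The plan is to follow the algorithm stated just before the proposition, specialized to $F=\mathfrak{A}_4$, whose normal subgroups are $1$, $V_4$ and $\mathfrak{A}_4$.

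\emph{Step 1: the kernels.} First fix the common normal subgroup $N=p_i(N_i)$. Since $N^3\lhd K$, the image $L=K/N^3$ is a subdirect product of $R^3$ with $R=\mathfrak{A}_4/N$, and it meets each coordinate factor trivially. This puts us in the setting of Lemma~\ref{threeGoursat}. The cases are as follows.
\begin{enumerate}
\item If $N=\mathfrak{A}_4$, then $K=\mathfrak{A}_4^3=K_{1728}$.
\item If $N=V_4$, then $R\cong\boldsymbol{\mu}_3$. Lemma~\ref{threeGoursat} says $P\subset\boldsymbol{\mu}_3^2$ is either a diagonal, or all of $\boldsymbol{\mu}_3^2$ with $f$ injective on both coordinate kernels. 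Up to the automorphism induced by $w$ (which inverts $b$) and permutations, this gives the diagonal case $K_{192}$ and the ``sum zero'' case $K_{576}=\langle V_4^3,(b,b^{-1},1),(1,b,b^{-1})\rangle$. Other choices of $f$ are excluded either by the injectivity condition or by failure of invariance under a lift of $(1,2,3)$.
\item If $N=1$, then $R=\mathfrak{A}_4$. Goursat's lemma gives $P$ as a fiber product over a common quotient: $1$, $\boldsymbol{\mu}_3$, or $\mathfrak{A}_4$.
\begin{enumerate}
\item If $P$ is a graph of an automorphism, then $K$ is a diagonal. Up to $\Norm_{\PGL_2}(\mathfrak{A}_4)^3=\mathfrak{S}_4^3$, every automorphism of $\mathfrak{A}_4$ is realized by conjugation, so $K\sim K_{12}=\Delta\mathfrak{A}_4$.
\item If $P$ is a fiber product over $\boldsymbol{\mu}_3$, of order $48$, we need a surjection $f\colon P\to\mathfrak{A}_4$ injective on $\ker\pi_1\cong V_4$ and $\ker\pi_2\cong V_4$. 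This should force $K=K_{48}$, consisting of triples in $V_4^3$ with trivial product, extended by $(b,b,b)$.
\item If $P=\mathfrak{A}_4^2$, then $f$ would have to be injective on $\mathfrak{A}_4\times1$, so it is an isomorphism on that factor, and likewise on $1\times\mathfrak{A}_4$. The two images must commute, which is impossible in $\mathfrak{A}_4$, since it has trivial centre.
\end{enumerate}
\end{enumerate}
The step I expect to be the main obstacle is the fiber product case over $\boldsymbol{\mu}_3$. There I would enumerate the homomorphisms $f$ concretely (equivalently, the normal subgroups of $P$ of index $4$ avoiding the coordinate kernels, with quotient $\mathfrak{A}_4$). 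This is a finite check, best done by explicit computation, and must be done up to conjugation by $\mathfrak{S}_4^3\rtimes\mathfrak{S}_3$.

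\emph{Step 2: normalizers.} Compute $\Norm_{\Aut(X)}(K)$ inside $\mathfrak{S}_4^3\rtimes\mathfrak{S}_3$. The $\mathfrak{S}_3$ part always normalizes, since each $K$ is symmetric. On the $\mathfrak{S}_4^3$ part, check which triples preserve $K$.
\begin{enumerate}
\item For a diagonal $K$, only diagonal triples preserve it, giving $\Delta\mathfrak{S}_4$.
\item For $K_{48}$, the triples must preserve both the product-one condition and the diagonal $b$-class, which forces $K_{48}$ itself plus $(w,w,w)$.
\item For $K_{192}$ and $K_{576}$, the group $\mathfrak{A}_4^3$ preserves them, because conjugation acts trivially on $\mathfrak{A}_4/V_4$ and the conditions are imposed mod $V_4$. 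An odd element on a single factor would invert $b$ there and break the condition, so only $(w,w,w)$ can be added.
\item For $K_{1728}$, everything normalizes, giving $\mathfrak{S}_4^3\rtimes\mathfrak{S}_3$.
\end{enumerate}

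\emph{Step 3: the groups $G$.} In each case, classify up to conjugacy the subgroups of $\Norm(K)/K$ that map isomorphically onto $\boldsymbol{\mu}_3$ or $\mathfrak{S}_3$. Here $\Norm(K)/K$ is a small group: for $K_{12}$, $K_{48}$ and $K_{1728}$ it is $\boldsymbol{\mu}_2^{\,\le}\times\mathfrak{S}_3$ or similar, and for $K_{192}$ and $K_{576}$ it has order $6\cdot 2\cdot 3$ or $6\cdot2\cdot9$.
\begin{enumerate}
\item A lift of $(1,2,3)$ is $R$ twisted by a class $c$ in the quotient. Conjugating by coset representatives changes $c$ by a $\boldsymbol{\mu}_3$-coboundary, so the lifts are counted by $H^1$-type data. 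This gives the lifts $R$ and $R_b$; the lift $R_b$ survives exactly when the $\mathfrak{A}_4/V_4$-part is nontrivial in the quotient, which is the case for $K_{192}$ and $K_{576}$.
\item A lift of a transposition is then $T$ or $T_w$ (respectively $T_{bw}$), where the $w$-twist is available when $(w,w,w)$ lies in the normalizer but not in $K$. Each candidate must be checked against the $\mathfrak{S}_3$ relations modulo $K$ and for conjugacy, which yields the listed groups.
\end{enumerate}
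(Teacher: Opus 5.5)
Your proposal is correct and takes essentially the same route as the paper. The paper only states the three-step algorithm (Goursat's lemma with $R=\mathfrak{A}_4/N$, normalizers inside $\mathfrak{S}_4^3\rtimes\mathfrak{S}_3$, then conjugacy classes of complements in $\Norm_{\Aut(X)}(K)/K$) and omits the details; your kernel and normalizer computations agree with its table. Two small points: in the fibre-product case $\ker f$ has order $4$ (index $12$, not index $4$), and in Step~3 the real check is that $R_b$ extends only through an odd lift of $(1,2)$, namely $T_{bw}$ for $K_{192}$ (where $T_w$ fails), whereas $T_w$ already works for $K_{576}$.
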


\begin{proposition}
For $F=\mathfrak{S}_4$, the kernels $K$, the normalizers $\Norm_{\Aut(X)}(K)$, and the corresponding groups $G$ are the following:
\begingroup\footnotesize
\begin{longtblr}[
theme = mytheme,
entry = none,
label = none,
]{
stretch = 1.25,
colspec = {|Q[l,m,co=1]|Q[l,m,co=1]|Q[l,m,co=1]|}, 
rowhead = 1,
}
\hline
$K$ & $\Norm_{\Aut(X)}(K)$ & $G$\\
\hline
\hline
$K_{24}=\Delta\mathfrak{S}_4$
& $K_{24}\rtimes\mathfrak{S}_3$
& {$G_{24,R}=\langle K_{24},R\rangle$\\
$G_{24,R,T}=\langle K_{24},R,T\rangle$}\\
\hline
{$K_{96}=\langle (a,a,1),(1,a,a),(c,c,1),$\\
$(1,c,c),(b,b,b),(w,w,w)\rangle$}
& $K_{96}\rtimes\mathfrak{S}_3$
& {$G_{96,R}=\langle K_{96},R\rangle$\\
$G_{96,R,T}=\langle K_{96},R,T\rangle$}\\
\hline
$K_{384}=\langle V_4^3,(b,b,b),(w,w,w)\rangle$
& $K_{384}\rtimes\mathfrak{S}_3$
& {$G_{384,R}=\langle K_{384},R\rangle$\\
$G_{384,R,T}=\langle K_{384},R,T\rangle$}\\
\hline
{$K_{1152}=\langle V_4^3,(b,b^{-1},1),(1,b,b^{-1}),$\\$(w,w,w)\rangle$}
& $K_{1152}\rtimes\mathfrak{S}_3$
& {$G_{1152,R}=\langle K_{1152},R\rangle$\\
$G_{1152,R,T}=\langle K_{1152},R,T\rangle$}\\
\hline
$K_{3456}=\langle \mathfrak{A}_4^3,(w,w,w)\rangle$
& $\mathfrak{S}_4^3\rtimes\mathfrak{S}_3$
& {$G_{3456,R}=\langle K_{3456},R\rangle$\\
$G_{3456,R,T}=\langle K_{3456},R,T\rangle$}\\
\hline
{$K_{6912}=\langle \mathfrak{A}_4^3,(w,w,1),(1,w,w)\rangle$}
& $\mathfrak{S}_4^3\rtimes\mathfrak{S}_3$
& {$G_{6912,R}=\langle K_{6912},R\rangle$\\
$G_{6912,R,T}=\langle K_{6912},R,T\rangle$\\
$G_{6912,R,T_w}=\langle K_{6912},R,T_w\rangle$}\\
\hline
$K_{13824}=\mathfrak{S}_4^3$
& $\mathfrak{S}_4^3\rtimes\mathfrak{S}_3$
& {$G_{13824,R}=\langle K_{13824},R\rangle$\\
$G_{13824,R,T}=\langle K_{13824},R,T\rangle$}\\
\hline
\end{longtblr}
\endgroup
\end{proposition}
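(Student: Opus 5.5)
The classification for $F=\mathfrak{S}_4$ follows the algorithm stated before the $\mathfrak{A}_4$ proposition; I would carry it out in three stages.

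\textbf{Step 1: the kernels.} The normal subgroups of $\mathfrak{S}_4$ are $1$, $V_4$, $\mathfrak{A}_4$ and $\mathfrak{S}_4$. This gives four possibilities for $N$, hence for $R=F/N$: namely $\mathfrak{S}_4$, $\mathfrak{S}_3$, $\boldsymbol{\mu}_2$ and $1$. For each $R$, I apply Lemma~\ref{threeGoursat} to the image $L$ of $K$ in $R^3$.
\begin{enumerate}
\item For $R=\mathfrak{S}_4$: the normal subgroups $M_i$ used in Goursat's lemma must make the restrictions of $f$ to $\ker\pi_i$ injective. Since $\mathfrak{S}_4$ has trivial centre and no normal subgroup that embeds appropriately, this should force $P$ to be a graph of an automorphism. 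As $\operatorname{Out}(\mathfrak{S}_4)=1$, after conjugation $K=\Delta\mathfrak{S}_4=K_{24}$.
\item For $R=\mathfrak{S}_3$: the same argument, together with $\operatorname{Out}(\mathfrak{S}_3)=1$, gives $L=\Delta\mathfrak{S}_3$, whose preimage is $\langle V_4^3,(b,b,b),(w,w,w)\rangle=K_{384}$.
\item For $R=\boldsymbol{\mu}_2$: $L$ is a subdirect product of $\boldsymbol{\mu}_2^3$ with trivial intersection with each coordinate axis. So $L$ is either the diagonal $\boldsymbol{\mu}_2$ or the sum-zero subgroup $\boldsymbol{\mu}_2^2$. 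These give $K_{3456}$ and $K_{6912}$ respectively.
\item For $R=1$: $K=\mathfrak{S}_4^3=K_{13824}$.
\end{enumerate}

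\textbf{The missing kernels.} This does not yet produce $K_{96}$ and $K_{1152}$. Lemma~\ref{threeGoursat} assumes the intersections with the coordinate factors are trivial modulo $N$. The intersections $K\cap(F\times1\times1)$ need not equal $N\times1\times1$ with $N$ normal of the stated type; they are only normal subgroups of $F$. So I would run the analysis properly for each triple ($N$, the image in $R^3$). In particular I must allow $p_i(N_i)=V_4$ combined with a non-diagonal twist in $\mathfrak{S}_4/V_4\cong\mathfrak{S}_3$: a subdirect product of $\mathfrak{S}_3^3$ of the form "diagonal on the $\boldsymbol{\mu}_2$-quotient and arbitrary on $\boldsymbol{\mu}_3$". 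This yields $K_{1152}$. Similarly, $K_{96}$ arises from $N=V_4$ sitting inside $K$ only as the sum-zero subgroup of $V_4^3$.

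This bookkeeping is the main obstacle. The subdirect products of $F^3$ are not all obtained from trivial-intersection data, so I would organise the argument by first fixing $K\cap V_4^3$ as a $\mathfrak{S}_3$-stable subgroup of $V_4^3\cong\mathbb{F}_2^6$, viewed as a module. Its options are the diagonal, the sum-zero subgroup, and the whole group. Then I would classify the image of $K$ in $\mathfrak{S}_3^3$, keeping only those $K$ normalised by some lift of $(1,2,3)$. I would verify the conjugacy classes and orders by computer, as in the rest of the paper.

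\textbf{Step 2: normalizers.} Each normalizer is computed inside $\mathfrak{S}_4^3\rtimes\mathfrak{S}_3$, since $\Norm_{\PGL_2}(\mathfrak{S}_4)=\mathfrak{S}_4$. For the diagonal-type kernels, the normalizer in $\mathfrak{S}_4^3$ reduces to $K$ itself, because $\mathfrak{S}_4$ has trivial centre and $\operatorname{Out}(\mathfrak{S}_4)=1$; this yields $K\rtimes\mathfrak{S}_3$. For $K\supseteq\mathfrak{A}_4^3$, the whole group $\mathfrak{S}_4^3\rtimes\mathfrak{S}_3$ normalises $K$.

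\textbf{Step 3: the groups $G$.} I classify, up to conjugacy in the normalizer, the subgroups of $\Norm_{\Aut(X)}(K)/K$ mapping onto $\boldsymbol{\mu}_3$ or $\mathfrak{S}_3$.
\begin{enumerate}
\item When the quotient is $\mathfrak{S}_3$ itself, the only such subgroups are $\langle R\rangle$ and $\langle R,T\rangle$.
\item For $K_{3456}$ and $K_{13824}$, the quotient is $(\mathfrak{S}_4/F')^3\rtimes\mathfrak{S}_3$ with $F'\supseteq\mathfrak{A}_4$. This is $\boldsymbol{\mu}_2^3\rtimes\mathfrak{S}_3$ modulo the relevant subgroup.
\item For $K_{6912}$, the quotient is $\boldsymbol{\mu}_2\times\mathfrak{S}_3$. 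The non-conjugate complements are then distinguished by the two lifts $T$ and $T_w$ of the transposition, while the lifts of $(1,2,3)$ are all conjugate to $R$. This uses $H^1(\boldsymbol{\mu}_3,\boldsymbol{\mu}_2)=0$ and a cocycle computation for $\mathfrak{S}_3$.
\end{enumerate}
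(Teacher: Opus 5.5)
Your three-stage plan is the paper's algorithm: Lemma~\ref{threeGoursat} for each $N\lhd F$, then normalizers, then subgroups of $\Norm_{\Aut(X)}(K)/K$. The paper itself only records the outcome of this computation, and your Steps~2--3 reach the right answers. However, Step~1 contains a genuine error. In cases (1) and (2) you assert that the injectivity conditions of Lemma~\ref{threeGoursat} force $P$ to be the graph of an automorphism, i.e.\ $M_1=M_2=1$ in Goursat's lemma. They do not. The kernels $\ker\pi_2=M_1\times 1$ and $\ker\pi_1=1\times M_2$ are normal in $P$ and commute elementwise. Since $f$ is surjective and injective on each, $f(M_1\times1)$ and $f(1\times M_2)$ are commuting normal subgroups of $R$ isomorphic to $M_1\cong M_2$. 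In $\mathfrak{S}_4$ and $\mathfrak{S}_3$ a normal subgroup is determined by its order, so this subgroup must commute with itself, i.e.\ be abelian. That excludes $\mathfrak{A}_4$ and $\mathfrak{S}_4$ (resp.\ $\mathfrak{S}_3$), but it allows two further cases:
\begin{enumerate}
\item For $R=\mathfrak{S}_4$, take $M_1=M_2=V_4$. Then $P=\mathfrak{S}_4\times_{\mathfrak{S}_3}\mathfrak{S}_4$ has order $96$, and $f(vs,us)=vus$ (with $u,v\in V_4$, $s\in\langle b,w\rangle$) gives $L=K_{96}$.
\item For $R=\mathfrak{S}_3$, take $M_1=M_2=\boldsymbol{\mu}_3$. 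Then $P=\mathfrak{S}_3\times_{\boldsymbol{\mu}_2}\mathfrak{S}_3$ has order $18$, and $L$ has diagonal sign and sum-zero $\boldsymbol{\mu}_3$-part; its preimage is $K_{1152}$.
\end{enumerate}
With this correction the paper's reduction yields exactly the seven kernels: $K_{24},K_{96}$ for $N=1$; $K_{384},K_{1152}$ for $N=V_4$; $K_{3456},K_{6912}$ for $N=\mathfrak{A}_4$; and $K_{13824}$ for $N=\mathfrak{S}_4$.

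Your diagnosis of the ``missing kernels'' is therefore also wrong. $K\cap(F\times1\times1)$ is always $N\times1\times1$ with $N\lhd F$, because $p_1(K)=F$. The three $N$'s coincide, and $K/N^3$ meets each coordinate factor trivially, so the reduction is exhaustive and nothing lies outside it. In particular, $K_{96}\cap V_4^3$ is the sum-zero subgroup, so $K_{96}$ has $N=1$, not $N=V_4$ as you say. Likewise the $\boldsymbol{\mu}_3$-part of $K_{1152}/V_4^3$ is the sum-zero subgroup, not ``arbitrary''; all of $\boldsymbol{\mu}_3^3$ would give $K_{3456}$. Your fallback of sorting by $K\cap V_4^3$ could be made to work, but it is only sketched and handed to a computer, so as written Step~1 does not prove that the list of kernels is complete.

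A smaller point concerns Step~2. The ``trivial centre'' argument only covers the graph-type kernels $K_{24}$ and $K_{384}$. For $K_{96}$ you must add that a normalizing element maps into $\Norm_{\mathfrak{S}_3^3}(\Delta\mathfrak{S}_3)=\Delta\mathfrak{S}_3$. You also need that $(v_1,v_2,v_3)\in V_4^3$ normalizes $K_{96}$ only if $v_1v_2v_3$ is $\mathfrak{S}_3$-fixed, hence trivial. A similar check is needed for $K_{1152}$.
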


\begin{proposition}
For $F=\mathfrak{A}_5$, the kernels $K$, the normalizers $\Norm_{\Aut(X)}(K)$, and the corresponding groups $G$ are the following:
\begingroup\footnotesize
\begin{longtblr}[
theme = mytheme,
entry = none,
label = none,
]{
stretch = 1.25,
colspec = {|Q[l,m,co=1]|Q[l,m,co=1]|Q[l,m,co=1]|}, 
rowhead = 1,
}
\hline
$K$ & $\Norm_{\Aut(X)}(K)$ & $G$\\
\hline
\hline
$K_{60}=\Delta\mathfrak{A}_5$
& $K_{60}\rtimes\mathfrak{S}_3$
& {$G_{60,R}=\langle K_{60},R\rangle$\\
$G_{60,R,T}=\langle K_{60},R,T\rangle$}\\
\hline
$K_{216000}=\mathfrak{A}_5^3$
& $K_{216000}\rtimes\mathfrak{S}_3$
& {$G_{216000,R}=\langle K_{216000},R\rangle$\\
$G_{216000,R,T}=\langle K_{216000},R,T\rangle$}\\
\hline
\end{longtblr}
\endgroup
\end{proposition}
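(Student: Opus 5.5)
This is a classification statement, so the plan is to run the three-step algorithm described before it for $F=\mathfrak{A}_5$. Two facts about $\mathfrak{A}_5$ make every step short: it is simple, and $\Norm_{\PGL_2}(\mathfrak{A}_5)=\mathfrak{A}_5$. Since $\mathfrak{A}_5$ is simple, the common normal subgroup $N=p_i(N_i)$ is either $1$ or $\mathfrak{A}_5$.

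\emph{Case $N=\mathfrak{A}_5$.} Here $N^3\lhd K\le \mathfrak{A}_5^3$, so $K=\mathfrak{A}_5^3=K_{216000}$.

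\emph{Case $N=1$.} We apply Lemma~\ref{threeGoursat} with $R=\mathfrak{A}_5$.
\begin{enumerate}
\item First, classify the subdirect products $P\le \mathfrak{A}_5^2$ by Goursat's lemma. The only quotients of $\mathfrak{A}_5$ are $1$ and $\mathfrak{A}_5$, so $P$ is either $\mathfrak{A}_5^2$ or the graph of an automorphism $\alpha$.
\item Suppose $P=\mathfrak{A}_5^2$. The kernels of the two projections of $P$ are $1\times\mathfrak{A}_5$ and $\mathfrak{A}_5\times 1$. By Lemma~\ref{threeGoursat}, $f$ must be injective on each, so $|\ker f|=60$ with $\ker f$ meeting both factors trivially. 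Then $\ker f$ is the graph of an automorphism. But such a graph is not normal in $\mathfrak{A}_5^2$, because $\mathfrak{A}_5$ has trivial centre. This is a contradiction, so $P$ is a graph.
\item Hence $K$ is the graph $\{(x,\alpha(x),\beta(x))\}$ of automorphisms $\alpha,\beta$. Conjugation in $\PGL_2$ realises only the inner automorphisms, since the normalizer is $\mathfrak{A}_5$ itself. So whether $\alpha,\beta$ can be normalised depends on the outer class in $\mathrm{Out}(\mathfrak{A}_5)\cong\boldsymbol{\mu}_2$.
\end{enumerate}

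\emph{Main obstacle: excluding outer twists.} Here I would use geometry: the outer automorphism of $\mathfrak{A}_5$ swaps the two classes of elements of order $5$. It is not induced by conjugation in $\PGL_2$, since it would have to lie in the normalizer. One might worry that it could still come from a different embedding. However, the two conjugacy classes of $5$-cycles correspond to rotations by $2\pi/5$ versus $4\pi/5$, which are distinguished by the trace$^2$/det invariant in $\PGL_2$. The projections $p_i$ are the given embedding $F\subset\PGL_2$, which is the same for all $i$ after the initial conjugation. An element $(x,\alpha(x),\beta(x))$ is just three elements of the same $F\subset\PGL_2$, so no a priori constraint forbids $\alpha$ from being outer. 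So the outer twist has to be excluded differently. I would use the requirement that $K$ be normalized by a lift $(g_1,g_2,g_3;(1,2,3))$ of the $3$-cycle, with $g_i\in\Norm_{\PGL_2}(\mathfrak{A}_5)=\mathfrak{A}_5$. Conjugating by such a lift permutes the coordinates and composes with inner automorphisms. It therefore maps the graph of $(\alpha,\beta)$ to a graph whose outer classes are $([\beta][\alpha]^{-1},[\alpha]^{-1})$. Invariance forces $[\beta]=[\alpha]^{-1}=[\alpha]$ and $[\beta][\alpha]^{-1}=[\alpha]$, that is, $1=[\alpha]$. So both twists are inner, and after conjugating by $(1,g,h)$ we get $K=\Delta\mathfrak{A}_5$. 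I expect this bookkeeping of outer classes under the cyclic shift to be the only real point.

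\emph{Normalizers and lifts.} Next, compute $\Norm_{\Aut(X)}(K)$ inside $\mathfrak{A}_5^3\rtimes\mathfrak{S}_3$. For $\Delta\mathfrak{A}_5$, an element $(n_1,n_2,n_3;\sigma)$ normalizes $K$ iff $n_1,n_2,n_3$ induce the same inner automorphism. Since the centre is trivial, this means $n_1=n_2=n_3$, giving $K_{60}\rtimes\mathfrak{S}_3$. For $\mathfrak{A}_5^3$ the normalizer is the whole group. In both cases $\Norm(K)/K\cong\mathfrak{S}_3$. Its subgroups surjecting onto $\boldsymbol{\mu}_3$ or $\mathfrak{S}_3$ are unique up to conjugacy: the subgroup of order $3$ and the whole group. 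These correspond to $\langle K,R\rangle$ and $\langle K,R,T\rangle$, which gives the table.
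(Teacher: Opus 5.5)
Your proposal is correct and carries out, for $F=\mathfrak{A}_5$, exactly the algorithm the paper describes before the classification tables and then omits: Lemma~\ref{threeGoursat} with $R=F/N$, Goursat for $P$, discarding kernels not normalized by a lift of the $3$-cycle, and finally normalizers and lifts in $\Norm_{\Aut(X)}(K)/K\cong\mathfrak{S}_3$. The one nontrivial point is your bookkeeping of outer classes in $\mathrm{Out}(\mathfrak{A}_5)\cong\boldsymbol{\mu}_2$ under the cyclic shift, which is what rules out the twisted graphs; you handle it correctly, and the detour about trace invariants can simply be dropped.
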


\subsection{$G$-birational behaviour for non-toric groups}
Now let us consider the $G$-birational behaviour for non-toric groups. First, we can utilize toric subgroups of non-toric groups for which we have already proved $G$-birational superrigidity.
\begin{theorem}
\label{Gbirsuprignontoric}
For the groups in the following list, $\PPP$ is $G$-birationally superrigid:
\begingroup
\setlength{\multicolsep}{3pt}
\setlength{\premulticols}{0pt}
\setlength{\postmulticols}{0pt}
\begin{multicols}{3}
\begin{enumerate}[itemsep=1.5pt, topsep=0pt, parsep=0pt, partopsep=0pt]
    \item $G_{48,R}$;
    \item $G_{48,R,T}$;
    \item $G_{48,R,T_w}$;
    \item $G_{192,R}$;
    \item $G_{192,R,T}$;
    \item $G_{192,R,T_w}$;
    \item $G_{576,R}$;
    \item $G_{576,R,T}$;
    \item $G_{576,R,T_w}$;
    \columnbreak
    \item $G_{1728,R}$;
    \item $G_{1728,R,T}$;
    \item $G_{1728,R,T_w}$;
    \item $G_{96,R}$;
    \item $G_{96,R,T}$;
    \item $G_{384,R}$;
    \item $G_{384,R,T}$;
    \item $G_{1152,R}$;
    \item $G_{1152,R,T}$;
    \columnbreak
    \item $G_{3456,R}$;
    \item $G_{3456,R,T}$;
    \item $G_{6912,R}$;
    \item $G_{6912,R,T}$;
    \item $G_{6912,R,T_w}$;
    \item $G_{13824,R}$;
    \item $G_{13824,R,T}$;
    \item $G_{216000,R}$;
    \item $G_{216000,R,T}$.
\end{enumerate}
\end{multicols}
\endgroup
\end{theorem}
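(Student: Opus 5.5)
The plan is to apply Lemma~\ref{overgroupsGbirsuperrig}: for each group $G$ in the list, I will exhibit a subgroup $H\subseteq G$, up to conjugation in $\Aut(\PPP)$, such that $\PPP$ is already known to be $H$-birationally superrigid. The candidates for $H$ are the toric groups covered by Theorem~\ref{Gsuperbirridspecial} and Corollary~\ref{Gbirsuprigrest}. Since $\PPP$ is smooth and Fano, the only thing to check for $H$ to be $H$-Fano is $H$-minimality, i.e.\ that $H$ permutes the three rulings transitively. Every candidate has $W_H\supset\mathfrak{A}_4$, so this is automatic.

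The key observation is that every kernel in the list except $K_{12}$, $K_{24}$, $K_{60}$ contains the subgroup
\[
V=\langle (a,1,a),(1,a,a),(c,c,1),(1,c,c)\rangle ,
\]
or even $V_4^3$, where $V_4=\langle a,c\rangle$. I will conjugate by $(b',b',b')$, with $b'$ a fixed element of $\PGL_2$ diagonalizing $V_4$ into $\{1,a,\iota(1),\iota(-1)\}$ up to the obvious identification. This carries $V_4$ into the normalizer of the standard torus $T_0$. Then:
\begin{enumerate}
\item $V_4^3$ becomes $\langle(-1,1,1),(1,-1,1),(1,1,-1)\rangle\rtimes\langle \text{the }\iota\text{'s}\rangle$, which lies in $\mathbb{T}\rtimes\mathbb{W}$.
\item The diagonal-type group $V$ becomes the index-$4$ subgroup $\langle(-1,-1,1),(1,-1,-1)\rangle\cdot\langle(\iota(1),\iota(1),1),(1,\iota(1),\iota(1))\rangle$.
\end{enumerate}
In both cases, adjoining $R$ and, where present, $(b,b,b)$ or $R_b$ gives an element that permutes the factors cyclically. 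The aim is to find, inside $G$, a group $H=\langle T_H,A,B\rangle$ with $T_H=\boldsymbol{\mu}_2^3$ or $\boldsymbol{\mu}_2^2$ and $W_H\supseteq\mathfrak{A}_4$. The generator $B$ comes from the $\iota$-parts of $V_4$, and $A$ comes from $R$ or $R_b$.

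Next I will identify $H$ with an entry of Corollary~\ref{groupsA4}. Using the normal forms in Propositions~\ref{groupsodd}--\ref{groupseven}, it suffices to compute $A^3$, $B^2$ and $(AB)^3$ and match them. My expectation is as follows. For $K_{192}$, $K_{576}$, $K_{1728}$ and their $\mathfrak{S}_4$-analogues, $H$ should be conjugate to $G_{96,229}$ or $G_{96,3}$. For $K_{48}$ and $K_{96}$, $H$ should be conjugate to $G_{48,50}$ or to $G_{48,49}'$ intersected appropriately. For $\mathfrak{A}_5^3$, I will use $\mathfrak{A}_4\subset\mathfrak{A}_5$: the product $\mathfrak{A}_4^3\subset\mathfrak{A}_5^3$, together with $R$, contains $G_{1728,R}$ up to conjugacy, which reduces $G_{216000,R}$ and $G_{216000,R,T}$ to the $\mathfrak{A}_4^3$ case. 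Groups containing $T$ or $T_w$ only enlarge $G$, so they reduce to the corresponding group generated with $R$ or $R_b$, provided that group is also in the list. Finally, Theorem~\ref{Gsuperbirridspecial} and Corollary~\ref{Gbirsuprigrest}, combined with Lemma~\ref{overgroupsGbirsuperrig}, give the conclusion.

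I expect the main obstacle to be the bookkeeping in matching the toric subgroup $H$ with the correct entry of the classification. The concern is that the resulting $H$ could be conjugate to $G_{48,49}$ or $G_{24,13}$-type groups, which are not $G$-solid, rather than to a superrigid one. This matters most for $K_{48}$ and $K_{96}$, where $T_H$ is only $\boldsymbol{\mu}_2^2$ or $\boldsymbol{\mu}_2$. For those I would try first to use the extra element $(b,b,b)$, which after conjugation contributes the factor $(\varepsilon_8,\varepsilon_8^{-1},1)$-type twist distinguishing $G_{48,50}$ from $G_{48,49}$. If that fails, I would instead look for a larger toric subgroup, containing the full $\boldsymbol{\mu}_2^3$, built from products of generators of $K$.
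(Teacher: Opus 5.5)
Your route is the paper's. There, the entire proof is three observations: $G_{48,50}\subset G_{48,R}$; every listed group other than $G_{216000,R}$ and $G_{216000,R,T}$ contains $G_{48,R}$; and those two contain a conjugate of $G_{48,R}$, because all subgroups $\mathfrak{A}_4\subset\PGL_2$ are conjugate. Lemma~\ref{overgroupsGbirsuperrig} and Theorem~\ref{Gsuperbirridspecial} then finish. Your reduction via $V\subset K$ and $R\in G$ is the same argument, and slightly leaner since it never uses $(b,b,b)$. However, the proposal never establishes the one identification everything rests on. You leave the type of $H$ as an ``expectation'' and worry it might be $G_{48,49}$ or $G_{24,13}$, so as written the argument is not closed.

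It closes in one line, and the worry is unfounded.
\begin{itemize}
\item No conjugation is needed. In $\PGL_2$ one has $c=bab^{-1}=\left(\begin{smallmatrix}0&-2i\\-2i&0\end{smallmatrix}\right)=\iota(1)$, so $V_4=\{1,-1,\iota(1),\iota(-1)\}$ already normalizes $T_0$.
\item Hence $(a,a,1)=(-1,-1,1)$, $(1,a,a)=(1,-1,-1)$ and $(c,c,1)(1,c,c)=(\iota(1),1,\iota(1))$. So $\langle V,R\rangle$ contains all four generators of $G_{48,50}$.
\item Since $R$ normalizes $V$ and $|V|=16$, we get $|\langle V,R\rangle|=48$, so $\langle V,R\rangle=G_{48,50}$ exactly.
\item This subgroup lies in every listed group with an $\mathfrak{A}_4$- or $\mathfrak{S}_4$-type kernel, including $K_{192},\dots,K_{13824}$. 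No separate $G_{96,229}$/$G_{96,3}$ matching is needed.
\end{itemize}
Even abstractly there is no ambiguity. The toric part of $\langle V,R\rangle$ is $\langle(-1,-1,1),(1,-1,-1)\rangle\cong\boldsymbol{\mu}_2^2\times\boldsymbol{\mu}_1$ and $W=\mathfrak{A}_4$. By Proposition~\ref{groupseven} with $n=2$, the only such group is $G_{6n^3}=G_{48,50}$, whereas $G_{48,49}$ and $G_{24,13}$ have toric part $\langle(-1,-1,-1)\rangle$.

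Finally, discard your fallback. The element $(b,b,b)$ does not normalize $\mathbb{T}$ (for instance $b\cdot[0:1]=[-1:1]$), so it cannot lie in a toric subgroup of $\Norm_{\Aut(X)}(\mathbb{T})$. Also, the $\ee_8$-twist you have in mind distinguishes $G_{96,3}$ from $G_{96,229}$, not $G_{48,50}$ from $G_{48,49}$.
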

\begin{proof}
    Indeed, note first that the toric group $G_{48,50}$ lies in $G_{48,R}$. Thus, $X = \PPP$ is $G_{48,R}$-birationally superrigid by Lemma~\ref{overgroupsGbirsuperrig} and Theorem~\ref{Gsuperbirridspecial}. Now, each subgroup in the list, except $G_{216000,R}$ and $G_{216000,R,T}$, contains $G_{48,R}$. For $G = G_{216000,R}$ and $G_{216000,R,T}$, since $\mathfrak{A}_5$ contains $\mathfrak{A}_4$, and since any two subgroups in $\PGL_2$ isomorphic to $\mathfrak{A}_4$ are conjugate in $\PGL_2$, there is a subgroup in $G$ conjugate to $G_{48,R}$. Thus, for each group in the list, there is a subgroup that is conjugate to $G_{48,R}$. Since we have already established that $X$ is $G_{48,R}$-birationally superrigid, $X$ is $G$-birationally superrigid for any remaining group $G$ in the list by Lemma~\ref{overgroupsGbirsuperrig}.
\end{proof}
Therefore, we are left with the following non-toric groups, which we divide into two sets: 
\begin{enumerate}
    \item Groups in the list
    \begin{equation}
    \label{diagonalgroups}
        \left\{G_{12,R},\ G_{12,R,T},\ G_{12,R,T_w},\ G_{24,R},\ G_{24,R,T},\ G_{60,R},\ G_{60,R,T}\right\},
    \end{equation} which we will call \say{diagonal}, since they all lie in $\Delta\PGL_2 \times \mathfrak{S}_3$.
    \item Groups in the list
    \begin{equation}
    \label{192groups}
    \left\{G_{192,R_b},\ G_{192,R_b,T_{bw}},\ G_{576,R_b},\ G_{576,R_b,T_{w}}\right\},
    \end{equation} which all contain $G_{192,R_b}$ as a subgroup.
\end{enumerate}
\begin{theorem}
\label{notGsoliddiagonal}
    For the groups in the list~\eqref{diagonalgroups}, $X = \PPP$ is not $G$-solid.
\end{theorem}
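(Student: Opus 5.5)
Every group in the list~\eqref{diagonalgroups} lies in $\Delta\PGL_2\times\mathfrak{S}_3$. Such a group preserves the small diagonal $\Delta=\{(p,p,p)\}\subset\PPP$, which is an irreducible $(1,1,1)$-curve. The plan is to show that an invariant $(1,1,1)$-curve always gives a non-trivial $G$-equivariant fibration with rational fibres. Lemma~\ref{nonGsolidcriterion} then finishes the proof. This is exactly case~(3) of Theorem~\ref{maintheorem}, and the argument should be uniform in $G$.

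The construction is to blow up $\Delta$ and project from it. Concretely, for a point $x=(p_1,p_2,p_3)$ off $\Delta$, I consider the $(1,1,1)$-divisors containing $\Delta$. By Lemma~\ref{222sections} (applied with $(k,l,m)=(1,1,1)$, $(a,b,c)=(1,1,1)$) they form a linear system of projective dimension at least $8-3-1-1=3$. I expect it to be exactly $\mathbb{P}^3$ with base locus $\Delta$, since $\Delta$ is cut out by the $2\times2$ minors $x_iy_j-x_jy_i$ multiplied by the remaining coordinates. It is cleaner to use the $\mathfrak{S}_3$-symmetric description: the space $H^0(\mathcal{O}(1,1,1))=V^{\otimes3}$, with $V=H^0(\mathcal{O}_{\PP}(1))$, decomposes under $\Delta\mathrm{SL}_2\times\mathfrak{S}_3$ as $S^3V\oplus(V\otimes U)$. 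Here $U$ is the standard $2$-dimensional representation of $\mathfrak{S}_3$, and $V\otimes U$ is precisely the $4$-dimensional subspace vanishing on $\Delta$. This gives a $G$-equivariant rational map
\[
\psi\colon\PPP\dashrightarrow\mathbb{P}(V\otimes U)^\vee\cong\mathbb{P}^3 .
\]
Because the map is built from representations of the whole of $\PGL_2\times\mathfrak{S}_3$ (after a central lift), it is automatically $G$-equivariant for every $G$ in the list.

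Next I will identify the image and fibres. Bidegree count: the system is $H-\Delta$ on $Y=\Bl_\Delta X$, with
\[
(H-E)^3=H^3-3H\cdot\Delta+\deg N_\Delta=6-9+\deg N_\Delta .
\]
The normal bundle has $\deg N_\Delta=-K_X\cdot\Delta+2g-2=6-2=4$, so $(H-E)^3=1$. Hence $\psi$ is \emph{birational} onto $\mathbb{P}^3$; indeed the image lies in $\mathbb{P}(V\otimes U)\cong\mathbb{P}^1\times\mathbb{P}^1$-Segre-type coordinates, so this does not directly give a fibration. Thus I will instead compose with a further equivariant projection. The image $\mathbb{P}^3=\mathbb{P}(V\otimes U)$ carries the $G$-invariant Segre quadric $\mathbb{P}(V)\times\mathbb{P}(U)$, which gives two $G$-equivariant rulings of $\mathbb{P}^3$ by lines. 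Equivalently, $\mathbb{P}(V\otimes U)\dashrightarrow\mathbb{P}(V)$ is a linear projection along one family of lines, where the map sends a $2\times 2$ matrix to its kernel. This yields a $G$-equivariant map $X\dashrightarrow\mathbb{P}(V)\cong\PP$ whose general fibre is birational to $\mathbb{P}^2$ minus a line, hence rational. Alternatively the map to $\mathbb{P}(U)\cong\PP$ works the same way. Lemma~\ref{nonGsolidcriterion}(2) then gives non-$G$-solidity.

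The main obstacle is verifying that these composite maps are dominant onto $\PP$ and genuinely $G$-equivariant, rather than only equivariant up to the $\mathfrak{S}_3$ or $\PGL_2$ factor acting on the base. A fibration over $\PP$ on which $G$ acts is fine, since Lemma~\ref{nonGsolidcriterion} only requires $G$-equivariance with $G$ acting on the base. I expect this to hold because both $\mathbb{P}(V)$ and $\mathbb{P}(U)$ carry induced $G$-actions. If this identification fails, the fallback is to exhibit directly an invariant pencil or a conic-bundle-type map by computing the decomposition of $H^0(\mathcal{O}(1,1,1))$ and $H^0(\mathcal{O}(2,1,1)\oplus\dots)$ for the minimal groups $G_{12,R}$, $G_{24,R}$, $G_{60,R}$, in the style of Theorem~\ref{notGsolid2412}. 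Those conclusions pass to $G_{\bullet,R,T}$ and $G_{12,R,T_w}$ because $\Delta$ and the maps are preserved by $T$ and $T_w$.
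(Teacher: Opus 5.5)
Your first step is the paper's own. You use the same diagonal curve, the same four-dimensional system $V\otimes U$ of $(1,1,1)$-divisors through it, and the same $G$-equivariant birational map $X\dashrightarrow\mathbb{P}^3$; your computation $(\pi^*H-E)^3=1$ is correct.

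The gap is in the second step. ``Sending a $2\times 2$ matrix to its kernel'' is not a rational map on $\mathbb{P}(V\otimes U)$. A general element has rank $2$, so the kernel is defined only on the determinantal quadric $\mathbb{P}(V)\times\mathbb{P}(U)$, and the rulings of that quadric sweep out only the quadric. The same objection applies to your map to $\mathbb{P}(U)$. A linear map $\mathbb{P}^3\dashrightarrow\PP$ is a projection from a line. It can be $G$-equivariant, for any $G$-action on the base, only if that line is $G$-invariant.

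Let $F$ be the group $\mathfrak{A}_4$, $\mathfrak{S}_4$ or $\mathfrak{A}_5$ with $K=\Delta F$. The representation $V$ stays irreducible under the binary lift of $F$, and $V\otimes U\cong V\oplus V$. By Schur's lemma, the $\Delta F$-invariant lines are therefore exactly the lines $\mathbb{P}(V\otimes u)$, $[u]\in\mathbb{P}(U)$, of one ruling of the quadric. You thus need $[u]$ to be fixed by the image $Q$ of $G$ in $\mathfrak{S}_3$.

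For $Q=\boldsymbol{\mu}_3$, the two eigenlines $u_\pm\subset U$ give two invariant lines, which are precisely the paper's $L_1$ and $L_2$. Equivalently, they give $G$-invariant pencils in $|\mathcal{O}_X(1,1,1)|$. Corrected this way, your argument does handle $G_{12,R}$, $G_{24,R}$ and $G_{60,R}$. However, $\mathfrak{S}_3$ acts on $\mathbb{P}(U)$ without fixed points and interchanges $u_+$ and $u_-$. Hence for $G_{12,R,T}$, $G_{12,R,T_w}$, $G_{24,R,T}$ and $G_{60,R,T}$ there is no $G$-invariant line in $\mathbb{P}^3$ and no equivariant projection to $\PP$ at all. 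Your claimed uniformity in $G$ fails.

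Your fallback does not close this gap. Non-$G$-solidity does not pass from a subgroup to an overgroup: for instance, $X$ is not $G_{12,3}$-solid but is $G_{96,229}$-birationally superrigid, although $G_{12,3}\subset G_{96,229}$. Moreover, the maps you would build for $G_{\bullet,R}$ are exactly the ones \emph{not} preserved by $T$ and $T_w$, since these swap the two pencils $V\otimes u_\pm$.

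The missing idea is to use both lines at once. $L_1\cup L_2$ is a $G$-invariant pair of skew lines. The two pencils of planes through them give a $G$-equivariant map $\mathbb{P}^3\dashrightarrow\PP\times\PP$, with the transpositions interchanging the factors. This map becomes a $\mathbb{P}^1$-bundle after blowing up $L_1\cup L_2$. Composing with $X\dashrightarrow\mathbb{P}^3$ gives a map to a surface with rational curves as fibres, so Lemma~\ref{nonGsolidcriterion}(1) applies. This is the paper's argument: it treats the $\boldsymbol{\mu}_3$-groups by projecting from one line and the $\mathfrak{S}_3$-groups by using the pair.
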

\begin{proof}
Consider the diagonal curve \[C = \{([x:y],[x:y],[x:y])\mid [x:y] \in \PP\}.\] It is invariant under $\Delta \PGL_2 \times \mathfrak{S}_3$; hence, it is invariant for every group in the list~\eqref{diagonalgroups}. Consider the linear system $U$ of divisors of tri-degree $(1,1,1)$ passing through the curve $C$. We can choose the following basis in $U$: \[x_3 y_1 y_2-x_1 y_2 y_3,\ x_2 y_1 y_3-x_1 y_2 y_3,\ x_2 x_3 y_1-x_1 x_2 y_3,\ x_1 x_3 y_2-x_1 x_2 y_3.\] This defines a rational map \[\varphi\colon \PPP \dashrightarrow \mathbb{P}^3.\] This is a classical link; its inverse is the blow-up of three skew lines followed by a contraction of the strict transform of the unique quadric containing those lines. Denote the coordinates in $\mathbb{P}^3$ as \[[z_0:z_1:z_2:z_3].\] Then consider the following two skew lines: \[L_1 \colon \{z_0 + \ee_3^2z_1= z_2 + \ee_3z_3=0\},\ L_2 \colon \{z_0 + \ee_3z_1= z_2 + \ee_3^2z_3=0\}.\] We can check that those lines are invariant under the action of $\Delta\PGL_2 \times \boldsymbol{\mu}_3$ and are permuted by the element $(1,1,1;(1,2))$. Hence, each group in the list~\eqref{diagonalgroups} has either an invariant line or an invariant pair of skew lines. For such groups, $\mathbb{P}^3$ is always non-$G$-solid. Indeed, for the fixed line, we have the link 
\[\begin{tikzcd}
	& Y & \\
	{\mathbb{P}^3} && {\mathbb{P}^1}
	\arrow["\pi"', from=1-2, to=2-1]
	\arrow["\xi", from=1-2, to=2-3]
	\arrow["\psi", dashed, from=2-1, to=2-3]
\end{tikzcd}\] where $\pi$ is the blow-up of the line, $\xi$ is a $\mathbb{P}^2$-bundle, and $\psi$ is the projection from the line. For the pair of skew lines, we have a link 
\[\begin{tikzcd}
	& Y & \\
	{\mathbb{P}^3} && {\mathbb{P}^1 \times \mathbb{P}^1}
	\arrow["\pi"', from=1-2, to=2-1]
	\arrow["\xi", from=1-2, to=2-3]
	\arrow["\psi", dashed, from=2-1, to=2-3]
\end{tikzcd}\] where $\pi$ is the blow-up of the skew lines, $\xi$ is a $\mathbb{P}^1$-bundle, and $\psi$ is given by the two linear systems of planes passing through each of the lines. Thus, for each group $G$ in the list~\eqref{diagonalgroups}, $\PPP$ is not $G$-solid.
\end{proof}
\begin{theorem}
\label{Gbirsuprig192}
    For the groups in the list~\eqref{192groups}, $X = \PPP$ is $G$-birationally superrigid.
\end{theorem}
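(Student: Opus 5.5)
By Lemma~\ref{overgroupsGbirsuperrig} it is enough to treat the smallest group $G=G_{192,R_b}=\langle K_{192},R_b\rangle$. Indeed, every other group in the list~\eqref{192groups} contains $G_{192,R_b}$ up to conjugation, and all of them are $G$-minimal. I will run the same seven-step strategy as in the proof of Theorem~\ref{Gsuperbirridspecial}. Assume $(X,\lambda\M)$ is not canonical for some $G$-invariant mobile $\M$ with $\lambda\M\sim_{\mathbb Q}-K_X$, and let $Z$ be a $G$-center of non-canonical singularities.

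First I will classify the $G$-invariant $G$-irreducible curves of tri-degree $(a,a,a)$ with $a\le 7$, as in Propositions~\ref{Gcurves963}--\ref{Gcurves4849}.
\begin{itemize}
\item The kernel $K_{192}$ contains $V_4^3$. Hence the stabilizer of a general $(1,0,0)$-surface contains $V_4\times V_4$ acting on $S\cong\PP\times\PP$, with general orbits of length $16$. Lemma~\ref{rulingsstab} then forces every curve not contained in a degenerate locus to have $F$-components of tri-degree divisible by a large number. Since $a\le 7$, this should leave only curves lying in the degenerate loci.
\item Those loci are unions of fibres $\{p\}\times\{q\}\times\PP$ over the $V_4\times V_4$-orbits of length $4$, $8$ or $12$. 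Their $G$-orbits are unions of rulings, of tri-degree at least $(4,4,4)$.
\item I expect that twisting by $b$ in $R_b$ prevents any union of rulings of tri-degree $<(8,8,8)$ from being $G$-invariant, except possibly curves of the type $\boldsymbol{\mathcal{F}}_i$.
\end{itemize}
Granting this, Lemma~\ref{Znotacurve} leaves only those ruling-type curves as candidates for $Z$. They are excluded exactly as in case (1) of Lemma~\ref{Znotacurvespecial}: restrict to a $(1,0,0)$-surface containing two parallel components and compare with bi-degree $(2,2)$.

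Next, suppose $Z$ is a $G$-orbit of a point. By Lemma~\ref{ZonTinv}, either $|Z|\le 8$ or $Z$ lies on one of the invariant curves just found. For $|Z|\le 8$ I will use the orbit structure. Every point has stabilizer of index at most $8$ in a group of order $576$, so the stabilizer contains a large subgroup of $V_4^3$. This forces each coordinate to be a $V_4$-fixed point of $\PP$, that is, a vertex of the octahedron. The resulting orbits are therefore built from the $6$ octahedral vertices in each factor.
\begin{itemize}
\item If the stabilizer acts irreducibly on the tangent space, I will apply Lemma~\ref{irrepmult} and then the $(1,1)$-curve argument through two points of $Z$ on a $(1,0,0)$-surface, as in the length-$4$ case of Lemma~\ref{Znotanorbitspecial}.
\item Otherwise I will use Theorem~\ref{alphaPP} on the stabilizer of a $(1,0,0)$-surface containing two points of $Z$. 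That stabilizer contains $V_4\times V_4$, which has no invariant curves of bi-degree $(1,0)$, $(0,1)$ or $(1,1)$. So $\alpha\ge1$, and \cite[Theorem 5.50]{KollarMori} gives a contradiction.
\end{itemize}
For orbits lying on the ruling curves, I will argue as in case (2) of Lemma~\ref{Znotanorbitspecial}. I will bound $|Z|$ by intersecting with a general member of the subsystem of $|4H|$, or $|2H|$, through the curve, using Lemma~\ref{ineqPukhlikov}. The short orbits that remain I will kill with a subsystem of $|2H|$ through $Z$ that has zero-dimensional base locus.

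The main obstacle is the first step: checking by hand or computer that $G_{192,R_b}$ has no invariant irreducible or ``twisted'' curves of small tri-degree, for example $(1,1,1)$-curves or $(2,2,2)$-curves. This is the point where the non-toric $b$-twist matters. I would first try to get divisibility by $16$ from Lemma~\ref{rulingsstab}, which would settle it immediately.
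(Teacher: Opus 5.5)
Your reduction to $G_{192,R_b}$ matches the paper. So does your first move on curves: the stabilizer $B=1\times V_4\times V_4$ of a general $(1,0,0)$-surface $S$ has order $16>7\geq S\cdot Z$, so $Z$ lies in the locus of degenerate $B$-orbits.

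\textbf{The gap.} It comes at the next step. On $S\cong\PP\times\PP$ the degenerate locus of $V_4\times V_4$ is not finite. It is the union of the twelve lines $\{v\}\times\PP$ and $\PP\times\{v\}$, where $v$ runs over the octahedral vertices. Its orbits have length $4$ or $8$; length $12$ cannot occur in a group of order $16$. In $X$ this locus is a union of twelve surfaces, and it contains many curves that are not unions of rulings, for instance curves of tri-degree $(k,0,m)$ inside $\PP\times\{v\}\times\PP$. Your fallback (``only $\boldsymbol{\mathcal{F}}_i$-type curves survive, then case (1) of Lemma~\ref{Znotacurvespecial}'') therefore does not cover all candidates. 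The decisive step is left as an expectation.

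\textbf{How to close it.} The paper settles this by computation; it can also be done by hand.
\begin{enumerate}
\item For each component $C'$ and each $i$ with $C'\cdot H_i>0$, the same count gives $C'\subset\Sigma_i$. Here $\Sigma_i$ is the set of points with a vertex in one of the two coordinates other than the $i$-th.
\item If all three degrees of $C'$ are positive, then $C'$ lies in $\Sigma_1\cap\Sigma_2\cap\Sigma_3$, which is a union of lines. This is absurd.
\item If exactly two degrees are positive, say $C'$ has tri-degree $(k,0,m)$, then $C'\subset\PP\times\{v\}\times\PP$ for a vertex $v$. The projection of $K_{192}$ to the second factor is $\mathfrak{A}_4$, which is transitive on the six vertices, and $R_b$ cycles the three types. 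Hence $a\geq 6(k+m)\geq 12$.
\item If $C'$ is a ruling, its orbit is governed by the image $\langle V_4\times V_4,(b,b)\rangle$ of $K_{192}$ in $\PGL_2\times\PGL_2$. Since $(b,b)$ cycles the three antipodal pairs, its shortest orbits (pairs of vertices) have length $12$, so again $a\geq 12$.
\end{enumerate}
Thus there are no invariant curves with $a\leq 7$ at all, not even of type $\boldsymbol{\mathcal{F}}_i$, which is what the paper asserts.

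\textbf{The orbit step.} The phrase ``$V_4$-fixed point'' is not meaningful, since $V_4$ has no fixed points on $\PP$. What your index count actually shows is that each coordinate is fixed by an involution of $V_4$, i.e.\ is a vertex. No element of $K_{192}\setminus V_4^3$ fixes such a point, because its coordinates lie in $\mathfrak{A}_4\setminus V_4$ and so have order $3$. Hence $|G_P|\leq 8\cdot 3=24$, contradicting $|G_P|\geq 72$. So there are no orbits of length at most $8$. The paper reaches the same conclusion from the subgroup lattice of the group with SmallGroups identifier $[576,8661]$ and the absence of $K_{192}$-fixed points.

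Your $\alpha$-invariant argument is nonetheless valid, and stronger than you use it. The subgroup $1\times V_4\times V_4\subset K_{192}$ stabilizes \emph{every} $(1,0,0)$-surface and has $\alpha\geq 1$ on it by Theorem~\ref{alphaPP}. So inversion of adjunction rules out every point of $X$ as a non-canonical center, with no need for Lemma~\ref{ZonTinv} or the curve list.

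With the curve step completed as above, your argument proves the theorem, and its treatment of point centers is cleaner than the paper's.
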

\begin{proof}
Let $G = G_{192,R_b}$. Suppose that $Z$ is a $G$-irreducible center of non-canonical singularities for the pair $(X, \lambda \mathcal{M})$, where $\mathcal{M}$ is a mobile $G$-invariant linear system and $\lambda$ is a positive number such that $-K_X \sim_{\mathbb{Q}}\lambda \mathcal{M}$. Then $Z$ is either a $G$-irreducible curve or a $G$-orbit of a point. In the first case, we know, by Lemma~\ref{Znotacurve}, that $H\cdot Z<24$, hence $Z$ is a curve of tri-degree $(a,a,a)$ with $a \leq 7$. Consider a general divisor $S$ of tri-degree $(1,0,0)$. Its stabilizer $B$ in $G$ is a group of order 16, generated by \[(1,-1,1),\ (1,1,-1),\ \left(
   1,\iota(1),1\right),\ \left(
   1,1,\iota(1)\right),\] and acts on $S$ faithfully. For any $G$-invariant curve $Z$ of tri-degree $(a,a,a)$ with $a\leq 7$ that does not lie in $S$, we have $S \cdot Z = a \leq 7$. Hence, since $|B| = 16$ and since the linear system $|S|$ is base-point-free, $Z$ must lie in the locus $\Sigma$ of degenerate orbits of $B$. A computation shows that there are no such curves in $\Sigma$. 
    
    Therefore, there are no $G$-invariant curves of tri-degree $(a,a,a)$ with $a \leq 7$, and $Z$ cannot be a curve. So suppose now that $Z$ is a $G$-orbit. Then by Lemma~\ref{ZonTinv} we get that either $Z$ lies on a $G$-invariant curve of tri-degree less than or equal to $(4,4,4)$ or $|Z|\leq 8$. Since there are no invariant curves of the required length, $Z$ must be of length 8 or less. However, there are no orbits of length $\leq 8$ on $X$. Indeed, the group $G_{192,R_b}$ has index $[576,8661]$ in the SmallGroups library. This group has a unique subgroup of index 3, which corresponds to the group $K_{192}$, and there are no other subgroups of index less than or equal to 8, apart from $G_{192,R_b}$ and $K_{192}$. Hence, the only possible orbits of length $\leq8$ are of length 1 or 3; both must come from fixed points of $K_{192}$. But we can see that $K_{192}$ does not have fixed points on $X$. Thus, there are no orbits of length $\leq 8$, and there are no other possibilities for $Z$. Therefore, $X$ is $G_{192,R_b}$-birationally superrigid.

    For other groups in the list~\eqref{192groups}, $G$-birational superrigidity follows from Lemma~\ref{overgroupsGbirsuperrig}, since each of these groups contains $G_{192,R_b}$ as a subgroup.
\end{proof}
\begin{proof}[Proof of Theorem~\ref{maintheorem}]
By Theorem~\ref{GminimalGsolid}, if $G$ is toric and $\PPP$ is $G$-solid, then
$W_G\supset\mathfrak{A}_4$. Hence, if $G$ is toric and
$W_G\not\supset\mathfrak{A}_4$, then $\PPP$ is not $G$-solid.

It remains to consider toric groups with $W_G\supset\mathfrak{A}_4$ and
non-toric groups. The results of Sections~5 and~6 exhaust all such groups:
each of them is either not $G$-solid or $G$-birationally superrigid.
Among the groups for which $\PPP$ is not $G$-solid, every group either
has a $G$-invariant curve of one of the types listed in the statement or a $G$-invariant pencil in $|\mathcal{O}_X(1,1,1)|$ or
is isomorphic to
\[
\widebar{G}_{48,30}\cong\mathfrak{A}_4\rtimes\boldsymbol{\mu}_4.
\]
On the other hand, none of the groups for which $\PPP$ is
$G$-birationally superrigid has a $G$-invariant curve of tri-degree less
than $(4,4,4)$, and none of them is isomorphic to
$\mathfrak{A}_4\rtimes\boldsymbol{\mu}_4$. As they are $G$-birationally superrigid, they also don't admit $G$-invariant pencils in $|\mathcal{O}_X(1,1,1)|$. Since every curve appearing in the statement has tri-degree strictly less than $(4,4,4)$, this gives
both implications and proves the theorem.
\end{proof}
\appendix
\section*{Appendix A. The final tables}
\label{appendixA}
Here we list the results of the analysis and where they are proved. In the table, \say{superrigid} means $G$-birationally superrigid, and similarly for $G$-birational rigidity and $G$-solidity. For all toric groups we assume that $W_G$ contains $\mathfrak{A}_4$.
\begingroup \scriptsize
\begin{longtblr}[
theme = mytheme,
entry = none,
label = none,
]{
stretch = 1.25,
colspec = {|Q[c,m,co=1.5]|Q[c,m,co=1.5]|Q[c,m,co=1]|Q[c,m,co=1.5]|Q[c,m,co=1.5]|Q[c,m,co=1]|Q[c,m,co=1.5]|Q[c,m,co=1.5]|Q[c,m,co=1]|},
rowhead = 1,
row{even} = {myblue},
row{odd} = {white},
row{1} = {white},
}
\hline
Group & Result & Proof & Group & Result & Proof & Group & Result & Proof\\
\hline
\hline
$G_{12,3}$ & not solid & Cor~\ref{notGsolidtoric} & $G_{12,3}'$ & not solid & Cor~\ref{notGsolidtoric} & $G_{96,229}$ & superrigid & Cor~\ref{Gbirsuprigrest}\\
\hline
$G_{96,3}$ & superrigid & Thm~\ref{Gsuperbirridspecial} & $G_{48,50}$ & superrigid & Thm~\ref{Gsuperbirridspecial} & $G_{24,13}$ & not solid & Cor~\ref{notGsolidtoric}\\
\hline
$G_{24,13}'$ & not solid & Cor~\ref{notGsolidtoric} & $G_{324,160}$ & superrigid & Thm~\ref{Gsuperbirridspecial} & $G_{324,160}'$ & superrigid & Thm~\ref{Gsuperbirridspecial}\\
\hline
$\widebar{G}_{24,12}$ & not solid & Cor~\ref{notGsolidtoric} & $\widebar{G}_{24,12}'$ & not solid & Cor~\ref{notGsolidtoric} & $\widebar{G}_{192,1538}$ & superrigid & Cor~\ref{Gbirsuprigrest}\\
\hline
$\widebar{G}_{192,1495}$ & superrigid & Cor~\ref{Gbirsuprigrest} & $\widebar{G}_{96,227}$ & superrigid & Cor~\ref{Gbirsuprigrest} & $\widebar{G}_{96,227}'$ & superrigid & Cor~\ref{Gbirsuprigrest}\\
\hline
$\widebar{G}_{48,48}$ & not solid & Cor~\ref{notGsolidtoric} & $\widebar{G}_{48,30}$ & {not solid} & {Thm~\ref{notGsolid4830}} & $G_{24,12}$ & not solid & Cor~\ref{notGsolidtoric}\\
\hline
$G_{24,12}'$ & not solid & Cor~\ref{notGsolidtoric} & $G_{24,12}''$ & not solid & Thm~\ref{notGsolid2412} & $G_{192,1538}$ & superrigid & Cor~\ref{Gbirsuprigrest}\\
\hline
$G_{192,1538}'$ & superrigid & Cor~\ref{Gbirsuprigrest} & $G_{192,181}$ & superrigid & Cor~\ref{Gbirsuprigrest} & $G_{96,227}$ & superrigid & Cor~\ref{Gbirsuprigrest}\\
\hline
$G_{96,227}'$ & superrigid & Cor~\ref{Gbirsuprigrest} & $G_{48,48}$ & not solid & Cor~\ref{notGsolidtoric} & $G_{48,48}'$ & not solid & Cor~\ref{notGsolid4848}\\
\hline
$G_{24,13}''$ & not solid & Cor~\ref{notGsolidtoric} & $G_{24,13}'''$ & not solid & Cor~\ref{notGsolidtoric} & $G_{24,13}^{IV}$ & not solid & Cor~\ref{notGsolidtoric}\\
\hline
$G_{192,1540}$ & superrigid & Cor~\ref{Gbirsuprigrest} & $G_{192,197}$ & superrigid & Cor~\ref{Gbirsuprigrest} & $G_{192,1000}$ & superrigid & Cor~\ref{Gbirsuprigrest}\\
\hline
$G_{96,229}'$ & superrigid & Cor~\ref{Gbirsuprigrest} & $G_{96,70}$ & superrigid & Cor~\ref{Gbirsuprigrest} & $G_{48,49}$ & not solid & Cor~\ref{notGsolidtoric}\\
\hline
$G_{48,49}'$ & superrigid & Thm~\ref{Gsuperbirridspecial} & $G_{48,48}''$ & not solid & Cor~\ref{notGsolidtoric} & $G_{48,48}'''$ & not solid & Cor~\ref{notGsolidtoric}\\
\hline
$G_{48,48}^{IV}$ & not solid & Cor~\ref{notGsolidtoric} & $G_{48,48}^{V}$ & not solid & Cor~\ref{notGsolidtoric} & $G_{384,20163}$ & superrigid & Cor~\ref{Gbirsuprigrest}\\
\hline
$G_{384,17948}$ & superrigid & Cor~\ref{Gbirsuprigrest} & $G_{384,5657}$ & superrigid & Cor~\ref{Gbirsuprigrest} & $G_{384,20100}$ & superrigid & Cor~\ref{Gbirsuprigrest}\\
\hline
$G_{192,1538}''$ & superrigid & Cor~\ref{Gbirsuprigrest} & $G_{192,1538}'''$ & superrigid & Cor~\ref{Gbirsuprigrest} & $G_{192,955}$ & superrigid & Cor~\ref{Gbirsuprigrest}\\
\hline
$G_{192,955}'$ & superrigid & Cor~\ref{Gbirsuprigrest} & $G_{96,226}$ & not solid & Cor~\ref{notGsolidtoric} & $G_{96,195}$ & superrigid & Cor~\ref{Gbirsuprigrest}\\
\hline
{toric groups\\ with $n = 3$\\ and $W_G \supsetneq \mathfrak{A}_4$} & superrigid & Cor~\ref{Gbirsuprigrest} & {toric groups\\ with $n\geq 4$} & superrigid & Thm~\ref{Gbirsupriglarge} & & &\\
\hline
\end{longtblr}
\endgroup

Here is the analogous table for non-toric groups. We assume that $\rank\Pic(X)^G = 1$.
\begingroup\scriptsize
\begin{longtblr}[
theme = mytheme,
entry = none,
label = none,
]{
stretch = 1.25,
colspec = {|Q[c,m,co=1.5]|Q[c,m,co=1.5]|Q[c,m,co=1]|Q[c,m,co=1.5]|Q[c,m,co=1.5]|Q[c,m,co=1]|Q[c,m,co=1.5]|Q[c,m,co=1.5]|Q[c,m,co=1]|},
rowhead = 1,
row{even} = {myblue},
row{odd} = {white},
row{1} = {white},
}
\hline
Group & Result & Proof & Group & Result & Proof & Group & Result & Proof\\
\hline
\hline
$G_{12,R}$ & not solid & Thm~\ref{notGsoliddiagonal} & $G_{12,R,T}$ & not solid & Thm~\ref{notGsoliddiagonal} & $G_{12,R,T_w}$ & not solid & Thm~\ref{notGsoliddiagonal}\\
\hline
$G_{48,R}$ & superrigid & Thm~\ref{Gbirsuprignontoric} & $G_{48,R,T}$ & superrigid & Thm~\ref{Gbirsuprignontoric} & $G_{48,R,T_w}$ & superrigid & Thm~\ref{Gbirsuprignontoric}\\
\hline
$G_{192,R}$ & superrigid & Thm~\ref{Gbirsuprignontoric} & $G_{192,R_b}$ & superrigid & Thm~\ref{Gbirsuprig192} & $G_{192,R,T}$ & superrigid & Thm~\ref{Gbirsuprignontoric}\\
\hline
$G_{192,R,T_w}$ & superrigid & Thm~\ref{Gbirsuprignontoric} & $G_{192,R_b,T_{bw}}$ & superrigid & Thm~\ref{Gbirsuprig192} & $G_{576,R}$ & superrigid & Thm~\ref{Gbirsuprignontoric}\\
\hline
$G_{576,R_b}$ & superrigid & Thm~\ref{Gbirsuprig192} & $G_{576,R,T}$ & superrigid & Thm~\ref{Gbirsuprignontoric} & $G_{576,R,T_w}$ & superrigid & Thm~\ref{Gbirsuprignontoric}\\
\hline
$G_{576,R_b,T_{w}}$ & superrigid & Thm~\ref{Gbirsuprig192} & $G_{1728,R}$ & superrigid & Thm~\ref{Gbirsuprignontoric} & $G_{1728,R,T}$ & superrigid & Thm~\ref{Gbirsuprignontoric}\\
\hline
$G_{1728,R,T_w}$ & superrigid & Thm~\ref{Gbirsuprignontoric} & $G_{24,R}$ & not solid & Thm~\ref{notGsoliddiagonal} & $G_{24,R,T}$ & not solid & Thm~\ref{notGsoliddiagonal}\\
\hline
$G_{96,R}$ & superrigid & Thm~\ref{Gbirsuprignontoric} & $G_{96,R,T}$ & superrigid & Thm~\ref{Gbirsuprignontoric} & $G_{384,R}$ & superrigid & Thm~\ref{Gbirsuprignontoric}\\
\hline
$G_{384,R,T}$ & superrigid & Thm~\ref{Gbirsuprignontoric} & $G_{1152,R}$ & superrigid & Thm~\ref{Gbirsuprignontoric} & $G_{1152,R,T}$ & superrigid & Thm~\ref{Gbirsuprignontoric}\\
\hline
$G_{3456,R}$ & superrigid & Thm~\ref{Gbirsuprignontoric} & $G_{3456,R,T}$ & superrigid & Thm~\ref{Gbirsuprignontoric} & $G_{6912,R}$ & superrigid & Thm~\ref{Gbirsuprignontoric}\\
\hline
$G_{6912,R,T}$ & superrigid & Thm~\ref{Gbirsuprignontoric} & $G_{6912,R,T_w}$ & superrigid & Thm~\ref{Gbirsuprignontoric} & $G_{13824,R}$ & superrigid & Thm~\ref{Gbirsuprignontoric}\\
\hline
$G_{13824,R,T}$ & superrigid & Thm~\ref{Gbirsuprignontoric} & $G_{60,R}$ & not solid & Thm~\ref{notGsoliddiagonal} & $G_{60,R,T}$ & not solid & Thm~\ref{notGsoliddiagonal}\\
\hline
$G_{216000,R}$ & superrigid & Thm~\ref{Gbirsuprignontoric} & $G_{216000,R,T}$ & superrigid & Thm~\ref{Gbirsuprignontoric} & & &\\
\hline
\end{longtblr}
\endgroup
\phantomsection

\addcontentsline{toc}{section}{References}

\bibliographystyle{alpha} 
\bibliography{refs}

\end{document}